\theoremstyle{definition}
\newtheorem{thm}{Theorem}[section]
\newtheorem{dfn}[thm]{Definition}
\newtheorem{prp}[thm]{Proposition}
\newtheorem{lem}[thm]{Lemma}
\newtheorem{cor}[thm]{Corollary}
\newtheorem{rmk}[thm]{Remark}
\newtheorem{ntt}[thm]{Notation}
\newtheorem{exa}[thm]{Example}
\newtheorem{problem}{Problem}
\newtheorem{thmA}{Theorem}
\newcommand{\Ga}{\Gamma}
\newcommand{\ga}{\gamma}
\newcommand{\De}{\Delta}
\newcommand{\al}{\alpha}
\newcommand{\de}{\delta}
\newcommand{\la}{\lambda}
\newcommand{\e}{\varepsilon}
\newcommand{\N}{\mathbb{N}}
\newcommand{\R}{\mathbb{R}}
\newcommand{\trn}[1]{{\left\vert\kern-0.25ex\left\vert\kern-0.25ex\left\vert #1 
    \right\vert\kern-0.25ex\right\vert\kern-0.25ex\right\vert}}
\newcommand{\qun}[1]{{\left\vert\kern-0.25ex\left\vert\kern-0.25ex\left\vert\kern-0.25ex\left\vert #1 
    \right\vert\kern-0.25ex\right\vert\kern-0.25ex\right\vert\kern-0.25ex\right\vert}}
\newcommand{\vertiii}[1]{{\left\vert\kern-0.25ex\left\vert\kern-0.25ex\left\vert #1 
    \right\vert\kern-0.25ex\right\vert\kern-0.25ex\right\vert}}
\DeclareMathOperator{\supp}{supp}
\DeclareMathOperator{\ran}{range}
\DeclareMathOperator{\ra}{rank}
\DeclareMathOperator{\ag}{age}
\DeclareMathOperator{\we}{weight}
\long\def\symbolfootnote[#1]#2{\begingroup%
\def\thefootnote{\fnsymbol{footnote}}\footnote[#1]{#2}\endgroup}
\begin{document}
 
\title[Diagonal scalar-plus-compact as Calkin algebras]{Algebras of diagonal operators of the form scalar-plus-compact are Calkin algebras}

\author[P. Motakis]{Pavlos Motakis}
\address{Department of Mathematics, Texas A\&M University, College Station, TX 77843-3368, U.S.A.}
\email{pavlos@math.tamu.edu}

\author[D. Puglisi]{Daniele Puglisi}
\address{Department of Mathematics and Computer Sciences, University of Catania,  Catania, 95125, Italy (EU)}
\email{dpuglisi@dmi.unict.it}

\author[A. Tolias]{Andreas Tolias}
\address{Department of Mathematics, University of Ioannina, Ioannina, 45110, Greece}
\email{atolias@uoi.gr}

\thanks{2010 Mathematics Subject Classification: Primary 46B03, 46B25, 46B28, 46B45.}
\thanks{The first author's research was supported by NSF DMS-1600600.}
\thanks{Keywords: Calkin algebras; Algebras of diagonal operators; James spaces; Hereditarily indecomposable spaces; $\mathscr{L}_\infty$-spaces; Bourgain-Delbaen method; Argyros-Haydon sums.}

\begin{abstract}
For every Banach space $X$ with a Schauder basis consider the Banach algebra $\R I\oplus\mathcal{K}_\mathrm{diag}(X)$ of all diagonal operators that are of the form $\la I + K$. We prove that $\R I\oplus\mathcal{K}_\mathrm{diag}(X)$ is a Calkin algbra i.e., there exists a Banach space $\mathcal{Y}_X$ so that the Calkin algebra of $\mathcal{Y}_X$ is isomorphic as a Banach algebra to $\R I\oplus\mathcal{K}_\mathrm{diag}(X)$. Among other applications of this theorem we obtain that certain hereditarily indecomposable spaces  and the James spaces $J_p$ and their duals endowed with natural multiplications are Calkin algebras, that all non-reflexive Banach spaces with unconditional bases are isomorphic as Banach spaces to Calkin algebras, and that sums of reflexive spaces with unconditional bases with certain James-Tsirelson type spaces are isomorphic as Banach spaces to Calkin algebras.
\end{abstract}
 
\maketitle

\section*{Introduction}
 This paper is aiming to contribute to the ongoing effort of understanding the types of unital Banach algebras $A$ that may occur as Calkin algebras, i.e., those for which there exists a Banach space $X$ so that $A$ is isomorphic as a Banach algebra to the Calkin algebra of $X$. This is the quotient algebra $\mathcal{C}al(X) = \mathcal{L}(X)/\mathcal{K}(X)$ where $\mathcal{L}(X)$ denotes the unital Banach algebra of all bounded linear operators on $X$ and $\mathcal{K}(X)$ denotes the ideal of all compact ones. This unital Banach algebra was introduced by J. W. Calkin in \cite{Cal} in the 1940's, at first only for $X$ being the Hilbert space. The topic of Calkin algebras of general Banach spaces later gathered attention as well. A classical result from the 1950's, due to F. B. Atkinson (see \cite{At}), is that a bounded linear operator on a Banach space is Fredholm precisely when its equivalence class in the Calkin algebra is invertible. Another example from the same era is the observation of B. Yood in \cite{Y} that, unlike the algebra $\mathcal{L}(X)$, in certain cases the Calkin algebra of a Banach space  may fail to be semi-simple. This is true in particular for the space $L_1$.

The origins of the topic studied herein can be traced to only much later, namely to 2011, when the first example of a Banach space $\mathfrak{X}_\mathrm{AH}$ with the scalar-plus-compact property was presented by S. A. Argyros and R. G. Haydon in \cite{AH}. On this space every bounded linear operator is a scalar multiple of the identity plus a compact operator which means that the Calkin algebra of $\mathfrak{X}_\mathrm{AH}$ is one-dimensional. This was in fact the first time a Calkin algebra of a Banach space could be explicitly described. Of particular importance in the construction is that $\mathfrak{X}_\mathrm{AH}$ is a hereditarily indecomposable (HI) $\mathscr{L}_\infty$-Bourgain-Delbaen space. In the years that followed a number of examples of Calkin algebras have appeared that can be explicitly described in terms of classical Banach algebras. All these examples are in one way or another tightly knit together with the theory of HI spaces, the theory of $\mathscr{L}_\infty$-spaces, or both. In his PhD thesis M. Tarbard \cite{T} combined the technique of W. T. Gowers and B. Maurey from \cite{GM} with the technique from \cite{AH} to construct for every $n\in\N$ a Banach space the Calkin algebra of which coincides with all $n\times n$ upper triangular Toeplitz matrices and also a Banach space the Calkin algebra of which coincides isometrically with the convolution algebra of $\ell_1(\N_0)$. Tarbard posed the question of what unital Banach algebras can be realized as Calkin algebras. Alternatively, he proposed that one should seek for obstructions that would prevent a unital Banach algebra from being a Calkin algebra. This paper is focused on studying the first question and as of yet no results concerning the second one exist. A contribution to the first one was made by  T. Kania and N. J. Laustsen in \cite{KL} where they observed that carefully manipulating and taking finite direct sums of powers of appropriate versions of the space $\mathfrak{X}_\mathrm{AH}$ can lead to something surprising: all finite dimensional semi-simple complex algebras are Calkin algebras. In particular, for any natural numbers $m_1,\ldots,m_n$ the algebra $\mathbb{M}_{m_1}(\mathbb{K})\oplus\cdots\oplus\mathbb{M}_{m_n}(\mathbb{K})$ endowed with point-wise multiplication is a Calkin algebra. Here, $\mathbb{K}$ denotes the scalar field and $\mathbb{M}_{k}(\mathbb{K})$ denotes the algebra of all $k\times k$ matrices over $\mathbb{K}$. A remark worth making is that Tarbard's aforementioned finite dimensional examples of Calkin algebras are not semi-simple. In the infinite dimensional setting the first two authors and D. Zisimopoulou proved in \cite{MPZ} that for every countable compactum $K$ the space $C(K)$ is a Calkin algebra. A noteworthy reason for which one may be interested in these examples is that many of them provide insight into ideals of $\mathcal{L}(X)$. Indeed, all aforementioned algebras are Calkin algebras of spaces with the bounded approximation property and information about the ideal structure of the Calkin algebra can be lifted to study the ideals of the corresponding $\mathcal{L}(X)$ space. For a detailed exposition of this topic we refer the interested reader to the introduction of \cite{KL}.

The motivation for the present paper stems from \cite[Question 1, page 66]{MPZ} of the existence of a Banach space with an infinite dimensional and reflexive Calkin algebra. This is indeed interesting as all infinite dimensional aforementioned examples are either isomorphic to $\ell_1$ or $c_0$-saturated and thus on the far opposite side of being reflexive. This question is difficult to answer and a space with a reflexive Calkin algebra cannot have too many complemented subspaces. Instead, we were interested in investigating whether we could find a quasi-reflexive Calkin algebra. Recall that a Banach space is called quasi-reflexive (of order one) if its canonical image in its second dual is of co-dimension one. While affirmatively answering this question  we were able to identify a rather large variety of explicitly described spaces that can be realized as Calkin algebras that are, from a Banach spaces perspective, truly different to the previously understood examples. Many of them admit unconditional bases while one example is HI. The main result of the present paper has the following statement.

\begin{thmA}
\label{main theorem intro}
Let $X$ be a Banach space with a Schauder basis. Then there exists a Banach space $\mathcal{Y}_X$ so that the Calkin algebra of $\mathcal{Y}_X$ is isomorphic as a Banach algebra to $\R I\oplus\mathcal{K}_\mathrm{diag}(X)$.
\end{thmA}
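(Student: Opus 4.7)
\medskip

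\noindent\textbf{Proof plan.} The plan is to build $\mathcal{Y}_X$ via a tailored Bourgain--Delbaen construction in the spirit of Argyros--Haydon \cite{AH}, but with an internal copy of $X$ encoded by the basis. More precisely, I would construct an increasing sequence of finite-dimensional $\ell_\infty^{d_n}$-spaces whose inductive limit $\mathcal{Y}_X$ is an $\mathscr{L}_\infty$-space, and I would set up the recursive coding so that (i) the subspace $Z_X := \overline{\spn}\{e_n : n \in \N\}$ spanned by a distinguished sequence of block vectors $(e_n)$ is isomorphic to $X$ and is complemented by a canonical projection $\pi : \mathcal{Y}_X \to Z_X$ arising from the Bourgain--Delbaen decomposition, and (ii) the norming set of $\mathcal{Y}_X^*$ contains both unconditional evaluation-type functionals attached to each $e_n$ (which will carry the diagonal action) and conditional HI-type functionals assembled from rapidly increasing sequences (which will kill all non-diagonal behaviour modulo compacts). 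The two families must be glued together by a mixed Tsirelson-type scheme, roughly as in \cite{AH,T,MPZ}, but with the unconditional stratum refined so that the biorthogonal functionals to $(e_n)$ have the same norming role as in a standard Schauder basis for $X$.

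\medskip

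\noindent\textbf{Key intermediate steps.} Once $\mathcal{Y}_X$ is built, I would verify the following in order. First, I would show that $Z_X \simeq X$ and that $\pi$ is a bounded projection, so $\mathcal{Y}_X \simeq X \oplus \ker\pi$ as Banach spaces. Second, for each bounded diagonal sequence $(\la_n)$ with $\la_n\to 0$ I would extend the compact diagonal operator $K = \sum_n \la_n e_n^* \otimes e_n$ on $X$ to a bounded operator $\widetilde{K}$ on $\mathcal{Y}_X$ by exploiting the $\mathscr{L}_\infty$-Bourgain--Delbaen structure: the extension is defined level-by-level on the coordinates associated with the block $e_n$, and the mixed Tsirelson norming guarantees boundedness with the correct norm up to a uniform constant. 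Third, I would show that the assignment $\la I + K \mapsto \la I + \widetilde{K}$ is a continuous algebra homomorphism from $\R I \oplus \mathcal{K}_{\mathrm{diag}}(X)$ into $\mathcal{L}(\mathcal{Y}_X)$, which upon passing to the Calkin quotient yields one of the two desired maps. Fourth, and most importantly, for every $T\in\mathcal{L}(\mathcal{Y}_X)$ I would produce a scalar $\la(T)$ and a compact diagonal operator $K(T)$ on $X$ such that $T - \la(T)I - \widetilde{K(T)}$ is compact on $\mathcal{Y}_X$. This last step is a Rapidly Increasing Sequences (RIS) analysis in the Argyros--Haydon style: on any normalised block sequence avoiding the span of $(e_n)$ the conditional HI-type functionals force $T$ to behave as a scalar multiple of the identity plus a compact, while on the $(e_n)$ themselves the unconditional functionals detect that $Te_n - \la(T)e_n$ is, modulo a compact perturbation, a multiple of $e_n$.

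\medskip

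\noindent\textbf{Main obstacle.} The genuinely delicate point is the simultaneous calibration of the conditional and unconditional parts of the norming set. The HI-mechanism of \cite{AH} needs to act on every block sequence that is asymptotically orthogonal to $(e_n)$ and force scalar-plus-compact behaviour there, yet it must not be triggered by the sequence $(e_n)$ itself, for otherwise the diagonal compacts on $X$ would collapse to scalars and the Calkin algebra would be only $\R I$. This requires engineering the admissibility conditions of the conditional functionals so that they never combine evaluations at the $(e_n)$ with evaluations at general block vectors in a way that produces a new conditional estimate. I expect this to be achieved by tagging the unconditional evaluation functionals with a forbidden label and excluding them from the tree-like coding used for the conditional analysis, analogously to how Tarbard \cite{T} prevents his ``generators'' from contributing to HI estimates. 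Carrying out the RIS arguments in the presence of this two-tier norming, and proving that the resulting map $\mathcal{C}al(\mathcal{Y}_X) \to \R I \oplus \mathcal{K}_{\mathrm{diag}}(X)$ is the inverse of the homomorphism built in the third step above, will constitute the bulk of the work.
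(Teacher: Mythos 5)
Your plan fails at step one, before any RIS analysis can begin. If $Z_X\simeq X$ is a complemented subspace of $\mathcal{Y}_X$ via a bounded projection $\pi$, with inclusion $\iota\colon X\to\mathcal{Y}_X$, then $S\mapsto\iota S\pi$ is an algebra homomorphism from $\mathcal{L}(X)$ into $\mathcal{L}(\mathcal{Y}_X)$ (since $\pi\iota=I_X$), and $\iota S\pi$ is compact if and only if $S$ is. Hence $\mathcal{C}al(\mathcal{Y}_X)$ would contain an isomorphic copy of $\mathcal{C}al(X)=\mathcal{L}(X)/\mathcal{K}(X)$ as a subalgebra. For generic $X$ with a Schauder basis, say $X=\ell_2$, the algebra $\mathcal{C}al(X)$ is nonseparable and noncommutative, while $\R I\oplus\mathcal{K}_\mathrm{diag}(X)$ is commutative and separable, so $\mathcal{C}al(\mathcal{Y}_X)$ cannot be $\R I\oplus\mathcal{K}_\mathrm{diag}(X)$ once your step one holds. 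Your fourth step tracks only the diagonal entries $e_n^*(Te_n)$ and is blind to precisely the non-diagonal, non-compact operators $\iota S\pi$ that your complementation forces to exist on $\mathcal{Y}_X$; the HI mechanism cannot dispose of them because they are supported on a complemented copy of $X$ where you have deliberately suppressed it.

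The construction in the paper avoids this trap by never embedding $X$ at all. The space $\mathcal{Y}_X$ is a Bourgain-Delbaen-Argyros-Haydon direct sum $(\sum_k\oplus X_k)^X_{\mathrm{AH}}$ in which each $X_k$ is an \emph{infinite-dimensional} Argyros-Haydon space with the scalar-plus-compact property, chosen so that every operator $X_k\to X_m$ with $k\neq m$ is compact. The role of your rank-one projections $e_n^*\otimes e_n$ is taken by the infinite-rank decomposition projections $I_k$ onto the $X_k$, while the basis of $X$ enters only through the norming set $\mathcal{G}^{\mathrm{utc}}$ of upper-triangular-column functionals, which makes $X$ crudely \emph{finitely} representable along block columns of $\mathcal{Y}_X$ without ever producing an isomorphic complemented copy. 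Under those constraints the RIS machinery yields that every $T\in\mathcal{L}(\mathcal{Y}_X)$ has the form $a_0I+\lim_n(\sum_{k\leq n}a_kI_k+K_n)$ with $K_n$ compact (Theorem \ref{diagonal plus compact are dense}), and Lemmas \ref{norm of this vector is such and such}, \ref{lower calkin} and Proposition \ref{upper calkin} identify the coefficient sequence, modulo compacts, with an element of $\R I\oplus\mathcal{K}_\mathrm{diag}(X)$. Your ``forbidden label'' intuition is closest in spirit to the extra clause (iv) in Definition \ref{RIS}, but it operates on finite-dimensional $X$-data encoded in the norming set, not on an embedded copy of $X$.
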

In fact, for every $\e>0$ the space $\mathcal{Y}_X$ can be constructed so that the corresponding Banach algebra isomorphism $\Phi:\mathcal{Y}_X\to\R I\oplus\mathcal{K}_\mathrm{diag}(X)$ satisfies $\|\Phi\|\|\Phi^{-1}\| \leq 1+\e$.

Using a Theorem of S. A. Argyros, I. Deliyanni, and the third author, that in special cases explicitly describes the diagonal operators of a Banach space with a basis in terms of its dual  (see \cite{ADT}), we describe several examples of spaces $\R I\oplus \mathcal{K}_\mathrm{diag}(X)$ which are, by virtue of Theorem \ref{main theorem intro}, examples of Calkin algebras as well. The unital Banach algebra $\R I\oplus \mathcal{K}_\mathrm{diag}(X)$ is always commutative and semi-simple. The first example is a Hereditarily indecomposable Banach algebra $\mathfrak{X}_\mathrm{ADT}$  from \cite{ADT} that is additionally quasi-reflexive of order one.

\begin{thmA}
There exists a hereditarily indecomposable Calkin algebra that is quasi-reflexive of order one.
\end{thmA}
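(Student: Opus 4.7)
The plan is to combine Theorem \ref{main theorem intro} with the construction from \cite{ADT}. To exhibit a hereditarily indecomposable Calkin algebra that is quasi-reflexive of order one, it suffices, in view of the main theorem, to produce a Banach space $X$ with a Schauder basis such that the associated algebra $\R I\oplus\mathcal{K}_\mathrm{diag}(X)$ is already HI as a Banach space and quasi-reflexive of order one; the main theorem then transports this Banach space structure onto the Calkin algebra of some $\mathcal{Y}_X$.

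First I would invoke the theorem of Argyros, Deliyanni, and Tolias from \cite{ADT}, which in suitable cases explicitly identifies $\R I\oplus\mathcal{K}_\mathrm{diag}(X)$ as a Banach space via a description in terms of the dual $X^*$. Choosing $X$ to be the particular space from \cite{ADT} whose diagonal algebra is the space $\mathfrak{X}_\mathrm{ADT}$ singled out in the introduction, I obtain a space with a Schauder basis for which $\R I\oplus\mathcal{K}_\mathrm{diag}(X)$ is isomorphic as a Banach space to $\mathfrak{X}_\mathrm{ADT}$. By the properties established in \cite{ADT}, the latter space is hereditarily indecomposable and quasi-reflexive of order one.

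Second I would apply Theorem \ref{main theorem intro} to this $X$. This yields a Banach space $\mathcal{Y}_X$ together with a Banach algebra isomorphism
\[
\mathcal{C}al(\mathcal{Y}_X)\;\cong\;\R I\oplus\mathcal{K}_\mathrm{diag}(X)\;\cong\;\mathfrak{X}_\mathrm{ADT}.
\]
Since any Banach algebra isomorphism is in particular a Banach space isomorphism, and since both being hereditarily indecomposable and being quasi-reflexive of order one are Banach space invariants, the Calkin algebra $\mathcal{C}al(\mathcal{Y}_X)$ inherits both properties, producing the desired example.

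The main obstacle is not in applying Theorem \ref{main theorem intro}, which does the heavy algebraic lifting, but rather in verifying that the space from \cite{ADT} fits into the hypotheses of the main theorem: namely that it admits a Schauder basis in the form required, and that the identification of its diagonal algebra with $\mathfrak{X}_\mathrm{ADT}$ preserves the HI and quasi-reflexivity properties. Both of these points are handled within \cite{ADT} together with the explicit description of $\mathcal{K}_\mathrm{diag}(X)$ recalled in the discussion preceding the theorem, so the proof in the present context reduces to a clean invocation of the main theorem.
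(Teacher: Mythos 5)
Your proposal is correct and takes essentially the same route as the paper: take the quasi-reflexive HI space $X_\mathrm{ADT}$ from \cite{ADT}, use \cite[Theorem 1.1]{ADT} and the quasi-reflexivity of $X_\mathrm{ADT}$ to identify $\R I\oplus\mathcal{K}_\mathrm{diag}(X_\mathrm{ADT})$ with $X_\mathrm{ADT}^*$ (which is HI and quasi-reflexive of order one), and then feed $X_\mathrm{ADT}$ into Theorem \ref{main theorem intro}. The only small point worth making explicit, which the paper spells out and you elide, is that the identification $\R I\oplus\mathcal{K}_\mathrm{diag}(X_\mathrm{ADT})=\mathcal{L}_\mathrm{diag}(X_\mathrm{ADT})\equiv X_\mathrm{ADT}^*$ uses quasi-reflexivity of $X_\mathrm{ADT}$ to collapse $\mathcal{L}_\mathrm{diag}$ onto $\R I\oplus\mathcal{K}_\mathrm{diag}$.
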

The possibility of such extreme behavior of the quotient $\mathcal{L}(X)/\mathcal{K}(X)$ contrasts the more canonical one of $\mathcal{L}(X)$. The latter space is always decomposable, containing complemented copies of both $X$ and $X^*$.

James' classical space $J_p$ from \cite{J} is, for each $1<p<\infty$, the Banach space consisting of all scalar sequences $(a_i)_i$ for which the quantity
\[\left\|(a_i)_i\right\|^p = \sup_{(E_k)_k}\sum_k\Big|\sum_{i\in E_k}a_i\Big|^p,\]
where the supremum is taken over all disjoint collections of intervals of $\N$, is finite. The spaces $J_p$ are quasi-reflexive of order one. It was for observed by A. D. Andrew and W. L. Green in \cite{AG} that $J_p$, after appropriate renorming, becomes a non-unital Banach algebra when endowed with coordinate-wise multiplication with respect to the basis $e_1$, $e_2-e_1$, $e_3-e_2,\ldots$. We denote the unitization of James space by $\R e_\omega\oplus J_p$, for $1<p<\infty$.

\begin{thmA}
\label{classical james banach algebra}
The spaces $\R e_\omega\oplus J_p$, $1<p<\infty$ are Calkin algebras.
\end{thmA}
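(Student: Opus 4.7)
The plan is to apply Theorem~\ref{main theorem intro} to a suitable space. By that theorem, it suffices to exhibit, for each $1<p<\infty$, a Banach space $X_p$ with a Schauder basis such that the unital Banach algebra $\R I\oplus\mathcal{K}_\mathrm{diag}(X_p)$ is isomorphic as a Banach algebra to $\R e_\omega\oplus J_p$; applying Theorem~\ref{main theorem intro} to $X_p$ then delivers a space $\mathcal{Y}_{X_p}$ whose Calkin algebra realises $\R e_\omega\oplus J_p$.

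To construct $X_p$ I would invoke the Argyros--Deliyanni--Tolias theorem of \cite{ADT}, which, in special cases, gives an explicit expression for the norm of a compact diagonal operator on a Banach space with a basis in terms of a norming subset of its dual. The strategy is to reverse-engineer this description: starting from the James norm
\[
\left\|(\la_i)_i\right\|_{J_p}^p = \sup_{(E_k)_k} \sum_k \Bigl|\sum_{i\in E_k}\la_i\Bigr|^p,
\]
one designs a norming set for $X_p^*$, built from weighted sums of coordinate functionals supported on disjoint intervals of $\N$, so that the ADT duality returns precisely this expression (or an equivalent variant adapted to the Andrew--Green difference basis $e_1, e_2-e_1, e_3-e_2,\ldots$) as the operator norm of the diagonal operator with coefficient sequence $(\la_i)_i$ on $X_p$. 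Since composition of diagonal operators is by definition coordinate-wise multiplication of the coefficient sequences, and since the Andrew--Green multiplication on $J_p$ in the difference basis is also coordinate-wise, the resulting Banach-space identification $\mathcal{K}_\mathrm{diag}(X_p)\cong J_p$ is automatically a Banach-algebra isomorphism. Mapping the identity operator on $X_p$ to the adjoined unit $e_\omega$ extends this to an isomorphism of unital Banach algebras $\R I\oplus\mathcal{K}_\mathrm{diag}(X_p)\cong\R e_\omega\oplus J_p$.

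The principal obstacle, in my view, is purely this calibration step: designing the norming set of $X_p^*$ so that the ADT theorem produces precisely the James $J_p$-norm (rather than an inequivalent variant) on diagonal coefficients, and verifying that the identification remains valid on compact—not merely bounded—diagonal operators. Once that has been carried out, the remaining steps, namely passing from the Banach-space identification to the Banach-algebra identification and adjoining units, are formal, and the theorem follows as a direct consequence of Theorem~\ref{main theorem intro}.
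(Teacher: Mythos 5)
Your proposal is correct and follows the paper's route exactly: apply Theorem~\ref{main theorem intro} to a base space $X_p$ for which $\R I\oplus\mathcal{K}_\mathrm{diag}(X_p)$ identifies as a Banach algebra with $\R e_\omega\oplus J_p$, the identification coming from the ADT theorem (Theorem~\ref{ADT main thm}) applied to a suitably chosen basis. The calibration step you flag as the main obstacle is carried out in the paper by taking $X_p = \mathcal{J}_*(\ell_p) = J_p^*$ with the basis $(v_i)_i$ of partial-sum remainders from Definition~\ref{define scriptjstar}, verifying the ADT hypotheses (monotonicity, domination of the summing basis, submultiplicativity of the dual) in Proposition~\ref{basic properties scriptjstar}, and concluding in Proposition~\ref{james spaces as diagonal operators}(ii) and Remark~\ref{james spaces are Calkin}.
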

S. F. Bellenot, R. G. Haydon, and E. Odell in \cite{BHO} extended the definition of James, based on the unit vector basis of $\ell_p$, to an arbitrary space $X$ with an unconditional basis to define the space $J(X)$, the ``jamesification'' of $X$. The space $J(X)$ is quasi-reflexive of order one whenever $X$ is reflexive. As it so happens this space is a non-unital Banach algebra as well the unitization of which we denote by $\R e_\omega\oplus J(X)$. Furthermore, a special subspace of $J^*(X)$ which we denote by $\mathcal{J}_*(X)$, and coincides with $J(X)^*$ when $X$ does not contain $\ell_1$,  is a separable unital Banach algebra.

\begin{thmA}
\label{all james banach algebra}
For any space $X$ with a normalized unconditional basis the spaces $\R e_\omega\oplus J(X)$ and $\mathcal{J}_*(X)$ are Calkin algebras.
\end{thmA}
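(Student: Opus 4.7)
The strategy is to reduce Theorem~IV to Theorem~\ref{main theorem intro} by producing two Banach spaces $Z_1$ and $Z_2$, each with a Schauder basis, such that
\[
\R I \oplus \mathcal{K}_{\mathrm{diag}}(Z_1) \cong \R e_\omega \oplus J(X)
\quad\text{and}\quad
\R I \oplus \mathcal{K}_{\mathrm{diag}}(Z_2) \cong \mathcal{J}_*(X)
\]
as Banach algebras. Once these identifications are in place, two applications of Theorem~\ref{main theorem intro} (with $X$ replaced by $Z_1$ and by $Z_2$) produce Banach spaces $\mathcal{Y}_{Z_1}$ and $\mathcal{Y}_{Z_2}$ whose Calkin algebras realize the two target Banach algebras.

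The construction of $Z_1$ and $Z_2$ is where the theorem from \cite{ADT} enters. That result provides, for certain classes of Banach spaces whose bases are built by mixed Tsirelson/James-type norming functionals, an explicit description of the algebra of compact diagonal operators in terms of the dual space. The plan is to choose $Z_1$ to be a jamesification-type space tailored to $X$, so that the ADT description identifies $\mathcal{K}_{\mathrm{diag}}(Z_1)$ with $J(X)$ equipped with the coordinatewise multiplication of \cite{AG} in the difference basis $e_1, e_2-e_1, e_3-e_2,\ldots$. Unitizing yields the first isomorphism above. For the second, $Z_2$ should be chosen (again via the ADT recipe) so that the compact diagonals together with the scalar multiples of the identity reproduce the unital algebra $\mathcal{J}_*(X)$ as a subalgebra of $J(X)^*$. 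The unconditionality of the basis of $X$ is used in both cases to guarantee that coordinatewise multiplication is well-defined and bounded, with an equivalent norm.

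For the algebra structure, the composition of two diagonal operators with respect to a common basis is itself diagonal, with coordinatewise-multiplied symbol; hence any linear identification between $\mathcal{K}_{\mathrm{diag}}(Z_i)$ and an algebra of sequences with coordinatewise multiplication is automatically multiplicative, so once norm-equivalence is verified, it is a Banach algebra isomorphism. The main obstacle is the selection and verification of $Z_1$ and $Z_2$: one must choose the saturation method defining each space with sufficient care that (i) every compact diagonal operator corresponds to an element of the target algebra with comparable norm, and (ii) no bounded diagonal operator outside $\R I + \mathcal{K}(Z_i)$ exists, so that $\R I \oplus \mathcal{K}_{\mathrm{diag}}(Z_i)$ captures the entire target. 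Property (ii) is where the quasi-reflexive, James-style nature of the construction must be exploited, via an argument showing that the summing functional cannot arise as the symbol of a bounded diagonal operator except as the identity.
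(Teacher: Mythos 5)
Your overall reduction is exactly the paper's: realize each target algebra as $\R I\oplus\mathcal{K}_{\mathrm{diag}}(Z)$ for a suitably chosen $Z$ with a Schauder basis, verify the hypotheses of the ADT theorem, then invoke Theorem~\ref{main theorem intro}. The concrete choices in the paper are $Z_2 = J(X)$ with its unit vector basis $(e_i)_i$, giving $\R I\oplus\mathcal{K}_{\mathrm{diag}}(J(X))\cong\mathcal{J}_*(X)$, and $Z_1 = \mathcal{J}_*(X)$ with its basis $(v_i)_i$, giving $\R I\oplus\mathcal{K}_{\mathrm{diag}}(\mathcal{J}_*(X))\cong\R e_\omega\oplus J(X)$; so what you call ``a jamesification-type space tailored to $X$'' is really the natural predual $\mathcal{J}_*(X) = \R s\oplus[(e_i^*)_i]\subset J(X)^*$, not a jamesification of $X$ itself. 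The substantive work you postpone --- verifying submultiplicativity of the dual basis and domination of the summing basis --- is what Propositions~\ref{Jstar is submultiplicative} and~\ref{basic properties scriptjstar} accomplish, via explicit norming-set manipulations that crucially use the $1$-unconditionality of $(x_i)_i$.

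One part of your obstacle analysis is a genuine misdirection. Your ``property (ii)'' --- that no bounded diagonal operator outside $\R I + \mathcal{K}(Z_i)$ should exist --- is neither needed nor how the argument goes. Theorem~\ref{main theorem intro} applies to any space $Z$ with a Schauder basis and produces a $\mathcal{Y}_Z$ whose Calkin algebra is $\R I\oplus\mathcal{K}_{\mathrm{diag}}(Z)$; nothing about $\mathcal{L}_{\mathrm{diag}}(Z)$ beyond this subalgebra matters. What must be checked instead is that under the ADT isomorphism $\mathcal{L}_{\mathrm{diag}}(Z_i)\cong Z_i^*$ the image of $\R I\oplus\mathcal{K}_{\mathrm{diag}}(Z_i)$ is precisely the target: for $Z_2 = J(X)$ the compact diagonals land on $[(e_i^*)_i]$ and $I\mapsto s$, giving $\mathcal{J}_*(X)$; for $Z_1 = \mathcal{J}_*(X)$ the compact diagonals land on $[(v_i^*)_i]\cong J(X)$ (Proposition~\ref{basic properties scriptjstar}(iii)) and $I\mapsto e_\omega$. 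No quasi-reflexivity of $J(X)$ is invoked anywhere in this reduction, and indeed the result holds even when $\ell_1$ embeds in $X$, in which case $\mathcal{J}_*(X)$ is a proper subspace of $J(X)^*$.
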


Next we turn our attention to spaces with unconditional bases endowed with coordinate-wise multiplication. We study these spaces themselves as Banach algebras. We do not prove that they are Calkin algebras but they always embed in them as complemented ideals. In what follows ``$\oplus$'' denotes the direct sum of Banach spaces.

\begin{thmA}
\label{ideals with unconditiional bases}
Let $\mathcal{A}$ be Banach space with a normalized unconditional basis endowed with coordinate-wise multiplication and $\mathcal{B}$ be a Banach algebra. In the cases described in the list bellow there exists a Banach space $\mathcal{Y}$ so that the Calkin algebra $\mathcal{C}al(\mathcal{Y})$ contains an ideal $\tilde{\mathcal{A}}$ isomorphic as a Banach algebra to $\mathcal{A}$ and a subalgebra $\tilde{\mathcal{B}}$ isomorphic as a Banach algebra to $\mathcal{B}$ so that $\mathcal{C}al(\mathcal{Y}) = \tilde{\mathcal{A}}\oplus \tilde{\mathcal{B}}$.
\begin{itemize}

\item[(a)]  The space $\mathcal{B}$ is $C(\omega)$, the space of convergent scalar sequences with pointwise multiplication.

\item[(b)] The space $\mathcal{B}$ is $\mathrm{bv}_1$, the space of all scalar sequences of bounded variation with pointwise multiplication.

\item[(c)] The space $\mathcal{A}$ does not contain an isomorphic copy of $c_0$ and $\mathcal{B}$ is the space $J(T_{\mathcal{M}}^{1/2})^*$ where $T_{\mathcal{M}}^{1/2}$ is the Tsirelson space over an appropriate regular family $\mathcal{M}$.

\item[(d)] The space $\mathcal{A}$ does not contain an isomorphic copy of $\ell_1$ and $\mathcal{B}$ is the space $\mathbb{R}e_\omega\oplus J(T_{\mathcal{M}}^{1/2})$ where $T_{\mathcal{M}}^{1/2}$ is the Tsirelson space over an appropriate regular family $\mathcal{M}$.

\end{itemize}
\end{thmA}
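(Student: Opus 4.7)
The plan is to invoke Theorem~\ref{main theorem intro} as a black box, reducing, in each of the four cases, to the construction of a Banach space $X$ with a Schauder basis whose algebra $\R I\oplus\mathcal{K}_\mathrm{diag}(X)$ already carries the prescribed decomposition as a Banach algebra. The space $\mathcal{Y}_X$ furnished by the main theorem then has Calkin algebra Banach-algebra isomorphic to that decomposed algebra, and the splitting transfers to $\mathcal{C}al(\mathcal{Y}_X)$.

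I would take $X=X_{\mathcal{A}}\oplus X_{\mathcal{B}}$, splicing the two individual Schauder bases into one interleaved Schauder basis of $X$. With respect to this basis every bounded diagonal operator on $X$ decouples into a pair of diagonal operators, one on each summand, with all cross products vanishing. This yields a Banach-algebra identification
\[
\R I\oplus\mathcal{K}_\mathrm{diag}(X) \;=\; \mathcal{K}_\mathrm{diag}(X_{\mathcal{A}})\;\oplus\;\bigl(\R I\oplus\mathcal{K}_\mathrm{diag}(X_{\mathcal{B}})\bigr),
\]
in which the first summand is a closed ideal and the second a unital subalgebra, as a short distributive-law computation confirms. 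The theorem then reduces to arranging $\mathcal{K}_\mathrm{diag}(X_{\mathcal{A}})\cong\mathcal{A}$ and $\R I\oplus\mathcal{K}_\mathrm{diag}(X_{\mathcal{B}})\cong\mathcal{B}$ as Banach algebras, with $\tilde{\mathcal{A}}=\mathcal{K}_\mathrm{diag}(X_{\mathcal{A}})$ and $\tilde{\mathcal{B}}=\R I\oplus\mathcal{K}_\mathrm{diag}(X_{\mathcal{B}})$.

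The choice of $X_{\mathcal{B}}$ is dictated by earlier results and is the easier half. For (a) I would take $X_{\mathcal{B}}=c_0$ with its unit vector basis, so $\R I\oplus\mathcal{K}_\mathrm{diag}(X_{\mathcal{B}})\cong C(\omega)$. For (b) a summing-type variant of the same construction whose compact diagonal algebra is the $\mathrm{bv}_0$-like sequences produces $\mathrm{bv}_1$ after adjoining the identity. For (c) and (d) I would use as $X_{\mathcal{B}}$ the James-type space underlying Theorem~\ref{all james banach algebra} applied to $T_{\mathcal{M}}^{1/2}$ for a suitable regular family $\mathcal{M}$, so that $\R I\oplus\mathcal{K}_\mathrm{diag}(X_{\mathcal{B}})$ is isomorphic as a Banach algebra to $\mathcal{J}_*(T_{\mathcal{M}}^{1/2})=J(T_{\mathcal{M}}^{1/2})^*$ in case~(c) and to $\R e_\omega\oplus J(T_{\mathcal{M}}^{1/2})$ in case~(d).

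The main technical step, and the expected principal obstacle, is the construction of $X_{\mathcal{A}}$ so that $\mathcal{K}_\mathrm{diag}(X_{\mathcal{A}})\cong\mathcal{A}$ as a Banach algebra under coordinate-wise multiplication, for an arbitrary $\mathcal{A}$ with a normalised unconditional basis. The naive choice $X_{\mathcal{A}}=\mathcal{A}$ would collapse the compact diagonal algebra to $c_0$, so $X_{\mathcal{A}}$ must be built with a deliberately non-unconditional basis. I would construct $X_{\mathcal{A}}$ by a Jamesification on a predual-like version of $\mathcal{A}$, and then use the theorem of Argyros--Deliyanni--Tolias from~\cite{ADT} to identify the operator norm of a diagonal operator on $X_{\mathcal{A}}$ with the $\mathcal{A}$-norm of its diagonal coordinates. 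The dichotomy in the hypotheses of~(c) and~(d) is exactly what this identification needs: the absence of $c_0$ forces the predual-side construction to be boundedly complete, which on the $\mathcal{B}$ side yields the dual algebra $J(T_{\mathcal{M}}^{1/2})^*$, while the absence of $\ell_1$ makes the underlying basis shrinking, ensuring that the identified diagonal algebra is literally $\mathcal{A}$ rather than a codimension-one extension of it. Once $X_{\mathcal{A}}$ and $X_{\mathcal{B}}$ are constructed, interleaving their bases and applying Theorem~\ref{main theorem intro} finishes the argument.
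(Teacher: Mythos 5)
Your architecture is wrong at its foundation, and the paper itself explains why. You propose to take $X=X_{\mathcal A}\oplus X_{\mathcal B}$ with interleaved bases and then reduce to arranging $\mathcal K_{\mathrm{diag}}(X_{\mathcal A})\cong\mathcal A$ as a Banach algebra. But $\mathcal K_{\mathrm{diag}}(X_{\mathcal A})$ is a non-unital algebra and $I$ acts on it as an identity, so a Banach-algebra isomorphism $\mathcal K_{\mathrm{diag}}(X_{\mathcal A})\cong\mathcal A$ automatically extends to an isomorphism $\R I\oplus\mathcal K_{\mathrm{diag}}(X_{\mathcal A})\cong\R e\oplus\mathcal A$ of the unitizations. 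Remark~\ref{cant show this way for normal pointwise} shows that whenever the unitization of a Banach space $\mathcal A$ with an unconditional basis is Banach-algebra isomorphic to some $\R I\oplus\mathcal K_{\mathrm{diag}}(Y)$, the basis of $\mathcal A$ must be equivalent to the $c_0$ basis: surjectivity forces each $Tx_k$ to be a single rank-one projection $e_{\phi(k)}^*\otimes e_{\phi(k)}$, and then boundedness of the partial sums $\sum_{k\le n}e_{\phi(k)}^*\otimes e_{\phi(k)}$ together with unconditionality gives $c_0$. So the intermediate object $X_{\mathcal A}$ you are trying to build cannot exist unless $\mathcal A\cong c_0$, and your proposed reduction collapses for every non-trivial $\mathcal A$. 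This is not a fixable detail; you have, in effect, reduced the theorem to the open Problem~\ref{unconditional pointwise unitization}.

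The paper's construction avoids this trap precisely by \emph{not} decoupling the two halves. In Propositions~\ref{all unconditional containing c0}, \ref{all unconditional containing ell1}, \ref{unconditional withtout c0 plus jamesification of tsirelson} and \ref{unconditional withtout ell1 plus jamesification of tsirelson}, one works with a single norm on $c_{00}(\N)$ defined as the maximum of the $\mathcal A$-norm read on (say) even coordinates and a James/summing-type norm read on all coordinates (cf.\ \eqref{sum with summing} and \eqref{definition norm of this space for reflexive and tsirelson}). The James/summing part supplies the submultiplicativity and bounded initial sums needed for Corollary~\ref{same space banach algebra}; the difference basis $(d_i)_i$ of the resulting space $W$ then has $\R I\oplus\mathcal K_{\mathrm{diag}}(Y)\cong\R e_\omega\oplus W$, and $\mathcal A$ emerges only as the complemented ideal $[(d_{2i})_i]$ inside $W$, never as a $\mathcal K_{\mathrm{diag}}$ of anything on its own. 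For cases (c) and (d) a further ingredient you have no analogue of is essential: Proposition~\ref{shrinking dominated by tsirelson} produces the regular family $\mathcal M$ (depending on the Szlenk index of $\mathcal A$ or of its predual) so that the basis of $\mathcal A$ satisfies subsequential $T_{\mathcal M}^{1/2}$ estimates, and this domination is exactly what makes the even part equivalent to $\mathcal A$ and the odd part equivalent to $J(T_{\mathcal M}^{1/2})^*$ (or its predual) in Proposition~\ref{doubling the space gives something clean}. In short: your plan to feed Theorem~\ref{main theorem intro} a suitable $X$ is the right outer shell, but the inner construction must mix the two halves through a max-of-norms and a Tsirelson comparison, not split them.
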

In statement (c) the complexity, i.e. the Cantor-Bendixson index, of $\mathcal{M}$ depends on the Szlenk index of the natural predual of $\mathcal{A}$ and in statement (d) it depends on the Szlenk index of $\mathcal{A}$. Of course, the spaces $C(\omega)$ and $\mathrm{bv}_1$ are isomorphic as Banach spaces to $c_0$ and to $\ell_1$ respectively. By a well known theorem of James any non-reflexive Banach space $X$ with an unconditional basis is either isomorphic to $X\oplus c_0$ or to $X\oplus\ell_1$. Consequently statements (a) and (b) yield something interesting.

\begin{thmA}
\label{all non-reflexive unconditional}
Every non-reflexive Banach space $X$ with a normalized unconditional basis is isomorphic as a Banach space to a Calkin algebra (that contains a complemented ideal isomorphic as a Banach algebra to $X$ endowed with coordinate-wise multiplication).
\end{thmA}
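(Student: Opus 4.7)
The plan is to derive this theorem as an immediate corollary of Theorem \ref{ideals with unconditiional bases} combined with the classical theorem of James characterizing reflexivity of spaces with unconditional bases. Recall that James' theorem asserts that a Banach space with an unconditional basis is reflexive if and only if it contains neither an isomorphic copy of $c_0$ nor an isomorphic copy of $\ell_1$. Since the basis of $X$ is unconditional, any such embedded copy is automatically complemented, and a standard Pe\l czy\'nski decomposition argument then yields that a non-reflexive $X$ with unconditional basis satisfies either $X \cong X \oplus c_0$ or $X \cong X \oplus \ell_1$ as a Banach space.

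The main step is to branch on this dichotomy and invoke the correct case of Theorem \ref{ideals with unconditiional bases}. If $X \cong X \oplus c_0$, I would apply statement (a) with $\mathcal{A} = X$ (endowed with coordinate-wise multiplication with respect to its unconditional basis) and $\mathcal{B} = C(\omega)$. This produces a Banach space $\mathcal{Y}$ whose Calkin algebra decomposes as $\mathcal{C}al(\mathcal{Y}) = \tilde{\mathcal{A}} \oplus \tilde{\mathcal{B}}$, where $\tilde{\mathcal{A}}$ is a complemented ideal isomorphic as a Banach algebra to $X$. Using that $C(\omega) \cong c_0$ as a Banach space, I conclude
\[
\mathcal{C}al(\mathcal{Y}) \cong X \oplus c_0 \cong X.
\]
If instead $X \cong X \oplus \ell_1$, I would apply statement (b) with the same $\mathcal{A}$ and with $\mathcal{B} = \mathrm{bv}_1$; since $\mathrm{bv}_1 \cong \ell_1$ as a Banach space, the same reasoning gives
\[
\mathcal{C}al(\mathcal{Y}) \cong X \oplus \ell_1 \cong X.
\]
In either case $\tilde{\mathcal{A}}$ supplies the complemented ideal promised in the parenthetical part of the statement.

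Because all the deep construction has been absorbed into Theorem \ref{ideals with unconditiional bases}, there is no substantial analytic obstacle here; the argument is essentially a bookkeeping exercise that matches James' dichotomy with the two available options for $\mathcal{B}$. The only point I would take care with is to note that the isomorphisms $C(\omega) \cong c_0$ and $\mathrm{bv}_1 \cong \ell_1$ are required only at the level of Banach spaces, not of Banach algebras; this is precisely what the conclusion of Theorem \ref{all non-reflexive unconditional} asks for, and it is exactly why the choices $\mathcal{B} = C(\omega)$ and $\mathcal{B} = \mathrm{bv}_1$ (rather than $c_0$ and $\ell_1$ themselves, which need not appear as complemented sub\emph{algebras} of the relevant Calkin algebras) are the right ones to feed into Theorem \ref{ideals with unconditiional bases}.
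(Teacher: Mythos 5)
Your proposal is correct and is essentially the paper's argument: the paper derives this theorem from statements (a) and (b) of Theorem \ref{ideals with unconditiional bases} via exactly the observations you make, namely the James dichotomy $X \cong X\oplus c_0$ or $X\cong X\oplus\ell_1$ for non-reflexive $X$ with unconditional basis, and the Banach-space identifications $C(\omega)\cong c_0$, $\mathrm{bv}_1\cong\ell_1$. The only minor imprecision is the claim that an embedded copy of $c_0$ or $\ell_1$ is ``automatically'' complemented merely from unconditionality of the ambient basis; the standard route is to pass to a block sequence equivalent to the unit vector basis of $c_0$ or $\ell_1$ (which exists by James) and use that such block subspaces are complemented, but this is a well-known fact the paper also leaves uncited, so it does not affect the correctness of your argument.
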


For reflexive Banach spaces we do not obtain the same result however since the space $J(T_{\mathcal{M}}^{1/2})$ is quasi-reflexive of order one from statement (c) or (d) we may deduce the following. In the statement that follows multiplication is also coordinate-wise with respect to the given unconditional basis.

\begin{thmA}
\label{all reflexive ideals in quasireflexive}
Every reflexive Banach space with a normalized unconditional basis is isomorphic as a Banach algebra to a complemented ideal of a separable quasi-reflexive Calkin algebra.
\end{thmA}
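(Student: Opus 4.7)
The plan is to apply Theorem~\ref{ideals with unconditiional bases}(c) directly. Let $\mathcal{A}$ denote the given reflexive Banach space with normalized unconditional basis, endowed with coordinate-wise multiplication. Reflexivity of $\mathcal{A}$ precludes the presence of an isomorphic copy of $c_0$, so the hypothesis of (c) is met. Applying the theorem with $\mathcal{B} = J(T_{\mathcal{M}}^{1/2})^*$ for the appropriate regular family $\mathcal{M}$ (whose Cantor--Bendixson index is determined by the Szlenk index of the natural predual of $\mathcal{A}$), I obtain a Banach space $\mathcal{Y}$ such that
\[
\mathcal{C}al(\mathcal{Y}) = \tilde{\mathcal{A}} \oplus \tilde{\mathcal{B}},
\]
where $\tilde{\mathcal{A}}$ is a complemented ideal isomorphic as a Banach algebra to $\mathcal{A}$ and $\tilde{\mathcal{B}}$ is isomorphic as a Banach algebra to $J(T_{\mathcal{M}}^{1/2})^*$. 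This already delivers the complemented-ideal half of the conclusion; everything that remains is a verification that the ambient Calkin algebra has the two structural properties demanded by the statement.

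Next I would verify that $\mathcal{C}al(\mathcal{Y})$ is quasi-reflexive of order one. Reflexivity of $\mathcal{A}$ transfers to $\tilde{\mathcal{A}}$. The Tsirelson-type space $T_{\mathcal{M}}^{1/2}$ is reflexive, as is standard for Tsirelson-type constructions with parameter $1/2$ over a regular compact family; consequently its jamesification $J(T_{\mathcal{M}}^{1/2})$ is quasi-reflexive of order one by the Bellenot--Haydon--Odell property recalled in the introduction just before Theorem~\ref{all james banach algebra}. Since quasi-reflexivity of order one is preserved under duality, $\tilde{\mathcal{B}} \cong J(T_{\mathcal{M}}^{1/2})^*$ is also quasi-reflexive of order one. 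The direct sum of a reflexive space and a space quasi-reflexive of order one is itself quasi-reflexive of order one, as
\[
\bigl(\tilde{\mathcal{A}} \oplus \tilde{\mathcal{B}}\bigr)^{**}\big/\bigl(\tilde{\mathcal{A}} \oplus \tilde{\mathcal{B}}\bigr) \cong \tilde{\mathcal{A}}^{**}/\tilde{\mathcal{A}} \,\oplus\, \tilde{\mathcal{B}}^{**}/\tilde{\mathcal{B}} \cong \{0\}\oplus \R,
\]
a one-dimensional quotient.

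Separability is automatic: $\mathcal{A}$ has a Schauder basis and is therefore separable, while $J(T_{\mathcal{M}}^{1/2})^*$ coincides with the separable space $\mathcal{J}_*(T_{\mathcal{M}}^{1/2})$ mentioned before Theorem~\ref{all james banach algebra}, since $T_{\mathcal{M}}^{1/2}$, being reflexive, contains no copy of $\ell_1$. Combining, $\mathcal{C}al(\mathcal{Y})$ is a separable, quasi-reflexive Calkin algebra containing $\tilde{\mathcal{A}} \cong \mathcal{A}$ as a complemented ideal, which is the statement to be proved. No serious obstacle arises, as the result is essentially a transfer of properties through Theorem~\ref{ideals with unconditiional bases}(c); the only points requiring care are the elementary stability of quasi-reflexivity of order one under direct summation with reflexive spaces and under passage to duals, together with the observation that reflexivity of $\mathcal{A}$ simultaneously rules out $c_0$-copies (activating (c)) and guarantees the separability of the adjoined piece $\mathcal{J}_*(T_{\mathcal{M}}^{1/2})$.
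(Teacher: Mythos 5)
Your proposal is correct and follows exactly the route the paper itself takes (the paper's "proof" is the sentence preceding the theorem statement, which cites Theorem~\ref{ideals with unconditiional bases}(c) or (d) and uses that $J(T_{\mathcal{M}}^{1/2})$ is quasi-reflexive of order one because $T_{\mathcal{M}}^{1/2}$ is reflexive). You simply make explicit the supporting facts — reflexivity excludes $c_0$, quasi-reflexivity of order $n$ passes to duals and is preserved when summing with reflexive spaces, and $J(T_{\mathcal{M}}^{1/2})^*$ coincides with the separable space $\mathcal{J}_*(T_{\mathcal{M}}^{1/2})$ since $\ell_1$ does not embed into the reflexive space $T_{\mathcal{M}}^{1/2}$ — all of which the authors leave to the reader.
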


It is worth mentioning that in certain cases, e.g. when the space $X$ is super-reflexive, in statements (c) and (d) of Theorem \ref{ideals with unconditiional bases} the space $J(T_\mathcal{M}^{1/2})$ may be replaced with the space $J_p$ for appropriate $1<p<\infty$. Going back to our initial question of the existence of quasi-reflexive Calkin algebras we also observe that these exist for any finite order. That is, for any $n\in\N$ there exists a Calkin algebra that is quasi-reflexive of order $n$. There are a few more examples that are mentioned throughout Section \ref{long section diagonal algebras}.

The technically most challenging part of this paper is the proof of Theorem \ref{main theorem intro}, that is, given a Banach space $X$ with a Schauder basis the construction of a space $\mathcal{Y}_X$ the Calkin algebra of which is isomorphic as a Banach algebra to $\R I\oplus\mathcal{K}_\mathrm{diag}(X)$. The definition of the space $\mathcal{Y}_X$ is based on a method of Zisimopoulou from \cite{Z} for defining direct sums $\mathcal{Z} = (\sum_k\oplus X_k)_\mathrm{AH}$ of a sequence of Banach spaces $(X_k)_k$ where the outside norm is in turn based on the Argyros-Haydon space from \cite{AH}. The main feature of the construction from \cite{Z} is that, under certain assumptions, every bounded linear operator $T:\mathcal{Z}\to\mathcal{Z}$ is a scalar operator plus an operator that vanishes on block sequences, namely a horizontally compact operator. A variation of this method was iterated transfinitely in \cite{MPZ} to show that for a countable compactum $K$ the space $C(K)$ is a Calkin algebra. In this paper we define an $X$-Bourgain-Delbaen-Argyros-Haydon direct sum $\mathcal{Y}_X = (\sum_k\oplus X_k)_\mathrm{AH}^X$ of a sequence of Argyros-Haydon Banach spaces $(X_k)_k$. This direct sum is designed so that the space $X$ is  crudely finitely representalbe in an appropriate block way. This is performed in such a manner so that the diagonal operators on $X$ can be viewed as compact perturbations of diagonal operators with respect to the decomposition  $(X_k)_k$ of $\mathcal{Y}_X$. The result is that the space $\R I\oplus\mathcal{K}_\mathrm{diag}(X)$ embeds into $\mathcal{C}al(\mathcal{Y}_X)$. The proof that this embedding is onto goes through all the delicate intricacies of the Argyros-Haydon construction, suitably modified to the context of the present setting. The main difference is that so called rapidly increasing block sequences have to be defined so as to take into consideration the ``$X$-part'' of the direct sum $(\sum_k\oplus X_k)_\mathrm{AH}^X$. In the end, the main theorem about the operators on the space is that they are of the form
\begin{equation}
\label{equation intro}
\tag{$\mathfrak{a}$}T = a_0I+\lim_n\Big(\sum_{k=1}^na_kI_k + K_n\Big)
\end{equation}
where the $I_k$ are projections onto the spaces $X_k$ and $K_n$ are compact operators. The definition of the space $\mathcal{Y}_X$ is presented comprehensively and most proofs are explained thoroughly. We have chosen to leave out a small number of details that are nearly exactly identical to proofs from other papers, for which we provide references.

The paper can be viewed as being divided into two main parts. The first part consists of Section \ref{long section diagonal algebras}. In this section the basics around the space $\R I\oplus \mathcal{K}_\mathrm{diag}(X)$ are discussed. Several examples of these spaces are presented, namely those mentioned in the introduction and a few others. As a result of Theorem \ref{main theorem intro} they are all Calkin algebras. Several results concerning James spaces and Tsirelson spaces are proved and utilized. To the most part the tools used are relatively elementary and a reading of this section should not be too challenging to the reader who is familiar with classical Banach space theory. With the exception of concluding Section \ref{section remarks and problems} the remaining Sections \ref{easy space}-\ref{Diagonal plus compact approximations} are focused on defining the space $\mathcal{Y}_X$ and proving the necessary properties to achieve the desired conclusion about its Calkin algebra. Section \ref{easy space} concerns the definition of a ``first stage'' $\mathcal{X}$ of the final direct sum $\mathcal{Y}_X = (\sum_k\oplus X_k)_\mathrm{AH}^X$ that does not involve the Bourgain-Delbaen-Argyros-Haydon part. Although $X$ is finitely block represented in $\mathcal{X}$ every normalized block sequence has a subsequence that is equivalent to the unit vector basis of $c_0$. We state most properties of $\mathcal{X}$ without proof because we don't evoke them directly. Section \ref{definition space section} is devoted to precisely defining the space $\mathcal{Y}_X = (\sum_k\oplus X_k)_\mathrm{AH}^X$ and determining its most fundamental properties. In Section \ref{section calkin algebra proof} we prove that the Calkin algebra of $\mathcal{Y}_X$ is $\R I\oplus\mathcal{K}_\mathrm{diag}(X)$. This is done by assuming that \eqref{equation intro} holds, the proof of which is the  objective of Sections \ref{section ris}-\ref{Diagonal plus compact approximations}. In Section  \ref{section ris} rapidly increasing sequences (RIS) are defined. Here, RIS have the additional property that the $X$ part in the $X$-Bourgain-Delbaen-Argyros-Haydon sum completely vanishes on them allowing them to be treated in the same manner as in \cite{AH} and \cite{Z}. In the same section the basic inequality is proved and it is shown that operators that vanish on RIS are horizontally compact. In Section \ref{section s-p-hc} it is proved that bounded operators on the space are scalars multiples of the idenity plus a horizontally compact operator and in Section \ref{Diagonal plus compact approximations} \eqref{equation intro} is finally proved. Section \ref{section remarks and problems} contains several remarks and open problems.

\section{The spaces $\mathcal{L}_\mathrm{diag}(X)$ and $\R I\oplus\mathcal{K}_\mathrm{diag}(X)$}
\label{long section diagonal algebras}
A sequence $(e_i)_i$ in a Banach space $X$ is called a Schauder basis of $X$ if every element can be represented uniquely as $x = \sum_{i=1}^\infty a_ie_i$ where the convergence is in the norm topology. Then, the natural projections $P_n(\sum_{i=1}^\infty a_ie_i) = \sum_{i=1}^na_ie_i$, $n\in\N$, are uniformly bounded by some $C\geq 1$ which is called the monotone constant of $(e_i)_i$. The sequence $(e_i^*)_i$ in $X^*$ defined by $e_i^*(e_j) = \de_{i,j}$ is called the biorthogonal sequence of $(e_i)_i$. For each $x\in X$ we define the support of $x$ to be the subset of $\N$, $\supp(x) = \{i: e_i^*(x)\neq 0\}$ and the range of $x$ to be the smallest interval of $\N$ containing the support of $x$. A sequence $(x_k)_k$ in $X$ is called a block sequence if the supports of the corresponding vectors are successive subsets of $\N$.

Given a Banach space $X$ with a Schauder basis $(e_i)_i$ a bounded linear operator $T:X\to X$ is called diagonal if for every $i\neq j\in\N$ we have $e_j^*(Te_i) = 0$. We denote the subspace of $\mathcal{L}(X)$ consisting of all diagonal operators by $\mathcal{L}_\mathrm{diag}(X)$. The simplest diagonal operator is the identity $I$. The ``building blocks of diagonal operators'' may be considered the operators $(e_i\otimes e_i)_i$ defined by $e_i^*\otimes e_i(x) = e_i^*(x)e_i$. Every diagonal operator $T$ then can be written as
\begin{equation}
\label{sot representation of diagonal operators}
T = \mathrm{SOT}-\sum_{i=1}^\infty e_i^*(Te_i)e_i^*\otimes e_i.
\end{equation}
We denote the space of diagonal operators that are simultaneously compact by $\mathcal{K}_\mathrm{diag}(X)$. That is, $\mathcal{K}_\mathrm{diag}(X) = \mathcal{L}_\mathrm{diag}(X)\cap\mathcal{K}(X)$. It is straightforward to check that a diagonal operator $T$ is compact if and only if the convergence of the sum in \eqref{sot representation of diagonal operators} is in the norm topology. It follows that $(e_i^*\otimes e_i)_i$ is a Schauder basis for $\mathcal{K}_\mathrm{diag}(X)$. It is worth pointing out that $\mathcal{L}_\mathrm{diag}(X)$ is isomorphic to the dual of the space $V$ spanned by the biorthogonal  sequence of $(e_i^*\otimes e_i)_i$ in $(\mathcal{K}_\mathrm{diag}(X))^*$.

We are particularly interested in the subspace of $\mathcal{L}_\mathrm{diag}(X)$ consisting of all operators of the form $T = \la I + K$, with $K\in\mathcal{K}_\mathrm{diag}(X)$. We naturally denote this space by $\R I\oplus \mathcal{K}_\mathrm{diag}(X)$. This space endowed with operator composition is a commutative unital Banach algebra. An operator $T$ in $\mathcal{L}_\mathrm{diag}(X)$ is in $\R I\oplus \mathcal{K}_\mathrm{diag}(X)$ if and only if
\begin{equation}
\label{scalar plus compact diagonal characterization}
\lim_m\lim_n\left\|\sum_{i=m}^n\left(e_i^*(Te_i) - e_m^*(Te_m)\right)e_i^*\otimes e_i\right\| = 0.
\end{equation}

It is a well known and easy to prove fact that if $(e_i)_i$ is unconditional then $\mathcal{L}_\mathrm{diag}(X)$ is naturally isomorphic to $\ell_\infty$. Then, $\mathcal{K}_\mathrm{diag}(X)$ is naturally isomorphic to $c_0$ (endowed with the unit vector basis) and $\R I\oplus \mathcal{K}_\mathrm{diag}(X)$ is naturally isomorphic to $c$, the space of convergent real sequences. As we will see in several examples if the basis is not unconditional then more interesting things may occur.

\subsection{Ideals of $\R I\oplus\mathcal{K}_\mathrm{diag}(X)$}
We observe that the ideals in $\R I\oplus\mathcal{K}_\mathrm{diag}(X)$ are the same as in the space $c = C(\omega)$. Here, $\omega$ is the first infinite ordinal number.
\begin{prp}
\label{these are the ideals}
Let $X$ be a Banach space with a Schauder basis. For every $T\in\R I\oplus\mathcal{K}_\mathrm{diag}(X)$ and $i\in\N$ define $\la_{T,i} = e_i^*(Te_i)$ and also define $\la_{T,\omega} = \lim_i\la_{T,i}$ (which exists by \eqref{scalar plus compact diagonal characterization}). For every closed subset $L$ of $[1,\omega]$ define
\[\mathcal{A}_L = \left\{T\in\R I\oplus\mathcal{K}_\mathrm{diag}(X): \la_{T,\kappa} = 0 \text{ for all } \kappa\in L\right\}.\]
Then the closed ideals of $\R I\oplus\mathcal{K}_\mathrm{diag}(X)$ are precisely the spaces $\mathcal{A}_L$ for all closed subsets $L$ of $[1,\omega]$.
\end{prp}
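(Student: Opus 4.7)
The plan is to run the usual dichotomy for closed ideals in a commutative Banach algebra mirrored on $C([1,\omega]) = c$: one direction builds each $\mathcal{A}_L$ as an intersection of kernels of characters, while the other extracts $L$ from a given closed ideal and then recovers the ideal from the spectral picture. For the first direction, for each fixed $\kappa \in [1,\omega]$ the evaluation $\phi_\kappa : T \mapsto \la_{T,\kappa}$ is a continuous linear functional: boundedness at $\kappa \in \N$ is immediate from $|\la_{T,i}| \leq \|e_i^*\|\|e_i\|\|T\|$, and at $\kappa = \omega$ follows from the characterization \eqref{scalar plus compact diagonal characterization}. Moreover $\phi_\kappa$ is multiplicative, since composition of diagonal operators is coordinate-wise multiplication on the diagonal ($\la_{TS,i} = \la_{T,i}\la_{S,i}$) and the product-of-limits formula handles $\kappa = \omega$. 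Writing $\mathcal{A}_L = \bigcap_{\kappa \in L} \ker \phi_\kappa$ then exhibits it as a closed two-sided ideal.

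For the converse, given a closed ideal $\mathcal{I}$ I would set
\[L = \{\kappa \in [1,\omega] : \la_{T,\kappa} = 0 \text{ for every } T \in \mathcal{I}\}.\]
This $L$ is closed because for each fixed $T$ the map $\kappa \mapsto \la_{T,\kappa}$ is continuous on $[1,\omega]$, regarded as the one-point compactification of $\N$. The inclusion $\mathcal{I} \subseteq \mathcal{A}_L$ is built in by definition, so the heart of the matter is proving the reverse inclusion $\mathcal{A}_L \subseteq \mathcal{I}$.

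I would handle $\mathcal{A}_L \subseteq \mathcal{I}$ in three stages. First, for each $i \in \N \setminus L$ the definition of $L$ supplies $S_i \in \mathcal{I}$ with $\la_{S_i,i} \neq 0$; since $e_i^* \otimes e_i \in \mathcal{K}_\mathrm{diag}(X)$ and $S_i \circ (e_i^* \otimes e_i) = \la_{S_i,i}\,(e_i^* \otimes e_i)$, the ideal property yields $e_i^* \otimes e_i \in \mathcal{I}$. Second, for any compact $K \in \mathcal{A}_L$, the norm-convergent expansion from \eqref{sot representation of diagonal operators} gives $K = \sum_i \la_{K,i}\,e_i^* \otimes e_i$; each summand lies in $\mathcal{I}$ (trivially when $i \in L$, by the first stage when $i \notin L$), so the partial sums lie in $\mathcal{I}$ and the closedness of $\mathcal{I}$ places $K$ there as well. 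Third, for a general $T \in \mathcal{A}_L$, if $\la_{T,\omega} = 0$ then $T$ is compact and the second stage finishes the job; otherwise $\omega \notin L$, so one may choose $T_0 \in \mathcal{I}$ with $\la_{T_0,\omega} \neq 0$ and rescale so that $\la_{T_0,\omega} = \la_{T,\omega}$. The key observation, which is the only mildly subtle point, is that because $T_0 \in \mathcal{I}$ the definition of $L$ forces $\la_{T_0,\kappa} = 0$ for every $\kappa \in L$ for free; consequently $T - T_0 \in \mathcal{A}_L$ is compact, belongs to $\mathcal{I}$ by the second stage, and therefore $T = T_0 + (T - T_0) \in \mathcal{I}$.
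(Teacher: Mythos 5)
Your proof is correct and follows essentially the same route as the paper: define $L$ from the ideal, observe $e_i^*\otimes e_i\in\mathcal{I}$ for $i\notin L$ via the ideal property, and in the case $\omega\notin L$ use an element of $\mathcal{I}$ with nonvanishing $\omega$-coordinate to absorb the ``identity part'' of a general member of $\mathcal{A}_L$. The one cosmetic difference is in the easy direction: you exhibit $\mathcal{A}_L$ as $\bigcap_{\kappa\in L}\ker\phi_\kappa$ with $\phi_\kappa$ multiplicative characters, whereas the paper simply records $\mathcal{A}_L$ as the closed span of the relevant basis vectors (and of $P_{\N\setminus L}$ when $\omega\notin L$) and leaves the ideal verification implicit; your phrasing makes that step visible, at the small cost of needing to justify that $\phi_\omega$ is bounded (which holds because $\|\lambda I+K\|\geq|\lambda|$ for compact $K$, though your appeal to \eqref{scalar plus compact diagonal characterization} alone is a bit terse on this point).
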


\begin{proof}
We start by making some elementary observations that follow from the fact that $(e_i^*\otimes e_i)_i$ is a Schauder basis of $\mathcal{K}_\mathrm{diag}(X)$. If $\omega\in L$ then $\mathcal{A}_L$ is the closed linear span of $(e_i^*\otimes e_i^*)_{i\in\N\setminus L}$. If $\omega\notin L$ then $L$ has an upper bound and $\mathcal{A}_L$ is the closed linear span of $P_{\N\setminus L} = I - \sum_{i\in L}e_i^*\otimes e_i$ together with $(e_i^*\otimes e_i^*)_{i\in\N\setminus L}$.

Given a closed ideal $\mathcal{A}$ define $L = \{\kappa\in[1,\omega]: \la_{T,\kappa} = 0$ for all $T\in\mathcal{A}\}$. Clearly, $L$ is closed and $\mathcal{A}\subset\mathcal{A}_L$. By the fact that $\mathcal{A}$ is an ideal is easy to see that $e_i^*\otimes e_i$ is in $\mathcal{A}$ for all $i\in\N\setminus L$. In the case $\omega\in L$ this easily yields $\mathcal{A}_L = [(e_i^*\otimes e_i)_{i\in\N\setminus L}] \subset \mathcal{A}$, i.e., $\mathcal{A} = \mathcal{A}_L$. For the case $\omega\notin L$ it is sufficient to show that $P_{\N\setminus L} = I - \sum_{i\in L}e_i^*\otimes e_i$ is in $\mathcal{A}$. To that end, let $T$ be any element in $\mathcal{A}$ with $\la_{T,\omega} \neq 0$. Then, by \eqref{scalar plus compact diagonal characterization},
\[
\begin{split}
T &= \la_{T,\omega}I + \sum_{i=1}^\infty(\la_{T,i} - \la_{T,\omega})e_i^*\otimes e_i\\
&= \la_{T,\omega}I - \sum_{i\in L}\la_{T,\omega}e_i^*\otimes e_i + \sum_{i\in\N\setminus L}^\infty(\la_{T,i} - \la_{T,\omega})e_i^*\otimes e_i\\
& = \la_{T,\omega}P_{\N\setminus L} + S,
\end{split}
\]
where $S = \sum_{i\in\N\setminus L}^\infty(\la_{T,i} - \la_{T,\omega})e_i^*\otimes e_i$. As $e_i^*\otimes e_i$ is in $\mathcal{A}$ for all $i\in\N\setminus L$ it follows that $S\in\mathcal{A}$ which yields the desired conclusion.
\end{proof}

\subsection{Initial examples of spaces $\R I\oplus \mathcal{K}_\mathrm{diag}(X)$}
An important result for explicitly describing the space $\mathcal{L}_\mathrm{diag}(X)$ is the following.

\begin{thm}\cite[Theorem 1.1]{ADT}
\label{ADT main thm}
Let $X$ be a Banach space with a Schauder basis $(e_i)_i$. The following are equivalent.
\begin{itemize}
\item[(i)] The map $e_i^*\to e_i^*\otimes e_i$ extends to an isomorphism between $X^*$ and $\mathcal{L}_\mathrm{diag}(X)$.
\item[(ii)]
\begin{itemize}
\item[(a)] The basis $(e_i)_i$ dominates the summing basis of $c_0$.
\item[(b)] The space $X^*$ is submultiplicative, that is, there exists $C$ so that for all sequences of scalars $(a_i)_{i=1}^n$ and $(b_i)_{i=1}^n$ we have
\[\left\|\sum_{i=1}^na_ib_ie_i^*\right\|\leq C\left\|\sum_{i=1}^na_ie_i^*\right\|\left\|\sum_{i=1}^nb_ie_i^*\right\|\]
\end{itemize}
\end{itemize}
\end{thm}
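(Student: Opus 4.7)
My plan is to prove the two implications separately, using the natural duality between diagonal operators on $X$ and functionals on $X^*$. For $(i)\Rightarrow(ii)(a)$, I would apply the hypothesised isomorphism to the basis projections $P_n = \sum_{i=1}^n e_i^*\otimes e_i$. These are precisely the images of the functionals $\sigma_n = \sum_{i=1}^n e_i^*$, so since $\|P_n\|$ is bounded by the monotone constant, the norms $\|\sigma_n\|_{X^*}$ are uniformly bounded; unpacking $\|\sigma_n\|_{X^*} = \sup_{\|x\|\leq 1}|\sum_{i\leq n}e_i^*(x)|$ as the summing-basis norm of the coefficient sequence of $x$ then gives the domination. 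For $(i)\Rightarrow(ii)(b)$, the key point is that on the basis $T_f T_g e_i = f(e_i)g(e_i)e_i$, so the isomorphism $f\mapsto T_f$ intertwines pointwise multiplication of biorthogonal expansions with operator composition; the Banach algebra inequality $\|T_f T_g\|\leq\|T_f\|\|T_g\|$ transports across to submultiplicativity in $X^*$, with constant depending on the isomorphism constant.

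For $(ii)\Rightarrow(i)$ I would split the argument into two parts. Part (A), that every $f\in X^*$ yields a bounded $T_f\in\mathcal{L}_\mathrm{diag}(X)$ with $\|T_f\|\leq C\|f\|$, rests on a duality computation: for finitely supported $x = \sum b_i e_i$ and $g\in X^*$ of norm at most one, I write $g(T_f x)=\sum f(e_i)g(e_i)b_i = h(x)$, where $h = \sum f(e_i)g(e_i)e_i^*$. The truncated functionals $P_n^*f$ and $P_n^*g$ have norms at most the monotone constant times $\|f\|$ and $\|g\|$, and submultiplicativity (b) applied to these truncations bounds $\|h\|$ by a constant multiple of $\|f\|\|g\|$. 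Supremising over $\|g\|\leq 1$ bounds $\|T_f x\|$, after which $T_f$ extends by density from the span of $(e_i)$ to all of $X$. Part (B), recovering $f$ from a given bounded diagonal $T$ with entries $\lambda_i=e_i^*(Te_i)$, uses the identity $f_n(x) = \sigma_n(TP_nx)$ for $f_n = \sum_{i=1}^n \lambda_i e_i^*$. Condition (a) bounds $\|\sigma_n\|_{X^*}$ uniformly, so $(f_n)$ is uniformly bounded in $X^*$. Defining $f$ on the span of $(e_i)$ by $f(e_i)=\lambda_i$ yields a functional that agrees eventually with each $f_n$ on this dense subspace, so $f$ extends to an element of $X^*$ with $T_f = T$. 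Injectivity of the whole map is immediate from $T_f=0\Rightarrow f(e_i)=0$ for all $i$.

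The work is almost entirely bookkeeping; the conceptual content is simply that (a) is exactly what is needed to recover functionals from diagonal operators via pairing with summing projections, whereas (b) is exactly what is needed to produce bounded diagonal operators from functionals via the associativity computation above. I do not anticipate any genuine obstacle, only the need to carefully track the monotone basis constant through each estimate and to be attentive that one is always manipulating \emph{finite} truncations before passing to weak-$*$ or norm limits.
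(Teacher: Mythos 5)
Your argument is correct: the identity $g(T_fx)=h_n(x)$ with $h_n=\sum_{i\le n}f(e_i)g(e_i)e_i^*$ together with the uniform bound $\|P_n^*f\|\le K\|f\|$ gives $(ii)\Rightarrow(i)$ via (b), while the pairing $f_n(x)=\sigma_n(TP_nx)$ and the uniform boundedness of $\sigma_n=\sum_{i\le n}e_i^*$ (which is exactly (a)) gives surjectivity and the inverse bound, and the forward implications are the transparent specializations to $\sigma_n\leftrightarrow P_n$ and to the algebra inequality $\|T_fT_g\|\le\|T_f\|\|T_g\|$. The paper does not reprove this result but simply cites \cite[Theorem 1.1]{ADT}; your proof is the natural one and I see no gap.
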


The following, fairly immediate, Corollary is sometimes more convenient.
\begin{cor}
\label{same space banach algebra}
Let $X$ be a Banach space with a Schauder basis for which there exist positive constants $C_1$, $C_2$ so that
\begin{itemize}

\item[(i)] for all $n\in\N$ $\|\sum_{i=1}^ne_i\| \leq C_1$ and
\item[(ii)] for all sequences of scalars $(a_i)_{i=1}^n$, $(b_i)_{i=1}^n$
\[\left\|\sum_{i=1}^na_ib_ie_i\right\| \leq C_2\left\|\sum_{i=1}^na_ie_i\right\|\left\|\sum_{i=1}^nb_ie_i\right\|.\]
\end{itemize}
Then if $Y = [(e_i^*)_i]$ the space $\mathbb{R}I\oplus\mathcal{K}_\mathrm{diag}(Y)$ is isomorphic as a Banach algebra to the unitization $\R e_\omega\oplus X$ of $X$ via the map $I\mapsto e_\omega$ and for all $i\in\N$ $e_i^{**}\otimes e_i^*\to e_i$.
\end{cor}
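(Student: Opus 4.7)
The plan is to apply Theorem \ref{ADT main thm} to $Y$ in place of $X$: once hypotheses (a) and (b) of that theorem are verified for $Y$, the identification of $\R I\oplus\mathcal{K}_\mathrm{diag}(Y)$ with the unitization $\R e_\omega\oplus X$ will follow by tracking the action of the resulting isomorphism on the finite-rank diagonals $e_i^{**}\otimes e_i^*$ and on the identity $I_Y$. Recall that $(e_i^*)_i$ is a Schauder basis of $Y=[(e_i^*)_i]$ with some monotone constant, which I will denote simply by $C$.

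Verifying (a) is immediate from condition (i): testing any $\sum_i a_ie_i^*\in Y$ against the unit vector $x_n = \big(\sum_{j=1}^n e_j\big)/\|\sum_{j=1}^n e_j\|_X\in B_X$ gives $\|\sum_i a_i e_i^*\|_Y \geq |\sum_{i=1}^n a_i|/C_1$ for every $n$, so $(e_i^*)_i$ dominates the summing basis of $c_0$ with constant $C_1$. For (b), the main preparatory step is to identify $[(e_i^{**})_i]\subseteq Y^*$ with $X$ via the natural map $\iota\colon X\to Y^*$, $\iota(x)(y)=y(x)$, for which $\iota(e_i)=e_i^{**}$ and $\|\iota(x)\|_{Y^*}\leq\|x\|_X$. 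For the reverse estimate on $x\in[e_j]_{j=1}^n$, take a norming $y\in X^*$ and replace it by $\tilde y=y\circ P_n=\sum_{j=1}^n y(e_j)e_j^*\in Y$; then $\|\tilde y\|_Y\leq C$ and $\tilde y(x)=\|x\|_X$, giving $\|x\|_X\leq C\|\iota(x)\|_{Y^*}$. Hence $\iota$ is an isomorphism onto $[(e_i^{**})_i]$, and condition (ii) transfers directly:
\[\Big\|\sum\alpha_i\beta_ie_i^{**}\Big\|_{Y^*}\leq\Big\|\sum\alpha_i\beta_ie_i\Big\|_X\leq C_2\Big\|\sum\alpha_ie_i\Big\|_X\Big\|\sum\beta_ie_i\Big\|_X\leq C^2C_2\Big\|\sum\alpha_ie_i^{**}\Big\|_{Y^*}\Big\|\sum\beta_ie_i^{**}\Big\|_{Y^*},\]
so $Y^*$ is submultiplicative.

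Theorem \ref{ADT main thm} now produces a linear isomorphism $\Psi\colon Y^*\to\mathcal{L}_\mathrm{diag}(Y)$ extending $e_i^{**}\mapsto e_i^{**}\otimes e_i^*$. Its restriction to $\iota(X)=[(e_i^{**})_i]$ lands on $\overline{\mathrm{span}}\{e_i^{**}\otimes e_i^*\}=\mathcal{K}_\mathrm{diag}(Y)$, so composing with $\iota^{-1}$ produces a linear isomorphism $\Phi\colon X\to\mathcal{K}_\mathrm{diag}(Y)$ with $\Phi(e_i)=e_i^{**}\otimes e_i^*$. A direct computation gives $(e_i^{**}\otimes e_i^*)\circ(e_j^{**}\otimes e_j^*)=\delta_{ij}(e_i^{**}\otimes e_i^*)$, matching $e_ie_j=\delta_{ij}e_i$ in $X$, so by joint continuity of multiplication $\Phi$ is an algebra isomorphism. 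Finally, since $I_Y\notin\mathcal{K}(Y)\supseteq\mathcal{K}_\mathrm{diag}(Y)$ and $\mathcal{K}(Y)$ is closed, $\R I_Y\oplus\mathcal{K}_\mathrm{diag}(Y)$ is a topological direct sum; extending $\Phi$ by $e_\omega\mapsto I_Y$ yields the desired unital Banach algebra isomorphism. The only nontrivial obstacle I anticipate is proving that $\iota$ is an isomorphism onto its image (not merely a contractive injection), since this is precisely what lets one upgrade the submultiplicativity of $X$ to that of $Y^*$ and thereby invoke Theorem \ref{ADT main thm}; all remaining steps are a formal identification of finite-rank diagonals with coordinate products together with the observation that the identity is not compact.
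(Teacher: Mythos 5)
Your proof is correct and follows exactly the route the paper indicates by calling this a corollary of Theorem \ref{ADT main thm}: apply that theorem to $Y=[(e_i^*)_i]$ (verifying (a) from hypothesis (i) by testing against normalized averages $\frac{1}{\|\sum_{j\le n}e_j\|}\sum_{j\le n}e_j$, and (b) by transporting hypothesis (ii) through the canonical isomorphism $\iota\colon X\to[(e_i^{**})_i]\subseteq Y^*$, whose lower bound you correctly establish using $P_n^*$), then restrict the resulting isomorphism $Y^*\cong\mathcal L_{\mathrm{diag}}(Y)$ to the closed span of the $e_i^{**}$ and match unitizations. You also correctly flag that the only substantive point is that $\iota$ is an onto-its-image isomorphism and not merely contractive, which is indeed where the bimonotone/monotone basis constant of $(e_i)_i$ enters; the rest is formal bookkeeping with the diagonal rank-one operators.
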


Clearly, the sequence $(e_\omega,e_1,e_2,\ldots)$ forms a Schauder basis of $\R e_\omega\oplus X$. The following result can be easily obtained by combining Proposition \ref{these are the ideals} and Corollary \ref{same space banach algebra}.

\begin{cor}
\label{ideals in that case too}
Let $X$ be a Banach space with a Schauder basis that satisfies the assumptions of Corollary \ref{same space banach algebra}. The ideals of $\R e_\omega\oplus X$ are precisely the spaces $\mathcal{A}_L= \cap_{\kappa\in L}\ker e_\kappa^*$ for all closed subsets $L$ of $[1,\omega]$.
\end{cor}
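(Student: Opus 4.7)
The plan is to pull back the classification of closed ideals from Proposition \ref{these are the ideals} along the Banach-algebra isomorphism furnished by Corollary \ref{same space banach algebra}. Set $Y=[(e_i^*)_i]$ and let $\Phi:\R I\oplus\mathcal{K}_\mathrm{diag}(Y)\to\R e_\omega\oplus X$ denote the isomorphism sending $I$ to $e_\omega$ and each $e_i^{**}\otimes e_i^*$ to $e_i$. Since $\Phi$ is an isomorphism of Banach algebras with continuous inverse, it induces a bijection between the closed ideals of the domain and those of the codomain, so the task reduces to computing the image $\Phi(\mathcal{A}_L)$ for each closed $L\subseteq[1,\omega]$, where the $\mathcal{A}_L$ are those produced by applying Proposition \ref{these are the ideals} to $Y$.

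What remains is coordinate bookkeeping. Every $T$ in the domain admits a norm-convergent expansion $T=\la I+\sum_i\mu_i\, e_i^{**}\otimes e_i^*$ (the series converges in norm because $T-\la I$ is compact and $(e_i^{**}\otimes e_i^*)_i$ is a Schauder basis of $\mathcal{K}_\mathrm{diag}(Y)$). A direct computation yields $\la_{T,i}=e_i^{**}(Te_i^*)=\la+\mu_i$ and $\la_{T,\omega}=\lim_i\la_{T,i}=\la$, while on the image side $\Phi(T)=\la e_\omega+\sum_i\mu_i\, e_i$. Consequently $\la_{T,\omega}$ coincides with $e_\omega^*(\Phi(T))$, and for $\kappa=i\in\N$ the scalar $\la_{T,i}=\la+\mu_i$ coincides with $e_i^*(\Phi(T))$, where $e_\kappa^*$ denotes the functional appearing on the right-hand side of the corollary.

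Putting these pieces together, for each closed $L\subseteq[1,\omega]$ the defining condition of $\mathcal{A}_L$ on the domain, namely $\la_{T,\kappa}=0$ for all $\kappa\in L$, translates through $\Phi$ to $e_\kappa^*(\Phi(T))=0$ for all $\kappa\in L$; that is, $\Phi(\mathcal{A}_L)=\bigcap_{\kappa\in L}\ker e_\kappa^*$. The bijection of closed ideals induced by $\Phi$ then yields the conclusion. The only point requiring real care is the identification in the previous paragraph of the functionals $e_\kappa^*$ with the scalars $\la_{T,\kappa}$ coming from Proposition \ref{these are the ideals} — in particular, making sure that the $\kappa=\omega$ value and the shifted finite values $\la+\mu_i$ are captured by the \emph{same} family $(e_\kappa^*)_{\kappa\in[1,\omega]}$; once this is settled, the corollary is immediate.
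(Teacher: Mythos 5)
Your proof is correct and takes exactly the route the paper indicates (the paper gives no explicit argument, merely noting the corollary follows by combining Proposition \ref{these are the ideals} with Corollary \ref{same space banach algebra}, which is precisely what you do: transport the ideal lattice through the isomorphism $\Phi$ and match coordinates). The one point you flag at the end is indeed the only subtlety: for $\cap_{\kappa\in L}\ker e_\kappa^*$ to even be an ideal, $e_i^*$ for $i\in\N$ must be understood as the multiplicative character with $e_i^*(e_\omega)=1$ (equivalently, the $w^*$-continuous extension when $e_\omega$ is viewed as $w^*$-$\lim_n\sum_{i\le n}e_i$), not as the biorthogonal functional of the basis $(e_\omega,e_1,e_2,\ldots)$ which vanishes on $e_\omega$; your computation $e_i^*(\Phi(T))=\la+\mu_i$ is consistent with this reading and makes the transported sets coincide with the stated $\mathcal{A}_L$.
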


We mention now some examples of spaces to which this Theorem can be applied without the requirement of a lot of other theory. Some of these examples are from \cite{ADT}. The subsequent subsections of this section are devoted to providing more examples to which this theorem can be applied.

\begin{exa}
\label{variation norm}
As it is explained in \cite[Example 2.7]{ADT}, if $X = c$ endowed with the monotone summing basis $(s_n)_n$ then $\mathcal{L}_\mathrm{diag}(X)$ is isometric to $\ell_1(\N)$. In this case $(s_n^*)_n$ spans a space of co-dimension one in $X^*$ which, by Theorem \ref{ADT main thm}, implies that $\R I\oplus\mathcal{K}_\mathrm{diag}(X) = \mathcal{L}_\mathrm{diag}(X) \equiv \ell_1$. To be more precise, $\mathcal{L}_\mathrm{diag}(X)$ isometrically coincides with the Banach algebra $\mathrm{bv}_1$ of all scalar sequences of bounded variation equipped with coordinate wise multiplication and the norm $\|(a_i)_i\| = \sum_i|a_i-a_{i+1}| + \lim_i|a_i|$.
\end{exa}

\begin{exa}
One of the main results of \cite{ADT} is Theorem 1.4 which states that there exists a  hereditarily indecomposable Banach space $X_\mathrm{ADT}$ that is quasi-reflexive of order one, it has a hereditarily indecomposable dual, and it has a Schauder basis $(e_i)_i$ that satisfies the assumptions of Theorem \ref{ADT main thm}. This means that $\mathcal{L}_\mathrm{diag}(X_\mathrm{ADT}) \equiv X^*_\mathrm{ADT}$, which is hereditarily indecomposable (in fact, as it is stated in \cite{ADT}, this identification is isometric). Also, by \cite[Theorem 14 (iii)]{ADT} we directly conclude  $\R I\oplus \mathcal{K}_\mathrm{diag}(X_\mathrm{ADT}) = \mathcal{L}_\mathrm{diag}(X_\mathrm{ADT})\equiv X^*_\mathrm{ADT}$ (this follows from the quasi-reflexivity of $X_\mathrm{ADT}$).
\end{exa}

Recall that every Banach space $X$ with a 1-unconditional basis is a Banach algebra when endowed with coordinate-wise multiplication.
\begin{prp}
\label{all unconditional containing c0}
Let $X$ be a Banach space with a normalized 1-unconditional basis $(x_i)_i$. Then there exists a Banach space $Y$ with a Schauder basis so that the space $\mathbb{R}I\oplus\mathcal{K}_\mathrm{diag}(Y)$ contains an ideal $\widetilde X$ that is isomorphic as a Banach algebra to $X$ and a subalgebra $\mathcal{A}$ that is isomorphic as a Banach algebra to $C(\omega)$ so that $\mathbb{R}I\oplus\mathcal{K}_\mathrm{diag}(Y) = \widetilde X\oplus \mathcal{A}$.
\end{prp}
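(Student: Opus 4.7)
I construct $Y$ directly via Theorem \ref{ADT main thm}; Corollary \ref{same space banach algebra} cannot be applied in the obvious way because condition (i) (bounded partial sums) fails for a $1$-unconditional basis of a space that is not $c_0$-like.

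Let $(x_i)$ be the given normalized $1$-unconditional basis of $X$. The hypotheses automatically give the submultiplicativity $\|\sum a_ib_ix_i\|_X \le \|\sum a_ix_i\|_X \cdot \|\sum b_ix_i\|_X$, since $|b_i|\le\|\sum b_jx_j\|_X$ for a normalized $1$-unconditional basis. Partition $\mathbb N=A\sqcup B$ into two infinite disjoint sets. I construct a Banach space $Y$ with a Schauder basis $(e_n)_n$ such that the subsequence $(e_a^*)_{a\in A}$ of its biorthogonal functionals in $Y^*$ is equivalent to $(x_i)$, the subsequence $(e_b^*)_{b\in B}$ in $Y^*$ is equivalent to the unit vector basis of $c_0$, the basis $(e_n)$ dominates the summing basis of $c_0$ (so that the identity $I\in\mathcal L_\mathrm{diag}(Y)$ is represented by a genuine summing functional $\sigma\in Y^*$), and the dual basis $(e_n^*)$ is submultiplicative in $Y^*$---this inherits the $X$-submultiplicativity on the $A$-part and is trivial on the $B$-part.

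With these properties, Theorem \ref{ADT main thm} supplies a Banach algebra isomorphism $\mathcal L_\mathrm{diag}(Y)\cong Y^*$ sending $e_n^*\otimes e_n$ to $e_n^*$ and $I$ to $\sigma$. Under this isomorphism $\mathbb R I\oplus\mathcal K_\mathrm{diag}(Y)$ corresponds to $\mathbb R\sigma+\overline{\mathrm{span}}(e_n^*)_n\subset Y^*$. Define $\widetilde X:=\overline{\mathrm{span}}(e_a^*\otimes e_a)_{a\in A}$ and $\mathcal A:=\mathbb R I+\overline{\mathrm{span}}(e_b^*\otimes e_b)_{b\in B}$. The identity $(e_a^*\otimes e_a)(e_{a'}^*\otimes e_{a'})=\delta_{aa'}e_a^*\otimes e_a$ matches the pointwise product of $(x_i)$ in $X$, so $\widetilde X\cong X$ as Banach algebras; likewise $\overline{\mathrm{span}}(e_b^*\otimes e_b)_{b\in B}\cong c_0$ as Banach algebras, and adjoining $I$ gives $\mathcal A\cong C(\omega)$. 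The space $\widetilde X$ is a closed two-sided ideal because for $a\in A$ and $n\in B$ the product $(e_a^*\otimes e_a)(e_n^*\otimes e_n)$ vanishes by orthogonality, while $I\cdot(e_a^*\otimes e_a)=e_a^*\otimes e_a\in\widetilde X$. Directness and exhaustiveness of the sum $\widetilde X\oplus\mathcal A$ follow from the disjointness of the $A$- and $B$-supports and the fact that $\sigma$ does not lie in $\overline{\mathrm{span}}(e_n^*)_n$.

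The main difficulty is the very first step---producing a single norm on $Y$ for which both conditions of Theorem \ref{ADT main thm} hold simultaneously and the partition of the dual basis decouples cleanly. In particular, making $(e_a^*)_{a\in A}$ equivalent to $(x_i)$ (whose partial sums may grow without bound in $X$) while also forcing $(e_n)$ to dominate the summing basis of $c_0$---which is what ensures $\sigma\in Y^*$---requires a careful interplay between an $X^*$-like contribution on the $A$-coordinates and an $\ell_1$-like contribution on the $B$-coordinates in the definition of the norm of $Y$.
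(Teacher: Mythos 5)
Your proposal correctly identifies the target structure and the role of Theorem \ref{ADT main thm}, but it has a genuine gap that you yourself flag at the end: the space $Y$ is never constructed, and the construction is the entire content of the proof. You write ``I construct a Banach space $Y$ with a Schauder basis $(e_n)_n$ such that $\ldots$'' and then list four simultaneous requirements on $(e_n)$ and $(e_n^*)$, but no norm is produced and no verification that the four properties are jointly achievable is offered. Since the remainder of the argument (the algebra isomorphism, the decomposition into $\widetilde X$ and $\mathcal A$, the ideal property) is straightforward \emph{given} such a $Y$, what remains is essentially the whole proposition.

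Two further points. First, your opening objection to Corollary \ref{same space banach algebra} misreads how the paper uses it: the corollary is never applied to $(x_i)$ itself. The paper builds an auxiliary space $W$ whose norm is the maximum of an $X$-seminorm on even coordinates and the $c$-norm on all coordinates, passes to the \emph{difference} basis $d_1 = e_1$, $d_{i+1} = e_{i+1}-e_i$ of $W$, and checks that $(d_i)$ does have uniformly bounded partial sums (indeed $\sum_{i=1}^n d_i = e_n$, which has norm $\le 1$ in $W$) and is submultiplicative. Corollary \ref{same space banach algebra} is then applied to $W$ with basis $(d_i)$, producing $Y = [(d_i^*)]$; the unbounded partial sums of $(x_i)$ are irrelevant. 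Second, the clean ``support decoupling'' you aim for --- choosing $A\sqcup B = \N$ so that $(e_a^*)_{a\in A}$ is $X$-like, $(e_b^*)_{b\in B}$ is $c_0$-like, and the two pieces split as a direct sum of subsequences of the dual basis --- is stronger than what the paper's construction delivers and would force $(e_n^*)$ to be unconditional; it is not obvious this can coexist with $(e_n)$ dominating the summing basis and $(e_n^*)$ being submultiplicative. In the paper the $c_0$-like subalgebra is \emph{not} the span of a subset of the basis $(d_i)$: it is $\bar{\mathcal A} = \R d_1 \oplus [(d_{2i}+d_{2i+1})_i]$, a span of two-term sums, and $\widetilde X = [(d_{2i})_i]$ while $(d_{2i-1})_i$ by itself is \emph{not} $c_0$-like. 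So even if a space with your cleaner decoupling exists, showing it exists is a separate task, and the paper's route avoids the issue entirely by decomposing the algebra along a non-support-based splitting. In short: the framework is right, but the key construction is missing and the proposed form of that construction is more demanding than the one the paper actually uses.
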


\begin{proof}
Let $(s_i)_i$ denote the monotone basis of $c$, i.e. $\|\sum _ia_is_i\| = \sup_n|\sum_{i=1}^na_i|$.We define a norm on $c_{00}(\N)$ so that for any scalar sequence $(a_i)_i$ that is eventually zero we have
\begin{equation}
\label{sum with summing}
\left\|\sum_{i=1}^\infty a_ie_i\right\| = \max\left\{\left\|\sum_{i=1}^\infty a_{2i}x_{i}\right\|,\left\|\sum_{i=1}^\infty a_is_i\right\|\right\}
\end{equation}
and let $W$ denote its completion. The sequence $(d_i)_i$ defined by $d_1 = e_1$ and for all $i\in\N$ $d_{i+1} = e_{i+1} - e_i$ forms a Schauder basis for $W$.
The norm on the sequence $(d_i)_i$ is given by
\[\left\|\sum_{i=1}^\infty a_id_i\right\| = \max\left\{\left\|\sum_{i=1}^\infty \left(a_{2i}-a_{2i+1}\right)x_{i}\right\|,\sup_i|a_1-a_{i}|\right\}\]
 A standard argument using the triangle inequality yields that $W$ endowed with $(d_i)_i$ satisfies the assumptions of Corollary \ref{same space banach algebra}.  The above formula also immediately yields that the sequence $(d_{2i})_i$ is equivalent to $(x_i)_i$ and by Corollary \ref{ideals in that case too} the space $\widetilde X = [(d_{2i})_i]$ is an ideal of $W$. It is also straightforward to check that the map $Q:W\to W$ defined by $Q(\sum_ia_id_i) = \sum_{i}(a_{2i}-a_{2i+1})d_{2i}$ defines a bounded linear projection onto $\widetilde X$. Note that the sequence $(d_1)^\frown(d_{2i}+d_{2i+1})_i$ is equivalent to the unit vector basis of $c_0$, the space $\bar{\mathcal{A}}= \R d_1\oplus[(d_{2i}+d_{2i+1})_i]$ is closed under multiplication, and it is isomorphic as a Banach algebra to $c_0$ endowed with coordinate-wise multiplication with respect to its unit vector basis. It is also easy to see that $\bar{\mathcal{A}}$ is complementary to $\widetilde X$. Finally, take $\widetilde X$ and $\mathcal{A} = \R e_\omega\oplus \bar{\mathcal{A}}$ as subspaces of $\R e_\omega\oplus W$ which, by Corollary \ref{same space banach algebra}, coincides with $\mathbb{R}I\oplus\mathcal{K}_\mathrm{diag}(Y)$ for $Y  = [(d_i^*)_i]$.
\end{proof}

The following lemma allows to dualize certain spaces and obtain Banach algerbas.

\begin{lem}
\label{dualizing unconditional plus submultiplicative}
Let $X$ and $Y$ be Banach spaces with normalized Schauder bases $(x_i)_i$ and $(y_i)$ respectively and assume that  $(x_i)_i$ is unconditional and that $(y_i^*)_i$ is submultiplicative. If $M = \{m_1<m_2<\cdots\}$, $N = \{n_1<n_2<\cdots\}$ are subsets of $\N$ with $M\cup N = \N$ define a norm on $c_{00}(\N)$ so that for any sequence of scalars $(a_i)_i$ that is eventually zero 
\begin{equation*}
\left\|\sum_{i=1}^\infty a_ie_i\right\| = \max\left\{\left\|\sum_{i=1}^\infty a_{m_i}x_i\right\|,\left\|\sum_{i=1}^
\infty a_{n_i}y_i\right\|\right\}
\end{equation*}
and let $W$ denote the completion of $c_{00}(\N)$ with this norm. Then the sequence $(e_i^*)_i$ in $W^*$ is submultiplicative.
\end{lem}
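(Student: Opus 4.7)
The plan is to exploit the natural duality between $W$ and $X^*\oplus_1 Y^*$. Let $P\colon W\to X$ and $Q\colon W\to Y$ be given by $P(\sum a_ie_i) = \sum_k a_{m_k}x_k$ and $Q(\sum a_ie_i) = \sum_k a_{n_k}y_k$; by definition $\|z\|_W = \max\{\|Pz\|_X,\|Qz\|_Y\}$, so that $z\mapsto(Pz,Qz)$ is an isometric embedding of $W$ into $X\oplus_\infty Y$. Via Hahn--Banach and the identification $(X\oplus_\infty Y)^* = X^*\oplus_1 Y^*$, every $f\in W^*$ admits a decomposition $f = P^*\xi + Q^*\eta$ with $\|\xi\|_{X^*} + \|\eta\|_{Y^*} = \|f\|_{W^*}$; one checks that $P^* x_k^* = e_{m_k}^*$ and $Q^* y_k^* = e_{n_k}^*$.

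Given $f^a = \sum_i a_ie_i^*$ and $f^b = \sum_i b_ie_i^*$ I fix such decompositions $f^a = P^*\xi^a + Q^*\eta^a$ and $f^b = P^*\xi^b+Q^*\eta^b$ with $\xi^a = \sum_k\mu_kx_k^*$, $\eta^a=\sum_k\nu_k y_k^*$, $\xi^b = \sum_k\mu_k'x_k^*$, $\eta^b = \sum_k\nu_k'y_k^*$. Letting $\sigma$ and $\tau$ denote the inverses of $k\mapsto m_k$ and $k\mapsto n_k$, evaluation at $e_i$ gives $a_i = \mu_{\sigma(i)}$ for $i\in M\setminus N$, $a_i = \nu_{\tau(i)}$ for $i\in N\setminus M$, and $a_i = \mu_{\sigma(i)} + \nu_{\tau(i)}$ for $i\in M\cap N$, with analogous formulas for $b_i$. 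Setting $\xi := \sum_k\mu_k\mu_k'x_k^*$ and $\eta := \sum_k\nu_k\nu_k' y_k^*$, a direct computation produces $\sum_i a_ib_ie_i^* = P^*\xi + Q^*\eta + R_1 + R_2$, where the cross-terms $R_1 := \sum_{i\in M\cap N}\mu_{\sigma(i)}\nu_{\tau(i)}'e_i^*$ and $R_2 := \sum_{i\in M\cap N}\nu_{\tau(i)}\mu_{\sigma(i)}'e_i^*$ appear only if $M\cap N\neq\emptyset$. Each is supported on $M$, so I write $R_1 = P^*\xi_1$ and $R_2 = P^*\xi_2$ with $\xi_1 := \sum_{k:m_k\in N}\mu_k\nu_{\tau(m_k)}'x_k^*$ and $\xi_2 := \sum_{k:m_k\in N}\nu_{\tau(m_k)}\mu_k'x_k^*$.

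The estimates now come from the hypotheses. Since $(x_k)_k$ is $1$-unconditional and normalized, so is $(x_k^*)_k$, and the resulting product rule $\|\sum c_kd_kx_k^*\|\leq (\sup_k|c_k|)\|\sum d_kx_k^*\|\leq \|\sum c_kx_k^*\|\|\sum d_kx_k^*\|$ yields $\|\xi\|_{X^*}\leq \|\xi^a\|_{X^*}\|\xi^b\|_{X^*}$. Submultiplicativity of $(y_k^*)_k$ with constant $C$ gives $\|\eta\|_{Y^*}\leq C\|\eta^a\|_{Y^*}\|\eta^b\|_{Y^*}$. The key move for the cross-terms is to route them through $P^*$: applying the same product rule in $X^*$ and then using $\sup_k|\nu_k'|\leq \|\eta^b\|_{Y^*}$ (valid because $(y_k)_k$ is normalized) yields $\|\xi_1\|_{X^*}\leq \|\eta^b\|_{Y^*}\|\xi^a\|_{X^*}$, and symmetrically $\|\xi_2\|_{X^*}\leq \|\eta^a\|_{Y^*}\|\xi^b\|_{X^*}$. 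Assembling,
\[\bigl\|\textstyle\sum_i a_ib_ie_i^*\bigr\|_{W^*}\leq \|\xi^a\|\|\xi^b\| + C\|\eta^a\|\|\eta^b\| + \|\xi^a\|\|\eta^b\| + \|\xi^b\|\|\eta^a\|\leq \max\{1,C\}(\|\xi^a\|+\|\eta^a\|)(\|\xi^b\|+\|\eta^b\|),\]
and the right-hand side equals $\max\{1,C\}\|f^a\|_{W^*}\|f^b\|_{W^*}$. The principal technical obstacle is the appearance of the cross-terms when $M\cap N\neq\emptyset$; in the disjoint case $W\cong X\oplus_\infty Y$ isometrically and the statement is essentially immediate, while in general $1$-unconditionality of $(x_k^*)_k$ combined with normalization of $(y_k)_k$ is exactly what is needed to absorb the cross-terms into an $X^*$-estimate.
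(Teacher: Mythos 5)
Your proof is correct and follows essentially the same route as the paper: isometrically embed $W$ into $X\oplus_\infty Y$, use Hahn--Banach to decompose a functional as $P^*\xi + Q^*\eta$ with $\|\xi\|+\|\eta\|=\|f\|_{W^*}$, compute the coordinatewise product, and absorb the two cross-terms supported on $M\cap N$ into the $X^*$-part via $1$-unconditionality of $(x_k^*)_k$ and normalization of $(y_k)_k$. The only cosmetic difference is that the paper first renorms $Y$ to make $(y_i^*)_i$ $1$-submultiplicative and bimonotone, obtaining the constant $1$, whereas you carry $C$ through and land on $\max\{1,C\}$.
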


\begin{proof}
By renorming our spaces we may assume that $(y_i^*)_i$ is bimonotone and $1$-submultiplicative and that $(x_i)_i$ is 1-unconditional. We will show that $(e_i^*)_i$ is 1-submultiplicative. Define the isometric embedding $T:W\to (X\oplus Y)_\infty$ given by
\[Te_i = (x,y)\text{ where }x = \left\{\begin{array}{ll}x_j&:\text{ if }i=m_j\\0&:\text{ otherwise}\end{array}\right.\text{ and }y = \left\{\begin{array}{ll}y_j&:\text{ if }i=n_j\\0&:\text{ otherwise}\end{array}\right.\]
and observe that $T^*:(X^*\oplus Y^*)_1\to W^*$ is the $w^*$-continuous map given by $T^*(x_i^*,0) = e^*_{m_i}$ and $T^*(0,y_i^*) = e^*_{n_i}$ for all $n\in\N$.

Let $u_1^*$ and $_2^*$ be normalized elements in $W^*$. We will show that the element $u_1^*u_2^* = w^*$-$\sum_iw_1^*(e_1)w_2^*(e_i)e_i^*$ is well defined and it has norm at most one. By the Hahn-Banach theorem there exist elements $x_1^* = w^*$-$\sum_i\la_ix_i^*$ and $y_1^* = w^*$-$\sum_i\mu_iy_i^*$ with $\|x_1^*\|+\|y_1^*\| = 1$ so that $u_1^* = T^*(x_1^*,y_1^*) = w^*$-$\sum_i\la_ie_{m_i}^* + w^*$-$\sum_i\mu_ie_{n_i}^*$ and similarly there exist $x_2^* = w^*$-$\sum_i\xi_ix_i^*$ and $y_2^* = w^*$-$\sum_i\zeta_iy_i^*$ with $\|x_2^*\|+\|y_2^*\| = 1$ so that $u_2^* = T^*(x_2^*,y_2^*) = w^*$-$\sum_i\xi_ie_{m_i}^* + w^*$-$\sum_i\zeta_ie_{n_i}^*$.

Define for each $i\in\N$ the scalars $\tilde\zeta_i =\left\{\begin{array}{ll}\zeta_j&:\text{ if }m_i=n_j\text{ for some }j\\0&:\text{ otherwise}\end{array}\right.$ and $\tilde\mu_i =\left\{\begin{array}{ll}\mu_j&:\text{ if }m_i=n_j\text{ for some }j\\0&:\text{ otherwise}\end{array}\right.$. Since $\sup_i|\tilde\zeta_i|\leq \|y^*_2\|$ and $\sup_i|\tilde\mu_i|\leq \|y^*_2\|$  by unconditionality we obtain that $x_3^* = w^*$-$\sum_i\la_i\tilde\zeta_ix_i^*$ and $x_4^* = w^*$-$\sum_i\xi_i\tilde\mu_ix_i^*$ are well defined and $\|x^*_3\|\leq\|x_1^*\|\|y_2^*\|$, $\|x_4^*\| \leq \|x_2^*\|\|y_1^*\|$. A straightforward calculation yields that if $x^* = x_1^*x_2^*+x_3^*+x_4^*$ and $y^* = y_1^*y_2^*$ then
\[
\begin{split}
T^*(x^*,y^*) &= \left(w^*\text{-}\sum_i\la_ie_{m_i}^* + w^*\text{-}\sum_i\mu_ie_{n_i}^*\right)\left(w^*\text{-}\sum_i\xi_ie_{m_i}^* + w^*\text{-}\sum_i\zeta_ie_{n_i}^*\right)\\
&=u_1^*u_2^*
\end{split}
\]
and hence $\|u_1^*u^*_2\|\leq \|x_1^*\|\|x_2^*\|+\|x_1^*\|\|y_2^*\|+\|x_2^*\|\|y_1^*\| + \|y_1^*\|\|y_2^*\| = 1$.
\end{proof}

\begin{prp}
\label{all unconditional containing ell1}
Let $X$ be a Banach space with a normalized 1-unconditional basis $(x_i)_i$. Then there exists a Banach space $Y$ with a Schauder basis so that the space $\mathbb{R}I\oplus\mathcal{K}_\mathrm{diag}(Y)$ contains an ideal $\widetilde X$ that is isomorphic as a Banach algebra to $X$ and a subalgebra $\mathcal{A}$ that is isomorphic as a Banach algebra to $\mathrm{bv}_1$ (see Example \ref{variation norm}) so that $\mathbb{R}I\oplus\mathcal{K}_\mathrm{diag}(Y) = \widetilde X\oplus \mathcal{A}$.
\end{prp}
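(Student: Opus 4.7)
The plan is to mirror the construction of Proposition \ref{all unconditional containing c0} but replace the summing-basis second coordinate in the outer norm by a variation-type seminorm so that the complementary subalgebra becomes $\mathrm{bv}_1$ (the unitization of $\mathrm{bv}_1^0$) instead of $C(\omega)$. Concretely, I would let $W$ be the completion of $c_{00}(\N)$ under
\[
\|\sum_i a_i e_i\|_W = \max\Big\{\|\sum_i a_{2i}x_i\|_X,\ \sum_{k\geq 1}|a_{2k-1}+a_{2k}|\Big\},
\]
and set $d_1=e_1$, $d_{i+1}=e_{i+1}-e_i$. The defining feature of the second summand is that, after converting to $(e_j)$-coordinates, a vector $\sum_i a_i d_{2i}\in\widetilde X:=[(d_{2i})_i]$ has coordinates $(-a_i,a_i)$ at each consecutive pair $(2i-1,2i)$, so each $a_{2k-1}+a_{2k}$ telescopes to zero; while a vector $\beta d_1+\sum_i c_i(d_{2i}+d_{2i+1})\in\bar{\mathcal{A}}:=\R d_1\oplus[(d_{2i}+d_{2i+1})_i]$ has nonzero $(e_j)$-coordinates only at odd positions, taking values $\beta-c_1,\,c_1-c_2,\,c_2-c_3,\dots$, so the seminorm part of $\|\cdot\|_W$ reduces exactly to the $\mathrm{bv}_1^0$-norm of the coefficient sequence $(\beta,c_1,c_2,\dots)$.

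I would then verify that $(d_i)_i$ satisfies the hypotheses of Corollary \ref{same space banach algebra}. Condition (i) is immediate: $d_1+\cdots+d_n=e_n$ and a parity case-check gives $\|e_n\|_W=1$. Condition (ii)---the submultiplicativity of $(d_i)_i$ in $W$---is the technical heart of the proof and parallels the ``standard triangle inequality'' argument referenced in Proposition \ref{all unconditional containing c0}. Rewriting the seminorm in $(d_i)$-coordinates as $\sum_k|a_{2k-1}-a_{2k+1}|$ and using that $a$ is eventually zero, telescoping yields $|a_{2k-1}|\leq\|A\|_W$ for $A=\sum_i a_id_i$; combining this with $|a_{2k}-a_{2k+1}|\leq\|A\|_W$ (from the $X$-part and normalization of $(x_i)$) gives $\|a\|_\infty\leq 2\|A\|_W$. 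One then splits
\[
a_jb_j-a_{j+1}b_{j+1}=a_j(b_j-b_{j+1})+(a_j-a_{j+1})b_{j+1}
\]
and estimates both the $X$-part and the seminorm part of $\|\sum_i a_ib_id_i\|_W$ via unconditionality of $(x_i)_i$ and subadditivity, yielding a submultiplicative inequality with a universal constant.

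Having verified both hypotheses, Corollary \ref{same space banach algebra} (with $Y:=[(d_i^*)_i]$) gives the Banach-algebra isomorphism $\R I\oplus\mathcal{K}_\mathrm{diag}(Y)\cong\R e_\omega\oplus W$. Since the seminorm vanishes on $\widetilde X$, the map $d_{2i}\mapsto x_i$ is an isometric Banach-algebra isomorphism $\widetilde X\cong X$; closure under coordinatewise $(d_i)$-multiplication shows $\widetilde X$ is an ideal (alternatively apply Corollary \ref{ideals in that case too}). On $\bar{\mathcal{A}}$, the vectors $d_1$ and $d_{2i}+d_{2i+1}$ are orthogonal idempotents and the $W$-norm agrees with the $\mathrm{bv}_1^0$-norm on the coefficient sequence, so $\bar{\mathcal{A}}\cong\mathrm{bv}_1^0$ and $\mathcal{A}:=\R e_\omega\oplus\bar{\mathcal{A}}\cong\mathrm{bv}_1$. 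The projection $\sum_i b_id_i\mapsto \sum_i(b_{2i}-b_{2i+1})d_{2i}$ onto $\widetilde X$ is of norm one and is complementary to the projection onto $\bar{\mathcal{A}}$, yielding the direct-sum decomposition $\R I\oplus\mathcal{K}_\mathrm{diag}(Y)=\widetilde X\oplus\mathcal{A}$ as required.

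The main obstacle will be the submultiplicativity estimate: the variation-type seminorm $\sum_k|a_{2k-1}+a_{2k}|$ behaves differently from the summing-basis piece used in Proposition \ref{all unconditional containing c0}, so one has to verify carefully that it still controls the $\ell_\infty$-norm of the coordinates of finitely supported vectors so that the splitting argument goes through. Once this estimate is in hand, the remaining steps are routine verifications in direct analogy with the $C(\omega)$ case.
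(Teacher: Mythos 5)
Your proof is correct, but it takes a genuinely different route from the paper. The paper does not build a new outer norm: it reuses exactly the space $W$ from Proposition \ref{all unconditional containing c0} (with the summing-basis second coordinate), invokes Lemma \ref{dualizing unconditional plus submultiplicative} to see that $(e_i^*)_i$ in $W^*$ is submultiplicative, applies Theorem \ref{ADT main thm} directly to $W$ (so $Y = W$ rather than $[(d_i^*)_i]$), and then dualizes the projection $Q$ from the $c_0$-case. In particular the paper passes to $Z = [(x_i^*)_i]$ ``by duality,'' so that $(e_{2i}^*)_i$ is equivalent to $(x_i^*)_i$ and $(e_{2i-1}^* + e_{2i}^*)_i$ is equivalent to the difference basis of $\ell_1$. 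By contrast, you replace the summing seminorm by $\sum_k |a_{2k-1}+a_{2k}|$, which in $(d_i)$-coordinates reads $\sum_k|b_{2k-1}-b_{2k+1}|$, and then verify the hypotheses of Corollary \ref{same space banach algebra} from scratch. This is more direct: no duality is needed, and $X$ is handled in one pass rather than through $Z=[(x_i^*)]$; the price is a fresh submultiplicativity estimate (your splitting $a_jb_j-a_{j+1}b_{j+1}=a_j(b_j-b_{j+1})+(a_j-a_{j+1})b_{j+1}$ and the $\ell_\infty$-control $\|a\|_\infty\le 2\|A\|_W$ do the job, giving constant $4$) and a separate check that $(d_i)_i$ is indeed a Schauder basis for your new norm (it is, with constant at most $3$ — this step is implicit in your write-up and should be made explicit, since unlike the paper's summing-basis version, $(e_i)_i$ is no longer monotone in your norm). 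Both routes are valid; the paper's is more economical because it reuses the space and Lemma \ref{dualizing unconditional plus submultiplicative}, whereas yours is more self-contained.
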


\begin{proof}
For notational purposes we will prove the result for the space $Z = [(x_i^*)_i]$ instead of $X$.  This is clearly sufficient by duality. Take $c_{00}(\N)$ with the norm described by \eqref{sum with summing} from the proof of Proposition \ref{all unconditional containing c0} and its completion $W$. By Lemma \ref{dualizing unconditional plus submultiplicative} the sequence $(e_i^*)_i$ is submultiplicative and since $(e_i)_i$ dominates the summing basis of $c_0$ by Theorem \ref{ADT main thm} the space $\R I\oplus \mathcal{K}_\mathrm{diag}(W)$ ca be naturally identified with $\R e^*_\omega\oplus[(e_i^*)]$, where $e^*_\omega = w^*$-$\sum_ie_i^*$.

By the proof of Proposition \ref{all unconditional containing c0} the operator $Q:W\to W$ given by $Q(\sum_ia_ie_i) = \sum_ia_{2i}(e_{2i} - e_{2i-1})$ is a bounded projection, $(e_{2i}-e_{2i-1})_i$ onto $\tilde X = [(e_{2i}-e_{2i-1})_i]$, $\mathrm{ker}(Q) = [(e_{2i-1})_i]$, $(e_{2i}-e_{2i-1})_i$ is equivalent to $(x_i)_i$, and $(e_{2i-1})_i$ is equivalent to the summing basis if $c_0$. Then by duality $Q^*|_{\R e_\omega^*\oplus[(e_i^*)]}$ is a projection onto $(e_{2i}^*)_i$, $\ker(Q^*|_{\R e_\omega^*\oplus[(e_i^*)]}) = \R e_\omega^*\oplus[(e_{2i-1}^*+e_{2i}^*)_i]$, $(e_{2i}^*)_i$ is equivalent to $(x_i^*)_i$, and $(e_{2i-1}^*+e_{2i}^*)_i$ is equivalent to the difference basis of $\ell_1$. Setting $\tilde Z = [(e_{2i}^*)_i]$ and $\mathcal{A} = \R e_\omega^*\oplus[((e_{2i-1}^*+e_{2i}^*)_i)]$ the conclusion follows.
\end{proof}

\subsection{Jamesifying unconditional sequences}
We discuss the jamesification of a Banach space with an unconditional basis and its dual. The classical such example is the jamesification $J$ of $\ell_2$ by R. C. James in \cite{J} (hence also the term jamesification).  The purpose is to study these spaces and their duals as Banach algebras of diagonal operators. We recall the definition of the jamesification of a Schauder basic sequence from \cite[Page 21]{BHO}.

\begin{dfn}[\cite{BHO}]
Let $X$ be a Banach space with a normalized Schauder basis $(x_i)_i$. We define a norm on $c_{00}(\N)$ as follows: for every sequence of scalars $(a_i)_i$ that is eventually zero we set
\begin{equation}
\label{jamesification original definition}
\left\|\sum_{i=1}^\infty a_ie_i\right\| = \sup\left\{\left\|\sum_{n=1}^\infty\left(\sum_{i=k_n}^{m_{n}}a_i\right)x_{k_n}\right\|: 1\leq k_1 \leq m_1 < k_2\leq m_2 <\cdots\right\}.
\end{equation}
We denote the completion of $c_{00}(\N)$ with this norm by $J(X)$ and we call it the ``jamesification'' of $(x_i)_i$. We call the sequence $(e_i)_i$ the unit vector basis of $J(X)$.
\end{dfn}
Of course, the space $J(X)$ depends on the basis $(x_i)_i$ of $X$. In the sequel the basis $(x_i)_i$ used to define $J(X)$ will be specified or clear from the context.

\subsection{The spaces $J(X)^*$ and $J(X)$ as Banach algebras of diagonal operators.}
It was observed by A. D. Andrew and W. L. Green in \cite{AG} that $J$ can be viewed as a Banach algebra.  We apply Theorem \ref{ADT main thm} to them to show that for any space $X$ with an unconditional basis the spaces $J(X)$ and $J(X)^*$ may be viewed as Banach algebras of diagonal operators.

\begin{prp}
\label{Jstar is submultiplicative}
Let $X$ be a Banach space with a normalized and $1$-unconditional basis $(x_i)_i$ and let $(e_i)_i$ denote the Schauder basis of its Jamesification $J(X)$. The following hold.
\begin{itemize}
\item[(i)] The sequence $(e_i)_i$ is a normalized and monotone Schauder basis of $J(X)$.
\item[(ii)] For any sequence of scalars $(a_i)_{i=1}^n$ we have $|\sum_{i=1}^na_i| \leq \|\sum_{i=1}^na_ie_i\|$.
\item[(iii)] For any sequences of scalars $(a_i)_{i=1}^n$, $(b_i)_{i=1}^n$ we have
\begin{equation*}
\left\|\sum_{i=1}^na_ib_ie_i^*\right\|\leq 2\left\|\sum_{i=1}^na_ie_i^*\right\|\left\|\sum_{i=1}^nb_ie_i^*\right\|.
\end{equation*}
\end{itemize}
\end{prp}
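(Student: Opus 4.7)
For part (i), I would verify normalization directly from the supremum definition of the $J(X)$-norm: given $i \in \N$, any admissible interval sequence $\mathcal{I} = ([k_n,m_n])_n$ meets $i$ in at most one interval $I_{n_0}$, so $A_\mathcal{I}(e_i) := \sum_n (\sum_{j \in I_n} \delta_{ij})x_{k_n}$ is either zero or a single basis vector $x_{k_{n_0}}$ of $X$-norm one; the value $1$ is attained by $\mathcal{I} = (\{i\})$. For monotonicity, given $y = \sum c_i e_i \in c_{00}(\N)$ and $N \in \N$, one truncates an arbitrary admissible $\mathcal{I}$ to $\widetilde{\mathcal{I}} = ([k_n, \min(m_n, N)])_{n : k_n \leq N}$, observes that this is again admissible with $A_{\widetilde{\mathcal{I}}}(y) = A_\mathcal{I}(P_N y)$, and concludes $\|A_\mathcal{I}(P_N y)\|_X \leq \|y\|_{J(X)}$. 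The Schauder basis property then follows from the density of $c_{00}(\N)$ in $J(X)$.

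For part (ii), I would simply apply the single-interval admissible sequence $\mathcal{I} = ([1, n])$; this gives $A_\mathcal{I}(\sum_{i=1}^n a_i e_i) = (\sum_{i=1}^n a_i)\,x_1$ of $X$-norm $|\sum_{i=1}^n a_i|$, witnessing the claimed lower bound for $\|\sum a_i e_i\|_{J(X)}$.

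Part (iii) is the core of the proposition, and I plan to handle it via duality combined with a multiplier estimate on $J(X)$. Writing $\|\sum a_i b_i e_i^*\|_{J(X)^*} = \sup_{\|y\|_{J(X)} \leq 1} |\sum a_i b_i c_i|$ with $y = \sum c_i e_i$, and recognizing $\sum a_i b_i c_i = (\sum a_i e_i^*)(M_b y)$ where $M_b$ denotes coordinate multiplication by $(b_i)_i$, submultiplicativity reduces to the multiplier bound
\[
\|M_b y\|_{J(X)} \leq 2\,\Big\|\sum_j b_j e_j^*\Big\|_{J(X)^*}\|y\|_{J(X)}.
\]
To establish this, I would fix an admissible $\mathcal{I}$ and $x^* \in B_{X^*}$, and identify
\[
x^*\bigl(A_\mathcal{I}(M_b y)\bigr) = \sum_n x^*(x_{k_n}) \sum_{i \in I_n} b_i c_i = \Big(\sum_j \mu_j b_j e_j^*\Big)(y),
\]
where $\mu_j = x^*(x_{k_n})$ for $j \in I_n$ and $\mu_j = 0$ otherwise. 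The sequence $\mu$ is a step function on $\bigcup_n I_n$ with $\|\mu\|_\infty \leq 1$, and the remaining task is to prove the ``step-weighted restriction'' estimate $\|\sum \mu_j b_j e_j^*\|_{J(X)^*} \leq 2 \|\sum b_j e_j^*\|_{J(X)^*}$.

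The main obstacle is this last estimate. My approach is to dualize once more and reduce to showing that coordinate multiplication $M_\mu$ is bounded on $J(X)$ with operator norm at most $2$. The argument combines the monotonicity of $(e_i)_i$ from part (i) -- which gives $\|P_I\| \leq 2$ for restriction to any single interval $I$ -- with the $1$-unconditionality of $(x_i)_i$, which absorbs the per-interval constants $\mu^{(n)} = x^*(x_{k_n})$ into the $X$-norm of $A_\mathcal{J}(y)$. The delicate technical point is that a test interval sequence $\mathcal{J}$ for $\|M_\mu z\|_{J(X)}$ need not be compatible with the partition $(I_n)_n$: when an interval of $\mathcal{J}$ straddles several $I_n$'s, the starting basis vectors in $A_\mathcal{J}(M_\mu z)$ differ from those in a refined admissible sequence, and the factor $2$ in the proposition is precisely the price paid for rectifying this incompatibility via the interval restriction norm from part (i).
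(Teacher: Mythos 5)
Your parts (i) and (ii) are correct and match the paper's (terse) treatment, which simply notes they follow from \eqref{jamesification original definition} and unconditionality.

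For part (iii) the paper argues via \cite[Theorem 2.4]{ADT}: it exhibits a $1$-norming set $K\subset J(X)^*$ consisting of functionals $\sum_n c_n\bigl(\sum_{i\in I_n} e_i^*\bigr)$ subject to $\|\sum_n c_n x_{k_n}^*\|\le 1$, and shows $K\cdot K\subset 2B_{J(X)^*}$. Your duality reformulation -- reduce to a multiplier bound $\|M_b\|\le 2\|\sum b_je_j^*\|$, then again to the ``step-weighted restriction estimate,'' i.e.\ $\|M_\mu\|_{\mathcal L(J(X))}\le 2$ for step sequences $\mu_j=x^*(x_{k_n})$ on $I_n$ -- is a legitimate and essentially equivalent route: the $\mu$'s parametrize exactly the norming set $K$, so your target estimate is a dualized restatement of $K\cdot K\subset 2B_{J(X)^*}$. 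All three reductions are sound, and the final claim is in fact true.

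The gap is in how you propose to finish, and it is not merely cosmetic. You attribute the factor $2$ to the interval-restriction bound $\|P_I\|\le 2$ coming from monotonicity, but that projection estimate plays no role here and cannot rescue the ``incompatibility'' you correctly identified. When you test $\|M_\mu z\|$ against a second admissible family $\mathcal J=([l_m,p_m])_m$ and a second $\tilde x^*\in B_{X^*}$, you get
\[
\tilde x^*\bigl(A_{\mathcal J}(M_\mu z)\bigr)=\sum_{(m,n)}\tilde x^*(x_{l_m})\,x^*(x_{k_n})\sum_{i\in[l_m,p_m]\cap I_n}c_i.
\]
The correct mechanism, which is also the combinatorial heart of the paper's $K\cdot K\subset 2B$ computation, is to split the nonempty intersections $[l_m,p_m]\cap I_n$ into two families according to \emph{which endpoint governs}: $S_1$, where $\max(l_m,k_n)=l_m$ (for each $m$ there is at most one such $n$), and $S_2$, where $\max(l_m,k_n)=k_n>l_m$ (for each $n$ at most one such $m$). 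On $S_1$ the intersections form an admissible family whose left endpoints are $l_m$, the scalar $x^*(x_{k_n})\in[-1,1]$ is absorbed into $\tilde x^*$ by $1$-unconditionality of $(x_i^*)_i$, and that block is bounded by $\|z\|$; symmetrically for $S_2$ with the roles of $x^*$ and $\tilde x^*$ swapped. Adding the two bounds gives the $2$. This two-way partition -- not a $\|P_I\|\le 2$ estimate -- is what rectifies the endpoint mismatch, and it is exactly the decomposition $fg=fg|_1+fg|_2$ indexed by $R_f$ and $R_g$ in the paper's proof. As written, your sketch names the obstacle but does not actually overcome it, so the proof is incomplete at its crucial step.
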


\begin{proof}
Statements (i) and (ii) easily follow from \eqref{jamesification original definition} and unconditionality. We will use the equivalence of statements (2) and (3) of \cite[Theorem 2.4]{ADT}. For (iii)  it is sufficient to find a 1-norming set $K$ in $J(X)^*$ that is contained in the linear span of $(e_i^*)_i$ and satisfies $K\cdot K\subset 2B_{J(X)^*}$. The will prove that the desired set is
\[
\begin{split}
K = \left\{\sum_{n=1}^\infty c_n\left(\sum_{i=k_n}^{m_{n}}e_i^*\right):\right.& (c_n)_{n=1}^\infty\in c_{00}(\N),\; k_1\leq m_1<k_2\leq m_2<\cdots\\
&\left.\text{and } \left\|\sum_{n=1}^\infty c_nx_{k_n}^*\right\|\leq 1\right\}
\end{split}
\]
By \eqref{jamesification original definition} it is easy to see that $K$ is a 1-norming set. To show that $K\cdot K\subset 2 B_{J(X)^*}$ let $f = \sum_{n=1}^\infty a_n(\sum_{i=k_n}^{d_{n}}e_i^*)$ and $f = \sum_{m=1}^\infty b_n(\sum_{i=l_m}^{t_{m}}e_i^*)$ be in $K$. Then, one can check that $fg$ has the form
$$fg = \sum_{r=1}^\infty c_r\left(\sum_{i=q_r}^{p_{r}}e_i^*\right)$$
where each $q_r$ is either some $k_n$ or some $l_m$. Define the sets $R_f = \{r\in\N: q_r=k_{n_r}$ for some $n_r\in\N\}$ and $R_g = \{r\in\N: q_r = m_{l_r}$ for some $l_r\in\N\}\setminus R_f$ and the functionals
\[
fg|_1 = \sum_{r\in R_f}c_r\left(\sum_{i=q_r = k_{n_r}}^{p_r}e_i^*\right) \text{ and } fg|_2 = \sum_{r\in R_g}c_r\left(\sum_{i=q_r = l_{m_r}}^{p_r}e_i^*\right).
\]
Clearly, $fg = fg|_1 + fg|_2$. Observe that for each $r\in R_f$ there is $\bar m_r\in\N$ so that $c_r = a_{k_{n_r}}b_{l_{\bar m_r}}$. Unconditionality of the basis $(x_i)_i$ yields that
$$\left\|\sum_{r\in R_f}c_rx_{q_r}^*\right\| = \left\|\sum_{r\in R_f}a_{k_{n_r}}b_{l_{\bar m_r}}x_{k_{n_r}}^*\right\| \leq \sup_n|b_n|\left\|\sum_{n=1}^\infty a_kx_{n_k}^*\right\| \leq 1.$$
This implies that $fg|_1\in K$ and similarly it follows that $fg|_2$ is in $K$ as well which yields the conclusion.
\end{proof}

A space that displays similar qualities to the jamesification of a space is a certain subspace of its dual defined below.
\begin{dfn}
\label{define scriptjstar}
Let $X$ be Banach space with a normalized 1-unconditional basis $(x_i)_i$. Denote by $s:J(X)\to \R$ the norm-one linear functional defined by $s(\sum_ia_ie_i) = \sum_ia_i$ and denote by $\mathcal{J}_*(X)$ the subspace of $J(X)^*$
\[\mathcal{J}_*(X) = \R s\oplus [(e^*_i)_i].\]
Denote by $(v_i)_i$ the sequence in $\mathcal{J}_*(X)$ given by $v_1 = s = w^*$-$\sum_{j=1}^\infty e_i^*$, and for all $i\in\N$, $v_{i+1} = s - \sum_{j=1}^ie_j^* = w^*$-$\sum_{j=i+1}^\infty e_j^*$.
\end{dfn}
The closed linear span of $(v_i)_i$ is clearly $\mathcal{J}_*(X)$ and as we shall see later it is a normalized and monotone Schauder basis as well (see Proposition \ref{basic properties scriptjstar} (i)). The space $\mathcal{J}_*(X)$ is qualitatively similar to the space $J([(x_i^*)_i])$ however they do not coincide in general.

\begin{rmk}
\label{when is it the dual I wonder}
If $X$ has a normalized 1-unconditional basis it is implied by \cite[Theorem 2.2 (2) and Theorem 4.1 (2)]{BHO} that if $\ell_1$ does not embed into $X$ then $\mathcal{J}_*(X) = J(X)^*$.
\end{rmk}

\begin{rmk}
\label{scriptjstar submultiplicative}
If $X$ is a Banach space with a 1-unconditional basis $(x_i)_i$ then the sequence $(e_i^*)_i$ in $J(X)^*$ is submultiplicative with the element $s = w^*$-$\sum_{i=1}^\infty e_i^*$ acting as a multiplicative identity. Hence the space $\mathcal{J}_*(X)$ is submultiplicative (not with $(v_i)_i$) and $s$ acts as an identity on it. In addition, $v_iv_j = v_{\max\{i,j\}}$ for all $i$, $j\in\N$.
\end{rmk}

\begin{prp}
\label{basic properties scriptjstar}
If $X$ has a normalized 1-unconditional basis $(x_i)_i$ then the basis $(v_i)_i$ of $\mathcal{J}_*(X)$ satisfies the following properties.
\begin{itemize}

\item[(i)] The basis $(v_i)_i$ is normalized and monotone.

\item[(ii)] For any scalars $(a_i)_{i=1}^n$ we have $|\sum_{i=1}^na_i| \leq \|\sum_{i=1}^na_iv_i\|$.


\item[(iii)] The unit ball of $\mathcal{J}_*(X)$ is a 1-norming set for $J(X)$, hence $J(X)$ is naturally isometric to a subspace of $(\mathcal{J}_*(X))^*$ with the identification $v_1^* = e_1$, and for all $i\in\N$, $v_{i+1}^* = e_{i+1} - e_i$. In particular, the closed linear span of $[(v_i^*)_i]$ is isometrically isomorphic to $J(X)$.

\item[(iv)] For any sequence of scalars $(a_i)_i$ that is eventually zero we have
\begin{equation}
\label{variation form}
\left\|\sum_{i=1}^\infty a_iv_i^*\right\| = \sup\left\{\left\|\sum_{n=1}^\infty\left(a_{k_n} - a_{m_n}\right)x_{k_n}\right\|: 1\leq k_1 < m_1\leq k_2<m_2\leq\cdots\right\}.
\end{equation}
\item[(v)] For any sequences of scalars $(a_i)_{i=1}^n$ and $(b_i)_{i=1}^n$ we have
\begin{equation}
\label{james variation submultiplicative}
\left\|\sum_{i=1}^\infty a_ib_iv_i^*\right\| \leq 2\left\|\sum_{i=1}^\infty a_iv_i^*\right\|\left\|\sum_{i=1}^\infty b_iv_i^*\right\|.
\end{equation}
\end{itemize}

\end{prp}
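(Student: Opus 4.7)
My plan is to establish parts (iii) and (iv) first since the remaining assertions essentially unpack them, and to handle (v) last because it carries the substantive content. For (iii), I would take the $1$-norming set $K$ from the proof of Proposition \ref{Jstar is submultiplicative}. Every element of $K$ is a finite linear combination of the $e_i^*$'s, hence lies in $\mathcal{J}_*(X)$; being already norming for $J(X)$ by \eqref{jamesification original definition}, the unit ball $B_{\mathcal{J}_*(X)}$ is norming as well, and one obtains the canonical isometric embedding $J(X) \hookrightarrow \mathcal{J}_*(X)^*$. A direct calculation using $v_j = w^*\text{-}\sum_{k \geq j}e_k^*$ gives $v_j(e_i) = 1$ when $j \leq i$ and $0$ otherwise, so under the embedding $e_i = \sum_{j=1}^i v_j^*$; telescoping yields $v_1^* = e_1$ and $v_{i+1}^* = e_{i+1} - e_i$, and since each $e_i$ lies in $[(v_j^*)_j]$ this shows $[(v_i^*)_i] = J(X)$ isometrically.

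Formula (iv) is then a matter of rewriting: using (iii), in $J(X)$ one has $\sum_i a_i v_i^* = \sum_j (a_j - a_{j+1}) e_j$ (with the terminal $a$ treated as zero), and substituting into \eqref{jamesification original definition} makes the interval sums $\sum_{j=k}^{m}(a_j - a_{j+1})$ telescope to $a_k - a_{m+1}$; the change of variables $m_n \mapsto m_n + 1$ then converts the admissibility condition $k_n \leq m_n < k_{n+1}$ into the one appearing in \eqref{variation form}. Statement (ii) follows from $\bigl(\sum_{i=1}^n a_i v_i\bigr)(e_n) = \sum_{i=1}^n a_i$ together with $\|e_n\|_{J(X)} = 1$; the same kind of evaluation, combined with the observation that $|\sum_{k=i}^N b_k|$ is one of the sums appearing in the norming supremum \eqref{jamesification original definition}, yields $\|v_i\| = 1$.

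For the monotonicity in (i), given $\phi = \sum_{i=1}^n a_i v_i$ and $\psi = \sum_{i=1}^{n+1} a_i v_i$, for each finitely supported $x = \sum_k b_k e_k \in B_{J(X)}$ I would set $y = \sum_{k<n} b_k e_k + \bigl(\sum_{k \geq n} b_k\bigr) e_n$ and verify $\phi(x) = \psi(y)$ by direct calculation. The inequality $\|y\|_{J(X)} \leq \|x\|_{J(X)}$ is obtained from any admissible family of intervals realising $\|y\|$: intervals lying strictly above $n$ contribute zero, while the interval (if any) containing $n$ absorbs the whole tail $\sum_{k \geq n} b_k$, which is recovered for $x$ by enlarging that interval to $[k_{j_0}, \max \supp(x)]$. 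Taking the supremum over $x$ gives $\|\phi\| \leq \|\psi\|$; since the closed linear span of $(v_i)_i$ is $\mathcal{J}_*(X)$ by definition, this also shows that $(v_i)_i$ is a monotone Schauder basis.

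The genuine content lies in (v). By (iv) it suffices to bound, for every admissible family of indices,
\[
\Bigl\|\sum_n (a_{k_n}b_{k_n} - a_{m_n}b_{m_n})x_{k_n}\Bigr\|_X \leq 2 \Bigl\|\sum_i a_i v_i^*\Bigr\|\,\Bigl\|\sum_i b_i v_i^*\Bigr\|.
\]
The algebraic identity
\[
a_{k_n}b_{k_n} - a_{m_n}b_{m_n} = a_{k_n}(b_{k_n} - b_{m_n}) + b_{m_n}(a_{k_n} - a_{m_n})
\]
splits the sum into two pieces which, by $1$-unconditionality of $(x_i)_i$, are bounded by $(\sup_n|a_{k_n}|)\,\|\sum_n(b_{k_n}-b_{m_n})x_{k_n}\|_X$ and $(\sup_n|b_{m_n}|)\,\|\sum_n(a_{k_n}-a_{m_n})x_{k_n}\|_X$ respectively. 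The step I expect to require most care is dominating the individual coordinates by the global norm: choosing a single interval $[k, M]$ in \eqref{variation form} with $M$ large enough that $a_M = 0$ yields $|a_k| = \|a_k x_k\|_X \leq \|\sum_i a_i v_i^*\|$, and symmetrically for $(b_i)$. Combining these estimates produces the factor of $2$ and completes the argument.
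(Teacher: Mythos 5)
Your proposal is correct and follows essentially the same route as the paper: establish the isometric embedding $J(X)\hookrightarrow\mathcal{J}_*(X)^*$ by noting the span of $(e_i^*)_i$ lies in $\mathcal{J}_*(X)$ and norms $J(X)$, telescope to get the $v_i^*$ identification and formula (iv), and then derive (v) by the splitting $a_{k_n}b_{k_n}-a_{m_n}b_{m_n}=a_{k_n}(b_{k_n}-b_{m_n})+b_{m_n}(a_{k_n}-a_{m_n})$ together with $1$-unconditionality. The paper handles (i) more economically by reading monotonicity of $(v_i^*)_i$ directly off formula (iv) and dualizing, whereas your direct collapsing argument (replacing $x$ by $y$ with the tail of coefficients accumulated at position $n$) is a valid if slightly more laborious alternative; either way the content is the same.
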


\begin{proof}
The second statement is a consequence of the fact that $(e_i)_i$ is normalized. For (iii) observe that the linear span of $(e_i^*)_i$ is a subset of $\mathcal{J}_*(X)$ and use the monotonicity of $(e_i)_i$. Then  (iv) follows from (iii) and \eqref{jamesification original definition}. Furthermore, (iv) implies that $(v_i^*)_i$ is monotone and \eqref{jamesification original definition} yields that $\|v_i\| = 1$ for all $i\in\N$, i.e. (i) holds. Finally, (v) follows from 1-unconditionality of $(x_k)_k$ and the triangle inequality applied to (iv).
\end{proof}

\begin{rmk}
\label{J(X) submultiplicative}
If $X$ is a Banach space with a  normalized 1-unconditional basis $(x_i)_i$ then the sequence $(v_i^*)_i$ is also a monotone (but not necessarily normalized) basis of $J(X)$ that satisfies \eqref{variation form}. Also, $J(X)$ with this basis satisfies \eqref{james variation submultiplicative}. We call $(v_i^*)_i$ the difference basis of $J(X)$. Hence, $J(X)$ is submultiplicative when endowed with pointwise multiplication with respect to this basis. Note that $e_ie_j = e_{\min\{i,j\}}$ for all $i$, $j\in\N$ and hence $(e_i)_i$ is an approximate identity  however $J(X)$ does not contain a multiplicative identity. If we identify $J(X)$ with a subspace of $(\mathcal{J}_*(X))^*$ then the element $e_\omega= w^*$-$\sum_{i=1}^\infty v_i^* = w^*$-$\lim_ie_i$ acts as a multiplicative identity on $J(X)$. We view the subspace $\R e_\omega\oplus J(X)$ of $(\mathcal{J}_*(X))^*$ as the unitization of $J(X)$.
\end{rmk}

\begin{prp}
\label{james spaces as diagonal operators}
Let $X$ be a Banach space with a 1-unconditional basis $(x_i)_i$. The following hold.
\begin{itemize}

\item[(i)] $\mathcal{L}_\mathrm{diag}(J(X)) \equiv J(X)^*$ and $\R I\oplus \mathcal{K}_\mathrm{diag}(X) \equiv \mathcal{J}_*(X) = \R s\oplus[(e_i^*)_i]$ and

\item[(ii)] $\mathcal{L}_\mathrm{diag}(\mathcal{J}_*(X)) \equiv \mathcal{J}_*(X)^*$ and $\mathcal{K}_\mathrm{diag}(\mathcal{J}_*(X)) \equiv J(X)$.

\end{itemize}
More precisely, the map $A:\mathcal{L}_\mathrm{diag}(J(X)) \to J(X)^*$ defined by
\[A\Big(\mathrm{SOT}\text{-}\sum_{i=1}^\infty\la_i e_i^*\otimes e_i\Big) = w^*\text{-}\sum_{i=1}^\infty\la_i e_i^*\]
is an onto isomorphism with $\|A\|\|A^{-1}\|\leq 2$ and the image of $\R I\oplus \mathcal{K}_\mathrm{diag}(J(X))$ under $A$ is $\mathcal{J}_*(X)$. Also, the map $B:\mathcal{L}_\mathrm{diag}(\mathcal{J}_*(X)) \to \mathcal{J}_*(X)^*$ defined by
\[B\Big(\mathrm{SOT}\text{-}\sum_{i=1}^\infty\la_i v_i^*\otimes v_i\Big) = w^*\text{-}\sum_{i=1}^\infty\la_i v_i^*\]
is an onto isomorphism with $\|B\|\|B^{-1}\|\leq 2$ and the image of $\R\oplus \mathcal{K}_\mathrm{diag}(\mathcal{J}_*(X))$ is $\R e_\omega\oplus J(X)$.
\end{prp}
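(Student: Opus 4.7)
The plan is to apply Theorem \ref{ADT main thm} twice, once to $J(X)$ and once to $\mathcal{J}_*(X)$, and then trace through the resulting identifications to isolate the scalar-plus-compact subspaces. Since Propositions \ref{Jstar is submultiplicative} and \ref{basic properties scriptjstar} were set up precisely to verify the hypotheses of the ADT theorem, the bulk of the work has already been done; what remains is bookkeeping.

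For (i), I would first check the hypotheses of Theorem \ref{ADT main thm} for the space $J(X)$: condition (ii)(a) (domination of the summing basis of $c_0$) is Proposition \ref{Jstar is submultiplicative}(ii), and condition (ii)(b) is Proposition \ref{Jstar is submultiplicative}(iii) with submultiplicativity constant $2$. The theorem then produces an isomorphism $J(X)^*\to\mathcal{L}_\mathrm{diag}(J(X))$ extending $e_i^*\mapsto e_i^*\otimes e_i$, with norm distortion at most $2$. Comparing with the SOT representation \eqref{sot representation of diagonal operators}, this isomorphism coincides with $A^{-1}$, so $A$ is an isomorphism with $\|A\|\|A^{-1}\|\le 2$. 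To identify the image of $\R I\oplus \mathcal{K}_\mathrm{diag}(J(X))$, I would note that $A(I)=w^*$-$\sum_i e_i^* = s$, and that a diagonal operator $T=\mathrm{SOT}$-$\sum_i \la_i e_i^*\otimes e_i$ is compact exactly when this series converges in operator norm, which (since $(e_i^*\otimes e_i)_i$ is a basis of $\mathcal{K}_\mathrm{diag}(J(X))$ by the discussion following \eqref{sot representation of diagonal operators}) is equivalent to $A(T)=\sum_i\la_i e_i^*$ converging in norm in $J(X)^*$, i.e., to $A(T)\in[(e_i^*)_i]$. Hence $A$ maps $\R I\oplus\mathcal{K}_\mathrm{diag}(J(X))$ onto $\R s\oplus[(e_i^*)_i]=\mathcal{J}_*(X)$.

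For (ii), I would repeat the argument with $\mathcal{J}_*(X)$ in place of $J(X)$ and with the basis $(v_i)_i$. The hypotheses of Theorem \ref{ADT main thm} come from Proposition \ref{basic properties scriptjstar}: (ii) gives domination of the summing basis of $c_0$; and Proposition \ref{basic properties scriptjstar}(iii) identifies $J(X)=[(v_i^*)_i]$ isometrically inside $\mathcal{J}_*(X)^*$, so that the submultiplicativity of $(v_i^*)_i$ recorded in Proposition \ref{basic properties scriptjstar}(v) is exactly submultiplicativity of the biorthogonal sequence in $\mathcal{J}_*(X)^*$ with constant $2$. This yields that $B$ is an isomorphism with $\|B\|\|B^{-1}\|\le 2$. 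Using Remark \ref{J(X) submultiplicative}, the identity satisfies $B(I)=w^*$-$\sum_i v_i^* = e_\omega$. As in the previous paragraph, a diagonal operator on $\mathcal{J}_*(X)$ is compact iff its associated sum $\sum_i\la_i v_i^*$ converges in norm in $\mathcal{J}_*(X)^*$, i.e., iff $B(T)\in[(v_i^*)_i]=J(X)$. Therefore $B$ sends $\mathcal{K}_\mathrm{diag}(\mathcal{J}_*(X))$ onto $J(X)$ and $\R I\oplus\mathcal{K}_\mathrm{diag}(\mathcal{J}_*(X))$ onto $\R e_\omega\oplus J(X)$.

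The only point that requires any care, rather than being routine, is the interplay between the $w^*$-convergent series produced by the ADT identification and the norm-convergent series characterizing compact diagonal operators; this is what pins down the images of $\mathcal{K}_\mathrm{diag}(J(X))$ and $\mathcal{K}_\mathrm{diag}(\mathcal{J}_*(X))$ as $[(e_i^*)_i]$ and $J(X)=[(v_i^*)_i]$ respectively. Everything else is a direct translation between the Schauder basis pictures of $J(X)^*$ and $\mathcal{J}_*(X)^*$ via the dualities already established in Proposition \ref{basic properties scriptjstar}(iii) and Remark \ref{J(X) submultiplicative}.
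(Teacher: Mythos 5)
Your argument follows the paper's route exactly: apply the ADT criterion (submultiplicativity of the biorthogonal sequence plus domination of the summing basis) to $J(X)$ with basis $(e_i)_i$ for part (i) and to $\mathcal{J}_*(X)$ with basis $(v_i)_i$ for part (ii), using Propositions \ref{Jstar is submultiplicative} and \ref{basic properties scriptjstar} as the verifications, then identify the images of the compact diagonals by matching the Schauder bases. The one small mismatch is that you invoke Theorem \ref{ADT main thm}, which as stated is purely qualitative and does not supply $\|A\|\|A^{-1}\|\le 2$; the paper instead cites the quantitative version, Theorem~2.4 of \cite{ADT}, whose norming-set formulation produces the factor $2$ directly from the submultiplicativity constant established in Propositions \ref{Jstar is submultiplicative}(iii) and \ref{basic properties scriptjstar}(v).
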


\begin{proof}
Item (i) follows readily from Proposition \ref{Jstar is submultiplicative} and \cite[Theorem 2.4]{ADT} and the definition of $\mathcal{J}_*(X)$. Item (ii) follows from Proposition \ref{basic properties scriptjstar} and \cite[Theorem 2.4]{ADT} as well.
\end{proof}

\begin{rmk}
\label{james spaces are Calkin}
Taking $\ell_p$, $1<p<\infty$, with the unit vector basis yields that the spaces $\R e_\omega\oplus J_p$ and $J_p^*$, $1<p<\infty$, are of the form $\R I\oplus\mathcal{K}_\mathrm{diag}(Y)$.
\end{rmk}

\begin{cor}
\label{quasi-reflexive any order}
For every $n\in\N$ there is a space $Y$ with a basis so that $\R I\oplus\mathcal{K}_\mathrm{diag}(Y)$ is quasi-reflexive of order $n$.
\end{cor}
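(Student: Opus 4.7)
The approach I would take is to realize a quasi-reflexive algebra of order $n$ by constructing $Y$ so that $\R I\oplus\mathcal{K}_\mathrm{diag}(Y)$ decomposes as a single copy of the scalars together with $n$ copies of a quasi-reflexive building block. The building block will be $\mathcal{K}_\mathrm{diag}(J_p)$ (any $1<p<\infty$), and the natural way to glue $n$ copies of it while preserving the diagonal structure is to take an $\ell_\infty$-direct sum of $n$ copies of $J_p$ equipped with an interleaved basis; this makes every diagonal operator on $Y$ block-diagonal with respect to the $n$ factors.

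Concretely, set $Y=(J_p\oplus\cdots\oplus J_p)_\infty$ ($n$ copies) and endow $Y$ with the Schauder basis $(e_j)_j$ obtained by interleaving the standard bases $(f_i^{(k)})_i$ of the summands, namely $e_{n(i-1)+k}=f_i^{(k)}$ placed in the $k$th factor. Since the sum is $\ell_\infty$ and a diagonal operator preserves each factor, the norm of such an operator equals the maximum of the norms of its restrictions, and compactness on $Y$ is equivalent to compactness on each of the finitely many factors. This yields Banach-space identifications $\mathcal{K}_\mathrm{diag}(Y)\cong\mathcal{K}_\mathrm{diag}(J_p)^n$ and $\mathcal{L}_\mathrm{diag}(Y)\cong\mathcal{L}_\mathrm{diag}(J_p)^n$ with the $\ell_\infty$-norm. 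Because the identity of $Y$ restricts to the identity of every factor, the linear bijection
\[
\Phi:\R\oplus\mathcal{K}_\mathrm{diag}(J_p)^n\to\R I\oplus\mathcal{K}_\mathrm{diag}(Y),\qquad \Phi(\la,K_1,\ldots,K_n)=\la I+(K_1,\ldots,K_n),
\]
is clearly bounded, and it is bounded below by the elementary Calkin-seminorm estimates $|\la|\leq\|\Phi(\la,K)\|$ and $\|K_k\|\leq 2\|\Phi(\la,K)\|$, so $\Phi$ is a Banach space isomorphism.

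It then remains to run a short quasi-reflexivity bookkeeping. By Remark \ref{when is it the dual I wonder}, $\mathcal{J}_*(\ell_p)=J_p^*$ (since $\ell_p$ does not contain $\ell_1$ for $p>1$); Proposition \ref{james spaces as diagonal operators}(i) then gives $\R I\oplus\mathcal{K}_\mathrm{diag}(J_p)\cong J_p^*$, under which $\mathcal{K}_\mathrm{diag}(J_p)$ corresponds to the closed codimension-one subspace $[(e_i^*)_i]$ complementary to $\R s$. Since $J_p$ and hence $J_p^*$ is quasi-reflexive of order one, the short exact sequence $0\to[(e_i^*)_i]\to J_p^*\to\R\to 0$ together with its two duals yields by a snake-lemma computation that $[(e_i^*)_i]^{**}/[(e_i^*)_i]$ is also one-dimensional. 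Taking an $n$-fold direct sum then produces a space quasi-reflexive of order $n$ (the bidual functor commutes with finite direct sums, so the quasi-reflexive defects add), and adjoining the reflexive summand $\R$ preserves the order, so $\R\oplus\mathcal{K}_\mathrm{diag}(J_p)^n$ is quasi-reflexive of order $n$. Transporting through $\Phi$ gives the conclusion. The only bookkeeping that requires attention is the verification that the interleaved basis really is a Schauder basis for $(J_p\oplus\cdots\oplus J_p)_\infty$ and that operator norms decompose as maxima; because there are only finitely many factors these are routine, so I do not anticipate a serious obstacle.
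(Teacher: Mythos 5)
Your proof is correct, and the underlying idea coincides with the paper's (interleave $n$ copies of a James-type quasi-reflexive-of-order-one building block), but the route is genuinely different. The paper starts on the \emph{algebra} side: it takes $X = J(\ell_2)\oplus\cdots\oplus J(\ell_2)$ with the interleaved \emph{difference} basis $(w_i)_i$, observes that $(w_i)_i$ has uniformly bounded partial sums and is submultiplicative, and then invokes Corollary \ref{same space banach algebra} to obtain $\R I\oplus\mathcal{K}_\mathrm{diag}(Y)\cong\R e_\omega\oplus X$ for $Y=[(w_i^*)_i]$; quasi-reflexivity of order $n$ then follows from that of $J$. You instead start on the \emph{space} side: you take $Y=(J_p\oplus\cdots\oplus J_p)_\infty$ with the interleaved unit vector basis, use the factorwise decomposition of diagonal operators on a finite $\ell_\infty$-sum to identify $\R I\oplus\mathcal{K}_\mathrm{diag}(Y)\cong\R\oplus\mathcal{K}_\mathrm{diag}(J_p)^n$ directly, and then need the extra small observation that $\mathcal{K}_\mathrm{diag}(J_p)\cong[(e_i^*)_i]\subset J_p^*$ is itself quasi-reflexive of order one (which you correctly extract from Proposition \ref{james spaces as diagonal operators} via the annihilator computation). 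Your route bypasses the submultiplicativity criterion at the cost of this extra quasi-reflexivity bookkeeping; the paper's route is a one-line appeal to the standing criterion of the section. Both are short and both are correct.
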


\begin{proof}
For $n=2$ take, e.g., the space the space $J = J(\ell_2)$ with the basis $(e_i)_i$ and $Y = J\oplus J$ endowed with the basis $(w_i)_i$ so that $w_{2i-1} = (v_i^*,0)$ and $w_{2i} = (0,v_i^*)$. It is straightforward to check that this basis satisfied the assumptions of Corollary \ref{same space banach algebra} and hence $\mathcal{K}_\mathrm{diag}(Y)$ is isomorphic to $J\oplus J$ which is quasi-reflexive of order two. Adding one dimension does not alter this fact. Of course, this works for any $n\in\N$.
\end{proof}

The following demonstrates some additional examples of spaces and their duals that can be viewed as spaces of the form $\R I\oplus\mathcal{K}_\mathrm{diag}(Y)$. In particular, it applies to a large class of spaces with spreading bases, e.g., to the diagonal of $\ell_q\oplus J_p$ for $1<q<p<\infty$. It follows from \cite{AMS} that this Corollary does not apply to all spaces with conditional spreading bases.
\begin{cor}
\label{this gives many conditional spreading}
Let $X$ and $Y$ be Banach spaces with normalized 1-unconditional bases $(x_i)_i$ and $(y_i)_i$ respectively. Denote by $(\tilde e_i)_i$ the unit vector basis of $J(X)$, define a norm on $c_{00}(\N)$ so that for any sequence of scalars $(a_i)_i$ that is eventually zero
\[\left\|\sum_{i=1}^\infty a_ie_i\right\| = \max\left\{\left\|\sum_{i=1}^\infty a_iy_i\right\|, \left\|\sum_{i=1}^\infty a_i\tilde e_i\right\|\right\},\]
and let $Z$ denote the completion of $c_{00}(\N)$ with respect to this norm.
Then the sequence $(e_i^*)_i$ in $Z^*$ is submultiplicative, $\mathcal{L}_\mathrm{diag}(Z)\equiv Z^*$ and $\R I\oplus \mathcal{K}_\mathrm{diag}(Z) \equiv \R s\oplus [(e_i^*)_i]$. Also, if we define the basis $(d_i)_i$ of $Z$ by setting $d_1 = e_1$ and $d_{i+1} = e_{i+1}-e_i$ for $i\in\N$  then $Z$ endowed with this basis is submultiplicative and $\mathcal{K}_\mathrm{diag}(\R s\oplus[(e_i^*)_i]) \equiv Z$.
\end{cor}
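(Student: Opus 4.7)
The plan is to apply Theorem \ref{ADT main thm} to $Z$ with its basis $(e_i)_i$ for the first three assertions, and then Corollary \ref{same space banach algebra} to $Z$ with its basis $(d_i)_i$ for the last two.

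First, I would verify that $(e_i)_i$ is a normalized monotone Schauder basis of $Z$ that dominates the summing basis of $c_0$: normalization and monotonicity follow from the corresponding properties of $(y_i)_i$ and $(\tilde e_i)_i$ combined with the max structure of the norm, while $|\sum a_i|\leq\|\sum a_i\tilde e_i\|_{J(X)}\leq\|\sum a_ie_i\|_Z$ by Proposition \ref{Jstar is submultiplicative}~(ii). Submultiplicativity of $(e_i^*)_i$ in $Z^*$ then follows from Lemma \ref{dualizing unconditional plus submultiplicative}, applied with the roles of $X$ and $Y$ in the lemma played by $Y$ and $J(X)$ respectively (and $M=N=\N$), since $(y_i)_i$ is $1$-unconditional and $(\tilde e_i^*)_i$ in $J(X)^*$ is submultiplicative by Proposition \ref{Jstar is submultiplicative}~(iii). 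Theorem \ref{ADT main thm} then yields $\mathcal{L}_\mathrm{diag}(Z)\equiv Z^*$ via $e_i^*\otimes e_i\mapsto e_i^*$; under this identification $I$ corresponds to $s=w^*\text{-}\sum_ie_i^*$ and compact diagonal operators correspond to norm-convergent expansions, so $\R I\oplus\mathcal{K}_\mathrm{diag}(Z)\equiv\R s\oplus[(e_i^*)_i]$.

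Next I would turn to $(d_i)_i$. Since $d_1+\cdots+d_n=e_n$ and $\|e_n\|_Z=\max\{\|y_n\|,\|\tilde e_n\|\}=1$, condition (i) of Corollary \ref{same space banach algebra} holds with $C_1=1$. For submultiplicativity, I use the telescoping $\sum a_id_i=\sum_{i<n}(a_i-a_{i+1})e_i+a_ne_n$ together with the decomposition
\[
a_ib_i-a_{i+1}b_{i+1}=(a_i-a_{i+1})b_i+a_{i+1}(b_i-b_{i+1}).
\]
In the $J(X)$-norm, $(d_i)_i$ coincides with the difference basis $(v_i^*)_i$ of $J(X)$, so Proposition \ref{basic properties scriptjstar}~(v) yields submultiplicativity with constant $2$. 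In the $Y$-norm, $1$-unconditionality of $(y_i)_i$ lets me bound each of the three resulting summands by $\|\sum a_id_i\|_Z\cdot\|\sum b_id_i\|_Z$, provided I use the estimate $\sup_i|a_i|\leq\|\sum a_id_i\|_Z$; the latter holds because $a_j$ is recovered as the value of the interval-sum functional $\sum_{i=j}^ne_i^*$, which belongs to $J(X)^*\subset Z^*$ with norm one. Combining the two norms gives submultiplicativity in $Z$ with constant $3$.

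Finally, I would apply Corollary \ref{same space banach algebra} to $Z$ with basis $(d_i)_i$, obtaining
\[
\R I\oplus\mathcal{K}_\mathrm{diag}\bigl([(d_i^*)_i]\bigr)\equiv\R e_\omega\oplus Z
\]
as Banach algebras via $I\mapsto e_\omega$ and $d_i^{**}\otimes d_i^*\mapsto d_i$. Computing the biorthogonals directly, the tail-summing functional $d_j^*=w^*\text{-}\sum_{i\geq j}e_i^*$ satisfies $d_j^*(d_k)=\delta_{j,k}$, so $d_1^*=s$ and $d_{j+1}^*=s-e_1^*-\cdots-e_j^*$; hence $[(d_i^*)_i]=\R s\oplus[(e_i^*)_i]$. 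Isolating the ideal part of the above identification yields $\mathcal{K}_\mathrm{diag}(\R s\oplus[(e_i^*)_i])\equiv Z$, completing all the assertions. The main obstacle is verifying submultiplicativity of $(d_i)_i$ in the $Y$-coordinate: this is not a mere triangle inequality, but requires the $J(X)$-norm control of tail sums in order to extract the $\sup_i|a_i|$ estimate that makes the $1$-unconditionality argument work.
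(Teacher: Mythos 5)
Your proof is correct and follows essentially the same route as the paper's: Lemma \ref{dualizing unconditional plus submultiplicative} combined with the ADT machinery (via Theorem \ref{ADT main thm}) for the first three assertions, and the telescoping decomposition together with Proposition \ref{basic properties scriptjstar}~(v) for the submultiplicativity of $(d_i)_i$. Your observation that the estimate $\sup_i|a_i|\leq\|\sum_ia_id_i\|_Z$ must be imported from the $J(X)$-coordinate in order to close the $Y$-coordinate argument correctly fills in a detail the paper compresses into an appeal to the triangle inequality.
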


\begin{proof}
The part about $\mathcal{L}_\mathrm{diag}(Z)$ and $Z^*$ can be obtained by combining Lemma \ref{dualizing unconditional plus submultiplicative} with Proposition \ref{james spaces as diagonal operators}. The fact that $(d_i)_i$ is submultiplicative follows from the triangle inequality and Proposition \ref{basic properties scriptjstar} (v) and the fact that $d_i^* = w^*$-$\sum_{j=i}^\infty e_i^*$ and hence $\R s\oplus[(e_i^*)] = [(d_i^*)_i]$.
\end{proof}

\subsection{Spaces with right (or left) dominant unconditional bases as complemented ideals of spaces of compact diagonal operators} We prove that if $X$ has a right dominant unconditional basis then $X$ embeds as a complemented ideal in the space $\mathcal{L}_\mathrm{diag}(\mathcal{J}_*(X\oplus X))$. We also show that if $X$ has a left dominant unconditional basis $(x_i)_i$ then $X$ embeds as a complemented ideal in the space $\mathcal{K}_\mathrm{diag}(J([(x_i^*)_i]\oplus[(x_i^*)_i]))$. This will be used in the next subsection to embed reflexive spaces with unconditional bases as ideals into quasi-reflexive algebras of diagonal operators.

We recall the following Definition from \cite[Page 22]{BHO}.

\begin{dfn}
\label{def right dominant}
An unconditional basis $(x_i)_i$ of a Banach space $X$ is said to be  $C$-right dominant, for some constant $C>0$, if for all $1\leq k_1\leq m_1 < k_2\leq m_2 <\cdots$ and any sequence of scalars $(a_i)_i$ that is eventually zero we have
\begin{equation}
\label{right dominance equation}
\left\|\sum_{i=1}^\infty a_{m_i}x_{k_i}\right\| \leq C\left\|\sum_{i=1}^\infty a_{m_i}x_{m_i}\right\|.
\end{equation}
We say that $(x_i)_i$ is right dominant if it is $C$-right dominant for some $C>0$. Reversing the inequality in \eqref{right dominance equation} we obtain the definition of a $1/C$-left dominant sequence.
\end{dfn}
Examples of right dominant sequences are the bases of Tsirelson space from \cite{FJ}, Tsirelson's dual from \cite{T}, Schreier space from \cite{S}, and any subsymmetric sequence such as $\ell_p$ spaces or Schlumprecht space \cite{Sch}. These examples are left dominant as well and the only case in which this is not straightforward is Tsirelson space where it follows from \cite[Proposition 6]{CJS}.

The property of being right dominant can be reformulated as follows. A sequence $(x_i)_i$ is right dominant if there exists a constant $C$ so that for every strictly increasing sequence of natural numbers $(k_i)_i$ and every sequence $(m_i)_i$ with $m_i\in[k_i,k_{i+1})$  we have that $(x_{k_i})_i$ is $C$-dominated by $(x_{m_i})_i$.

\begin{rmk}
\label{don't have to be so strict about inequalities}
If $(x_i)_i$ is suppression unconditional and $C$-right dominant, $(a_i)_i$ is a sequence of scalars that is eventually zero and $1\leq k_1 < m_1 \leq k_2 < m_2 \leq k_3< m_3\leq\cdots$ then by splitting the even and odd terms of the sequence we get
\begin{equation*}
\begin{split}
\left\|\sum_{i=1}^\infty a_ix_{k_i}\right\| &\leq \left\|\sum_{i=1}^\infty a_{2i}x_{k_{2i}}\right\| + \left\|\sum_{i=1}^\infty a_{2i-1}x_{k_{2i-1}}\right\|\\
&\leq C\left\|\sum_{i=1}^\infty a_{2i}x_{m_{2i}}\right\| + C\left\|\sum_{i=1}^\infty a_{2i-1}x_{m_{2i-1}}\right\|\leq 2C\left\|\sum_{i=1}^\infty a_ix_{m_i}\right\|.
\end{split}
\end{equation*}
\end{rmk}

The following result builds on \cite[Proposition 2.3]{BHO} and, in a sense, it is a slight generalization of it by removing an extra assumption.

\begin{lem}
\label{complemented subspace even parts}
Let $X$ be a Banach space with a normalized 1-unconditional basis $(x_i)_i$ that is right dominant.
The sequence $(e_{2i}-e_{2i-1})_i$ in $J(X)$ is equivalent to $(x_{2i})_i$ and it spans a complemented subspace $\widetilde X_{2\N}$ of $J(X)$ via the map $Q:J(X)\to J(X)$ defined by $Qx = \sum_{i=1}^\infty e_{2i}^*(x)(e_{2i} - e_{2i-1})$. Furthermore, we have that $(I - Q)(x) = \sum_{i=1}^\infty(e_{2i-1}^*+e_{2i}^*)(x)e_{2i-1}$ and the space $J(X)_{2\N - 1} = (I-Q)[J(X)]$ is a subalgebra of $J(X)$.
\end{lem}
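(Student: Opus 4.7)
The main technical step is the equivalence of $(e_{2i}-e_{2i-1})_i$ with $(x_{2i})_i$ in $J(X)$; once this is in hand, the boundedness of $Q$, the formula for $I-Q$, and the subalgebra property all follow from soft arguments. I would start by computing the $J(X)$-norm of $w=\sum_i b_i(e_{2i}-e_{2i-1})$ directly from \eqref{jamesification original definition}. Writing the coefficient sequence of $w$ as $a_{2j-1}=-b_j$ and $a_{2j}=b_j$, the inner sum $\sum_{i=k_n}^{m_n}a_i$ telescopes after the consecutive pairs $\{2j-1,2j\}$ lying inside $[k_n,m_n]$ are cancelled, leaving at most two unpaired endpoint terms. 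Depending on the parities of $k_n$ and $m_n$, the residual contribution is one of $0$, $b_{j_1}$, $-b_{j_2}$, or $b_{j_1}-b_{j_2}$ at the coordinate $x_{k_n}$, for indices $j_1,j_2$ determined by $k_n,m_n$.

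\textbf{The equivalence.} The lower estimate $\|\sum_i b_ix_{2i}\|\leq\|w\|$ is immediate from \eqref{jamesification original definition} by choosing $k_n=m_n=2n$. For the upper estimate I would split the intervals of an arbitrary admissible configuration into the three non-trivial parity types (even--even, odd--odd, and even--odd) and bound $\|\sum_n c_nx_{k_n}\|$ by the triangle inequality applied to the three subsums. The even--even type reduces to $\|\sum_n b_{j_1^{(n)}}x_{2j_1^{(n)}}\|$ with distinct indices and is controlled by $\|\sum_j b_jx_{2j}\|$ via $1$-unconditionality. The odd--odd type produces $\|\sum_n b_{j_2^{(n)}}x_{2j_1^{(n)}-1}\|$, and the indices $(2j_1^{(n)}-1,2j_2^{(n)})$ form a strict block configuration in the sense of Definition \ref{def right dominant}, so right dominance followed by unconditionality bounds it by $C\|\sum_j b_jx_{2j}\|$. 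The even--odd type contributes $\|\sum_n(b_{j_1^{(n)}}-b_{j_2^{(n)}})x_{2j_1^{(n)}}\|$, which I split again by the triangle inequality; one piece is handled by unconditionality alone, while for the other the indices $(2j_1^{(n)},2j_2^{(n)})$ may satisfy only $m'_n\leq k'_{n+1}$, so I invoke Remark \ref{don't have to be so strict about inequalities} at the price of a factor of $2C$. Summing the three contributions yields the equivalence with a constant depending only on $C$.

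\textbf{Projection, formula, and subalgebra.} The equivalence immediately produces $\|Qx\|\leq C'\|\sum_i e_{2i}^*(x)x_{2i}\|_X$, and the configuration $k_n=m_n=2n$ in the definition of $\|x\|_{J(X)}$ shows $\|\sum_i e_{2i}^*(x)x_{2i}\|_X\leq\|x\|_{J(X)}$; applying the same estimate to the tail $P_{2N}x$, whose norm vanishes by monotonicity of $(e_i)_i$ from Proposition \ref{Jstar is submultiplicative}(i), guarantees norm-convergence of the defining series of $Qx$ for arbitrary $x\in J(X)$. Evaluating on the basis gives $Qe_{2i}=e_{2i}-e_{2i-1}$ and $Qe_{2i-1}=0$, so $Q^2=Q$ and $\ran Q=\widetilde X_{2\N}$; since $(I-Q)e_{2i}=(I-Q)e_{2i-1}=e_{2i-1}$, the announced formula $(I-Q)x=\sum_i(e_{2i-1}^*(x)+e_{2i}^*(x))e_{2i-1}$ follows. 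For the subalgebra assertion I pass to the multiplication on $J(X)$ from Remark \ref{J(X) submultiplicative}, which is pointwise with respect to the difference basis $(v_i^*)_i$; using $e_n=v_1^*+\cdots+v_n^*$ together with $v_i^*v_j^*=\delta_{ij}v_i^*$, one obtains $e_n\cdot e_m=e_{\min(n,m)}$ as already recorded there, hence $e_{2i-1}\cdot e_{2k-1}=e_{2\min(i,k)-1}\in J(X)_{2\N-1}$. Thus the linear span of $\{e_{2i-1}\}_i$ is closed under multiplication, and continuity of the submultiplicative product extends this to its closure $J(X)_{2\N-1}$. The main obstacle throughout is the even--odd case in the upper estimate for the equivalence, where the slightly weaker ordering of indices forces the use of Remark \ref{don't have to be so strict about inequalities}; everything else is essentially bookkeeping.
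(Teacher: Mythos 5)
Your proposal is correct, and its core engine is the same as the paper's: a parity split of the admissible configuration, followed by $1$-unconditionality for the pieces with matching parities and right dominance (via Remark \ref{don't have to be so strict about inequalities}) for the pieces with offset indices. Where you differ is mostly presentational, with one genuine simplification. The paper runs the computation in the variation form \eqref{variation form} of Proposition \ref{basic properties scriptjstar} (iv), where only the terms with $k_n$ even or $m_n$ even contribute, and so it decomposes into the two sets $K=\{n:k_n\text{ even}\}$ and $M=\{n:m_n\text{ even}\}$; you work directly from \eqref{jamesification original definition} and decompose into the three surviving parity types (even--even, odd--odd, even--odd), with the even--odd type needing a further triangle-inequality split. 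These are equivalent bookkeeping schemes, since the two norm formulas are in natural bijection via the difference basis. The genuine improvement is your treatment of the boundedness of $Q$: you deduce it from the already-established equivalence together with the observation that the configuration $k_n=m_n=2n$ in \eqref{jamesification original definition} gives $\|\sum_i e_{2i}^*(x)x_{2i}\|\le\|x\|_{J(X)}$, whereas the paper redoes a second variational computation (its estimate for $\nu=\|\sum_i(a_{2i}-a_{2i+1})v_{2i}^*\|$) to get the same conclusion. Your route is shorter and reuses work already done. One small presentational slip: you refer to ``the tail $P_{2N}x$, whose norm vanishes by monotonicity'' — what vanishes is $\|(I-P_{2N})x\|$ (because $(e_i)_i$ is a Schauder basis, monotonicity only controlling the projections), but the inequality you intend to apply to that tail is the correct one, so the argument stands. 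The subalgebra step is, up to phrasing, identical to the paper's: you use $e_{2i-1}\cdot e_{2k-1}=e_{2\min(i,k)-1}$ directly, while the paper records the same fact by exhibiting the basis $v_1^*, v_2^*+v_3^*, v_4^*+v_5^*,\dots$ of $J(X)_{2\mathbb{N}-1}$ as disjointly supported idempotents.
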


\begin{proof}
Let $C$ be the constant for which \eqref{right dominance equation} holds. We first show the equivalence. Note that the basis under consideration is in fact $(v_{2i}^*)_i$ and it satisfies \eqref{variation form}, which immediately yields that it dominates $(x_{2i-1})_i$ with constant one. For the inverse domination let $(a_i)_i$ be a sequence of scalars that are eventually zero and all odd entries are zero as well. Then, there exist $k_1<m_1\leq k_2 < m_2 \leq\cdots$ so that
\[\left\|\sum_{i=1}^\infty a_{2i}v_{2i}^*\right\| =  \left\|\sum_{i=1}^\infty a_{i}v_{i}^*\right\| = \left\|\sum_{n=1}^\infty\left(a_{k_n} - a_{m_n}\right)x_{k_n}\right\|.\]
Denote the quantity in the above equation by $\lambda$ and define
\[K = \left\{n: k_n\text{ is even }\right\},\; M = \left\{n: m_n \text{ is even}\right\}.\]
We then calculate 
\begin{equation}
\label{equation one in complemented subspace even parts}
\begin{split}
\la &= \left\|\sum_{n\in K\cap M}\left(a_{k_n} - a_{m_n}\right)x_{k_n} + \sum_{n\in K\setminus M}a_{k_n}x_{k_n} - \sum_{n\in M\setminus K}a_{m_n}x_{k_n}\right\|  \\
& = \left\|\sum_{n\in K}a_{k_n}x_{k_n} - \sum_{n\in M}a_{m_n}x_{k_n}\right\| \leq \left\|\sum_{n\in K} a_{k_n}x_{k_n}\right\| + \left\|\sum_{n\in M}a_{m_n}x_{k_n}\right\|\\
& \leq \left\|\sum_{n\in K} a_{k_n}x_{k_n}\right\| + 2C\left\|\sum_{n\in M}a_{m_n}x_{m_n}\right\|\text{ (by Remark \ref{don't have to be so strict about inequalities})}.
\end{split}
\end{equation}
By unconditionality we obtain $\la \leq (1+2C)\|\sum_{i=1}^\infty a_{2i}x_{2i}\|$.

For the complementation of the sequence $(v_{2i}^*)_i$ it is enough to show that the map $Q = \mathrm{SOT}-\sum_{i=1}^\infty(v_{2i}-v_{2i+1})\otimes v_{2i}^*$ defines a bounded linear map on $J(X)$. To that end, let $(a_i)_i$ be a sequence of scalars that is eventually zero and set $\mu = \|\sum_{i=1}^\infty a_i u_i^*\|$ as well as $\nu = \|\sum_{i=1}^\infty (a_{2i} - a_{2i+1})v_{2i}^*\|$. Then there are $1\leq k_1 < m_1\leq k_2 < m_2\leq\cdots$ so that if $K = \{n: k_n\text{ is even }\}$ and $M = \{n: m_n \text{ is even}\}$ then
\[\begin{split}
\nu &= \left\|\sum_{n\in K\cap M}\left(a_{k_n} - a_{k_n+1} - a_{m_n}+a_{m_n+1}\right)x_{k_n}\right.\\
&\left. \phantom{AA}+ \sum_{n\in K\setminus M}\left(a_{k_n} - a_{k_n+1}\right)x_{k_n} - \sum_{n\in M\setminus K}\left(a_{m_n} - a_{m_n+1}\right)x_{k_n}\right\|\\
&=\left\|\sum_{n\in K}\left(a_{k_n} - a_{k_n+1}\right)x_{k_n} - \sum_{n\in M}\left(a_{m_n} - a_{m_n+1}\right)x_{k_n}\right\|
\end{split}\]
Repeating the same argument as earlier yields $\nu \leq (1+2C)\|\sum_{i=1}^\infty (a_i-a_{i+1})x_i\| \leq (1+2C)\mu$.

To see that the space $J(X)_{2\N - 1}$ is a subalgebra note that it is spanned by the vectors $(e_{2i-1})_i$. It follows that the sequence $v_1^*$, $v_2^*+v_3^*$, $v_4^*+v_5^*$,\ldots is a basis of $J(X)_{2\N - 1}$ which  clearly yields that $J(X)_{2\N - 1}$ is closed under multiplication.
\end{proof}

\begin{lem}
\label{doubling keeps right dominance}
Let $X$ be a Banach space with a normalized 1-unconditional $C$-right dominant Schauder basis $(x_i)_i$ and let $W = (X\oplus X)_\infty$ endowed with the basis $(w_i)_i$ where $(x_i,0) = w_{2i-1}$ and $(0,x_i) = w_{2i}$ for all $i\in\N$. Then $(w_i)_i$ $5C$-is right dominant.
\end{lem}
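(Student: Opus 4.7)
The plan is to reduce the right-dominance inequality for $(w_i)_i$ to applications of the right-dominance inequality for $(x_i)_i$ combined with $1$-unconditionality. Define $\phi:\N\to\N$ by $\phi(2j-1)=\phi(2j)=j$, so that $w_n=(x_{\phi(n)},0)$ when $n$ is odd and $w_n=(0,x_{\phi(n)})$ when $n$ is even. Since the norm on $W=(X\oplus X)_\infty$ is the maximum of the two coordinate norms, for any eventually zero scalars $(c_i)_i$ one has
$$\Big\|\sum_i c_iw_{n_i}\Big\|=\max\Big\{\Big\|\sum_{n_i\text{ odd}}c_ix_{\phi(n_i)}\Big\|,\Big\|\sum_{n_i\text{ even}}c_ix_{\phi(n_i)}\Big\|\Big\}.$$
For $1\leq k_1\leq m_1<k_2\leq m_2<\cdots$, set $I_\epsilon=\{i:k_i\text{ has parity }\epsilon\}$ and $J_\eta=\{i:m_i\text{ has parity }\eta\}$, and write the two sides of the desired inequality as $\max(\|u_1\|,\|u_2\|)$ and $5C\max(\|v_1\|,\|v_2\|)$, where $u_\epsilon=\sum_{i\in I_\epsilon}a_{m_i}x_{\phi(k_i)}$ and $v_\eta=\sum_{i\in J_\eta}a_{m_i}x_{\phi(m_i)}$.

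The bound on $u_1$ is the easy half. For $i<i'$ in $I_1$ the odd integers $k_i<k_{i'}$ differ by at least two; combined with $m_i<k_{i'}$, a brief case analysis on the parity of $m_i$ gives the strict inequality $\phi(m_i)<\phi(k_{i'})$, while $\phi(k_i)\leq\phi(m_i)$ is immediate. Thus $(\phi(k_i),\phi(m_i))_{i\in I_1}$ satisfies the hypothesis of Definition \ref{def right dominant}, and right-dominance of $(x_i)_i$ gives $\|u_1\|\leq C\|\sum_{i\in I_1}a_{m_i}x_{\phi(m_i)}\|$. Splitting $I_1$ by the parity of $m_i$ and using $1$-unconditionality dominates this by $C(\|v_1\|+\|v_2\|)\leq 2C\max(\|v_1\|,\|v_2\|)$.

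The bound on $u_2$ is the one subtle point and is the main obstacle. When $i<i'$ are in $I_2$ with $m_i$ odd, it is possible that $m_i=k_{i'}-1$, giving $\phi(m_i)=\phi(k_{i'})$ and destroying the strict inequality required by right-dominance. I would circumvent this by splitting $I_2=I_2^{(a)}\cup I_2^{(b)}$ according as $m_i$ is even or odd. On $I_2^{(a)}$ the argument from the previous paragraph applies verbatim and produces a bound by $C\|v_2\|$. On $I_2^{(b)}$ I would enumerate $I_2^{(b)}=\{j_1<j_2<\cdots\}$ and split into the odd-indexed and even-indexed subsequences; for each such subsequence a whole pair $(k_{j_r},m_{j_r})$ lies strictly between consecutive selected indices, forcing $m_{j_{2r-1}}+2\leq k_{j_{2r+1}}$, and because $m$ is odd while $k$ is even this improves to $m_{j_{2r-1}}+3\leq k_{j_{2r+1}}$, which restores the strict inequality $\phi(m_{j_{2r-1}})<\phi(k_{j_{2r+1}})$. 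Two applications of right-dominance reassembled via $1$-unconditionality yield $\|\sum_{i\in I_2^{(b)}}a_{m_i}x_{\phi(k_i)}\|\leq 2C\|v_1\|$, whence $\|u_2\|\leq C\|v_2\|+2C\|v_1\|\leq 3C\max(\|v_1\|,\|v_2\|)$.

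Taking the maximum over the two coordinate norms gives $\|\sum_ia_{m_i}w_{k_i}\|\leq 3C\|\sum_ia_{m_i}w_{m_i}\|$, which is in particular at most $5C$ times the right-hand side. The off-by-one collision on $I_2^{(b)}$ noted above is the only point where one loses a factor, and it is ultimately a consequence of the asymmetry of the map $\phi$; the dyadic subdivision used to repair the strict inequality accounts for the factor of two over the ideal bound $C$.
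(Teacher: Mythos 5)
Your proof is correct, and it shares the same core mechanism as the paper's: reduce right-dominance of $(w_i)_i$ to coordinate-wise right-dominance of $(x_i)_i$ via a parity case analysis, observe that the only configuration in which passing through $\phi$ can lose strictness is $k_i$ even, $m_i$ odd (your $I_2^{(b)}$, the paper's $K\setminus M$), and repair that case by splitting into alternating subsequences and applying right-dominance twice --- which is precisely the content of Remark~\ref{don't have to be so strict about inequalities}, rederived inline in your argument. Where you genuinely differ is in the assembly. The paper partitions the index set three ways according to whether $k_i$ and $m_i$ agree in parity (the sets $A$, $K\setminus M$, $M\setminus K$), applies the triangle inequality to the full $W$-norm over those pieces, and shifts indices $k_i\mapsto k_i\pm 1$ so as to transport $w_{k_i}$ to the copy of $X$ containing $w_{m_i}$ before invoking right-dominance there. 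You instead exploit the $\ell_\infty$-structure of $W$ from the outset, so the whole comparison reduces to a maximum of two $X$-norm inequalities, with no index shift and no triangle inequality across $W$-pieces; this tighter accounting is why you land on $3C$, whereas the paper's three-piece aggregation of $C+2C+C$ gives a somewhat larger constant. Either way, the stated $5C$ bound holds.
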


\begin{proof}
Let $(k_i)_i$ be strictly increasing, let $(m_i)_i$ be a sequence with $k_i\leq m_i <k_{i+1}$, and let $(a_i)_i$ be a sequence of scalars that is eventually zero. Define
\[\la = \left\|\sum_ia_iw_{k_i}\right\|,\; \mu = \left\|\sum_ia_iw_{m_i}\right\|,\;K = \{i: k_i \text{ even}\}, \text{ and } M = \{i: m_i \text{ even}\}\]
If we set $A = (K\cap M)\cup(K^c\cap M^c)$ then the sets $A$, $K\setminus M$, and $M\setminus K$ form a partition of $\N$. By using that $(w_{2i-1})_i$ and $(w_{2i})_i$ are isometrically equivalent to each other and that they are both $C$-right dominant we obtain the following
\begin{equation*}
\begin{split}
\left\|\sum_{i\in A}a_iw_{k_i}\right\| &\leq C \left\|\sum_{i\in A}a_iw_{m_i}\right\|,\\
\left\|\sum_{i\in K\setminus M}a_iw_{k_i}\right\| &= \left\|\sum_{i\in K\setminus M}a_iw_{k_i-1}\right\| \leq 2C\left\|\sum_{i\in K\setminus M}a_i w_{m_i}\right\| \text{ (by Remark \ref{don't have to be so strict about inequalities})},\\
\left\|\sum_{i\in M\setminus K}a_iw_{k_i}\right\| & = \left\|\sum_{i\in M\setminus K}a_iw_{k_i+1}\right\| \leq C\left\|\sum_{i\in M\setminus K}a_iw_{m_i}\right\|,
\end{split}
\end{equation*}
where in the last inequality we used that for $i\in M\setminus K$ we have $k_i+1\leq m_i <k_{i+1}+1$. We use unconditionality to conclude $\la \leq 5C\mu$.
\end{proof}


The statement of the following result is somewhat lengthy however most of the details are necessary in the sequel.

\begin{prp}
\label{doubling the space gives something clean}
Let $X$ be a Banach space with a right dominant normalized 1-unconditional basis $(x_i)_i$. Let $Y$ denote the space $[(x_i^*)_i]$ and let $W$ denote the space $(X\oplus X)_\infty$ endowed with the basis $(w_i)_i$ where $(x_i,0) = w_{2i-1}$ and $(0,x_i) = w_{2i}$ for all $i\in\N$. Denote by $(e_i)_i$ the unit vector basis of $J(X)$ and by $(\bar e_i)_i$ the unit vector basis of $J(W)$. Also denote by $s:J(X)\to \R$ and $\bar s:J(W)\to\R$ the corresponding summing functionals. The following hold.
\begin{itemize}
\item[(i)] The sequence $(\bar e_{2i} - \bar e_{2i-1})_i$ is equivalent to $(x_i)_i$ and complemented in $J(W)$  via the projection $Qx = \sum_{i=1}^\infty \bar e_{2i}^*(x)(\bar e_{2i} - \bar e_{2i-1})$.
\item[(ii)] The sequence $(\bar e_{2i-1})_i$ is equivalent to $(e_i)_i$ and complemented in $J(W)$ via the projection $(I-Q)(x) = \sum_{i=1}^\infty(\bar e_{2i-1}^*(x) + \bar e_{2i}^*(x))\bar e_{2i-1}$.
\end{itemize}
In particular, the space $\widetilde X = [(\bar e_{2i} - \bar e_{2i-1})_i]$ is an ideal of $J(W)$ that is isomorphic as a Banach algebra to $X$, the space $\widetilde{J(X}) = [(\bar e_{2i-1})_i]$ is a subalgebra of $J(W)$ that is isomorphic as a Banach algebra to $J(X)$, and $J(W) = \widetilde X\oplus \widetilde{J(X})$.
\begin{itemize}
\item[(iii)] The sequence $(\bar e_{2i}^*)_i$ is equivalent to $(x_i^*)_i$ and complemented in $\mathcal{J}_*(W)$ via the projection $R(f) = \sum_{i=1}^\infty (f(\bar e_{2i}) - f(\bar e_{2i-1}))\bar e_{2i}^*$.
\item[(iv)] The sequence $(\bar s)^\frown(\bar e_{2i}^*+\bar e_{2i-1}^*)_i$ is equivalent to $(s)^\frown(e_i^*)_i$ and complemented in $\mathcal{J}_*(W)$ via the map $S:\mathcal{J}_*(W)\to\mathcal{J}_*(W)$ that is defined as follows. $S(\bar s) = \bar s$ and $S|_{[(\bar e_i^*)_i]}(f) = \sum_{i=1}^*f(\bar e_{2i-1})(\bar e_{2i-1}^* + \bar e_{2i}^*)$.
\end{itemize}
In particular, the space $\widetilde Y = [(\bar e_{2i}^*)_i]$ is an ideal of $\mathcal{J}_*(W)$ that is isomorphic as a Banach algebra to $Y$, the space $\widetilde{\mathcal{J}_*(}Y) = \mathbb{R}\bar s\oplus[(\bar e^*_{2i-1}+\bar e^*_{2i})_i]$ is a subalgebra of $\mathcal{J}_*(W)$ that is isomorphic as a Banach algebra to $\mathcal{J}_*(Y)$, and $\mathcal{J}_*(W) = \widetilde Y\oplus \widetilde{\mathcal{J}_*(}Y)$.
\end{prp}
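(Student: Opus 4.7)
The assertion breaks into four norm equivalences with complementations, plus the identification of the algebra structure. Items (i) and the formula for $I-Q$ in (ii) should follow immediately from the two preceding lemmas: applying Lemma \ref{doubling keeps right dominance} gives that $(w_i)_i$ is right dominant, and then Lemma \ref{complemented subspace even parts} applied to $W$ in place of $X$ produces $Q$ as a bounded projection and the equivalence of $(\bar e_{2i}-\bar e_{2i-1})_i$ with $(w_{2i})_i$. Since $w_{2i}=(0,x_i)$ lies isometrically in the second summand of $(X\oplus X)_\infty$, the subsequence $(w_{2i})_i$ is isometric to $(x_i)_i$, which finishes (i) and reduces (ii) to the equivalence $(\bar e_{2i-1})_i\sim(e_i)_i$.

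That remaining equivalence I would prove by computing both norms from the defining formula \eqref{jamesification original definition}. For the lower bound, I test the $J(W)$-norm with $J(W)$-intervals of the form $[2\ell_n-1,2t_n-1]$ arising from $J(X)$-intervals $[\ell_n,t_n]$, using that $w_{2\ell_n-1}=(x_{\ell_n},0)$ to recover exactly the quantity $\|\sum_n(\sum_{i\in[\ell_n,t_n]}a_i)x_{\ell_n}\|_X$. For the upper bound, given arbitrary successive intervals $[k_n,m_n]$, I split the index $n$ according to the parity of $k_n$, so that in each class $w_{k_n}$ is isometric to some $x_{p_n}$; the set of odd integers in $[k_n,m_n]$ then corresponds to an interval in $\mathbb{N}$ with left endpoint either $p_n$ (odd case) or $p_n+1$ (even case), and in the latter case I invoke right dominance together with Remark \ref{don't have to be so strict about inequalities} to shift back to $p_n$ at the cost of a constant. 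Summing over the two parity classes and using unconditionality of $(x_i)_i$ gives the required bound by $\|\sum_i a_ie_i\|_{J(X)}$.

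For (iii) and (iv) I dualize. A direct computation yields $Q^*f=\sum_i(f(\bar e_{2i})-f(\bar e_{2i-1}))\bar e_{2i}^*$, which always belongs to $[(\bar e_i^*)_i]\subset\mathcal{J}_*(W)$, so $R=Q^*|_{\mathcal{J}_*(W)}$ is a bounded projection with the stated formula and range $[(\bar e_{2i}^*)_i]$. Since $(\bar e_{2i}^*)_i$ is biorthogonal to $(\bar e_{2i}-\bar e_{2i-1})_i$, part (i) immediately gives the equivalence with $(x_i^*)_i$, settling (iii). Writing a generic $f=c\bar s+\sum_i c_i\bar e_i^*\in\mathcal{J}_*(W)$ and using $\bar s(\bar e_j)=1$, one computes $(I-R)f=c\bar s+\sum_i c_{2i-1}(\bar e_{2i-1}^*+\bar e_{2i}^*)$, which is exactly the formula for $S$; its range is the claimed subspace, and the equivalence of $(\bar s)^\frown(\bar e_{2i-1}^*+\bar e_{2i}^*)_i$ with $(s)^\frown(e_i^*)_i$ follows from (ii) by the same duality.

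For the algebra identifications I would use that $J(W)$ is submultiplicative with respect to its difference basis $(\bar v_i^*)_i$ (Remark \ref{J(X) submultiplicative}) together with the identity $\bar e_{2i}-\bar e_{2i-1}=\bar v_{2i}^*$, so that $\widetilde X=[(\bar v_{2i}^*)_i]$ is an even-indexed coordinate subspace and hence an ideal, with induced product matching the coordinate multiplication on $X$. The elementary identity $\bar e_{2i-1}\bar e_{2j-1}=\bar e_{2\min\{i,j\}-1}$, read off from $e_ie_j=e_{\min\{i,j\}}$, then exhibits $\widetilde{J(X)}$ as a subalgebra naturally isomorphic to $J(X)$. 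The dual statements are handled identically using Remark \ref{scriptjstar submultiplicative}: $\widetilde Y=[(\bar e_{2i}^*)_i]$ is an even-indexed coordinate ideal, while $\R\bar s\oplus[(\bar e_{2i-1}^*+\bar e_{2i}^*)_i]$ corresponds to the sequences $(\lambda_i)_i$ satisfying $\lambda_{2i-1}=\lambda_{2i}$, which is clearly closed under coordinatewise multiplication and carries the Banach algebra structure of $\mathcal{J}_*(Y)$. The only genuinely technical step is the parity-splitting upper bound in the second paragraph, which is the single place where the right dominance hypothesis does real work; the rest of the argument is duality and bookkeeping.
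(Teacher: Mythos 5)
Your proposal is correct and takes the same route as the paper: apply Lemma \ref{doubling keeps right dominance} and Lemma \ref{complemented subspace even parts} for (i) and the complementation in (ii), verify the equivalence $(\bar e_{2i-1})_i\sim(e_i)_i$ directly from the jamesification formula \eqref{jamesification original definition} and right dominance, then obtain (iii)--(iv) by dualizing via $R=Q^*|_{\mathcal{J}_*(W)}$ and $S=(I-Q)^*|_{\mathcal{J}_*(W)}$. Your parity-splitting computation of the norm equivalence, which the paper dismisses as following ``easily,'' is a correct and welcome elaboration of that step.
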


\begin{proof}
By Lemma \ref{doubling keeps right dominance} the sequence $(w_i)_i$ is right dominant. Then, Lemma \ref{complemented subspace even parts} basically contains statement (i) whereas to obtain statement (ii) the only thing missing is that $(\bar e_{2i-1})_i$ is equivalent to $(e_i)_i$. This follows easily from \eqref{jamesification original definition} and the right dominance of $(w_i)_i$. The ``in particular'' part under statement (ii) is also contained in Lemma \ref{complemented subspace even parts}. For statement (iii) merely observe that $R = Q^*|_{\mathcal{J}_*(W)}$ whereas for statement (iv) observe that $S = I - R = (I - Q)^*|_{\mathcal{J}_*(W)}$. The remaining part of the statement is fairly straightforward.
\end{proof}

We can tidy up the statement of Proposition \ref{doubling the space gives something clean} by combining it with Proposition \ref{james spaces as diagonal operators} to obtain the following neat corollaries, which apply to a large class of spaces, e.g. those mentioned after Definition \ref{def right dominant} (spaces with subsymmetric bases, Schreier space, Tsirelson space and its dual). Here, $E = \mathcal{J}_*(X\oplus X)$.

\begin{cor}
\label{what you can get from right dominant}
Let $X$ be a Banach space with a right dominant normalized 1-unconditional basis $(x_i)_i$. Then there exists a Banach space $E$ with a Schauder basis so that the space $\mathcal{K}_\mathrm{diag}(E)$ contains an ideal $\widetilde X$ that is isomorphic to $X$ as a Banach algebra and a subalgebra $\widetilde{J(X})$ that is isomorphic to $J(X)$ so that $\mathcal{K}_\mathrm{diag}(E) = \widetilde X\oplus\widetilde{J(X})$.
\end{cor}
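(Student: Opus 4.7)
The plan is to take $E = \mathcal{J}_*(W)$, where $W = (X\oplus X)_\infty$ is equipped with the interleaved basis $(w_i)_i$ defined in Proposition \ref{doubling the space gives something clean}. By Lemma \ref{doubling keeps right dominance} the basis $(w_i)_i$ is itself normalized, $1$-unconditional, and right dominant, so the full strength of Propositions \ref{doubling the space gives something clean} and \ref{james spaces as diagonal operators} is available when applied to $W$ in place of $X$. Note also that by Proposition \ref{basic properties scriptjstar} (i) the space $E$ comes with a monotone Schauder basis, meeting the hypothesis of the statement. The argument will then be a two-step transfer: first identify $\mathcal{K}_\mathrm{diag}(E)$ with $J(W)$ as Banach algebras, and then import the decomposition of $J(W)$ supplied by Proposition \ref{doubling the space gives something clean}.

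For the first step, I would apply Proposition \ref{james spaces as diagonal operators} (ii) to obtain the Banach-space isomorphism $B^{-1}:J(W)\to \mathcal{K}_\mathrm{diag}(E)$ of distortion at most $2$ given by
\[B^{-1}\Big(w^*\text{-}\sum_{i=1}^\infty \la_i \bar v_i^*\Big) = \mathrm{SOT}\text{-}\sum_{i=1}^\infty \la_i\, \bar v_i^*\otimes \bar v_i,\]
where $(\bar v_i^*)_i$ is the difference basis of $J(W)$. A small but crucial point is that $B^{-1}$ is in fact a Banach-algebra isomorphism: the computation $(\bar v_i^*\otimes \bar v_i)\circ (\bar v_j^*\otimes \bar v_j) = \de_{ij}\, \bar v_i^*\otimes \bar v_i$ shows that operator composition on $\mathcal{K}_\mathrm{diag}(E)$ corresponds to coordinate-wise multiplication of the eigenvalue sequence $(\la_i)$, and by Remark \ref{J(X) submultiplicative} this is precisely the Banach-algebra product put on $J(W)$ via its difference basis.

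For the second step, Proposition \ref{doubling the space gives something clean} (the ``in particular'' clause following statement (ii)) supplies a Banach-algebra direct sum
\[J(W) = \widetilde X \oplus \widetilde{J(X)},\]
where $\widetilde X = [(\bar e_{2i}-\bar e_{2i-1})_i]$ is an ideal isomorphic as a Banach algebra to $X$ and $\widetilde{J(X)} = [(\bar e_{2i-1})_i]$ is a subalgebra isomorphic as a Banach algebra to $J(X)$. Transporting this decomposition through $B^{-1}$ immediately produces
\[\mathcal{K}_\mathrm{diag}(E) = B^{-1}(\widetilde X)\oplus B^{-1}\big(\widetilde{J(X)}\big),\]
which is the required splitting. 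The only non-routine point in the whole argument is the verification that $B^{-1}$ is an algebra homomorphism; once that is in place, the rest is bookkeeping combining the two cited propositions.
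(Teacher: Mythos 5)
Your proof is correct and follows exactly the route the paper indicates: combine Proposition \ref{doubling the space gives something clean} with Proposition \ref{james spaces as diagonal operators}~(ii) applied to $W = (X\oplus X)_\infty$, taking $E = \mathcal{J}_*(W)$. The explicit verification that $B^{-1}$ carries coordinate-wise multiplication on the difference basis of $J(W)$ to operator composition on $\mathcal{K}_\mathrm{diag}(E)$ is a useful unpacking of what the paper's ``$\equiv$'' already asserts, but does not change the argument.
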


Here, $F = [(x_i^*)]\oplus[(x_i^*)_i]$ which by duality and Lemma \ref{doubling keeps right dominance} is right dominant.
\begin{cor}
\label{what you can get from left dominant}
Let $X$ be a Banach space with a left dominant normalized 1-unconditional basis $(x_i)_i$ and let $Y = [(x_i^*)_i]$. Then there exists a Banach space $F$ with a Schauder basis so that the space $\R I\oplus \mathcal{K}_\mathrm{diag}(F)$ contains an ideal $\widetilde X$ that is isomorphic to $X$ as a Banach algebra and a subalgebra $\widetilde{\mathcal{J}_*(Y})$ that is isomorphic to $\mathcal{J}_*(Y)$ as a Banach algebra so that $\R I\oplus\mathcal{K}_\mathrm{diag}(F) = \widetilde X\oplus\widetilde{\mathcal{J}_*(Y})$.
\end{cor}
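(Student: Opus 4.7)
The strategy is to apply Proposition~\ref{doubling the space gives something clean} with $Y$ playing the role of the ``$X$'' in that proposition, and then to transfer the resulting decomposition of $\mathcal{J}_*$ to one of $\R I\oplus\mathcal{K}_\mathrm{diag}$ via Proposition~\ref{james spaces as diagonal operators}(i).

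To set this up I would first verify the following dominance duality: if $(x_i)_i$ is $C$-left dominant and $1$-unconditional in $X$, then $(x_i^*)_i$ is $C$-right dominant and $1$-unconditional in $Y=[(x_i^*)_i]$. The unconditionality part is standard, since the adjoints of the coordinate projections on $X$ are again coordinate projections of norm one. For the dominance, fix $1\le k_1\le m_1<k_2\le m_2<\cdots$ and scalars $(b_i)_i$, choose $x=\sum_j c_j x_j\in B_X$ almost norming $\sum b_{m_i}x^*_{k_i}$, and consider $x':=\sum c_{k_i}x_{m_i}$. By $C$-left dominance together with $1$-unconditionality one has $\|x'\|\le C$, while $\langle x',\sum b_{m_i}x_{m_i}^*\rangle=\sum b_{m_i}c_{k_i}=\langle x,\sum b_{m_i}x_{k_i}^*\rangle$; taking a supremum gives the desired right-dominance inequality for $(x_i^*)_i$.

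Next I would apply Proposition~\ref{doubling the space gives something clean} to $Y$ with its right dominant $1$-unconditional basis $(x_i^*)_i$. Setting $W:=(Y\oplus Y)_\infty$ equipped with its natural $1$-unconditional basis (right dominant by Lemma~\ref{doubling keeps right dominance}), parts (iii) and (iv) of the proposition produce the Banach-algebra decomposition $\mathcal{J}_*(W)=\widetilde{Y'}\oplus\widetilde{\mathcal{J}_*(Y)}$, where $Y'$ denotes the closed span in $Y^*$ of the biorthogonals of the basis $(x_i^*)_i$ of $Y$. Since these biorthogonals are precisely the canonical images of $(x_i)_i$ in $X^{**}$, $Y'$ is isomorphic as a Banach space with basis to $X$, and consequently $\widetilde{Y'}$ is isomorphic as a Banach algebra to $X$; we rename it $\widetilde X$.

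Finally, put $F:=J(W)$. Applying Proposition~\ref{james spaces as diagonal operators}(i) to the $1$-unconditional basis of $W$ gives $\R I\oplus\mathcal{K}_\mathrm{diag}(F)\equiv\mathcal{J}_*(W)$ as Banach algebras, and combined with the decomposition of the previous paragraph this yields $\R I\oplus\mathcal{K}_\mathrm{diag}(F)=\widetilde X\oplus\widetilde{\mathcal{J}_*(Y)}$, as required. The only step with genuine content is the dominance-duality argument; everything else is bookkeeping on top of results already proved.
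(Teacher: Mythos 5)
Your argument is correct and is precisely the route the paper intends: apply Proposition~\ref{doubling the space gives something clean} to $Y=[(x_i^*)_i]$ (whose basis is right dominant by the duality you verify), read off from items (iii) and (iv) the decomposition $\mathcal{J}_*(Y\oplus Y)=\widetilde X\oplus\widetilde{\mathcal{J}_*(Y)}$, and transfer it to $\R I\oplus\mathcal{K}_\mathrm{diag}(J(Y\oplus Y))$ via Proposition~\ref{james spaces as diagonal operators}(i). You also supply the ``by duality'' step relating left dominance of $(x_i)_i$ to right dominance of $(x_i^*)_i$, which the paper states without proof.
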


\begin{rmk}
Note that in Corollary \ref{what you can get from left dominant} if $X$ is additionally reflexive then $Y = X^*$ and by Remark \ref{when is it the dual I wonder} $\mathcal{J}_*(Y) = \mathcal{J}_*(X^*) = J(X^*)^*$ and hence $\widetilde{\mathcal{J}_*(Y})$ is isomorphic as a Banach algebra to $J(X^*)^*$.
\end{rmk}

\subsection{Spaces with unconditional bases plus Jamesified Tsirelson spaces as spaces with compact diagonal operators} We will show that whenever a space $X$ with an unconditional basis does not contain $c_0$ then there exists an appropriate Tsirelson space $T_\mathcal{M}^{1/2}$ so that $X\oplus J(T_\mathcal{M}^{1/2})^*$ is the Calkin algebra of some space and that whenever  a space $X$ with an unconditional basis does not contain $\ell_1$ then there exists an appropriate Tsirelson space $T_\mathcal{M}^{1/2}$ so that $X\oplus J(T_\mathcal{M}^{1/2})$ is the Calkin algebra of some space. We first discuss some basic notions around Tsirelson spaces.

A a collection $\mathcal{M}$ of finite subsets of $\N$ is called compact if it is compact when naturally identified with a subset of $2^\N$. We say that a collection $(E_i)_{i=1}^n$ of finite subsets of $\N$ is $\mathcal{M}$-admissible if there is a set $\{m_1,\ldots,m_n\}\in \mathcal{M}$ so that $m_1\leq E_1 < m_2\leq E_2<\cdots<m_n\leq E_n$. Given a compact  collection $\mathcal{M}$ and $0<\theta<1$ the Tsirelson space $T_\mathcal{M}^\theta$, defined in \cite{AD}, is the completion of $c_{00}(\N)$ with the uniquely defined norm satisfying the implicit formula
\begin{equation*}
\left\|\sum_{i=1}^\infty a_it_i\right\| = \max\left\{\sup_i|a_i|,\theta\sup\sum_{k=1}^n\left\|\sum_{i\in E_k}a_it_i\right\|: (E_k)_{k=1}^n \text{ is }\mathcal{M}\text{-admissible}\right\}
\end{equation*}
The sequence $(t_i)_i$ forms a 1-unconditional Schauder basis for $T_\mathcal{M}^\theta$. The classical Tsirelson space from \cite{Tsi} (or to be more precise, its dual described in \cite{FJ}) is the space $T = T_\mathcal{S}^{1/2}$ where $\mathcal{S} = \{F\subset\N: \#F\leq\min(F)\}\cup\{\emptyset\}$. It is stated in \cite[Theorem 1]{AD} that the space $T_\mathcal{M}^\theta$ is reflexive whenever $\mathrm{CB}(\mathcal{M})(\theta/(1 + \theta)) > 1$, where $\mathrm{CB}(\mathcal{M})$ denotes the Cantor-Bendixson index of $\mathcal{M}$. In particular, if $\mathrm{CB}(\mathcal{M})$ is infinite then the space $T_\mathcal{M}^\theta$ is reflexive (see also \cite[1.1. Proposition]{AD2}). A special type of collections of finite subsets of $\N$ are the so called regular families. A collection $\mathcal{M}$ is called regular if it is compact, for every $A\in\mathcal{M}$ it contains all  $B\subset A$, and whenever $\{k_1,\ldots,k_n\}\in\mathcal{M}$ and $k_1\leq m_1,\ldots, k_n\leq m_n$ then $\{m_1,\ldots,m_n\}\in\mathcal{M}$. Whenever the family $\mathcal{M}$ is regular it is not very hard to see that the basis $(t_i)_i$ of $T_\mathcal{M}^\theta$ is right dominant.

There are a series of results, starting with \cite{OSZ}, about controlling the norm of certain spaces via norms of Tsirelson spaces. We use some estimates from \cite{OSZ} to prove one such result that we need later. We point out that we will not require the full statement. Here, $Sz(X)$ denotes the Szlenk index of $X$ (see \cite{Sz}).

\begin{prp}
\label{shrinking dominated by tsirelson}
Let $X$ be a Banach space with a normalized monotone shrinking basis $(x_i)_i$. Then there exists a regular family $\mathcal{M}$ so that $(x_i)_i$ satisfies subsequential $15$-$T_\mathcal{M}^{1/2}$ estimates. That is, every normalized block sequence $(y_i)_i$ of $(x_i)_i$ with $k_i = \min\supp(y_i)$ for all $i\in\N$ is $15$-dominated by $(t_{k_i})_i$. Here, $(t_i)_i$ denotes the basis of $T_\mathcal{M}^{1/2}$.
\end{prp}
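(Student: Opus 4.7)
The plan is to capture, at each dyadic scale $\varepsilon = 2^{-n}$, the combinatorial shape of index sets that can be ``tested'' by a single norm-one functional on a normalized block sequence, and to assemble these shapes into one regular family $\mathcal{M}$ driven by the shrinking hypothesis. The domination will then follow by decomposing a norming functional dyadically according to the size of its pairings with the blocks, so that each scale contributes an $\mathcal{M}$-admissible portion to which the implicit Tsirelson inequality applies directly.

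To that end, for each $n \in \N$ I would set
\[
\mathcal{F}_n = \Big\{\{k_1 < \cdots < k_m\} \subset \N : \exists\, x^* \in B_{X^*},\ \exists\text{ normalized block } (y_j)_{j=1}^m \text{ of } (x_i)\text{ with }\min\supp(y_j) = k_j \text{ and } |x^*(y_j)| \ge 2^{-n}\Big\},
\]
and replace $\mathcal{F}_n$ by its hereditary and spreading closure if needed. Compactness of each $\mathcal{F}_n$ in $2^{\N}$ is the key structural input and the place where shrinking enters decisively: any pointwise-limit of witnessing pairs $(x^*_\ell,(y^{(\ell)}_j))$ gives, via weak$^*$-compactness of $B_{X^*}$, a limit functional $x^*$, and shrinking forces $x^*(y)\to 0$ on the tails of any block sequence of $(x_i)$, which rules out an infinite limit witness. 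Once each $\mathcal{F}_n$ is regular I would aggregate them with a Schreier-style guard,
\[
\mathcal{M} = \bigcup_{n\in\N}\mathcal{F}_n[n], \qquad \mathcal{F}_n[n] = \{F \in \mathcal{F}_n : \min F \ge n\}.
\]
The guard $\min F \ge n$ makes $\mathcal{M}$ compact, since any pointwise-limit with $\min = m$ lies in the finite union $\bigcup_{n\le m}\mathcal{F}_n[n]$ of compact sets; hereditariness and spreading of $\mathcal{M}$ are inherited from the $\mathcal{F}_n$.

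For the domination, fix a normalized block $(y_j)$ with $\min\supp(y_j) = k_j$, finitely supported scalars $(a_j)$, and a norming $x^* \in B_{X^*}$. Partition the relevant indices by scale $J_n = \{j : 2^{-(n+1)} < |x^*(y_j)| \le 2^{-n}\}$ for $n \ge 0$, and split $J_n = J_n^{\mathrm{low}} \sqcup J_n^{\mathrm{high}}$ according to whether $k_j < n+1$ or $k_j \ge n+1$. By construction $\{k_j : j \in J_n^{\mathrm{high}}\} \in \mathcal{F}_{n+1}[n+1] \subset \mathcal{M}$, so that the singletons $(\{k_j\})_{j \in J_n^{\mathrm{high}}}$ form an $\mathcal{M}$-admissible family and the defining Tsirelson inequality gives
\[
\sum_{j \in J_n^{\mathrm{high}}}|a_j| \le 2\Big\|\sum_j a_j t_{k_j}\Big\|_{T_{\mathcal{M}}^{1/2}}.
\]
The boundary part is handled by $|J_n^{\mathrm{low}}| \le n$ together with $\max_j|a_j| \le \|\sum_j a_j t_{k_j}\|_{T_{\mathcal{M}}^{1/2}}$, which holds by $1$-unconditionality of $(t_i)$ and $\|t_i\|=1$. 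Summing,
\[
\Big|x^*\Big(\sum_j a_j y_j\Big)\Big| \le \sum_{n \ge 0} 2^{-n}(n+2)\Big\|\sum_j a_j t_{k_j}\Big\|_{T_{\mathcal{M}}^{1/2}},
\]
and the geometric series together with additional constant factors coming from the hereditary and spreading closures of the $\mathcal{F}_n$ and from monotonicity of $(x_i)$ yields an overall constant at most $15$.

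The main obstacle is the compactness of each $\mathcal{F}_n$, where the shrinking hypothesis is indispensable; without it, infinite weakly non-null tails could sustain the uniform $2^{-n}$ threshold and destroy the regularity of $\mathcal{M}$. Once compactness is in hand, the remainder is combinatorial bookkeeping: the Schreier guard $\min F \ge n$ glues the scale families into a single regular $\mathcal{M}$, and scale-by-scale application of the implicit Tsirelson norm inequality reduces the estimate to the convergent geometric series above, with ample margin for the explicit constant $15$.
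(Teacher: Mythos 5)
Your proposal is correct, but it takes a genuinely different route from the paper's. The paper invokes the Odell--Schlumprecht--Zs\'{a}k theorem (\cite[Theorem 18]{OSZ}) to obtain, from the Szlenk index, a fine Schreier family $\mathcal{F}_\alpha$ together with a subsequence $(m_i)$ already giving a $5$-$T^{1/2}_{\mathcal{F}_\alpha}$ estimate, and then reshapes that family into $\mathcal{M}$; you instead build $\mathcal{M}$ from scratch via the dyadic scale families $\mathcal{F}_n$ and prove compactness directly from the shrinking hypothesis. Your route is more self-contained, avoids the OSZ machinery, and even gives a better constant (your sum $\sum_{n\ge 0}(n+2)2^{-n}=6$, so no extra factors from the closures are in fact needed). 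What it does \emph{not} give, and what the paper's approach does, is the quantitative control on the Cantor--Bendixson index of $\mathcal{M}$ in terms of $\mathrm{Sz}(X)$ that the paper records in the remark following the proposition and uses in the statements surrounding Theorem \ref{ideals with unconditiional bases}; your $\mathcal{M}$ is regular but its complexity is left implicit.

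One place you should tighten the argument is the compactness of $\mathcal{F}_n$, which you treat as nearly immediate but which requires a genuine diagonalization. Given an infinite branch $\{k_1<k_2<\cdots\}$ with witnesses $(x^*_m,(y^{(m)}_j)_{j\le m})$, the blocks $y^{(m)}_j$ vary with $m$, so weak$^*$-compactness of $B_{X^*}$ alone is not enough; you also need the observation that for $m>j$ one has $\supp(y^{(m)}_j)\subset[k_j,k_{j+1})$, a \emph{finite-dimensional} constraint forcing a norm-convergent subsequence $y^{(m)}_j\to y_j$ for each $j$. After diagonalizing over $j$ and passing to a weak$^*$-convergent subsequence $x^*_m\to x^*$, the combination $|x^*(y_j)|\ge 2^{-n}$ for an infinite normalized block sequence $(y_j)$ contradicts shrinking. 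Once this is spelled out, the rest of your proof (the closure operations preserving compactness, the Schreier-type guard $\min F\ge n$ making $\mathcal{M}$ compact, the scale-by-scale admissibility of $\{k_j:j\in J_n^{\mathrm{high}}\}$, and the bound $|J_n^{\mathrm{low}}|\le n$ from $k_j\ge j$) is sound and yields the claimed $15$-domination with room to spare.
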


\begin{proof}
By \cite[Theorem 18]{OSZ} (we use the statement of the theorem for $X=Z$) there exist $1 =m_0<m_1<m_2<\cdots$ and an ordinal number $\al < Sz(X)$ so that for any $1 \leq s_0 < s_1<\cdots$ and any block sequence $(y_i)_i$ of $(x_i)_i$ with $\supp(y_i)\in(m_{s_{i-1}},m_{s_i}]$  is 5-dominated by $(t_{m_{s_{i-1}}})_i$, where $(t_i)_i$ is the basis of the space $T_{\mathcal{F}_\al}^{1/2}$. Here, $\mathcal{F}_\al$ denotes the fine Schreier family that has Cantor-Bendixson index $\alpha + 1$ (see, e.g., \cite[Page 71]{OSZ}). By passing to an appropriate subsequence of $(m_i)_i$ we may assume that $(m_i - m_{i-1})_i$ is nondecreasing. Define
\[
\begin{split}
\mathcal{M} = \left\{\vphantom{\cup_{i=1}^nA_i}\right.&\cup_{i=1}^nA_i:  A_1<\cdots <A_n\text{ and }\#A_i \leq m_{d_i}-m_{d_{i-1}},\\
&\left.\vphantom{\cup_{i=1}^nA_i}\text{where }d_i = \min\{i:\min(A_i) \leq m_{d_i}\}\right\}
\end{split}
\]
The family $\mathcal{M}$ is regular. This follows from the regularity of $\mathcal{F}_\al$ and the fact that $(m_i - m_{i-1})_i$ is nondecreasing. It also follows that $\mathrm{CB}(\mathcal{M}) \leq \omega\alpha + 1$.

We will show that $\mathcal{M}$ is the desired family. For any sequence of scalars $(a_j)_j$ that is eventually zero we have:
\begin{equation}
\label{norming sets proves this got no time for that}
\left\|\sum_{i=1}^\infty\left(\sum_{j=m_{i-1}+1}^{m_i}|a_j|\right)t_{m_{i-1}}\right\|_{T_{\mathcal{F}_\al}^{1/2}}\leq\left\|\sum_{i=1}^\infty\sum_{j=m_{i-1}+1}^{m_i}a_jt_j\right\|_{T_\mathcal{M}^{1/2}}
\end{equation}
 A way to see this is to use norming sets. Take the standard norming set $K$ of $T_{\mathcal{F}_\al}^{1/2}$ and write it as $K = \cup_{s=0}^\infty K_s$ (see, e.g., the proof of \cite[1.1. Proposition]{AD2}) and show by induction on $s$ that for every  $f$ in $K_s$ with $\supp(f)\subset\{m_j:j\in\N\cup\{0\}\}$ there is $g\in B^*_{T_\mathcal{M}^{1/2}}$ for any $i\in\N$ and $j\in(m_{i-1},m_i]$ we have $g(e_j) = f(e_{m_{i-1}})$.
We leave the details to the reader.

Let $(x_j)_{j=1}^n$ be a block sequence in $X$ and for each $j$ set $k_j = \min\supp(x_j)$ and also set $B_j = \{i:\ran(x_j)\cap(m_{i-1},m_i]\neq\emptyset\}$. The sets $B_j$ are intervals of $\N$ clearly satisfying $\max(B_j)\leq\min(B_{j+1})$. This easily implies that we may define a partition $C_0$, $C_1$, $C_2$ of $\{1,\ldots,n\}$ so that for that for each $\e=0,1,2$ and each $j_1,j_2\in C_\e$ we have that $B_{j_1}$ and $B_{j_2}$ either are singletons that coincide or they are disjoint. 
For $i\in\N$ and $\e=0,1,2$ the set $A_i^\e = \{j\in C_\e:\min(B_j) = m_{i}\}$. Set $D_\e = \{i:A_i^\e\neq \emptyset\}$ and for $i\in D_\e$ deifne $y_i^\e = \sum_{j\in A_i^\e}x_j$. Because of the property of $C_\e$ we obtain that $(y^\e_i/\|y^\e_i\|)_{i\in D_\e}$ is 5-dominated by $(t_{m_{i-1}})_{i\in D_\e}$. We calculate
\[
\begin{split}
\left\|\sum_{j\in C_\e}x_j\right\| &= \left\|\sum_{i\in D_\e}y_i^\e\right\| \leq 5\left\|\sum_{i\in D_\e}\|y_i^\e\|t_{m_{i-1}}\right\|_{T_{\mathcal{F}_\al}^{1/2}} =  5\left\|\sum_{i\in D_\e}\left\|\sum_{j\in A_i^\e}x_j\right\|t_{m_{i-1}}\right\|_{T_{\mathcal{F}_\al}^{1/2}}\\
&\leq 5\left\|\sum_{i\in D_\e}\left(\sum_{j\in A_i^\e}\left\|x_j\right\|\right)t_{m_{i-1}}\right\|_{T_{\mathcal{F}_\al}^{1/2}}\text{(triangle ineq. and uncond.)}\\
& \leq 5\left\|\sum_{i\in D_\e}\sum_{j\in A^\e_i}\left\|x_j\right\|t_{k_j}\right\|_{T_\mathcal{M}^{1/2}}\text{(by \eqref{norming sets proves this got no time for that})}\\
& = 5\left\|\sum_{j\in C_\e}\|x_j\|t_{k_j}\right\|_{T_\mathcal{M}^{1/2}} \leq 5\left\|\sum_{j=1}^n\|x_j\|t_{k_j}\right\|_{T_\mathcal{M}^{1/2}}\text{ (by uncond.)}
\end{split}
\]
The conclusion follows by adding the estimates for $C_0$, $C_1$, and $C_2$.
\end{proof}

\begin{rmk}
The proof of Proposition \ref{shrinking dominated by tsirelson} and \cite[Theorem 6.2]{C} yield the following. If $\mathrm{Sz}(X) = \omega$ then $\mathrm{CB}(\mathcal{M}) < \omega^2$ whereas if $\mathrm{Sz}(X) > \omega$ then $\mathrm{CB}(\mathcal{M}) < \mathrm{Sz}(X)$.
\end{rmk}

The following says that spaces with an unconditional basis that do not contain $c_0$ are embedded as complemented ideals into quasi-reflexive spaces of the form $\R I\oplus\mathcal{K}_\mathrm{diag}(Y)$.

\begin{prp}
\label{unconditional withtout c0 plus jamesification of tsirelson}
Let $X$ be a Banach space with a normalized and 1-unconditional basis $(x_i)_i$ that does not contain an isomorphic copy of $c_0$. Then there exists a regular family $\mathcal{M}$, and a Banach space $Y$ with a Schauder basis so that the space $\R I\oplus\mathcal{K}_\mathrm{diag}(Y)$ contains a complemented ideal $\widetilde X$ that is isomorphic as a Banach algebra to $X$ and a subalgebra $\mathcal{A}$ that is isomorphic as a Banach algebra to $J(T_\mathcal{M}^{1/2})^*$ so that $\R I\oplus\mathcal{K}_\mathrm{diag}(Y) = \widetilde X\oplus \mathcal{A}$.
\end{prp}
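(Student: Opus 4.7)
The plan is to mimic the strategy of Proposition~\ref{all unconditional containing c0}: interleave the basis of $X$ with a ``summing-basis analogue'' coming from $J(T_\mathcal{M}^{1/2})$, pass to a transformed basis $(d_i)_i$, and invoke Corollary~\ref{same space banach algebra} to identify the Calkin algebra. First, since $X$ does not contain $c_0$, the basis $(x_i)_i$ is boundedly complete and $Z:=[(x_i^*)_i]$ carries a shrinking 1-unconditional basis with $Z^*=X$ under the canonical identification. Apply Proposition~\ref{shrinking dominated by tsirelson} to $Z$ to obtain a regular family $\mathcal{M}$ such that $(x_i^*)_i$ satisfies subsequential $15$-$T_\mathcal{M}^{1/2}$ estimates; write $T:=T_\mathcal{M}^{1/2}$ with its right-dominant 1-unconditional basis $(t_i)_i$.

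Next, define $W$ on $c_{00}(\mathbb{N})$ with basis $(e_i)_i$ and norm
\[
\Big\|\sum a_ie_i\Big\|_W=\max\Big\{\|\sum a_{2i}x_i\|_X,\ \|\sum a_i\bar e_i\|_{J(T)}\Big\},
\]
with $(\bar e_i)_i$ the unit vector basis of $J(T)$. By Lemma~\ref{dualizing unconditional plus submultiplicative} (applied with $M=2\mathbb{N}$, $N=\mathbb{N}$, using Proposition~\ref{Jstar is submultiplicative}(iii)), the sequence $(e_i^*)_i$ is submultiplicative in $W^*$; Proposition~\ref{Jstar is submultiplicative}(ii) shows it dominates the summing basis of $c_0$. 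Pass to $d_1=e_1$, $d_{i+1}=e_{i+1}-e_i$; a computation using Proposition~\ref{basic properties scriptjstar}(v), Remark~\ref{J(X) submultiplicative} and 1-unconditionality of $(x_i)_i$ verifies that $(d_i)_i$ satisfies the hypotheses of Corollary~\ref{same space banach algebra}. Thus for $Y:=[(d_i^*)_i]$ we obtain $\R I\oplus\mathcal{K}_\mathrm{diag}(Y)\cong\R e_\omega\oplus W$ as Banach algebras.

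The decomposition $W=\widetilde X\oplus\bar{\mathcal A}$ with $\widetilde X=[(d_{2i})_i]$ and $\bar{\mathcal A}=\R d_1\oplus[(d_{2i}+d_{2i+1})_i]$ is induced by the bounded projection $Q(\sum a_ie_i)=\sum a_{2i}d_{2i}$, and Proposition~\ref{these are the ideals} identifies $\widetilde X$ as a Banach algebra ideal. The subalgebra $\bar{\mathcal A}$ is identified with $J(T)$ as a Banach algebra by analyzing the odd-indexed subspace $[(e_{2i-1})_i]\subset W$, using right-dominance of $(t_i)_i$ to reduce the $J(T)$-norm on odd-indexed sequences to a genuine $J(T)$-norm after reindexing; the unitization then satisfies $\R e_\omega\oplus\bar{\mathcal A}\cong J(T_\mathcal{M}^{1/2})^*$, using that by Proposition~\ref{james spaces as diagonal operators} and Remark~\ref{when is it the dual I wonder}, $J(T)^*=\mathcal{J}_*(T)$ since $T$ is reflexive (hence contains no copy of $\ell_1$).

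The main technical obstacle is the identification $\widetilde X\cong X$. By Lemma~\ref{complemented subspace even parts} applied to $T$ together with right-dominance of $(t_i)_i$, the norm on $[(d_{2i})_i]$ reduces to $\|\sum c_{2i}d_{2i}\|_W\asymp\max\{\|\sum c_{2i}x_i\|_X,\|\sum c_{2i}t_i\|_T\}$, and one must show that the $T$-contribution is dominated by the $X$-contribution, that is, $\|\sum c_it_i\|_T\le C\|\sum c_ix_i\|_X$. The dual version $\|\sum c_it_i^*\|_{T^*}\le 15\|\sum c_ix_i\|_X$ is immediate from Proposition~\ref{shrinking dominated by tsirelson}, but lifting this to a statement about $(t_i)_i$ itself rather than $(t_i^*)_i$ is the crux of the argument; this is where reflexivity of $T$ and the specific structure of $W$ (in particular that the $X$-contribution controls $c_{2i}$ pointwise, yielding bounds on the partial-sum behaviour matching the Tsirelson admissibility structure) must be exploited. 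Once this comparison is in hand, the remaining bookkeeping parallels the proof of Proposition~\ref{all unconditional containing c0}.
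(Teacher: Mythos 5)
Your proposal has a genuine gap, which you have in fact half-spotted yourself. You build $W$ using the norm of $J(T)$, so after passing to the difference basis the even-indexed subspace $[(d_{2i})_i]$ carries the norm
\[
\max\Big\{\Big\|\sum_i a_{2i}x_i\Big\|_X,\ \Big\|\sum_i a_{2i}\big(\bar e_{2i}-\bar e_{2i-1}\big)\Big\|_{J(T)}\Big\},
\]
and by Lemma~\ref{complemented subspace even parts} the second term is equivalent to $\|\sum_i a_{2i}t_{2i}\|_{T}$. To get $\widetilde X\cong X$ you therefore need the $X$-norm to dominate the \emph{Tsirelson} norm of $(t_i)_i$. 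But Proposition~\ref{shrinking dominated by tsirelson}, applied to $[(x_i^*)_i]$, says $(x_i^*)_i$ is subsequentially dominated by $(t_i)_i$, which dualizes to $(x_i)_i$ dominating $(t_i^*)_i$ --- the $T^*$-norm, not the $T$-norm. These are genuinely incomparable: taking $X$ to be a (reflexive) space whose basis is equivalent to a Tsirelson-dual-type basis already kills the desired inequality $\|\sum c_it_i\|_T\lesssim\|\sum c_ix_i\|_X$. Reflexivity of $T$ does not make $T$-norms and $T^*$-norms comparable, and the ``partial-sum/admissibility'' observation you gesture at does not give the missing estimate; so the ``crux'' you flag is not a loose end but a structural obstruction to this route.

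The paper avoids this precisely by \emph{not} using $J(T)$ directly. It works inside $\mathcal{J}_*(Z)=J(Z)^*$ for the doubled space $Z=T\oplus T$, with $w_i=\sum_{j\le i}e_j^*$ in the second coordinate of the outer max. After passing to the difference basis, the even-indexed part becomes $\max\{\|\sum a_{2i}x_i\|_X,\|\sum a_{2i}e_{2i}^*\|_{\mathcal{J}_*(Z)}\}$, and by Proposition~\ref{doubling the space gives something clean}~(iii) the sequence $(e_{2i}^*)_i$ is equivalent to $(t_i^*)_i$. That is, the Tsirelson contribution lands on the $T^*$-side, which is exactly what Proposition~\ref{shrinking dominated by tsirelson} controls. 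Simultaneously, the odd part $(e^*_{2i-1}+e^*_{2i})_i$ is identified via Proposition~\ref{doubling the space gives something clean}~(iv) with $(\bar e_i^*)_i$, the biorthogonals of the $J(T)$-basis, whence the unitized subalgebra is $J(T)^*$. Note also that in your write-up the subalgebra is built from the odd part of $J(T)$ (giving something $J(T)$-like, whose unitization is $\R e_\omega\oplus J(T)$), which is not $J(T)^*$; this mismatch is another symptom of being on the wrong side of the duality. Replacing $J(T)$ by $\mathcal{J}_*(T\oplus T)$ repairs both issues at once.
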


\begin{proof}
Take $\mathcal{M}$ given by Proposition \ref{shrinking dominated by tsirelson} applied to the space $[(x_i^*)_i]$.
This means that for any sequence of scalars $(a_i)_i$ that is eventually zero we have
\begin{equation}
\label{dominates tsirelson dual}
\left\|\sum_{i=1}^\infty a_ix_i\right\| \geq \frac{1}{15}\left\|\sum_{i=1}^\infty a_it_i^*\right\|
\end{equation}
where $(t_i^*)_i$ is the basis of $(T_\mathcal{M}^{1/2})^*$. Let $Z = [(t_i)_i]\oplus[(t_i)_i] = T_\mathcal{M}^{1/2}\oplus T_\mathcal{M}^{1/2}$ endowed with the basis $(z_i)_i$ as in the statement of Lemma \ref{doubling keeps right dominance} which is right dominant, let $(e_i)_i$ denote the unit vector basis of $J(Z)$ and let $(v_i)_i$ denote the basis of the space $\mathcal{J}_*(Z) = J(Z)^*$(equality follows from Remark \ref{when is it the dual I wonder}). Also denote by $(\bar e_i)_i$ the unit vector basis of $J(T_\mathcal{M}^{1/2})$,

Set $w_i = v_1 - v_{i+1} = \sum_{j=1}^ie_i^*$.  We define a norm on $c_{00}(\N)$ given by
\begin{equation}
\label{definition norm of this space for reflexive and tsirelson}
\left\|\sum_{i=1}^\infty a_i\tilde e_i\right\| = \max\left\{\left\|\sum_{i=1}^\infty a_{2i-1}x_{i}\right\|,\left\|\sum_{i=1}^\infty a_iw_{i}\right\|\right\}
\end{equation}
and denote by $W$ the completion of $c_{00}(\N)$ with respect to this norm. The sequence $(d_i)_i$ given by $d_1 = \tilde e_1$ and for $i\in\N$ $d_{i+1} = \tilde e_{i+1} -\tilde e_i$ is a Schauder basis of $W$ and it satisfies the formula
\[
\left\|\sum_{i=1}^\infty a_id_i\right\| = \max\left\{\left\|\sum_{i=1}^\infty(a_{2i-1} - a_{2i})x_i\right\|,\left\|\sum_{i=1}^\infty a_{i}e_i^*\right\|\right\}.\]
It follows that $W$ endowed with $(d_i)_i$ satisfies the assumptions of Corollary \ref{same space banach algebra}. Hence, it is enough to show the desired decomposition for the unitization of $W$. Furthermore
\[
\left\|\sum_{i=1}^\infty a_id_{2i}\right\| = \max\left\{\left\|\sum_{i=1}^\infty a_{i}x_i\right\|,\left\|\sum_{i=1}^\infty a_{i}e_{2i}^*\right\|\right\}.\]
By Proposition \ref{doubling the space gives something clean} (iii) the sequence $(e_{2i}^*)_i$ is equivalent to $(t_i^*)_i$, and by \eqref{dominates tsirelson dual} we are able to conclude that $(d_{2i})_i$ is equivalent to $(x_i)_i$. In fact, by Proposition \ref{doubling the space gives something clean} (iii) the map $R(\sum_ia_i e_i^*) = \sum_i(a_{2i}-a_{2i-1})e_{2i}^*$ is a bounded linear projection which easily implies that $\tilde R(\sum_ia_i d_i) = \sum_i(a_{2i}-a_{2i-1})d_{2i}$ is a bounded linear projection onto the ideal $\widetilde X = [(d_{2i})]$ the kernel of which is the subalgebra $\tilde {\mathcal{A}} = [(d_{2i}+d_{2i-1})_i] = [(\tilde e_{2i})_i]$. Clearly, by \eqref{definition norm of this space for reflexive and tsirelson}, the sequence $(\tilde e_{2i})_i$ is equivalent to the sequence $(w_{2i})_i$ which implies that $(d_{2i}+d_{2i-1})_i$ is equivalent to $(e_{2i}^*+e^*_{2i-1})$ which by Proposition \ref{doubling the space gives something clean} (iv) is equivalent $(\bar e_i^*)_i$. Setting $\mathcal{A} = \R e_\omega\oplus \tilde{\mathcal{A}}$ in the unitization of $W$ concludes the proof.
\end{proof}

The proof of the following result uses the proof of Proposition \ref{unconditional withtout c0 plus jamesification of tsirelson} and Lemma \ref{dualizing unconditional plus submultiplicative}. We omit its proof because it is very similar to the proof of Proposition \ref{all unconditional containing ell1}.
\begin{prp}
\label{unconditional withtout ell1 plus jamesification of tsirelson}
Let $X$ be a Banach space with a normalized 1-unconditional basis that does not contain an isomorphic copy of $\ell_1$. Then there exists a regular family $\mathcal{M}$, and a Banach space $Y$ with a Schauder basis so that the space $\R I\oplus\mathcal{K}_\mathrm{diag}(Y)$ contains a complemented ideal $\widetilde X$ that is isomorphic as a Banach algebra to $X$ and a subalgebra $\mathcal{A}$ that is isomorphic as a Banach algebra to $\R e_\omega\oplus J(T_\mathcal{M}^{1/2})$ so that $\R I\oplus\mathcal{K}_\mathrm{diag}(Y) = \widetilde X\oplus \mathcal{A}$.\end{prp}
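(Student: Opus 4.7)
Since $X$ does not contain a copy of $\ell_1$, its $1$-unconditional basis $(x_i)$ is shrinking, so $X^{*}=[(x_i^{*})_i]$ carries a boundedly complete $1$-unconditional basis and in particular does not contain $c_0$. Thus $X^{*}$ satisfies the hypothesis of Proposition \ref{unconditional withtout c0 plus jamesification of tsirelson}. The plan is to run the construction of that proposition with $X^{*}$ in place of $X$ to produce a space $V$, and then to recover the required decomposition of $\R I\oplus\mathcal{K}_\mathrm{diag}(V)$ by dualizing, in direct analogy with how Proposition \ref{all unconditional containing ell1} was obtained from Proposition \ref{all unconditional containing c0}.

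Concretely, I would let $\mathcal{M}$ be the regular family produced by applying Proposition \ref{shrinking dominated by tsirelson} to $X$ (so in the dual $\|\sum a_ix_i^{*}\|\ge \frac{1}{15}\|\sum a_it_i^{*}\|$), set $Z=T_\mathcal{M}^{1/2}\oplus T_\mathcal{M}^{1/2}$ with the doubled right-dominant basis, and define $V$ verbatim from the proof of Proposition \ref{unconditional withtout c0 plus jamesification of tsirelson} with $X^{*}$ in the role of $X$, namely
\[
\left\|\sum_i a_i\tilde e_i\right\|_V \;=\; \max\!\left\{\left\|\sum_i a_{2i-1}x_i^{*}\right\|,\;\left\|\sum_i a_iw_i\right\|_{\mathcal{J}_*(Z)}\right\},
\]
where $w_i=v_1-v_{i+1}=\sum_{j=1}^i e_j^{*}\in\mathcal{J}_*(Z)$. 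The $w_i$-summand already forces $(\tilde e_i)$ to dominate the summing basis of $c_0$ (evaluate at $e_n\in B_{J(Z)}$). For submultiplicativity of the dual basis $(\tilde e_i^{*})$ in $V^{*}$ I would invoke Lemma \ref{dualizing unconditional plus submultiplicative}: unconditionality of $(x_i^{*})$ is automatic, and a direct biorthogonality computation identifies $w_i^{*}$ with $-v_{i+1}^{*}\in J(Z)\subset\mathcal{J}_*(Z)^{*}$, after which Remark \ref{J(X) submultiplicative} applied to $J(Z)$ (the difference basis of a Jamesification is $2$-submultiplicative) supplies $\|\sum_i a_ib_iw_i^{*}\|\le 2\|\sum_i a_iw_i^{*}\|\|\sum_i b_iw_i^{*}\|$ in the quotient norm on $[(w_i)_i]^{*}$. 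Theorem \ref{ADT main thm} then identifies $\R I\oplus\mathcal{K}_\mathrm{diag}(V)$ with $\R e_\omega^{*}\oplus[(\tilde e_i^{*})_i]$ inside $V^{*}$.

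To extract the decomposition I would dualize the projection $\widetilde R:V\to V$ from the proof of Proposition \ref{unconditional withtout c0 plus jamesification of tsirelson} (projecting onto $[(d_{2i})_i]\cong X^{*}$ along $[(\tilde e_{2i})_i]$). A short computation fully parallel to the passage from $Q$ to $Q^{*}$ in Proposition \ref{all unconditional containing ell1} shows that $\widetilde R^{*}\!\restriction_{\R e_\omega^{*}\oplus[(\tilde e_i^{*})_i]}$ annihilates $e_\omega^{*}$ and each $\tilde e_{2i-1}^{*}+\tilde e_{2i}^{*}$ while fixing each $\tilde e_{2i-1}^{*}$, so it is a bounded projection onto $\widetilde X:=[(\tilde e_{2i-1}^{*})_i]$ along $\mathcal{A}:=\R e_\omega^{*}\oplus[(\tilde e_{2i-1}^{*}+\tilde e_{2i}^{*})_i]$. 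Standard biorthogonality then identifies $\widetilde X$, with coordinatewise multiplication, with the canonical copy $[(x_i^{**})_i]\subset X^{**}$ of $X$, and using Proposition \ref{doubling the space gives something clean}(iv) together with the duality $\mathcal{J}_*(T_\mathcal{M}^{1/2})=J(T_\mathcal{M}^{1/2})^{*}$, the subalgebra $\mathcal{A}$ is identified with the unitization $\R e_\omega\oplus J(T_\mathcal{M}^{1/2})$. The only genuinely new technical point relative to the argument for Proposition \ref{all unconditional containing ell1} is the verification that $(w_i^{*})$ is submultiplicative in $[(w_i)_i]^{*}$; everything else is the standard dualization template already laid down there.
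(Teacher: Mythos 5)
Your route is the one the paper intends: run the construction of Proposition \ref{unconditional withtout c0 plus jamesification of tsirelson} with $X^{*}=[(x_i^{*})_i]$ in place of $X$, and then pass to $\R I\oplus\mathcal{K}_\mathrm{diag}(V)\cong\R e_\omega\oplus[(\tilde e_i^{*})_i]\subset V^{*}$ via Theorem \ref{ADT main thm} and Lemma \ref{dualizing unconditional plus submultiplicative}, dualizing the projection exactly as in the passage from Proposition \ref{all unconditional containing c0} to Proposition \ref{all unconditional containing ell1}. You also put your finger on the one step that is genuinely new, namely submultiplicativity of $(w_i^{*})_i$.

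That step, however, is not quite finished as written. The biorthogonal identification $w_i^{*}=-v_{i+1}^{*}\big|_{[(w_j)_j]}$ is correct, but $[(w_j)_j]=[(e_j^{*})_j]$ is a hyperplane of $\mathcal{J}_*(Z)$, so $[(w_j)_j]^{*}=\mathcal{J}_*(Z)^{*}/\R e_\omega$ is a proper quotient and the norm of $\sum a_iw_i^{*}$ there is $\inf_{\lambda}\bigl\|\sum a_iv_{i+1}^{*}-\lambda e_\omega\bigr\|_{\mathcal{J}_*(Z)^{*}}$, not $\bigl\|\sum a_iv_{i+1}^{*}\bigr\|_{J(Z)}$. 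The inequality from Remark \ref{J(X) submultiplicative} therefore lives in the $J(Z)$-norm and does not transfer automatically to the quotient, and in fact the direction of the comparison you would need on the right-hand side is the nontrivial one. The gap is fillable: since $e_\omega\notin J(Z)$ and $J(Z)$ is closed in $\mathcal{J}_*(Z)^{*}$, the one-dimensional subspace $\R e_\omega$ is complemented from $J(Z)$, so the quotient map $J(Z)\to\mathcal{J}_*(Z)^{*}/\R e_\omega$ is bounded below and the two norms are equivalent (with some constant $c\in(0,1]$). Submultiplicativity of $(w_i^{*})_i$ then follows, but with constant $2/c^{2}$ rather than the $2$ you wrote. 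You should make this comparison explicit; as stated, the asserted inequality ``$\le 2\cdots$ in the quotient norm'' does not follow from the ingredients you cite. (Separately, Lemma \ref{dualizing unconditional plus submultiplicative} also presupposes that $(w_i)_i$ is a Schauder basis of $[(w_j)_j]$; this is used implicitly in the paper's proof of the $c_0$-version as well, so it is not a discrepancy with the paper, but it is worth recording as an assumption being carried along.)
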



\begin{rmk}
As the proof of Proposition \ref{unconditional withtout c0 plus jamesification of tsirelson} clearly indicates, if the basis of $X$ dominates the unit vector basis of $\ell_q$, for some $1< q <\infty$, then in the conclusion of Proposition \ref{unconditional withtout c0 plus jamesification of tsirelson} $\mathcal{A}$ may be taken to be isomorphic as a Banach algebra to $J_p^* = J(\ell_p)^*$ where $1/p+1/q = 1$. This happens, e.g., if the space $X$ has non-trivial co-type. Similarly, if the basis of $X$ is dominated by the unit vector basis of $\ell_p$ for some $1<p<\infty$ then in the conclusion of Proposition \ref{unconditional withtout ell1 plus jamesification of tsirelson} $\mathcal{A}$ may be taken to be isomorphic as a Banach algebra to $J_p = J(\ell_p)$. This happens when, e.g., the space $X$ has non-trivial type.
\end{rmk}

\section{Control on diagonal operators via horizontally block finite representability}
\label{easy space}
Given a Banach space $X$ with a normalized Schauder basis $(e_k)_k$ and a sequence of Banach spaces $(X_k)_{k=1}^\infty$, all having normalized Schauder bases, we shall define a type of direct sum $\mathcal{X} = (\sum\oplus X_k)^{X}_\mathrm{utc}$ the outside norm of which is a mixture of $c_0$ alongside finite, but arbitrarily large, pieces of the basis of the space $X$. This sum is fairly simply defined and it has the property that the sequence of projection operators $(I_k)_k$, each on the space $X_k$ within the sum, is equivalent (in the operator norm), to the sequence of diagonal operators $(e_k^*\otimes e_k)_k$ in $\mathcal{L}_\mathrm{diag}(X)$. We will use this space $\mathcal{X}$ in the sequel and it exhibits certain key properties given to it by its components. We will use these components in Section \ref{definition space section} to define the more complicated space.

\subsection{The definition of $\mathcal{X} = (\sum\oplus X_k)^{X}_\mathrm{utc}$}
\label{the easy section}
Let us fix a Banach space $X$ with a normalized Schauder basis $(e_i)_{i=1}^\infty$ 
with bimonotone constant $A_0$. That is, for each interval $E$ of $\N$ the natural projection on the  vectors $(e_{k})_{k\in E}$ has norm at most $A_0$. Let us also fix a sequence of Banach spaces $(X_k)_{k=1}^\infty$ each one of which has a normalized Schauder basis $(t_{k,i})_{i=1}^\infty$. We do not, yet, make any additional assumption on the bases $(t_{k,i})_{i=1}^\infty$.

Let us denote by $c_{00}(X_k)_k$ the vector space of all sequence $(x_k)_k$ where $x_k\in X_k$ for all $k\in\N$ and only finitely many entries are non-zero. For each $k\in\N$ we naturally identify a vector $x\in X_k$ with the sequence $(x_k)_k$ the $k$'th entry of which is $x$ and all other entries are $0$. Similarly define $c_{00}(X^*_k)_k$ and make the same identification. We define two subset of  $c_{00}(X^*_k)_k$, namely
\begin{equation*}
\begin{split}
\mathcal{G}^{\mathrm{utc}} &= \left\{\sum_{k=1}^{i_0}a_kt^*_{k,i_0}:\;i_0\in\N \text{ and } \exists \;(a_k)_{k=1}^\infty\text{ such that }\left\|w^*\!\text{-}\!\sum_{k=1}^\infty a_ke_k^*\right\|_{X^*}\leq 1\right\} \text{ and }\\
\mathcal{G}^{\mathrm{utc}}_0 &= \mathcal{G}^{\mathrm{utc}}\cup \left(\cup_{k=1}^\infty\left(\frac{1}{A_0} B_{X_k^*}\right)\right) \text{ (each } B_{X_k^*} \text{ is viewed in } c_{00}(Y^*_k)_k).
\end{split}
\end{equation*}

\begin{rmk}
The term utc stems from the fact that if we view each element $x^* = (x_k^*)_k$ of $\mathcal{G}^{\mathrm{utc}}$ as a matrix $((a_{k,i})_{k=1}^\infty)_{i=1}^\infty$ where each $x_k^* = w^*$-$\sum_ia_{k,i}t_{k,i}^*$, then this matrix has non-zero entries in only one column $i_0$ and only above the diagonal.
\end{rmk}

For each $x = (x_k)_k$ and $x^* = (x_k^*)_k$ in $c_{00}(X_k)_k$ and $c_{00}(X_k^*)_k$ respectively we define $x^*(x) = \sum_{k=1}^\infty x_k^*(x_k)$. We now define a norm for $x = (x_k)_k$ in $c_{00}(X_k)$:
\begin{equation*}
\|x\|_{\mathcal{X}} = \sup\left\{x^*(x):x^*\in\mathcal{G}^{\mathrm{utc}}_0\right\}.
\end{equation*}
We set $\mathcal{X} = (\sum\oplus X_k)^{X}_\mathrm{utc}$ to be the completion of $c_{00}(X_k)$ endowed with this norm. We also denote for each $n\in\N$, for later use, by $\mathcal{X}_n = (\sum_{k=1}^n\oplus X_k)^{X}_\mathrm{utc}$  the subspace of $\mathcal{X}$ that consists of all $x = (x_k)_k$ with $x_k = 0$ for all $k>n$.

For each vector $x = (x_k)_k$ in $\mathcal{X}$ we define $\supp(x) = \{k:x_k\neq 0\}$. We also define for each $k\in\N$
\begin{equation}
\label{k-support}
\supp_{k}(x) = \{i\in\N: t_{k,i}^*(x_k)\neq0\} = \supp_{(t_{k,i})_i}(x_k).
\end{equation}
We list some facts about the space $\mathcal{X}$.

\begin{prp}
\label{basic facts about simple space}
The space  $\mathcal{X} = (\sum\oplus X_k)^{X}_\mathrm{utc}$ satisfies the following.
\begin{itemize}

\item[(i)] The sequence $(X_k)_k$ forms a shrinking Schauder decomposition of the space with bimonotone constant $A_0$. Hence, for each $k\in\N$ we may define the natural projection $I_k$ the image of which is $X_k$.

\item[(ii)] For every $i_0\in\N$ and sequence of scalars $(a_k)_{k=1}^{i_0}$ we have
\begin{equation*}
\left\|\sum_{k=1}^{i_0}a_kt_{k,i_0}\right\|_{\mathcal{X}} = \left\|\sum_{k=1}^{i_0}a_ke_k\right\|_X. 
\end{equation*}
In particular, $X$ is finitely representable in $\mathcal{X}$.

\item[(iii)] For any vectors $y,$ $w$ in $\mathcal{X}$ so that $\max\supp(y) < \min\supp(w)$ with the property that for all $k\in\N$ the set $\supp_k(y)$ is finite  (i.e., if $y=(x_k)_k$ then each $x_k$ has finite support) and satisfy the condition
$$\max_{k\in\N}\{\max(\supp_k(y))\} < \min\supp(w)$$
we have $\|y + w\|_\mathcal{X} = \max\{\|y\|_\mathcal{X},\|w\|_{\mathcal{X}}\}$. In particular, every normalized block sequence $(w_k)_k$ in $\mathcal{X}$ has, for every $\e>0$, a subsequence that is $(1+\e)$-equivalent to the unit vector basis of $c_0$.

\item[(iv)] For all sequences of scalars $(a_k)_{k=1}^\infty$ we have
\begin{equation*}
\left\|\mathrm{SOT}\text{-}\sum_{k=1}^\infty a_kI_k\right\|= \left\|\mathrm{SOT}\text{-}\sum_{k=1}^\infty a_ke_k^*\otimes e_k\right\|.
\end{equation*}
In particular, $(I_k)_k$ is isometrically equivalent to $(e_k^*\otimes e_k)_k$.
\end{itemize}
\end{prp}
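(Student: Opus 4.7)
My plan is to handle the parts in the order \text{(ii)} $\to$ \text{(iii)} $\to$ \text{(i)} $\to$ \text{(iv)}, since each later step leans on an earlier one.

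For \text{(ii)}, I would argue directly from the norming set. For the upper bound, any $x^{*} = \sum_{k=1}^{j_0}b_k t_{k,j_0}^{*} \in \mathcal{G}^{\mathrm{utc}}$ evaluated at $\sum_{k=1}^{i_0}a_k t_{k,i_0}$ is zero unless $j_0 = i_0$, in which case it equals $\sum_{k=1}^{i_0}a_k b_k = \left(w^{*}\text{-}\sum b_k e_k^{*}\right)\!\left(\sum a_k e_k\right)$, which is at most $\|\sum a_k e_k\|_X$. The singleton functionals in $A_0^{-1}B_{X_k^{*}}$ contribute at most $|a_k|/A_0 \leq \|\sum a_k e_k\|_X$ by the bimonotonicity of $(e_i)_i$. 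For the lower bound, Hahn--Banach produces $(b_k)_k$ with $\|w^{*}\text{-}\sum b_k e_k^{*}\|_{X^{*}} = 1$ and $\sum_{k=1}^{i_0} a_k b_k = \|\sum a_k e_k\|_X$; then $\sum_{k=1}^{i_0}b_k t_{k,i_0}^{*} \in \mathcal{G}^{\mathrm{utc}}$ witnesses the reverse inequality.

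For \text{(iii)}, take $x^{*} \in \mathcal{G}^{\mathrm{utc}}_0$. If $x^{*}\in A_0^{-1}B_{X_k^{*}}$ then $x^{*}$ is supported in a single $X_k^{*}$, so $x^{*}(y+w)$ equals either $x^{*}(y)$ or $x^{*}(w)$ (since the supports of $y$ and $w$ are disjoint). If $x^{*} = \sum_{k=1}^{i_0}a_k t_{k,i_0}^{*}\in\mathcal{G}^{\mathrm{utc}}$, then $t_{k,i_0}^{*}(y_k) \neq 0$ forces $i_0 \in \supp_k(y)$, so $i_0 \leq \max_k\max\supp_k(y) < \min\supp(w)$, which in turn forces $x^{*}(w) = 0$ (as $w$ is supported on indices $\geq\min\supp(w)>i_0$); conversely if $x^{*}(w)\neq 0$ then $i_0 \geq\min\supp(w)$, which makes $x^{*}(y)=0$. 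So $x^{*}(y+w) \in \{x^{*}(y),x^{*}(w)\}$ in all cases, yielding $\|y+w\|_\mathcal{X} = \max\{\|y\|_\mathcal{X},\|w\|_\mathcal{X}\}$. For the $c_0$ subsequence conclusion, given a normalized block sequence $(w_n)_n$, approximate each $w_n$ in norm by a vector $\tilde w_n$ with $\supp_k(\tilde w_n)$ finite for every $k$, then choose a subsequence $(\tilde w_{n_j})_j$ so that $\max_k \max\supp_k(\tilde w_{n_j}) < \min\supp(\tilde w_{n_{j+1}})$; iterating the norm identity gives that $(\tilde w_{n_j})_j$ is isometrically equivalent to the $c_0$ basis, and a standard perturbation argument transfers this to $(w_{n_j})_j$.

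For \text{(i)}, bimonotonicity follows because the norming set $\mathcal{G}^{\mathrm{utc}}_0$ is stable under interval restriction: if $x^{*} = \sum_{k=1}^{i_0}a_k t_{k,i_0}^{*}\in\mathcal{G}^{\mathrm{utc}}$ and $E$ is an interval of $\mathbb{N}$, then the restriction $\sum_{k \in E,\,k\leq i_0}a_k t_{k,i_0}^{*}$ still lies in $\mathcal{G}^{\mathrm{utc}}$ up to a factor of $A_0$ by the bimonotonicity of $(e_i)_i$. Hence the natural projections onto intervals of $(X_k)_k$ have norm at most $A_0$. For the shrinking property, I invoke part \text{(iii)}: every normalized block sequence with respect to $(X_k)_k$ has a subsequence equivalent to the $c_0$ basis, hence a weakly null subsequence; a standard sliding-hump argument then shows every normalized block sequence is itself weakly null, which is equivalent to shrinking of the decomposition.

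The main obstacle is \text{(iv)}. The lower bound comes for free from \text{(ii)}: restricted to the isometric copy $\mathrm{span}\{t_{k,i_0}\}_{k=1}^{i_0}$ of $\mathrm{span}\{e_k\}_{k=1}^{i_0}$, the operator $\sum a_k I_k$ acts exactly as $\sum a_k e_k^{*}\otimes e_k$, so taking $i_0 \to \infty$ gives $\|\sum a_k I_k\|_{\mathcal{L}(\mathcal{X})} \geq \|\sum a_k e_k^{*}\otimes e_k\|_{\mathcal{L}(X)}$. For the upper bound, I again test against $x^{*}\in\mathcal{G}^{\mathrm{utc}}_0$. Singleton functionals give $|a_k x^{*}(x)| \leq |a_k|\|x\|$, and $|a_k| = \|(\sum a_j e_j^{*}\otimes e_j)e_k\| \leq \|\sum a_j e_j^{*}\otimes e_j\|$. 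For $x^{*} = \sum_{k=1}^{i_0}b_k t_{k,i_0}^{*}\in\mathcal{G}^{\mathrm{utc}}$, a direct computation gives $x^{*}\!\left(\left(\sum a_k I_k\right)x\right) = \tilde x^{*}(x)$, where $\tilde x^{*} = \sum_{k=1}^{i_0}a_k b_k t_{k,i_0}^{*}$. The crucial observation is that $\tilde x^{*}$ corresponds to the sequence $(a_k b_k)$, and $\|w^{*}\text{-}\sum a_k b_k e_k^{*}\|_{X^{*}}$ equals $\|(\sum a_j e_j^{*}\otimes e_j)^{*}(\sum b_k e_k^{*})\|_{X^{*}} \leq \|\sum a_j e_j^{*}\otimes e_j\|_{\mathcal{L}(X)}$; therefore $\tilde x^{*}$ belongs to $\|\sum a_j e_j^{*}\otimes e_j\| \cdot \mathcal{G}^{\mathrm{utc}}$, and the bound follows. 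The infinite-sum case is then obtained by passing to limits as both sides are computed as sups over finite truncations.
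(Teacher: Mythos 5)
Your proof is correct and follows exactly the route the paper's remarks indicate: the paper declines to write out (iii) and (iv), noting only that (iii) ``follows from the utc condition'' and that (iv) parallels \cite[Theorem 2.4]{ADT} and Propositions~\ref{lower calkin}/\ref{upper calkin}, and your direct evaluation against $\mathcal{G}^{\mathrm{utc}}_0$ — with the single-column structure enforcing the orthogonality in (iii) and the dualized diagonal operator doing the work in (iv) — is precisely that argument. Two spots worth tightening when writing it up: in (iii), to get the \emph{equality} $\|y+w\|=\max\{\|y\|,\|w\|\}$ rather than just $\le$, record that any $x^*\in\mathcal{G}^{\mathrm{utc}}_0$ with $x^*(y)\neq 0$ must annihilate $w$ (and vice versa), so functionals nearly norming $y$ (resp.\ $w$) witness the reverse inequality; and in the lower bound of (iv), the passage $i_0\to\infty$ uses that $\sup_{i_0}\|D|_{[e_1,\dots,e_{i_0}]}\|=\|D\|$, which holds because $\|P_{i_0}x\|\to\|x\|$ for each fixed $x$ (not because $\|P_{i_0}\|=1$, which need not be the case when $A_0>1$).
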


The first two statements are fairly straightforward (except perhaps the shrinking property in (i) which follows from (iii)). We do not explicitly use statements (iii) or (iv) so we do  not include a proof of any of them. We note that (iii) follows from the ``utc'' condition in $\mathcal{G}^{\mathrm{utc}}$. The proof of (iv) is a simplified version of the proof of \cite[Theorem 2.4]{ADT} and later on we prove something similar to this for a more complicated space (see Propositions \ref{lower calkin} and \ref{upper calkin}).

\begin{rmk}
\label{pesky detail}
If for fixed $k_0\in\N$ we take the  natural identification $\mathrm{id}_{k_0}$ of $X_{k_0}$ with a subspace of $\mathcal{X} = (\sum_k\oplus X_k)_X^\mathrm{utc}$ then $\mathrm{id}_{k_0}$ is not necessarily an isometry. In fact, for each $x\in X_{k_0}$ we have $(1/A_0)\|x\| \leq \|\mathrm{id}_{k_0}(x)\| \leq \sup_i\|t_{k_0,i}^*\|\|x\|$. The upper bound comes from the set $\mathcal{G}^\mathrm{utc}$.
\end{rmk}

\begin{rmk}
The assumption that $(t_{k,i})_i$ is a Schauder basis of $X_k$ is not entirely necessary. It is, e.g, sufficient if for each $k$ there is a complemented subspace $W_k$ of $X_k$ so that $(t_{k,i})_i$ is a Schauder basis of $W_k$. In general, what is required is that for each $X_k$ there is a meaningful notion of support with respect to $(t_{k,i})_i$ in the sense that there exists a bounded sequence $(t_{k,i}^*)_i$ in $X_k^*$ that is orthogonal to $(t_{k,i})_i$ so that each vector $w$ can be approximated by a sequence of vectors $(w_j)_j$ so that, for each $j\in\N$, the set $\{i:t_{k,i}^*(w_j)\neq 0\}$ is finite. A bounded Markushevich basis is sufficient as well.
\end{rmk}

\section{An $X$-Bourgain-Delbaen-Argyros-Haydon direct sum of spaces}
\label{definition space section}

In \cite{Z} D. Zisimopoulou defined a Bourgain-Delbaen direct sum $(\sum\oplus X_n)_{\mathrm{AH}}$ of a sequence of separable Banach spaces where the outside norm is based on the Argyros-Haydon construction from \cite{AH}. One of the most important features of this construction is that, under certain assumptions, every bounded linear operator on this space is a multiple of the identity plus a horizontaly compact operator (see Definition \ref{def hor-co}). This is used in \cite{MPZ} where an appropriate choice of the sequence $(X_n)_n$ yields a space with a $C(\omega)$ Calkin Algebra. A careful iteration of this procedure is also implemented in that paper and this leads for each countable compactum $K$, to a space having $C(K)$ as a Calkin Algebra.  In this section we modify the construction of Zisimopoulou by adding the space $(\sum\oplus X_k)^{X}_\mathrm{utc}$ as an ingredient. More precisely, we will define a direct sum of the spaces $X_k$, $k\in\N$ where the outside norm is a mixture of a Bourgain-Delbaen-Argyros-Haydon sum with a ``utc'' sum.  The purpose of this is to obtain a space $\mathcal{Y}_X$ that has the space $\R I\oplus\mathcal{K}_\mathrm{diag}(X)$ as a Calkin Algebra instead of $C(\omega)$. The definition of this space $\mathcal{Y}_X$ is long and technical, however, it is similar to \cite{Z}, and we present it rather comprehensively. We shall shorten the proof of some of the properties of the resulting space, whenever they are almost word for word applicable in the present case, by referring the reader to the appropriate proof in the appropriate paper.

\subsection{Determining the shape of the space $\mathcal{Y}_X$}
We fix for the rest of this paper a Banach space $X$ with a normalized Schauder basis $(e_i)_{i=1}^\infty$ with bimonotone constant $A_0$ and a sequence of Banach spaces $(X_k)_{k=1}^\infty$, each one with a Schauder basis $(t_{k,i})_{i=1}^\infty$ and we set $\mathcal{X} = (\sum\oplus X_k)^{X}_\mathrm{utc}$. We will specify the spaces $X_k$ eventually but this is not yet important. The space $\mathcal{Y}_X$ is going to be a subspace of the following ``large'' space
\begin{equation}
\label{big space}
\mathcal{Z}_{X}^{\infty} = \left(\left(\sum_{k=1}^\infty\oplus X_k\right)^{X}_\mathrm{utc}\!\!\!\!\!\!\oplus\ell_\infty(\Gamma)\right)_\infty \equiv \left(\left(\sum_{k=1}^\infty\oplus X_k\right)^{X}_\mathrm{utc}\!\!\!\!\!\!\oplus\left(\sum_{k=1}^\infty\oplus\ell_\infty(\Delta_k)\right)_\infty\right)_\infty,
\end{equation}
where $\Gamma$ is a countable set that is the union of a collection of pairwise disjoint finite sets $\Delta_k$, $k\in\N$, that will be determined later. The identification in \eqref{big space} is done in the obvious way and each element $z$ in $\mathcal{Z}_X^\infty$ can be represented in the form  $z = (x_k,y_k)_{k=1}^\infty$, where $x_k\in X_k$ and $y_k\in\ell_\infty(\Delta_k)$, for each $k\in\N$. Furthermore, for each $n\in\N$ set $\Ga_n = \cup_{k=1}^n\De_k$ and  
\begin{equation}
\label{initial parts of big space}
\mathcal{Z}_{X}^n = \left(\left(\sum_{k=1}^n\oplus X_k\right)^{X}_\mathrm{utc}\!\!\!\!\!\!\oplus\ell_\infty(\Ga_n)\right)_\infty \equiv \left(\left(\sum_{k=1}^n\oplus X_k\right)^{X}_\mathrm{utc}\!\!\!\!\!\!\oplus\left(\sum_{k=1}^n\oplus\ell_\infty(\Delta_k)\right)_\infty\right)_\infty,
\end{equation}
i.e., the space of all $z = (x_k,y_k)_{k=1}^\infty\in\mathcal{Z}_X^\infty$ with $x_k = 0$ and $y_k = 0$ for all $k>n$. We naturally represent each such $z$ in $\mathcal{Z}_X^n$ by $(x_k,y_k)_{k=1}^n$. For each $n\in\N$ we define $R_n:\mathcal{Z}_X^\infty\to \mathcal{Z}_X^n$ to be the restriction onto the $n$ first coordinates, i.e., $R_n(x_k,y_k)_{k=1}^\infty = (x_k,y_k)_{k=1}^n$ (this vector may also be represented as an infinite sequence by adding zeros to the tail). We point out that $\|R_n\| \leq A_0$ ($R_n$ may fail to have norm one) because of Proposition \ref{basic facts about simple space} (i).

Before defining the embedding of $\mathcal{Y}_X$ into $\mathcal{Z}_X^\infty$ we discuss certain ingredients on the latter space. We assume the existence of these ingredient and we don't define them precisely until later, however, in the end this reduces to the definition of the sets $\Delta_k$, $k\in\N$. Let us assume that for each $n\in\N$ and $\gamma\in\De_{n+1}$ we have fixed a bounded linear functional $c_\ga^*:\mathcal{Z}_X^n\to\R$. Define for $n\in\N$ the bounded linear extension operator $i_{n,n+1}:\mathcal{Z}_X^n\to\mathcal{Z}_X^{n+1}$ given by
\begin{equation*}
\begin{split}
i_{n,n+1}(x_k,y_k)_{k=1}^n = (\tilde x_k,\tilde y_k)_{k=1}^{n+1}, \text{ where } \tilde x_k = x_k, \tilde y_k = y_k \text{ for } 1\leq k\leq n \text{ and}\\
\tilde x_{n+1} = 0, \tilde y_{n+1} = (c_\ga^*((x_k,y_k)_{k=1}^n))_{\ga\in\De_{n+1}}.
\end{split}
\end{equation*}

This naturally defines for $m< n\in\N$ the bounded linear extension operator $i_{m,n}:\mathcal{Z}_X^m\to\mathcal{Z}_X^{n}$ given by $i_{m,n} = i_{n-1,n}\circ i_{n-2,n-1}\circ\cdots\circ i_{m,m+1}$. For $n=m$ we take $i_{n,n}$ to be the identity on $\mathcal{Z}_X^n$. The following are easy to see.

\begin{rmk}
\label{partial extensions remark}
For all $l\leq m\leq n$ we have
\begin{equation*}
i_{l,n} = i_{m,n}\circ i_{l,m} = i_{m,n}\circ R_m\circ i_{l,n}.
\end{equation*}
Moreover, for  $m\leq n\in\N$ and $z = (x_k,y_k)_{k=1}^m\in\mathcal{Z}_X^m$ if  $i_{m,n}(z) = (\tilde x_k,\tilde y_k)_{k=1}^n$ then
\begin{itemize}
 \item[(i)] $\tilde x_k = x_k$ and $\tilde y_k = y_k$ for $1\leq k\leq m$,
 \item[(ii)] $\tilde x_k = 0$ for $m<k\leq n$, and
 \item[(iii)] for each $m<k\leq n$ and $\ga\in\De_k$ we have $e_\ga^*(i_{m,n}(\tilde y_k)) = c_\ga^*(i_{m,k-1}(z))$,
\end{itemize}
where $e_\ga^*$ denote the coordinate functionals on $\ell_\infty(\Delta_k)$.
\end{rmk}

For each $m\leq n$ we define the bounded linear operator $P_m^{(n)}:\mathcal{Z}_X^\infty\to \mathcal{Z}_X^n$
\begin{equation*}
P_m^{(n)} = i_{m,n}\circ R_m,
\end{equation*}
which may be also viewed as an operator on $\mathcal{Z}_X^n$. One can easily check the following.
\begin{rmk}
Let $l,m\leq n\in\N$. Then
\label{partial projections remark}
\begin{itemize}
\item[(i) ] $P_m^{(n)}$ is a projection,
\item[(ii)] $P^{(n)}_lP_{m}^{(n)} = P^{(n)}_{\min\{l,m\}}$, and
\item[(iii)] $P_n^{n}[\mathcal{Z}_X^\infty] = R_n[\mathcal{Z}_X^\infty] = \mathcal{Z}_X^n$.
\end{itemize}
\end{rmk}

We define for $l< m\leq n$ the bounded linear projection $P_{(l,m]}^{(n)} = P_{m}^{(n)} - P_{l}^{(n)}$. We will now make an additional assumption on the form of the functionals $(c_\ga^*)_{\ga\in\De_{n+1}}$, $n\in\N$. Recall that the sequence $(X_k)_k$ is a Schauder decomposition of the space $(\sum\oplus X_k)^{X}_\mathrm{utc}$ with bimonotone constant $A_0$. Let us fix $0<\beta_0 < 1/A_0$ and assume that for every $n\in\N$ and $\ga\in\De_{n+1}$ there are $\beta\in[-\beta_0,\beta_0]$ and $b^*$ in the unit ball of $(\mathcal{Z}_X^n)^*$ so that
\begin{subequations}
\begin{align}
c_\ga^* &= e_\eta^* + \beta b^*\circ P^{(n)}_{(m,n]}, \text{ where } 1\leq m<n \text{ and } \eta\in\De_m,\text{ or}\label{BD form a}\\
c_\ga^* &= \beta b^*\circ P_n^{(n)}.\label{BD form b}
\end{align}
\end{subequations}
\begin{rmk}
\label{only dependence on the past}
It is important to note that whether properties \eqref{BD form a} and \eqref{BD form b} of the functionals $c_\ga^*$, $\ga\in\De_{n+1}$ are satisfied is witnessed on the space $\mathcal{Z}_X^n$ and it does not depend on the entire space $\mathcal{Z}_X^\infty$.
\end{rmk}
Although the proof of the following is identical to that of \cite[Proposition 5.1]{Z}, we include a short description of it for completeness. We fix
\begin{equation}
\label{extension constant}
C_0 = 1+2\beta_0A_0/(1-\beta_0A_0)
\end{equation}
throughout the rest of the paper.

\begin{rmk}
\label{almost isometric}
If $\beta_0$ is chosen sufficiently close to zero then $C_0$ can be desirably close to one.
\end{rmk}

\begin{lem}
\label{uniformly bounded partial projections}
Let us fix $n\in\N$ and assume that for all $1\leq m\leq n$ and $\ga\in\De_{m+1}$ the functional $c_\ga^*$ satisfies \eqref{BD form a} or \eqref{BD form b}. Then $\|i_{l,m}\|\leq C_0$ (see \eqref{extension constant}) for all $1\leq l\leq m\leq n+1$.
\end{lem}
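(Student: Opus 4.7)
The plan is to argue by induction on $m$ that $\|i_{l,m}\| \leq C_0$ for all $1 \leq l \leq m \leq n+1$, with the base case $m=l$ trivial since $i_{l,l}$ is the identity. For the inductive step, fix $l \leq m$ and $z \in \mathcal{Z}_X^l$, and factor $i_{l,m+1} = i_{m,m+1} \circ i_{l,m}$ using Remark~\ref{partial extensions remark}. Because $i_{m,m+1}$ only appends the $\ell_\infty(\Delta_{m+1})$-coordinates $(c_\gamma^*(w))_{\gamma \in \Delta_{m+1}}$ to a vector $w$, and the $\ell_\infty$-sum structure of $\mathcal{Z}_X^{m+1}$ in \eqref{initial parts of big space} yields coordinate-wise maxima,
\[
\|i_{l,m+1}(z)\| \;=\; \max\Big(\|i_{l,m}(z)\|,\; \sup_{\gamma \in \Delta_{m+1}} |c_\gamma^*(i_{l,m}(z))|\Big).
\]
The first term is controlled by the inductive hypothesis, so the entire task reduces to uniformly bounding $|c_\gamma^*(i_{l,m}(z))|$ by $C_0\|z\|$.

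I would then run a case analysis on the form \eqref{BD form a}, \eqref{BD form b} of $c_\gamma^*$. For \eqref{BD form b} the direct estimate $|c_\gamma^*(i_{l,m}(z))| \leq \beta_0 \|i_{l,m}(z)\| \leq \beta_0 C_0 \|z\| \leq C_0\|z\|$ is immediate since $\beta_0 A_0 < 1$. For $c_\gamma^* = e_\eta^* + \beta\,b^* \circ P^{(m)}_{(p,m]}$ of type \eqref{BD form a} with $\eta \in \Delta_p$ and $p \geq l$, the composition identity $i_{l,m} = i_{p,m} \circ i_{l,p}$ combined with the fact that $R_p \circ i_{p,m}$ is the identity on $\mathcal{Z}_X^p$ (by Remark~\ref{partial extensions remark}(i)) forces $P_p^{(m)}(i_{l,m}(z)) = i_{l,m}(z)$, hence $P^{(m)}_{(p,m]}(i_{l,m}(z))=0$ and one is left with $|e_\eta^*(i_{l,m}(z))| \leq \|i_{l,m}(z)\| \leq C_0\|z\|$.

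The delicate case is $p < l$. Here the $\Delta_p$-part of $i_{l,m}(z)$ coincides with that of $z$, so $|e_\eta^*(i_{l,m}(z))| = |e_\eta^*(z)| \leq \|z\|$, while $R_p(i_{l,m}(z)) = R_p(z)$ combined with $i_{l,m} \circ i_{p,l} = i_{p,m}$ produces the key identity
\[
P^{(m)}_{(p,m]}(i_{l,m}(z)) \;=\; i_{l,m}(z) - i_{p,m}(R_p(z)) \;=\; i_{l,m}\big(z - P_p^{(l)}(z)\big).
\]
The inductive hypothesis, together with $\|b^*\| \leq 1$, $|\beta| \leq \beta_0$, and $\|R_p\| \leq A_0$, then bounds $|c_\gamma^*(i_{l,m}(z))|$ by an expression of the form $\|z\| + \beta_0\,C_0\,\|z - P_p^{(l)}(z)\| \cdot(\text{factor})\|z\|$, and the specific value $C_0 = 1 + 2\beta_0 A_0/(1-\beta_0 A_0)$ is exactly the positive fixed point of the resulting self-consistent inequality, so the induction closes.

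The main technical obstacle is precisely this final algebraic closure. A naive triangle-inequality bound $\|z - P_p^{(l)}(z)\| \leq (1 + C_0 A_0)\|z\|$ is visibly too loose to recover the stated $C_0$, and one must sharpen the estimate by exploiting that $P^{(m)}_{(p,m]}(i_{l,m}(z))$ lies in the range of $i_{l,m}$ and that its $\mathcal{X}$-part enjoys the bimonotone bound $A_0$, or equivalently by simultaneously inducting on an auxiliary quantity such as $\|P^{(l)}_{(p,l]}\|$. This mirrors the computation in \cite[Proposition 5.1]{Z}, with the extra factor $A_0$ accounting for the bimonotonicity of the Schauder decomposition $(X_k)_k$ of $\mathcal{X}$; the paper explicitly signals that the proof is essentially identical, and I would import that argument verbatim with this book-keeping modification.
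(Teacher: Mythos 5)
Your proposal has the same skeleton as the paper's proof: induction on $m$ (with $l$ fixed), the reduction to bounding $\max_{\gamma\in\Delta_{m+1}}|c_\gamma^*(i_{l,m}(z))|$, the case split on \eqref{BD form a}/\eqref{BD form b}, the annihilation $P^{(m)}_{(p,m]}\circ i_{l,m}=0$ when $p\geq l$, and the identity $P^{(m)}_{(p,m]}(i_{l,m}(z))=i_{l,m}(z-P_p^{(l)}z)$ when $p<l$ — all of these match what the paper does. The difference is the final estimate, and there you stop short.

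You correctly flag that the naive bound $\|z-P_p^{(l)}z\|\leq(1+C_0A_0)\|z\|$, fed through $\|i_{l,m}\|\leq C_0$, gives $1+\beta_0C_0(1+C_0A_0)$, which is strictly larger than $C_0$; so the induction does not close along that route. But then you leave the repair as a promissory note ("import the argument verbatim with this book-keeping modification"), and the expression you write for the resulting bound, $\|z\|+\beta_0 C_0\|z-P_p^{(l)}(z)\|\cdot(\text{factor})\|z\|$, is not a coherent inequality. This is a genuine gap, not just an omitted routine step: the whole point of the specific value $C_0=(1+\beta_0A_0)/(1-\beta_0A_0)$ is that one must \emph{not} estimate through $\|i_{l,m}\|\cdot\|z-P_p^{(l)}z\|$. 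The paper instead establishes the operator-norm bound $\|P^{(m)}_{(p,m]}\|\leq A_0+C_0A_0$ (using $\|R_m\|\leq A_0$ and $\|i_{p,m}R_p\|\leq C_0A_0$ from the inductive hypothesis) and combines it with the observation that, after cancellation of the first $p$ coordinates, the $\mathcal{X}$-part of $P^{(m)}_{(p,m]}(i_{l,m}(z))$ is exactly $(0,\ldots,0,x_{p+1},\ldots,x_l,0,\ldots)$, whose $\mathcal{X}$-norm is $\leq A_0\|z\|$ by bimonotonicity — this is what gives $|c_\gamma^*(i_{l,m}(z))|\leq 1+\beta_0(A_0+C_0A_0)=C_0$ when $\|z\|\leq1$. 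You gesture at exactly this idea ("exploiting... that its $\mathcal{X}$-part enjoys the bimonotone bound $A_0$"), but you never carry it out, and your proposed "simultaneous induction on $\|P^{(l)}_{(p,l]}\|$" is a different strategy which, as your own earlier computation suggests, would reintroduce the problematic $C_0$ factor. To close the argument you must produce the bound $\|P^{(m)}_{(p,m]}(i_{l,m}(z))\|\leq(A_0+C_0A_0)\|z\|$ coordinate-by-coordinate, splitting into the $\mathcal{X}$-part (where bimonotonicity gives $A_0\|z\|$) and the $\ell_\infty(\Gamma)$-part (where the first $p$ coordinates vanish and the remaining ones are handled via the inductive bound on $\|i_{d,m'}\|$); simply invoking $\|P^{(m)}_{(p,m]}\|\cdot\|i_{l,m}(z)\|$ will not yield the stated $C_0$.
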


\begin{proof}
Fix $l\leq n$ and prove the statement by induction on $l\leq m\leq n+1$. For $l=m$ the map $i_{l,l}$ is the identity and there is nothing to prove. Assume that the conclusion holds for all $l\leq d\leq m$ for some $l\leq m \leq n$, which also implies that $\|P^{(m)}_{(d,m]}\|\leq A_0 + C_0A_0$ for all $l\leq d\leq m$. Let $z = (x_k,y_k)_{k=1}^l\in\mathcal{Z}_X^l$ with $\|z\|\leq 1$. If $i_{l,m+1}(z) = (\tilde x_k,\tilde y_k)_{k=1}^{m+1}$ then by Remark \ref{partial extensions remark} we can deduce that
\begin{equation*}
\|i_{l,m+1}(z)\| = \max\{\|i_{l,m}(z)\|,\|\tilde y_{m+1}\|\} \leq \max\left\{C_0,\max_{\ga\in\De_{m+1}}|c_\ga^*(i_{l,m}(z))|\right\}
\end{equation*}
To complete the proof fix $\ga\in\De_{m+1}$. If $c_\ga^*$ satisfies \eqref{BD form b} then one can check that $|c_\ga^*(i_{l,m}(z))|\leq \beta_0 C_0 < C_0$. Otherwise, there are $1\leq d<m$, $\eta\in\De_d$, $\beta\in[\beta_0,\beta_0]$, and $b^*$ in the unit ball of $(\mathcal{Z}_X^{m})^*$ with $c_\ga^* = e_\eta^* + \beta b^*\circ P^{(m)}_{(d,m]}$. If $d\leq l$ then it follows that $e_\eta^*(i_{l,m}(z)) = e_\ga^*(z)$ therefore $|c_\ga^*(i_{l,m}(z))| \leq 1 + \beta_0(A_0 + C_0A_0) = C_0$. If $l<d<m$, then it can be seen that $P^{(m)}_{(d,m]}(i_{l,m}(z)) = 0$, i.e., $|c_\ga^*(i_{l,m}(z))| = |e_\eta^*(i_{l,m}(z))| \leq C_0$.
\end{proof}

Lemma \ref{uniformly bounded partial projections} and Remark \ref{partial projections remark} allows us to define for each $n\in\N$ the extension operator $i_n:\mathcal{Z}_X^n\to\mathcal{Z}_X^\infty$ with $i_n(x) = \lim_mi_{n,m}(x)$. This extension is well defined due to Remark \ref{partial extensions remark} (ii). Let us restate  Remark \ref{partial extensions remark} in the language of the new extension operators.

\begin{rmk}
\label{infinite extensions remark}
For all $n\in\N$ we have
\begin{equation}
\label{boundedness and compatibility}
 \|i_n\| \leq C_0 \text{ and for all }m\leq n \text{ we have } i_m = i_n\circ R_n\circ i_m.
\end{equation}
Moreover, for  $n\in\N$ and $z = (x_k,y_k)_{k=1}^n\in\mathcal{Z}_X^n$ if  $i_{n}(z) = (\tilde x_k,\tilde y_k)_{k=1}^\infty$ then
\begin{itemize}
 \item[(i)] $\tilde x_k = x_k$ and $\tilde y_k = y_k$ for $1\leq k\leq n$,
 \item[(ii)] $\tilde x_k = 0$ for all $k>n$, and
 \item[(iii)] for each $k>n$ and $\ga\in\De_k$ we have $e_\ga^*(i_{n}(\tilde y_k)) = c_\ga^*(i_{n,k-1}(z))$,
\end{itemize}
where $e_\ga^*$ denote the coordinate functionals on $\ell_\infty(\Delta_k)$. By Lemma \ref {uniformly bounded partial projections} we also have
\begin{equation}
\label{extensions are embeddings}
\|z\| \leq \|i_{n}(z)\| \leq C_0\|z\|.
\end{equation}
\end{rmk}

We now define for each $n\in\N$ the space $Y_X^n = i_n[\mathcal{Z}_X^n]$ and for all $n\in\N$ we define the bounded linear operator $P_n = i_n\circ R_n$.
\begin{rmk}
\label{Schauder decomposition}
The following hold.
\begin{itemize}
 \item[(i)] The space $Y_X^n$ is $C_0$-isomorphic to $\mathcal{Z}_X^n$, for all $n\in\N$ via the map $i_n$ the inverse of which is the map $R_n$ (by \eqref{extensions are embeddings}),
 \item[(ii)] for all $n\in\N$ we have that $P_n:\mathcal{Z}_X^\infty\to Y^n_X$ is a bounded linear projection with $\|P_n\| \leq A_0C_0$, and
 \item[(iii)] for all $m\leq n\in\N$ we have $P_mP_n = P_nP_m = P_m$ (this follows from \eqref{boundedness and compatibility}).
\end{itemize}
We may therefore define for $m\leq n\in\N$ the projection $P_{(m,n]} = P_n - P_m$ which has norm at most $2C_0A_0$. We also write $P_{(0,n]} = P_n$.
\end{rmk}
It follows that the sequence of spaces $(Y_X^n)_n$ is increasing with respect to inclusion. We define $\mathcal{Y}_X$ to be the closure (in the norm topology) of $\cup_nY_X^n$ in the space $\mathcal{Z}_X^\infty$. The space $\mathcal{Y}_X$ admits a Schauder decomposition $(Z_n)_n$ with associated projections $(P_n)_n$. That is, $P_1[\mathcal{Y}_X] = Z_1$ and for $n\geq 2$ $P_{(n-1,n]}[\mathcal{Y}_X] = Z_n$. Using \eqref{boundedness and compatibility} it is not hard to see that in fact
\begin{equation*}
Z_n = i_n[(X_n\oplus\ell_\infty(\Delta_n))_\infty],
\end{equation*}
where $(X_n\oplus\ell_\infty(\Delta_n))_\infty$ is viewed as a subspace of $\mathcal{Z}_X^n$ in the natural way. Hence, we may write $\mathcal{Y}_X = \sum\oplus Z_n$ and if $z\in\mathcal{Y}_X$ then $z = \sum_{n=1}^\infty P_{\{n\}}z$, where  $P_{\{n\}} = P_{(n-1,n]}$. We can define the set
\begin{equation*}
\supp_\mathrm{BD}(z) = \{n\in\N:P_{\{n\}}z\neq 0\}
\end{equation*}
and we also denote by $\ran_\mathrm{BD}(z)$ the smallest interval of $\N$ containing $\supp_\mathrm{BD}(z)$. For $0<m\leq n$ it can be seen that
\begin{equation}
\label{block vectors live in such things}
P_{(m,n]}[\mathcal{Y}_X] = \sum_{k=m+1}^n\oplus Z_k = i_n\left[\mathcal{Z}_X^{(m,n]}\right],
\end{equation}
where
\begin{equation*}
\mathcal{Z}_X^{(m,n]} = \left(\left(\sum_{k=m+1}^n\oplus X_k\right)_{\mathrm{utc}}^X\!\!\!\!\!\!\oplus \left(\sum_{k=m+1}^n\oplus\ell_\infty(\Delta_k)\right)_\infty\right)_\infty 
\end{equation*}
is viewed as a subspace of $\mathcal{Z}_X^n$ in the natural way.

\begin{rmk}
\label{how things are normed}
As $\mathcal{Y}_X$ is a subspace of $\mathcal{Z}_X^\infty$, every $z\in\mathcal{Y}_X$ is of the form $z = (x_k,y_k)_{k=1}^\infty$ and $\|z\| = \max\{\|(x_k)\|_\mathcal{X},\|(y_k)_k\|_{\ell_\infty(\Ga)}\}$. By setting $x = (x_k)_k\in\mathcal{X}$ and $y = (y_k)_k\in\ell_\infty(\Ga)$ we obtain
\begin{equation}
\label{norm triad}
\begin{split}
\|x\| = \max&\left\{\sup\{|x^*(x)|: x^*\in \mathcal{G}^\mathrm{utc}\},\right.\\
&\;\;\sup\{|x^*(x_k)|: x^*\in (1/A_0)B_{X^*_k}: k\in\N\},\\
&\;\;\left.\sup\{|e_\ga^*(y)|: \ga\in\Ga\}\right\}.
\end{split}
\end{equation}
\end{rmk}

\subsection{The precise definition of the space $\mathcal{Y}_X$}
We have established the general form of the space $\mathcal{Y}_X$. To abide by the assumptions of Lemma \eqref{uniformly bounded partial projections} we have to fix $0<\beta_0<1/A_0$ and choose a sequence of disjoint finite sets $(\De_n)_n$ and bounded linear functionals $(c_\ga^*)_{n\in\De_{n+1}}$ defined on $\mathcal{Z}_X^n$ so that \eqref{BD form a} and \eqref{BD form b} are satisfied. Crucially, by Remark \ref{only dependence on the past}, we are allowed to choose by induction on $n$ the set $\De_n$ and the corresponding functionals $(c_\ga^*)_{\ga\in\De_n}$ as they act each time only on what has been defined so far.  

First, we fix a pair of increasing sequences of natural numbers $(\tilde m_j,\tilde n_j)_j$ that satisfy \cite[Assumption 2.3]{AH}, namely
\begin{equation}
\label{mjnjproperties}
\text{(1) } \tilde m_1\geq 4,\text{ (2) } \tilde m_{j+1}\geq \tilde m_j^2, \text{ (3) } \tilde n_1 \geq \tilde m_1^2, \text{ and  (4) } \tilde n_{j+1}\geq(16\tilde n_j)^{\log_2(\tilde m_{j+1})}.
\end{equation}
Let us next choose an infinite sequence of pairwise disjoint infinite subsets of the natural numbers $(L_k)_{k=0}^\infty$. For each $k\in\N$ we define $X_k$ to be the Argyros-Haydon space defined in \cite[Section 10.2]{AH} using the sequence $(\tilde m_j,\tilde n_j)_{j\in L_k}$. Each of these spaces has the ``scalar-plus-compact'' property (i.e., every $T:X_k\to X_k$ is a compact perturbation of a scalar operator) and for $k\neq m$ every bounded linear operator $T:X_k\to X_m$ is compact (see \cite[Theorem 10.4]{AH}). We will use the set $L_0 = \{\ell^0_1<\ell^0_2<\cdots\}$ to define the outside norm of the direct sum. Henceforth we write $\tilde m_{\ell^0_j} = m_j$ and $\tilde n_{\ell^0_j} = n_j$. We make the assumption that $\beta_0 = 1/m_{1} < 1/A_0$.
 
Let us choose for each $k\in\N$ a $1$-norming countable and symmetric subset $\tilde F_k$ of the unit ball of $X_k^*$, and let $F_k^n$ be the symmetric subset of $(1/A_0)\tilde F_k$ set consisting of the first $n$-elements of $(1/A_0)\tilde F_k$ and their negatives. Each set $F_k^n$ may be naturally identified with a subset of the unit ball of $((\sum\oplus X_k)^X_{\mathrm{utc}})^*$. For each $n\in\N$ define $K_n = \cup_{k=1}^n F_k^n$. Let us also take the set $\mathcal{G}^{\mathrm{utc}}$ (see the beginning of Section \ref{the easy section}) which is a symmetric and separable subset of the unit ball of $((\sum\oplus X_k)^X_{\mathrm{utc}})^*$. Choose an increasing sequence of finite symmetric subsets $(\mathcal{G}^\mathrm{utc}_n)_n$ the union of which is dense in $\mathcal{G}^\mathrm{utc}$.

We are prepared to inductively define the sets $(\De_n)_n$ and the corresponding functionals. Set $\De_1 = \{0\}$. There is no need to define $c_\ga^*$ for $\ga\in\De_1$. Assume that we have defined the sets $\De_1,\ldots,\De_n$ and the families of functionals $(c_\ga^*)_{\ga\in\De_k}$, $1\leq k\leq n$. Having defined these elements means having defined the space $\mathcal{Z}_X^n$ (see \eqref{initial parts of big space}). We also assume that to each $\ga\in\Ga_n = \cup_{k=1}^n\De_k$ we have assigned a natural number $\sigma(\ga)$, so that $\max_{\ga\in\De_k}\sigma(\ga)<\min_{\ga\in\De_{k+1}}\sigma(\ga)$ for $1\leq k\leq n-1$ and the map $\sigma:\Ga_n\to \N$ is injective. Furthermore, assume that to each $\ga\in\Ga_n\setminus\Ga_1$ we have a assigned a positive number $\we(\ga) = 1/m_j$, for some $j\in\N$, and a natural number $\ag(\ga) = a$ with $1\leq a\leq n_j$. For each $1\leq k\leq n$, if $\ga\in\De_k$ we also write $\ra(\ga) = k$. 

Set $N_{n+1} = 2^{n}(\#\Ga_n)$ and let $B_{n}$ \label{rational convex combinations} be the set of all linear combination $\sum_{\eta\in\Ga_n}a_\eta e_\eta^*$ with $\sum_{\eta\in\Ga_n}|a_\eta|\leq 1$ and $a_\eta$ is a rational number with denominator dividing $N_{n+1}!$. Set $A_n = K_n\cup\mathcal{G}^{\mathrm{utc}}_n\cup B_n$. The set $\De_{n+1}$ is the union of the following four finite sets consisting of triples and quadruples.
\begin{subequations}
\begin{align}
\De_{n+1}^{\mathrm{Even}_0} =&\left\{(n+1,1/m_{2j},b^*): 2j\leq n+1, b^*\in A_n\right\}\label{even0}\\
\De_{n+1}^{\mathrm{Odd}_0} =&\left\{(n+1,1/m_{2j-1},\eta): 2j-1\leq n+1, \eta\in\Ga_n\text{ with}\right.\label{odd0}\\
&\left.\;\;\we(\eta) = 1/m_{4i-2}<(1/n_{2j-1})^2\right\}\nonumber\\
\De_{n+1}^{\mathrm{Even}_1} =&\left\{(n+1,\xi,1/m_{2j},b^*): \xi\in\Ga_n\text{ with }\we(\xi) = 1/m_{2j},\right.\label{even1}\\
&\left.\;\; \ag(\xi) < n_{2j}, b^*\in A_n\right\}\nonumber\\
\De_{n+1}^{\mathrm{Odd}_1} =&\left\{(n+1,\xi,1/m_{2j-1},\eta): \xi\in\Ga_n\text{ with } \we(\xi) = 1/m_{2j-1}\text{ and}\right.\label{odd1}\\
&\;\; \ag(\xi) < n_{2j-1}, \eta\in\Ga_n\text{ with } \ra(\xi)<\ra(\eta)\text{ and}\nonumber\\
&\left.\;\;\we(\eta) = 1/m_{4\sigma(\xi)}\right\}\nonumber
\end{align}
\end{subequations}
We define for each $\ga\in\De_{n+1}$ the corresponding linear functional $c_\ga^*$. Note that each $e_\eta^*$ for $\eta\in\Ga_n$ and each $b^*$ for $b^*\in A_n$ act as a linear functional on $\mathcal{Z}_X^n$ in the natural way. We set
\begin{align*}
c_\ga^* &= \frac{1}{m_{2j}}b^*\circ P_{(0,n]}^n,\;\ag(\ga) = 1,\text{ and }\we(\ga) = 1/m_{2j}, \text{ if }\ga\in\De_{n+1}^{\mathrm{Even}_0}\\
c_\ga^* &= \frac{1}{m_{2j-1}}e_\eta^*\circ P_{(0,n]}^n,\;\ag(\ga) = 1,\text{ and }\we(\ga) = 1/m_{2j-1}, \text{ if }\ga\in\De_{n+1}^{\mathrm{Odd}_0}\\
c_\ga^* &= e_\xi^* + \frac{1}{m_{2j}}b^*\circ P_{(p,n]}^n,\;\ag(\ga) = \ag(\xi)+1,\text{ and }\we(\ga) = 1/m_{2j},\\
&\quad\text{ if }\ga\in\De_{n+1}^{\mathrm{Even}_1}\text{ and }p = \ra(\xi)\\
c_\ga^* &= e_\xi^* + \frac{1}{m_{2j-1}}e_\eta^*\circ P^n_{(p,n]},\;\ag(\ga) = \ag(\xi)+1,\text{ and }\we(\ga) = 1/m_{2j-1},\\
&\quad\text{ if }\ga\in\De_{n+1}^{\mathrm{Odd}_1}\text{ and }p = \ra(\xi)
\end{align*}
Finally, we extend the definition of the function $\sigma$ to on the set $\De_{n+1}$, so that it remains one-to-one injective on $\Ga_{n+1}$ and $\max_{\ga\in\De_n}\sigma(\ga)<\min_{\ga\in\De_{n+1}}\sigma(\ga)$.
\begin{rmk}
Comparing the definition of this section to the definition presented in \cite[Section 4]{Z}, modulo perhaps certain convexity conditions, the key addition is that we allow the functionals $b^*$ to be chosen from the set $\mathcal{G}^\mathrm{utc}_n$ as well. 
\end{rmk}

\subsection{Basic properties of $\mathcal{Y}_X$}\label{basic properties of space}

For every $n\in\N$ and $\ga\in\De_{n+1}$ the functional $c_\ga^*$ is defined on $\mathcal{Z}_X^n$. We extend its domain to the whole space $\mathcal{Z}_X^\infty$ and hence also to the space $\mathcal{Y}_X$ by taking $c_\ga^*\circ R_n$ and we denote this ``new'' functional by $c_\ga^*$ as well. We also set $c_\ga^* = 0$ if $\ga\in\De_1$. We then define for each $\ga\in\Ga$ the functional $d_\ga^* = e_\ga^* - c_\ga^*$, which is defined on $\mathcal{Z}_X^\infty$ and hence also on $\mathcal{Y}_X$. An important fact is that if $\ga\in\De_n$, then $d_\ga^* = e_\ga^*\circ P_{\{n\}}$.

The following result provides what is called the evaluation analysis of a coordinate functional $e_\ga^*$. It is also used in \cite{Z} (Proposition 5.3) however it appeared earlier in \cite[Proposition 4.5]{AH} where a proof may be found.

\begin{prp}
\label{evanalysis}
Let $n\in\N$ and $\ga\in\De_{n+1}$ with $\we(\ga) = 1/m_j$ and $\ag(\ga) = a\leq n_j$. Then there exist $0=p_0<p_1<\cdots<p_a = n+1$, elements $\xi_1,\xi_2,\ldots,\xi_a$ of weight $1/m_j$ with $\xi_i\in\De_{p_i}$ and $\xi_a = \ga$, and functionals $b_i^*\in A_{p_i-1}$ (see paragraph before \eqref{even0}) so that
\begin{equation}
\label{eanalysis eq}
e_\ga^* = \sum_{i=1}^ad_{\xi_i}^* + \frac{1}{m_j}\sum_{i=1}^ab_i^*\circ P_{(p_{i-1},p_i)}. 
\end{equation}
The sequence $(p_i,\xi_i,b_i^*)_{i=1}^a$ is called the evaluation analysis of $\ga$.
\end{prp}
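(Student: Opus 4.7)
The plan is to argue by induction on the age $a = \ag(\ga)$, using as the central identity the definitional relation $e_\ga^* = d_\ga^* + c_\ga^*$. What makes this work is that the four types of $c_\ga^*$ listed in the construction already have almost exactly the shape dictated by \eqref{eanalysis eq}: age-one nodes contribute a single $b^*\circ P$-term (no inductive analysis needed), while age $\geq 2$ nodes carry a ``copy'' of $e_\xi^*$ for some previous $\xi$ of age $a-1$ and the same weight as $\ga$, which is precisely the hook one uses to iterate.

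For the base case $a = 1$, $\ga$ must belong to $\De_{n+1}^{\mathrm{Even}_0}\cup \De_{n+1}^{\mathrm{Odd}_0}$, so that $c_\ga^*$ is either $(1/m_j)b^*\circ P^n_{(0,n]}$ with $b^* \in A_n$, or $(1/m_{j})e_\eta^*\circ P^n_{(0,n]}$ with $\eta\in\Ga_n$; in the latter case $e_\eta^*$ sits in $B_n\subset A_n$ (as the rational convex combination with coefficient one). Setting $p_0 = 0$, $p_1 = n+1$, $\xi_1 = \ga$, and $b_1^*$ to be $b^*$ or $e_\eta^*$ respectively, the identity $e_\ga^* = d_\ga^* + c_\ga^*$ is exactly \eqref{eanalysis eq}.

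For the inductive step with $a\geq 2$, $\ga\in\De_{n+1}^{\mathrm{Even}_1}\cup\De_{n+1}^{\mathrm{Odd}_1}$, so the construction produces a $\xi\in\Ga_n$ with $\we(\xi) = \we(\ga) = 1/m_j$, $\ag(\xi) = a-1$, together with either $b^*\in A_n$ or $\eta\in\Ga_n$, so that $c_\ga^* = e_\xi^* + (1/m_j)\,b^*\circ P^n_{(p,n]}$ with $p = \ra(\xi)$. Applying the induction hypothesis to $\xi$ yields $0 = p_0 < p_1 < \cdots < p_{a-1} = p$, nodes $\xi_1,\dots,\xi_{a-1}$ of weight $1/m_j$, and functionals $b_i^* \in A_{p_i - 1}$, with
\[
e_\xi^* \;=\; \sum_{i=1}^{a-1} d_{\xi_i}^* \;+\; \frac{1}{m_j}\sum_{i=1}^{a-1} b_i^*\circ P_{(p_{i-1},p_i)}.
\]
Setting $p_a = n+1$, $\xi_a = \ga$, and $b_a^* = b^*$ (or $e_\eta^*$) and substituting into $e_\ga^* = d_\ga^* + c_\ga^*$ produces the required expression for $\ga$.

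The only step that is not bookkeeping is confirming that the ``truncated'' projections appearing in the definition of $c_\ga^*$ on $\mathcal{Z}_X^n$ agree, after the extension $\cdot\circ R_n$, with the projections $P_{(p_{i-1},p_i)}$ on $\mathcal{Y}_X$ appearing in \eqref{eanalysis eq}. This is where I expect the only real (but routine) technical work: one uses the compatibility relation $i_m = i_n\circ R_n\circ i_m$ from Remark \ref{infinite extensions remark} and the identity $P^n_{(p,n]}\circ R_n = R_n - P^{(n)}_p\circ R_n$ on $\mathcal{Z}_X^n$, combined with $P_p = i_p\circ R_p$ on $\mathcal{Y}_X$, to rewrite $b^*\circ P^n_{(p,n]}\circ R_n$ as $b^*\circ P_{(p,n+1)}$ acting on $\mathcal{Y}_X$. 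Once this identification is in place the inductive step becomes purely algebraic, and the proposition follows. This is also the reason the argument transports verbatim from \cite[Proposition 4.5]{AH} and \cite[Proposition 5.3]{Z}; the additional $\mathcal{X}$-ingredient in our $c_\ga^*$'s (via the functionals $b^*\in\mathcal{G}^\mathrm{utc}_n$) plays no role in the analysis, since what matters is only the structural form of $c_\ga^*$, not the particular kind of functional appearing as $b^*$.
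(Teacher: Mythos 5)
Your induction on age, hinging on $e_\ga^* = d_\ga^* + c_\ga^*$ and the structural form of $c_\ga^*$ for the four node types, is exactly the argument from \cite[Proposition 4.5]{AH} to which the paper defers, including the observation that $e_\eta^* \in B_n \subset A_n$ for the $\mathrm{Odd}_0$ and $\mathrm{Odd}_1$ cases and the routine reconciliation of $P^{(n)}_{(p,n]}$ with $P_{(p,n+1)}$ via $R_n\circ i_p = i_{p,n}$. The proof is correct and matches the paper's (cited) approach.
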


The following lemma allows to conversely build functionals with a certain prescribed evaluation analysis, provided that certain mild conditions are satisfied. For a proof see \cite[Proposition 4.7]{AH}

\begin{lem}
\label{constructing gammas}
Let $j\in\N$, $1\leq a\leq n_{2j}$, $0 = p_0 < p_1<\cdots<p_a$ with $2j\leq p_1$, and $b_i^*\in A_{p_i-1}$. Then, there exist $\xi_i\in\De_{p_i+1}$ for $1\leq i\leq a$ and a $\ga\in\Ga$ with $\we(\ga) = 2j$ and evaluation analysis $(p_i,\xi_i,b_i^*)_{i=1}^a$. 
\end{lem}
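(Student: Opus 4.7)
The plan is to construct the elements $\xi_i$ inductively by direct appeal to the explicit definitions of $\De_{n+1}^{\mathrm{Even}_0}$ and $\De_{n+1}^{\mathrm{Even}_1}$, then to take $\ga=\xi_a$ and unfold the resulting functional $e_\ga^*$ to recover the prescribed evaluation analysis. A preliminary remark is that the sets $A_n$ are monotone increasing in $n$: $K_n$ and $\mathcal{G}^{\mathrm{utc}}_n$ are manifestly nested, and $B_n\subset B_{n+1}$ because $N_{n+1}!$ divides $N_{n+2}!$. Consequently each hypothesis $b_i^*\in A_{p_i-1}$ yields $b_i^*\in A_{p_i}$, which is the form needed below.

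For the base case I would set $\xi_1$ to be the triple $(p_1+1,1/m_{2j},b_1^*)\in\De_{p_1+1}^{\mathrm{Even}_0}$; the admissibility condition $2j\leq p_1+1$ follows from $2j\leq p_1$, and $b_1^*\in A_{p_1}$. The defining recursion then gives $\we(\xi_1)=1/m_{2j}$, $\ag(\xi_1)=1$, and $c_{\xi_1}^*=(1/m_{2j})\,b_1^*\circ P^{(p_1)}_{(0,p_1]}$. For the inductive step, assume $\xi_{i-1}$ has been constructed with rank $p_{i-1}+1$, weight $1/m_{2j}$, and age $i-1$; set $\xi_i$ to be the quadruple $(p_i+1,\xi_{i-1},1/m_{2j},b_i^*)\in\De_{p_i+1}^{\mathrm{Even}_1}$. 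The three conditions required by the definition all hold: the rank inequality $p_{i-1}+1\leq p_i$ places $\xi_{i-1}$ in $\Ga_{p_i}$; the age bound $\ag(\xi_{i-1})=i-1<n_{2j}$ follows from $i\leq a\leq n_{2j}$; and $b_i^*\in A_{p_i}$ as noted. Consequently $\we(\xi_i)=1/m_{2j}$, $\ag(\xi_i)=i$, and $c_{\xi_i}^*=e_{\xi_{i-1}}^*+(1/m_{2j})\,b_i^*\circ P_{(\ra(\xi_{i-1}),p_i]}$.

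Finally I would set $\ga=\xi_a$ and read off the evaluation analysis by iterating the identity $e_{\xi_i}^*=d_{\xi_i}^*+c_{\xi_i}^*=d_{\xi_i}^*+e_{\xi_{i-1}}^*+(1/m_{2j})\,b_i^*\circ P_{(\ra(\xi_{i-1}),p_i]}$ down from $i=a$ to $i=1$. The $e_{\xi_{i-1}}^*$ terms telescope, leaving precisely $\sum_{i=1}^a d_{\xi_i}^*+(1/m_{2j})\sum_{i=1}^a b_i^*\circ P_{(p_{i-1},p_i)}$ in the format of Proposition \ref{evanalysis} (once the evident index shift between $\ra(\xi_i)=p_i+1$ and the ranks used in \ref{evanalysis} is accounted for).

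No real obstacle is anticipated: the lemma is the direct reversal of the inductive construction of $\De_{n+1}$, and the entire argument amounts to checking that the ingredients $(p_i,b_i^*)$ meet the admissibility conditions encoded in \eqref{even0} and \eqref{even1}. The only place where one might easily make a mistake is the bookkeeping of ranks and interval endpoints, so I would be careful to match $\ra(\xi_{i-1})=p_{i-1}+1$ against the interval $(p_{i-1},p_i)$ in the target formula, with the shift absorbed by the convention that $\xi_i\in\De_{p_i+1}$.
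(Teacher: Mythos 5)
Your approach is exactly the right one, and it is in fact what the paper delegates to \cite[Proposition 4.7]{AH}: build $\xi_1$ via $\De^{\mathrm{Even}_0}$, build $\xi_i$ from $\xi_{i-1}$ via $\De^{\mathrm{Even}_1}$, check the three admissibility conditions, set $\ga = \xi_a$, and unfold $e_{\xi_i}^* = d_{\xi_i}^* + c_{\xi_i}^*$ down the chain. All the admissibility checks you state are correct.

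The one point worth flagging is the off-by-one bookkeeping, which you have noticed but absorbed rather than resolved. With your choice $\xi_1 = (p_1+1, 1/m_{2j}, b_1^*)$, the unfolded functional has $c_{\xi_1}^* = (1/m_{2j})\,b_1^*\circ P_{(0,p_1]}$ and, for $i\geq 2$, the projections $P_{(p_{i-1}+1,\,p_i]}$; these are $P_{(q_{i-1},q_i)}$ for $q_i = p_i+1$, not $P_{(p_{i-1},p_i)}$ as you write, so $\ga$ has evaluation analysis $(p_i+1,\xi_i,b_i^*)$ rather than $(p_i,\xi_i,b_i^*)$ in the sense of Proposition \ref{evanalysis}. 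The cleaner (and internally consistent) choice is $\xi_1 = (p_1,1/m_{2j},b_1^*)\in\De_{p_1}^{\mathrm{Even}_0}$ and $\xi_i = (p_i,\xi_{i-1},1/m_{2j},b_i^*)\in\De_{p_i}^{\mathrm{Even}_1}$: the admissibility conditions use $b_i^*\in A_{p_i-1}$ directly (so your preliminary remark on the monotonicity of the sets $A_n$, although correct, becomes unnecessary), the ranks satisfy $\ra(\xi_i)=p_i$ exactly as in Proposition \ref{evanalysis}, and the unfolding yields $e_\ga^* = \sum_{i=1}^a d_{\xi_i}^* + \frac{1}{m_{2j}}\sum_{i=1}^a b_i^*\circ P_{(p_{i-1},p_i)}$ with no shift to account for. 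Your need for the monotonicity remark and for an acknowledged index shift is in fact a sign that ``$\xi_i\in\De_{p_i+1}$'' in the lemma's statement is an off-by-one slip; it should read $\xi_i\in\De_{p_i}$, matching both the convention of Proposition \ref{evanalysis} and the hypothesis $b_i^*\in A_{p_i-1}$.
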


\section{The Calkin algebra of $\mathcal{Y}_X$}
\label{section calkin algebra proof}
In this section we will assume a result from the sequel to prove the desired description of the Calkin algebra of $\mathcal{Y}_X$, namely that it is isomorphic as a Banach algebra to $\R I\oplus\mathcal{K}_\mathrm{diag}(X)$. For $n\in\N$ we define a bounded linear operator $I_n:\mathcal{Z}_X\to\mathcal{Z}_X$ as follows. If $z$ is in $\mathcal{Y}_X$ and $z = (x_k,y_k)_{k=1}^\infty$ is its representation in $\mathcal{Z}^\infty_X$ set $R_{\{n\},0}z = (\tilde x_y, \tilde y_k)_{k=1}^n$ with $\tilde y_k = 0$ for $1\leq k\leq n$, $\tilde x_k = 0$, for $1\leq k<n-1$ and $\tilde x_n = x_n$. Note that $R_{\{n\},0}:\mathcal{Y}_X\to \mathcal{Z}_X^n$ is a bounded linear operator with norm at most $A_0$. We then set $I_n = i_n\circ R_{\{n\},0}$, i.e. $I_n(z) = i_n((0,0),\ldots,(x_n,0))$. The map $I_n$ is a projection of norm at most $A_0C_0$ and its image is $(A_0\sup_i\|t_{n,i}^*\|)$-isomorphic to the space $X_n$. Furthermore, the map $A_{n}:\mathcal{Y}_X\to\mathcal{Y}_X$ defined by $A_{n}z = i_n((0,0),\ldots,(0,y_n))$ is a finite rank operator and hence compact. It is important to note that $I_n = P_{\{n\}} - A_{n}$, that is, $I_n$ is a compact perturbation of $P_{\{n\}}$. The following cannot be proved yet and requires some work. We use it in this section to describe the Calkin Algebra of $\mathcal{Y}_X$ and postpone its proof until much later.

\begin{thm}[Theorem \ref{diagonal plus compact are dense}]
\label{pretty important part of this result}
For every bounded linear operator $T:\mathcal{Y}_X\to\mathcal{Y}_X$ there exists a sequence of real numbers $(a_k)_{k=0}^\infty$ and a sequence of compact operators $(K_n)_n$ so that
$$T = \lim_n\left(a_0 I + \sum_{k=1}^na_kI_k + K_n\right),$$
where the limit is taken in the operator norm.
\end{thm}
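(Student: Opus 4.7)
The strategy combines three ingredients: the scalar-plus-horizontally-compact decomposition from Section~\ref{section s-p-hc}, the scalar-plus-compact property of each Argyros-Haydon component $X_k$, and the cross-compactness of bounded operators $X_k\to X_m$ for $k\neq m$. First, Section~\ref{section s-p-hc} produces a decomposition $T=a_0 I+S$ with $S$ horizontally compact, so in particular $\|S(I-P_n)\|\to 0$ as $n\to\infty$; this yields the coefficient $a_0$.

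Next, for each $k\in\N$ the image $I_k\mathcal{Y}_X$ is isomorphic to the Argyros-Haydon space $X_k$ (Remark~\ref{pesky detail}), and $I_k S I_k$ induces a bounded operator on $X_k$. Its scalar-plus-compact property yields $a_k\in\R$ and a compact $C_{k,k}\in\mathcal{L}(\mathcal{Y}_X)$ with $I_k S I_k=a_k I_k+C_{k,k}$. For $m\neq k$, the operator $I_m S I_k$ factors through a bounded map $X_k\to X_m$, hence is compact by the Argyros-Haydon cross-compactness. Since each $\De_k$ is finite, the operator $A_k=P_{\{k\}}-I_k$ has image in the finite-dimensional space $i_k[\ell_\infty(\De_k)]$ and is therefore finite-rank.

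We define
\[ K_n = P_n S P_n - \sum_{k=1}^n a_k I_k. \]
Expanding $P_n S P_n=\sum_{m,k=1}^n P_{\{m\}} S P_{\{k\}}$ and substituting $P_{\{k\}}=I_k+A_k$ shows that $P_n S P_n$ equals $\sum_{m,k=1}^n I_m S I_k$ plus a finite-rank term. Hence $K_n$ is a finite sum of compact operators (the diagonal blocks $I_k S I_k-a_k I_k=C_{k,k}$, the cross-blocks $I_m S I_k$ for $m\neq k$, and the $A_k$-corrections) and is therefore compact. The residual error is
\[ T-a_0 I-\sum_{k=1}^n a_k I_k - K_n \;=\; S - P_n S P_n \;=\; (I-P_n)\,S + P_n\, S(I-P_n), \]
whose second summand has operator norm bounded by $\|P_n\|\cdot\|S(I-P_n)\|\to 0$ by horizontal compactness.

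The main obstacle is to show that $\|(I-P_n)\,S\|\to 0$. Horizontal compactness furnishes only the one-sided bound $\|S(I-P_n)\|\to 0$; the reverse estimate amounts to saying that $S$ maps the unit ball of $\mathcal{Y}_X$ into a set whose Bourgain-Delbaen tails are uniformly small. We expect this to be the heart of Section~\ref{Diagonal plus compact approximations}: the rapidly increasing sequences of Section~\ref{section ris}, adapted so that their utc coordinates vanish on the chosen RIS, should allow one to probe the output of $S$ on arbitrary vectors through the $\ell_\infty(\Ga)$ coordinates; combined with the reflexivity of the Argyros-Haydon components, this should upgrade the one-sided horizontal compactness of $S$ to the symmetric tail decay required. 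Granting this, $\|T-a_0 I-\sum_{k=1}^n a_k I_k - K_n\|\to 0$, and the theorem follows.
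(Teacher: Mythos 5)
There is a genuine gap, and it stems from the choice of approximant. You define $K_n = P_n S P_n - \sum_{k=1}^n a_k I_k$ and are then forced to estimate the residual $S - P_n S P_n = (I-P_n)S + P_n S(I-P_n)$. Horizontal compactness controls $\|S(I-P_n)\|$, hence the second summand, but gives nothing on $\|(I-P_n)S\|$, and you correctly flag this as an unresolved obstacle. In fact the paper never proves $\|(I-P_n)S\|\to 0$, and there is no reason to expect it: the whole point of horizontal compactness is a one-sided statement. Your sketch then asks the RIS machinery to ``upgrade the one-sided horizontal compactness of $S$ to the symmetric tail decay,'' but that is not what the RIS machinery delivers, so the argument as written does not close.

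The paper avoids this by approximating $S$ with $SP_n$ rather than $P_n S P_n$. Then the residual is simply $S - SP_n = S(I-P_n)\to 0$ by horizontal compactness, and the remaining work is to show that $SP_n$ differs from $\sum_{k=1}^n a_k I_k$ by a compact operator. This is where the extra input enters: $SP_n = \sum_{k=1}^n SP_{\{k\}}$, and Proposition \ref{you can only escape from component  compactly} says each $SP_{\{k\}} - P_{\{k\}}SP_{\{k\}}$ is compact, while the scalar-plus-compact property of $X_k$ handles the diagonal block $P_{\{k\}}SP_{\{k\}}$. Crucially, Proposition \ref{you can only escape from component  compactly} is \emph{not} just the Argyros--Haydon cross-compactness between finitely many blocks $X_m, X_k$ with $m,k\leq n$; its second half shows $P_{(k,\infty)}SP_{\{k\}}$ is compact, i.e.\ that $X_k$ cannot map boundedly into the infinite tail $\sum_{i>k}\oplus Z_i$, and this uses the RIS estimate \eqref{kenli} together with the lower bound of Proposition \ref{lower bound}. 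Your cross-compactness argument covers the blocks $I_m S I_k$ with $m\neq k$ both $\leq n$, but not the escape into the tail, and your truncation $P_n S P_n$ discards that escape only at the cost of the uncontrolled term $(I-P_n)S$. Switching to $SP_n$ and invoking Proposition \ref{you can only escape from component  compactly} (which is the intended role of the RIS-and-lower-bound material you gesture toward) is the repair. One smaller slip: the Argyros--Haydon components $X_k$ are $\mathscr{L}_\infty$-spaces with separable dual, not reflexive, so ``reflexivity of the Argyros--Haydon components'' is not an available hypothesis.
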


\begin{cor}
\label{they generate all of it}
If we denote by $[I]$ and $[I_n]$ the equivalence class of $I$ and $I_n$ respectively in $\mathcal{C}al(\mathcal{Y}_X)$, then the linear span of $\{[I]\}\cup\{[I_n]: n\in\N\}$ is dense in the space $\mathcal{C}al(\mathcal{Y}_X)$.
\end{cor}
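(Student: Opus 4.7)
The plan is to simply pass Theorem \ref{pretty important part of this result} through the quotient map $\pi:\mathcal{L}(\mathcal{Y}_X)\to\mathcal{C}al(\mathcal{Y}_X)$ and exploit that it kills compact operators while being norm-continuous. Concretely, given any $T\in\mathcal{L}(\mathcal{Y}_X)$, Theorem \ref{pretty important part of this result} supplies scalars $(a_k)_{k=0}^\infty$ and compacts $(K_n)_n$ with
\[
\Big\|T - \Big(a_0 I + \sum_{k=1}^n a_k I_k + K_n\Big)\Big\|_{\mathcal{L}(\mathcal{Y}_X)} \xrightarrow[n\to\infty]{} 0.
\]
Since $\pi$ has norm at most one and $\pi(K_n)=0$ for every $n$, applying $\pi$ to this identity yields
\[
\Big\|[T] - \Big(a_0[I] + \sum_{k=1}^n a_k [I_k]\Big)\Big\|_{\mathcal{C}al(\mathcal{Y}_X)} \xrightarrow[n\to\infty]{} 0,
\]
so $[T]$ lies in the closed linear span $\overline{\mathrm{span}}\{[I],[I_1],[I_2],\ldots\}$ in $\mathcal{C}al(\mathcal{Y}_X)$. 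As $T$ was arbitrary, this span is dense, which is exactly the desired conclusion.

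There is essentially no obstacle here: the whole content is contained in Theorem \ref{pretty important part of this result}, and the corollary is just the observation that the equivalence classes of the finite sums $a_0 I + \sum_{k=1}^n a_k I_k$ belong to the linear span of $\{[I]\}\cup\{[I_n]: n\in\N\}$, and that norm convergence in $\mathcal{L}(\mathcal{Y}_X)$ passes to norm convergence in the quotient. No additional properties of the $I_n$ (such as the structure of $\mathbb{R}I\oplus\mathcal{K}_\mathrm{diag}(X)$) are needed for density; those enter only when one later upgrades density to the full Banach-algebra isomorphism asserted in Theorem \ref{main theorem intro}. So I would present the proof in two lines: invoke the theorem, then apply the norm-decreasing quotient map.
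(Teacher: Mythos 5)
Your proof is correct and is exactly the argument the paper has in mind: the corollary is stated immediately after Theorem \ref{pretty important part of this result} with no separate proof, since it is precisely the observation that the quotient map $\pi:\mathcal{L}(\mathcal{Y}_X)\to\mathcal{C}al(\mathcal{Y}_X)$ is norm-decreasing and annihilates the compact operators $K_n$. Nothing to add.
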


We now proceed to make several estimates as to how the norm on the linear span $\{[I]\}\cup\{[I_n]: n\in\N\}$ compares to the norm of the linear span $\{I\}\cup\{e_i^*\otimes e_i: i\in\N\}$ in $\R I\oplus\mathcal{K}_\mathrm{diag}(X)$.

\begin{lem}
\label{norm of this vector is such and such}
Let $i_0\leq i\in\N$ and $(b_k)_{k=1}^{i_0}$ be a sequence of scalars. If
\[u = (x_k,y_k)_{k=1}^{i_0}\in \left(\left(\sum_{k=1}^{i_0}\oplus X_k\right)^X_\mathrm{utc}\oplus\left(\sum_{k=1}^{i_0}\oplus\ell_\infty\left(\De_k\right)\right)_\infty\right)_\infty\]
with $x_k = b_kt_{k,{i}}$ for $1\leq k\leq  {i_0}$ and $z = i_{i_0}(u)$ (which by \eqref{block vectors live in such things} is in $\mathcal{Y}_X$) then we have that
\begin{equation*}
 \left\|\sum_{k=1}^{i_0}b_ke_k\right\|_X \leq \|z\| \leq C_0\max\left\{\left\|\sum_{k=1}^{i_0}b_ke_k\right\|_X, \max_{1\leq k\leq i_0}\|y_k\|_\infty\right\}.
\end{equation*}
In particular, the space $X$ is $C_0$-crudely finitely representable in $\mathcal{Y}_X$.
\end{lem}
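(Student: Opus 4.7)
The plan is to reduce everything to computing the norm of $u$ in the finite-dimensional building block $\mathcal{Z}_X^{i_0}$ and then recognize that its first component is essentially a copy of $\sum b_k e_k \in X$.

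First, by \eqref{extensions are embeddings} we have $\|u\| \leq \|z\| = \|i_{i_0}(u)\| \leq C_0 \|u\|$, and by the $\ell_\infty$-product structure in \eqref{initial parts of big space},
\[
\|u\| = \max\Big\{\|(x_k)_{k=1}^{i_0}\|_{\mathcal{X}},\; \max_{1\leq k\leq i_0}\|y_k\|_\infty\Big\}.
\]
Thus the whole lemma reduces to showing that, for $x = (b_k t_{k,i})_{k=1}^{i_0}$ with $i_0\leq i$, one has the identity $\|x\|_{\mathcal{X}} = \|\sum_{k=1}^{i_0} b_k e_k\|_X$; the displayed inequalities then follow, and the upper bound will go through with the $C_0$ factor absorbed by \eqref{extensions are embeddings}.

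For this identity I would evaluate the norming set $\mathcal{G}_0^{\mathrm{utc}}$ on $x$ directly. An element $f = \sum_{k=1}^{j_0} a_k t^*_{k,j_0} \in \mathcal{G}^{\mathrm{utc}}$ satisfies $f(x) = \sum_{k=1}^{\min(i_0,j_0)} a_k b_k \,t^*_{k,j_0}(t_{k,i})$, which is zero unless $j_0 = i$, and in that case (using $i_0\leq i$) it equals $\sum_{k=1}^{i_0}a_k b_k$. The scalar constraint defining $\mathcal{G}^{\mathrm{utc}}$ says that $(a_k)_{k=1}^{i_0}$ extends to a sequence whose $w^*$-series in $X^*$ has norm at most $1$; bimonotonicity of $(e_k)_k$ lets me take the tail to be zero, so the constraint reduces to $\|\sum_{k=1}^{i_0} a_k e_k^*\|_{X^*}\leq 1$. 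Maximizing $\sum a_k b_k$ over such $a$ yields $\|\sum_{k=1}^{i_0} b_k e_k\|_X$ by the duality between $X$ and the subspace of $X^*$ generated by $e_1^*,\ldots,e_{i_0}^*$. The remaining ``ball'' functionals in $(1/A_0)B_{X_k^*}$ contribute at most $(1/A_0)|b_k|$, and bimonotonicity gives $(1/A_0)|b_k| \leq \|\sum_{k=1}^{i_0}b_k e_k\|_X$, so they do not increase the norm. This yields the desired equality $\|x\|_{\mathcal{X}} = \|\sum b_k e_k\|_X$.

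For the final assertion about $C_0$-crude finite representability, I would fix an arbitrary finite-dimensional subspace $E_{i_0} = \mathrm{span}\{e_1,\ldots,e_{i_0}\}\subset X$, pick any $i\geq i_0$, and define $\Phi: E_{i_0} \to \mathcal{Y}_X$ by $\Phi(\sum_{k=1}^{i_0} b_k e_k) = i_{i_0}((b_k t_{k,i})_{k=1}^{i_0},(0)_{k=1}^{i_0})$. The two inequalities just established (with all $y_k = 0$) give $\|\Phi(v)\| \leq C_0 \|v\|_X$ and $\|\Phi(v)\| \geq \|v\|_X$, so $\Phi$ is a $C_0$-isomorphism onto its range. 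Since every finite-dimensional subspace of $X$ embeds (uniformly) into some $E_{i_0}$ by bimonotonicity, $X$ is $C_0$-crudely finitely representable in $\mathcal{Y}_X$. The only subtle point in the whole argument is bookkeeping around the ``utc'' condition, namely observing that $i_0\leq i$ is precisely what guarantees that the single utc-index $j_0 = i$ already sees all the nonzero coordinates $b_1,\ldots,b_{i_0}$; this is the step I expect to require the most care.
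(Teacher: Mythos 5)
Your proof follows the same route as the paper's: reduce, via \eqref{extensions are embeddings} and the $\ell_\infty$-structure of $\mathcal{Z}_X^{i_0}$, to the single identity $\|(b_k t_{k,i})_{k\leq i_0}\|_{\mathcal{X}} = \|\sum_{k=1}^{i_0} b_k e_k\|_X$, and verify this by evaluating the norming set $\mathcal{G}^{\mathrm{utc}}_0$. The reading-off of the ``utc'' support, the observation that only $j_0 = i$ contributes, the treatment of the ball functionals $(1/A_0)B_{X_k^*}$, and the finite-representability conclusion are all correct and match the paper.

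There is, however, a genuine gap in the key duality step. You claim that the constraint ``$(a_k)_{k\leq i_0}$ extends to a sequence with $\|w^*\text{-}\sum a_k e_k^*\|_{X^*}\leq 1$'' reduces, by bimonotonicity, to ``$\|\sum_{k\leq i_0} a_k e_k^*\|_{X^*}\leq 1$''. This reduction is one-directional: extending by zeros shows that the tail-free ball is \emph{contained} in the constraint set, but the other inclusion only gives $\|\sum_{k\leq i_0} a_k e_k^*\|_{X^*}\leq A_0$, not $\leq 1$. The actual constraint set is the image of $B_{X^*}$ under $\phi \mapsto (\phi(e_k))_{k\leq i_0}$, which by Hahn--Banach is the unit ball of $E^*$ for $E = \spn\{e_1,\ldots,e_{i_0}\}$; in general this is strictly larger than $\{(a_k) : \|\sum_{k\leq i_0} a_k e_k^*\|_{X^*}\leq 1\}$. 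Maximizing $|\sum a_k b_k|$ over your reduced set gives $\sup\{|f(\sum b_k e_k)| : f\in \spn(e_1^*,\ldots,e_{i_0}^*),\ \|f\|_{X^*}\leq 1\}$, which can be as small as $(1/A_0)\|\sum b_k e_k\|_X$ when $A_0>1$ (e.g., a two-dimensional Euclidean space with $e_1,e_2$ nearly parallel). The paper retains the full $w^*$-series constraint without zeroing the tail, so the supremum ranges over \emph{all} of $B_{X^*}$ (any $\phi\in B_{X^*}$ gives an admissible sequence $a_k=\phi(e_k)$), and Hahn--Banach then gives the exact value $\|\sum b_k e_k\|_X$. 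Since the sharp lower bound $\|\sum b_k e_k\|_X \leq \|z\|$ is what feeds into Lemma \ref{lower calkin}, the factor-of-$A_0$ loss in your argument is not benign: you should drop the tail-zeroing reduction and argue directly with the full sequences.
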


\begin{proof}
By \eqref{extensions are embeddings} we have $\|u\| \leq \|z\| \leq C_0\|u\|$ hence it is sufficient to show that $\|\sum_{k=1}^{i_0}b_ke_k\|_X\leq \|u\| \leq \max\{\|\sum_{k=1}^{i_0}b_ke_k\|_X,\max_{1\leq k\leq i_0}\|y_k\|_\infty\}$. We define the vector $x = (\la_kt_{k,{i}})_{k=1}^{i_0}\in (\sum_{k=1}^{i_0}\oplus X_k)^X_\mathrm{utc}$. Then \eqref{norm triad} yields
$$\|u\| = \max\left\{\max_{1\leq k\leq {i_0}}(1/A_0)|\la_k|,\sup_{x^*\in\mathcal{G}^\mathrm{utc}}|x^*(x)|,\max_{1\leq k\leq i_0}\|y_k\|_\infty\right\}$$
From the fact that the basis of $X$ has bimonotone constant $A_0$ we obtain that $\max_{1\leq k\leq {i_0}}|\la_k| \leq A_0\|\sum_{k=1}^{i_0}b_ke_k\|_X$, whereas from the definition of $\mathcal{G}^\mathrm{utc}$ we obtain
\begin{equation*}
\begin{split}
\sup_{x^*\in\mathcal{G}^\mathrm{utc}}|x^*(x)| &= \sup\left\{\left|\sum_{k=1}^{i_0}a_kb_k\right|: (a_k)_{k=1}^\infty\text{ is such that }\left\|w^*\!\text{-}\!\sum_{k=1}^\infty a_ke_k^*\right\|_X\leq 1\right\}\\
&=\left\|\sum_{k=1}^{i_0}b_ke_k\right\|_X.
\end{split}
\end{equation*}
The conclusion immediately follows from the above equations.
\end{proof}

\begin{lem}
\label{lower calkin}
Let $n\in\N$, $(a_k)_{k=0}^{n}$ be a sequence of scalars, and $T:\mathcal{Y}_X\to\mathcal{Y}_X$ be the bounded linear operator $T =a_0I + \sum_{k=1}^na_kI_k$. Then, for every compact operator $K:\mathcal{Y}_X\to\mathcal{Y}_X$ we have
\begin{equation*}
\|T - K\| \geq \frac{1}{C_0}\left\|a_0 I_X + \sum_{k=1}^na_k e_k^*\otimes e_k\right\|_{\mathcal{L}(X)}. 
\end{equation*}
\end{lem}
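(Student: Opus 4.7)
The plan is to evaluate $T-K$ on a family of bounded ``block vectors'' built via Lemma \ref{norm of this vector is such and such}, which witness the crude finite representability of $X$ in $\mathcal{Y}_X$. Fix $\varepsilon>0$ and choose $i_0\geq n$ together with scalars $(b_k)_{k=1}^{i_0}$ with $\|\sum_{k=1}^{i_0}b_ke_k\|_X=1$ and
$$\Big\|D\Big(\sum_{k=1}^{i_0}b_ke_k\Big)\Big\|_X\geq \|D\|_{\mathcal{L}(X)}-\varepsilon,\qquad D:=a_0I_X+\sum_{k=1}^n a_k e_k^*\otimes e_k.$$
For each integer $i\geq i_0$ set $u^{(i)}=(b_k t_{k,i},0)_{k=1}^{i_0}\in \mathcal{Z}_X^{i_0}$ and $z^{(i)}=i_{i_0}(u^{(i)})\in\mathcal{Y}_X$. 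By Lemma \ref{norm of this vector is such and such} (applied with the prescribed $(b_k)$ and with $i\geq i_0$) one has $\|z^{(i)}\|_{\mathcal{Y}_X}\leq C_0$.

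Next I would compute the $\mathcal{X}$-part of $T(z^{(i)})$ in its $\mathcal{Z}_X^\infty$-representation. By Remark \ref{infinite extensions remark}(i)--(ii), $z^{(i)}$ has $X_j$-component $b_jt_{j,i}$ for $1\leq j\leq i_0$ and $0$ otherwise. The operator $I_k=i_k\circ R_{\{k\},0}$ preserves the $X_k$-component and kills every other $X_j$-component, so the $\mathcal{X}$-part of $T(z^{(i)})=a_0z^{(i)}+\sum_{k=1}^n a_kI_k(z^{(i)})$ equals $(c_jb_jt_{j,i})_{j=1}^{i_0}$, where $c_j=a_0+a_j$ for $j\leq n$ and $c_j=a_0$ for $n<j\leq i_0$. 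Using the functional $\sum_{j=1}^i \alpha_j t_{j,i}^*\in\mathcal{G}^{\mathrm{utc}}$ with $(\alpha_j)$ nearly realizing the $X$-norm of $\sum c_jb_je_j$ via Hahn--Banach, together with \eqref{norm triad}, gives the lower bound
$$\|T(z^{(i)})\|_{\mathcal{Y}_X}\;\geq\;\big\|(c_jb_jt_{j,i})_{j=1}^{i_0}\big\|_{\mathcal{X}}\;\geq\;\Big\|D\Big(\sum_{j=1}^{i_0}b_je_j\Big)\Big\|_X\;\geq\;\|D\|_{\mathcal{L}(X)}-\varepsilon,$$
uniformly in $i\geq i_0$.

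To absorb $K$, I would use that the basis $(t_{k,i})_i$ of each Argyros--Haydon space $X_k$ is shrinking (its biorthogonals span $X_k^*\cong \ell_1$), hence weakly null. Therefore $b_kt_{k,i}\to 0$ weakly in $X_k$ as $i\to\infty$ for each fixed $k\leq i_0$, which implies $u^{(i)}\to 0$ weakly in $\mathcal{Z}_X^{i_0}$, and by the weak--weak continuity of $i_{i_0}$, also $z^{(i)}\to 0$ weakly in $\mathcal{Y}_X$. Compactness of $K$ now forces $\|Kz^{(i)}\|\to 0$, and so
$$\|T-K\|\;\geq\;\frac{\|Tz^{(i)}-Kz^{(i)}\|}{\|z^{(i)}\|}\;\geq\;\frac{\|D\|_{\mathcal{L}(X)}-\varepsilon-\|Kz^{(i)}\|}{C_0}.$$
Letting $i\to\infty$ and then $\varepsilon\to 0$ yields the desired inequality.

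The main obstacle is the weak-nullity step: one must justify that the sequence $(z^{(i)})_i$ tends to zero weakly in $\mathcal{Y}_X$. This uses in an essential way that the inner spaces $X_k$ are Argyros--Haydon spaces with shrinking basis, and that the extension operators $i_n$ constructed from the Bourgain--Delbaen-type functionals $c_\gamma^*$ are bounded (Lemma \ref{uniformly bounded partial projections}), hence weak-to-weak continuous. All other ingredients are direct applications of Lemma \ref{norm of this vector is such and such} and the formula \eqref{norm triad}.
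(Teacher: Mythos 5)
Your proof is correct and follows essentially the same route as the paper's: the same family of test vectors $z^{(i)}=i_{i_0}(b_kt_{k,i},0)_{k=1}^{i_0}$, the same lower bound for $\|Tz^{(i)}\|$ via the $\mathcal{X}$-part (which is exactly the content of the lower estimate in Lemma \ref{norm of this vector is such and such}, so you could cite it directly rather than rerun the $\mathcal{G}^{\mathrm{utc}}$/Hahn--Banach argument), and the same weak-nullity-of-$z^{(i)}$ argument (shrinking bases of the $X_k$ plus weak-to-weak continuity of $i_{i_0}$) to absorb $K$. The only cosmetic difference is that you fix a near-optimal $(b_k)$ and let $\varepsilon\to 0$, whereas the paper keeps $(b_k)$ arbitrary with $\|\sum b_ke_k\|_X\leq 1$ and takes a supremum at the end.
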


\begin{proof}
Let us for the moment fix $i_0\in\N$ with $i_0\geq n+1$ and $(b_k)_{k=1}^{i_0}$ so that $\|\sum_{k=1}^{i_0}b_ke_k\|_X\leq  1$. For $i\geq i_0$ define the vector $x_i$ exactly as in the statement of Lemma \ref{norm of this vector is such and such}. We observe that the sequence $(x_i)_{i\geq i_0}$ is weakly null. Recall that for $1\leq k\leq i_0$ the basis $(t_{k,i})_i$ of the space $X_k$ is shrinking (although this is not explicitly stated, it follows easily from the proof of \cite[Proposition 5.2]{AH}). The natural image of $X_k$ in the space $((\sum_{k=1}^{i_0}\oplus X_k)_X^\mathrm{utc}\oplus(\sum_{k=1}^{i_0}\oplus\ell_\infty(\De_k))_\infty)_\infty$ is a ($A_0\sup_{i}\|t_{k,i}^*\|$)-embedding (see Remark \ref{pesky detail}) and the map $i_{i_0}$ is a $C_0$-embedding as well (see \eqref{extensions are embeddings}). If we consider the natural image $(\tilde t_{k,i}^{i_0})_i$ of the sequence $(t_{k,i})_i$ in the aforementioned space then sequence $(i_{i_0}(\tilde t_{k,i}^{i_0}))_{i}$ is weakly null. As $x_i = \sum_{k=1}^{i_0}b_ki_{i_0}(\tilde t_{k,i}^{i_0})$ we have that $(x_i)_i$ is weakly null. This means that $(Kx_i)_i$ converges to zero in norm, i.e. $\liminf_i\|Tx_i - Kx_i\| = \liminf_i\|Tx_i\|$. We combine this with Lemma \ref{norm of this vector is such and such}, according to which $\|x_i\| \leq C_0$ for all $i\geq i_0$, to deduce
\begin{equation}
 \|T - K\| \geq \frac{1}{C_0}\liminf_i\|Tx_i\|.
\end{equation}
For $i\geq i_0$ the vector $Tx_i$ has the form $i_{i_0}(x_k,y_k)_{k=1}^{i_0}$ where $x_k = (a_0 + a_k)b_kt_{k,i}$, for $1\leq k \leq n$ and $x_k = a_0b_kt_{k,i}$ for $n+1 \leq k\leq i_0$. By Lemma \ref{norm of this vector is such and such} we obtain
\begin{equation*}
\begin{split}
 \|Tx_i\| &\geq \left\|\sum_{k=1}^n(a_0 + a_k)b_ke_k + \sum_{k=n+1}^{i_0}a_0b_ke_k\right\|_X\\
 &= \left\|\left(a_0I_X+\sum_{k=1}^na_ke_k^*\otimes e_k\right)\left(\sum_{k=1}^{i_0}b_ke_k\right)\right\|_X
\end{split}
\end{equation*}
Taking a supremum over all $i_0\geq n+1$ and $(b_k)_{k=1}^{i_0}$ as in the first line of the proof yields the desired result. 
\end{proof}

The following is proved using Proposition \ref{evanalysis} in an identical manner as in the proof of \cite[Lemma 1.7]{MPZ} .

\begin{lem}
\label{some convenient form}
Let $n,q\in\N$ with $q>n$ and $\ga\in\De_q$ with $\we(\ga) = 1/m_j$ for some $j\in\N$. Consider the functional $g:\mathcal{Y}_X\to\mathbb{R}$ with $g = e_\ga^*\circ P_{[1,n]}$. Then, one of the following holds:
\begin{itemize}
 \item[(i)] $g = 0$.
 \item[(ii)] There are $p_1\in\N$ with $p_1\leq n$ and $b^*$ in the unit ball of the space $((\sum_{k=1}^n\oplus X_k)_\mathrm{utc}^X\oplus (\sum_{k=1}^n\oplus\ell_\infty(\De_k))_\infty)_\infty^*$ so that $g = (1/m_j)b^*\circ P_{(p_1,n]}$.
 \item[(iii)] There are $p_1\in\N$ with $p_1\leq n$, $\ga'\in \De_{p_1}$, and $b^*$ as above so that $g = e_{\ga'}^* + (1/m_j)b^*\circ P_{(p_1,n]}$.
\end{itemize}

\end{lem}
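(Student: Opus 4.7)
The strategy is to apply the evaluation analysis of $e_\ga^*$ provided by Proposition \ref{evanalysis} and then compose on the right with $P_{[1,n]} = P_n$, tracking term by term which pieces survive. This parallels the corresponding argument in \cite[Lemma 1.7]{MPZ}.

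Let $(p_i,\xi_i,b_i^*)_{i=1}^a$ be the evaluation analysis of $\ga$, so $0 = p_0 < p_1 < \cdots < p_a = q$, each $\xi_i\in\De_{p_i}$ has weight $1/m_j$ (with $\xi_a = \ga$), each $b_i^*\in A_{p_i-1}$, and
\[e_\ga^* = \sum_{i=1}^a d_{\xi_i}^* + \frac{1}{m_j}\sum_{i=1}^a b_i^*\circ P_{(p_{i-1},p_i)}.\]
Since $q>n$, the index $i^* = \max\{i\in\{0,1,\ldots,a\}: p_i\leq n\}$ (with the convention $i^*=0$ when no such $i$ exists) satisfies $0\leq i^* < a$. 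Using that $d_{\xi_i}^* = e_{\xi_i}^*\circ P_{\{p_i\}}$ together with Remark \ref{partial projections remark}, I would first verify that $d_{\xi_i}^*\circ P_n$ equals $d_{\xi_i}^*$ if $p_i\leq n$ and $0$ otherwise, while $P_{(p_{i-1},p_i)}\circ P_n$ equals $P_{(p_{i-1},p_i)}$ for $i\leq i^*$, equals $P_{(p_{i^*},n]}$ for $i = i^*+1$, and vanishes for $i>i^*+1$. Collecting the surviving summands gives
\[g = \sum_{i=1}^{i^*} d_{\xi_i}^* + \frac{1}{m_j}\sum_{i=1}^{i^*} b_i^*\circ P_{(p_{i-1},p_i)} + \frac{1}{m_j}\, b_{i^*+1}^*\circ P_{(p_{i^*},n]}.\]

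The essential observation is that the inductive encoding of predecessors in $\De_n^{\mathrm{Even}_1}$ and $\De_n^{\mathrm{Odd}_1}$ makes the evaluation analysis coherent: the truncated tuple $(p_i,\xi_i,b_i^*)_{i=1}^{i^*}$ is itself the evaluation analysis of $\xi_{i^*}$. Hence Proposition \ref{evanalysis} applied to $\xi_{i^*}$ collapses the first two sums into $e_{\xi_{i^*}}^*$, yielding
\[g = e_{\xi_{i^*}}^* + \frac{1}{m_j}\, b_{i^*+1}^*\circ P_{(p_{i^*},n]},\]
which is conclusion (iii) upon setting $p_1 = p_{i^*}$, $\ga' = \xi_{i^*}$, and letting $b^*$ be the restriction of $b_{i^*+1}^*$ to $\mathcal{Z}_X^n$. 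When $i^* = 0$ the first two sums are vacuous and we obtain $g = (1/m_j)\, b_1^*\circ P_{(0,n]}$, which is conclusion (ii). Conclusion (i) is reserved for the degenerate sub-case where the resulting functional simplifies to zero (for example when $p_1 = 1$ and $A_0$ reduces to $\{0\}$).

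A routine final check confirms that the $b^*$ appearing in (ii) or (iii) lies in the unit ball of $(\mathcal{Z}_X^n)^*$: by construction $b_{i^*+1}^*\in A_{p_{i^*+1}-1}$ sits in the unit ball of $(\mathcal{Z}_X^{p_{i^*+1}-1})^*$, and $\mathcal{Z}_X^n$ embeds as the first $n$ coordinates of $\mathcal{Z}_X^{p_{i^*+1}-1}$ with no increase in dual norm. I expect the main bookkeeping obstacle to be verifying cleanly how $P_n$ truncates the nested intervals $(p_{i-1},p_i)$, and invoking the coherence of the predecessor structure to justify that the initial segment of the analysis of $\ga$ really is the full evaluation analysis of $\xi_{i^*}$ with the same $b_i^*$'s; beyond these points the argument is direct computation.
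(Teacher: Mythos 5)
Your proof is correct and follows the same route the paper indicates by its reference to [MPZ, Lemma 1.7]: truncate the evaluation analysis of $\ga$ at the largest $p_{i^*}\leq n$, observe that the initial segment of the analysis is itself the evaluation analysis of $\xi_{i^*}$ (and hence recollapses to $e_{\xi_{i^*}}^*$), and restrict the single surviving $b_{i^*+1}^*$ to $\mathcal{Z}_X^n$. The only blemishes are cosmetic: the case $i^*=0$ gives $P_{(0,n]}=P_n$ so $p_1$ in conclusion (ii) should really be allowed to equal $0$, and your parenthetical explanation of when (i) occurs is off — $g=0$ arises precisely when $i^*=0$ and $b_1^*$ happens to annihilate $\mathcal{Z}_X^n$ (e.g.\ $b_1^*\in K_{p_1-1}$ is supported on some $X_k$ with $k>n$), not from any degeneracy of the set $A_0$ (which collides notationally with the bimonotonicity constant anyway).
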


\begin{prp}
\label{upper calkin}
Let $n\in\N$, $(a_k)_{k=0}^{n}$ be a sequence of scalars, and $T:\mathcal{Y}_X\to\mathcal{Y}_X$ be the bounded linear operator $T =a_0I + \sum_{k=1}^na_kI_k$. Then, there exists a compact operator $K:\mathcal{Y}_X\to\mathcal{Y}_X$ so that
\begin{equation*}
\left\|T - K\right\| \leq  (2C_0^2 - C_0)\left\|a_0 I_X + \sum_{k=1}^na_k e_k^*\otimes e_k\right\|_{\mathcal{L}(X)}.
\end{equation*}
\end{prp}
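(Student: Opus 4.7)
The plan is to take $K = -\sum_{k=1}^n a_k A_k$, which is compact since each $A_k$ has finite rank (its image lies in $i_k[\{0\}\oplus\ell_\infty(\Delta_k)]$ and $\Delta_k$ is finite). Using the identity $I_k = P_{\{k\}} - A_k$, a direct computation gives
\[
T - K = a_0 I + \sum_{k=1}^n a_k(I_k + A_k) = a_0 I + \sum_{k=1}^n a_k P_{\{k\}},
\]
which I denote by $D$. This is a block-diagonal operator with respect to the FDD $(Z_k)_k$: on $Z_k$ it acts as multiplication by $\lambda_k$, where $\lambda_k = a_0 + a_k$ for $1 \leq k \leq n$ and $\lambda_k = a_0$ for $k > n$. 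Applying the diagonal operator on $X$ to the normalized basis vector $e_k$ gives $|\lambda_k| \leq M := \|a_0 I_X + \sum_{k=1}^n a_k e_k^* \otimes e_k\|_{\mathcal{L}(X)}$, and $D$ commutes with every projection $P_{\{k\}}$ and every interval projection $P_{(\ell, m]}$.

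To bound $\|D\|$, I will test $\phi(Dz)$ against the natural norming subset of $\mathcal{Y}_X^*$ inherited from the embedding $\mathcal{Y}_X \hookrightarrow \mathcal{Z}_X^\infty$, namely the set $\mathcal{G}^{\mathrm{utc}}_0 \cup \{e_\gamma^* : \gamma \in \Gamma\}$ from Remark \ref{how things are normed}. For $x^* = \sum_{k=1}^{i_0} a_k t_{k, i_0}^* \in \mathcal{G}^{\mathrm{utc}}$, a direct calculation yields $x^*(Dz) = y^*(z_\mathcal{X})$ where $y^* = \sum_k a_k \lambda_k t_{k, i_0}^*$. Since the diagonal operator with multiplier $(\lambda_k)$ on $X$ has norm $M$ and so does its dual, $\|w^*\text{-}\sum_k a_k \lambda_k e_k^*\|_{X^*} \leq M$, hence $y^*/M \in \mathcal{G}^{\mathrm{utc}}$ and $|x^*(Dz)| \leq M \|z_\mathcal{X}\|_\mathcal{X} \leq M \|z\|$. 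For $x^* \in (1/A_0) B_{X_k^*}$ the estimate $|x^*(Dz)| = |\lambda_k x^*(x_k)| \leq M \|z\|$ follows by the same principle.

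The main estimate is $|e_\gamma^*(Dz)|$ for $\gamma \in \Delta_m$, which I will treat via the evaluation analysis $e_\gamma^* = \sum_{i=1}^a d_{\xi_i}^* + (1/m_j)\sum_{i=1}^a b_i^* \circ P_{(p_{i-1}, p_i)}$ from Proposition \ref{evanalysis}. Because $P_{\{p_i\}}D = \lambda_{p_i} P_{\{p_i\}}$ and $D$ commutes with interval projections,
\[
e_\gamma^*(Dz) = \sum_{i=1}^a \lambda_{p_i} d_{\xi_i}^*(z) + \frac{1}{m_j}\sum_{i=1}^a b_i^*\bigl(D P_{(p_{i-1}, p_i)} z\bigr).
\]
For the second sum I split into cases based on $b_i^* \in A_{p_i - 1} = K_{p_i - 1} \cup \mathcal{G}^{\mathrm{utc}}_{p_i - 1} \cup B_{p_i - 1}$: if $b_i^* \in \mathcal{G}^{\mathrm{utc}}_{p_i - 1} \cup K_{p_i - 1}$, the $\mathcal{X}$-part bound from the previous paragraph gives $|b_i^*(DP_{(p_{i-1}, p_i)} z)| \leq M \|P_{(p_{i-1}, p_i)} z\| \leq 2 A_0 C_0 M \|z\|$; if $b_i^* \in B_{p_i - 1}$, expanding into $e_\eta^*$'s reduces the estimate to an inductive bound on $|e_\eta^*(Dw)|$ for $\eta$ of strictly smaller weight arising from the evaluation analysis. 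The first sum $\sum_i \lambda_{p_i} d_{\xi_i}^*(z)$ is then recast via the identity $\sum_i d_{\xi_i}^* = e_\gamma^* - (1/m_j)\sum_i b_i^* \circ P_{(p_{i-1}, p_i)}$, reducing it to the second-sum estimate together with the trivial bound $|e_\gamma^*(z)| \leq \|z\|$.

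The hard part will be orchestrating the recursive bounds so the constants close up. Each application of the BD form \eqref{BD form a} or \eqref{BD form b} contributes only a factor of $|\beta| \leq \beta_0$ multiplied by $\|P_{(\ell, m]}\| \leq 2 A_0 C_0$, and these small factors combine with the $1/m_j$ weight factors to telescope through the nested evaluation analyses. Summing all contributions will yield $|e_\gamma^*(Dz)| \leq (2C_0^2 - C_0) M \|z\|$, and combined with the $\mathcal{G}^{\mathrm{utc}}_0$ bound this gives the desired $\|T - K\| = \|D\| \leq (2 C_0^2 - C_0) M$.
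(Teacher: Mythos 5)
Your choice $K = -\sum_{k=1}^n a_k A_k$ is not the paper's and, more importantly, the resulting $D := T - K = a_0 I + \sum_{k=1}^n a_k P_{\{k\}}$ does \emph{not} satisfy the claimed bound, so the approach cannot close. The paper takes $K = A\sum_{k=1}^n a_k I_k$, where $A z = i_n((0,y_k)_{k=1}^n)$; a direct computation then gives $(T-K)z = a_0 z + i_n(u)$ with $u = (a_k x_k, 0)_{k=1}^n$, whose $\ell_\infty(\Delta_k)$-components are identically zero. That zeroing is the whole point: it forces $e_{\gamma'}^*(i_n(u)) = 0$ for every $\gamma'$ with $\ra(\gamma') \le n$, so when Lemma \ref{some convenient form} is applied to $g = e_\gamma^*\circ P_{[1,n]}$ the potentially large $e_{\gamma'}^*$ term in case (iii) vanishes, leaving only $(1/m_j)b^*\circ P_{(p_1,n]}$, which carries the small factor $1/m_j \le \beta_0$ and yields $|e_\gamma^*(i_n(u))| \le 4\beta_0 C_0 A_0 M$. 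Your $D$ instead scales the $\ell_\infty(\Delta_k)$-part of $Z_k$ by $a_0 + a_k$, so that cancellation is lost.

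Here is a concrete obstruction. Take $X = c_0$ (so $A_0 = 1$), let $a_0 = M$, $a_p = -2M$ for some $p \le n$, and $a_k = 0$ otherwise; then $\|a_0 I_X + \sum a_k e_k^*\otimes e_k\| = M$ and $|a_p| = 2M$. Choose $\gamma \in \Delta_{n+1}$ with $c_\gamma^* = e_\xi^* + (1/m_{2j})b^*\circ P_{(p,n]}$ and $\xi \in \Delta_p$, and set $z = z_p + z_{n+1}$ where $z_p = i_p(0,\dots,0,(0,-\mathbf{1}_{\{\xi\}}))$ and $z_{n+1} = i_{n+1}(0,\dots,0,(0,\mathbf{1}_{\{\gamma\}}))$; both vectors have zero $X_k$-parts, so $\|z\| \to 1$ as $\beta_0 \to 0$ (the cancellation $c_\gamma^*(z_p) + 1 = -1+1 = 0$ in coordinate $n+1$ keeps the $\ell_\infty$-norm down). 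Then $e_\gamma^*(z_p) = c_\gamma^*(z_p) = -1$ and $e_\gamma^*(z_{n+1}) = d_\gamma^*(z_{n+1}) = 1$, so $e_\gamma^*(Dz) = \lambda_p(-1) + \lambda_{n+1}(1) = -(a_0+a_p) + a_0 = -a_p$, i.e.\ $|e_\gamma^*(Dz)| = 2M$ while $\|z\| \approx 1$; meanwhile the target bound $(2C_0^2 - C_0)M \to M$. So $\|D\|$ genuinely exceeds the claim. (With the paper's $K$, the same $z$ gives $e_\gamma^*((T-K)z) = a_0 e_\gamma^*(z) = 0$.) Separately, even where the bound might hold, your plan of summing the full evaluation analysis $(p_i,\xi_i,b_i^*)_{i=1}^a$ term by term does not work: $a$ can be as large as $n_j$, so $\tfrac{1}{m_j}\sum_{i=1}^a 2A_0C_0 M \|z\|$ can be of order $\tfrac{n_j}{m_j}M\|z\|$, which is unbounded, and $\sum_i \lambda_{p_i} d_{\xi_i}^*(z)$ with varying $\lambda_{p_i}$ cannot be recast through $\sum_i d_{\xi_i}^* = e_\gamma^* - \tfrac{1}{m_j}\sum_i b_i^*\circ P_{(p_{i-1},p_i)}$. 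The one-step Lemma \ref{some convenient form} combined with the right $K$ is what makes the argument go through.
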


\begin{proof}
Let us set
$$\left\|a_0 I_X +\sum_{k=1}^na_k e_k^*\otimes e_k\right\|_{\mathcal{L}(X)} = M.$$
Set $a_k = 0$ for all $k>n$. This way, for all $x = \sum_{k=1}^\infty c_ke_k$ in $X$ we have $(a_0 I_X + \sum_{k=1}^na_k e_k^*\otimes e_k)x = \sum_{k=1}^\infty (a_0 + a_k)c_ke_k$. Let $A:\mathcal{Y}_X\to\mathcal{Y}_X$ be defined as follows: if $x = (x_k,y_k)_{k=1}^\infty$ then $Ax = i_n((0,y_k)_{k=1}^n)$. This is a finite rank operator and hence it is compact. We define $K = A\sum_{k=1}^na_kI_k$. To show that $K$ satisfies the conclusion let $x = (x_k,y_k)_{k=1}^\infty$ be an element in the unit ball of $\mathcal{Y}_X$. Note that
\begin{equation}
\label{trivial observations used in this proof}
 \left(\sum_{k=1}^na_kI_k - K\right)x = i_n(a_kx_k,0)_{k=1}^n \text{ and } a_0Ix = (a_0x_k,a_0y_k)_{k=1}^\infty
\end{equation}
This means that $(T-K)x = ((a_0+a_k)x_k,w_k)_{k=1}^\infty$, where $w_k = a_0y_k$, if $1\leq k\leq n$ and $w_k$ are some vectors in $\ell_\infty(\De_k)$ for $k>n$.

As $\|x\| \leq 1$ we conclude that for every $x^*\in\mathcal{G}^\mathrm{utc}$ we have $|x^*(x)|\leq 1$. This means that for all $i_0\in\N$ and for all $(b_k)_{k=1}^\infty$ for which $\|w^*\!\text{-}\!\sum_{k=1}^\infty b_ke_k^*\|_{X^*} \leq 1$ we have $|\sum_{k=1}^{i_0}b_kt_{k,i_0}^*(x_k)|\leq 1$, i.e. for all $i_0\in\N$ we have
\begin{equation}
\label{yeah yeah}
\left\|\sum_{k=1}^{i_0}t_{k,i_0}^*(x_k)e_k\right\|_X \leq 1.
\end{equation}
This yields that for any $(b_k)_{k=1}^\infty$ with $\|w^*\!\text{-}\!\sum_{k=1}^\infty b_ke_k^*\|_{X^*} \leq 1$ and every $i_0\in\N$ we have
\begin{equation}
\label{utc estimate}
\begin{split}
\left|\sum_{k=1}^{i_0}b_k(a_0 + a_k)t_{k,i_0}^*(x_k)\right| &\leq \left\|\sum_{k=1}^{i_0}(a_k+a_0)t_{k,i_0}^*(x_k)e_k\right\|_X\\
&\leq M\left\|\sum_{k=1}^{i_0}t_{k,i_0}^*(x_k)e_k\right\|_X\leq M \text{ by \eqref{yeah yeah}}.
\end{split}
\end{equation}
The above easily implies that for all $x^*\in\mathcal{G}^\mathrm{utc}$ we have $$\left|x^*\left(\left(T-K\right)x\right)\right| \leq M.$$
Similarly, for all $k\in\N$ and $x_k^*\in(1/A_0)B_{X_k^*}$ we have $|x_k^*(x_k)|\leq 1$, i.e.
\begin{equation}
\label{coordinatewise estimate}
\begin{split}
\left|x_k^*\left(\left(T-K\right)x\right)\right| &=  \left|x_k^*\left(\left(a_0+a_k\right)x_k\right)\right| \leq |a_0 + a_k|=\left\|\left(a_0 + a_k\right)e_k\right\|_X\\
&= \left\|\left(a_0 I_X + \sum_{k=1}^na_k e_k^*\otimes e_k\right)e_k\right\|_X\leq M.
\end{split}
\end{equation}
To obtain the desired conclusion it remains to show that for any $\ga\in\Ga$ we have
$$\left|e_\ga^*\left(\left(T-K\right)x\right)\right| \leq (2C_0^2 - C_0)M.$$
For $\ga\in\Ga$ with $\ra(\ga) = q \leq  n$ we have
\begin{equation*}
\left|e_\ga^*\left(\left(T-K\right)x\right)\right| = \left|e_\ga^*(a_0y_q)\right| \leq |a_0| = \left\|\left(a_0 I_X + \sum_{k=1}^na_k e_k^*\otimes e_k\right)e_{n+1}\right\|_X\leq M.
\end{equation*}
We now consider the case in which $\ga\in\Ga$ with $\ra(\ga) = q>n$. Set $u = (a_kx_k,0)_{k=1}^n\in \mathcal{Z}_X^n$. Arguments identical to that used in obtaining \eqref{utc estimate} and \eqref{coordinatewise estimate} yield that
\begin{equation*}
 \|u\| \leq \left\|\sum_{k=1}^na_ke_k^*\otimes e_k\right\|_{\mathcal{L}(X)} \leq 2M.
\end{equation*}
We observe that if $m\leq n$ then
\begin{equation*}
\left\|P_{(m,n]} i_n(u)\right\| = \left\|i_n(u) - i_mR_m(u)\right\| \leq (C_0+C_0A_0)\|u\| \leq 4C_0A_0M.
\end{equation*}
Also observe that for all $\ga'$ with $\ra(\ga') \leq n$ we have $e_\ga^*(i_n(u)) = 0$. We combine the above two facts with Lemma \ref{some convenient form} to obtain
$$|e_\ga^*(i_n(u))| \leq \beta_04C_0A_0M.$$
Finally,
\begin{equation*}
\begin{split}
\left|e_\ga^*\left(\left(T-K\right)x\right)\right|& = \left|e_\ga^*(a_0x) + e_\ga^*\left(i_n(u)\right)\right| \text{ (from \eqref{trivial observations used in this proof})}\\
& \leq |a_0| + 4\beta_0C_0A_0M \leq M + 4\beta_0C_0A_0M = (1+4\beta_0C_0A_0)M\\
&\leq C_0(1+4\beta_0A_0)M= C_0\left(C_0 - \frac{2\beta_0A_0}{1-\beta_0A_0} + 4\beta_0C_0A_0\right)M\\
&\leq C_0\left(C_0 + \frac{2\beta_0A_0}{1-\beta_0A_0}\right)M = C_0(C_0+C_0 - 1)M = (2C_0^2 - C_0)M.
\end{split}
\end{equation*}
\end{proof}

We are ready to prove the main theorem of this paper, before proceeding with the proof of Theorem \ref{pretty important part of this result} (otherwise known as Theorem \ref{diagonal plus compact are dense}). Note that, as it was pointed out in Remark \ref{almost isometric}, we can modify our construction so that $C_0$ is arbitrarily close to one.

\begin{thm}
\label{main result}
The space $\mathcal{C}al(\mathcal{Y}_X)$ is isomorphic, as a Banach algebra, to the space $\R I\oplus\mathcal{K}_\mathrm{diag}(X)$. Furthermore, the isomorphism $\Phi$ witnessing this fact satisfies $\|\Phi^{-1}\|\|\Phi\| \leq 2C_0^3 - C_0^2$.
\end{thm}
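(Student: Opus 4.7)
The plan is to define the isomorphism $\Phi$ first on the dense subalgebra $\mathrm{span}(\{[I]\}\cup\{[I_n]:n\in\N\})$ of $\mathcal{C}al(\mathcal{Y}_X)$ (dense by Corollary \ref{they generate all of it}) and then extend by continuity. On this subalgebra I will set
\[\Phi_0\!\left(a_0[I]+\sum_{k=1}^na_k[I_k]\right)=a_0I_X+\sum_{k=1}^na_ke_k^*\otimes e_k.\]
Well-definedness drops straight out of Lemma \ref{lower calkin}: if two formal combinations define the same Calkin class, their difference $T$ in $\mathcal{L}(\mathcal{Y}_X)$ is compact, so taking $K=T$ in the lemma forces the corresponding difference of right-hand sides in $\mathcal{L}(X)$ to vanish.

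The norm bounds then follow effortlessly. Lemma \ref{lower calkin} gives $\|\Phi_0(u)\|_{\mathcal{L}(X)}\leq C_0\|u\|_{\mathcal{C}al}$, so $\Phi_0$ extends by density to a bounded linear $\Phi:\mathcal{C}al(\mathcal{Y}_X)\to\R I\oplus\mathcal{K}_\mathrm{diag}(X)$ with $\|\Phi\|\leq C_0$. Proposition \ref{upper calkin} supplies the reverse inequality $\|u\|_{\mathcal{C}al}\leq(2C_0^2-C_0)\|\Phi_0(u)\|$, which survives passage to the limit and thereby shows $\Phi$ is injective with $\|\Phi^{-1}\|\leq 2C_0^2-C_0$ on its range. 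Multiplying the two bounds yields $\|\Phi\|\|\Phi^{-1}\|\leq C_0(2C_0^2-C_0)=2C_0^3-C_0^2$, which is precisely the estimate claimed.

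For surjectivity I will observe that the range of $\Phi_0$ is the algebraic span of $\{I_X\}\cup\{e_k^*\otimes e_k:k\in\N\}$, which is dense in $\R I\oplus\mathcal{K}_\mathrm{diag}(X)$ because $(e_k^*\otimes e_k)_k$ is a Schauder basis of $\mathcal{K}_\mathrm{diag}(X)$; combined with the closedness of $\mathrm{range}(\Phi)$ coming from the lower bound of Proposition \ref{upper calkin}, this forces $\mathrm{range}(\Phi)=\R I\oplus\mathcal{K}_\mathrm{diag}(X)$. For multiplicativity, it suffices to work on the dense subalgebra. Using Remark \ref{infinite extensions remark}(ii), the operator $I_n=i_n\circ R_{\{n\},0}$ outputs a vector whose only possibly nonzero $X_k$-component sits at position $n$, so a direct computation gives $I_nI_m=\delta_{n,m}I_n$ on the nose (not merely modulo compacts), mirroring the product $(e_n^*\otimes e_n)(e_m^*\otimes e_m)=\delta_{n,m}e_n^*\otimes e_n$ in $\mathcal{K}_\mathrm{diag}(X)$; together with $I\cdot I_n=I_n\cdot I=I_n$ and $\Phi([I])=I_X$ this makes $\Phi_0$, and hence its continuous extension $\Phi$, a unital Banach algebra homomorphism.

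The heavy lifting has been absorbed into Theorem \ref{pretty important part of this result}, Lemma \ref{lower calkin}, and Proposition \ref{upper calkin}, so no substantive obstacle remains at this final assembly stage. The single point that demands care is the multiplicativity computation: one must track that the Bourgain-Delbaen extension $i_n$ injects no mass into any $X_k$-coordinate for $k>n$, so that $I_n\circ I_m$ genuinely collapses to zero when $n\neq m$ and to $I_n$ when $n=m$, rather than merely agreeing with the expected value up to a compact error.
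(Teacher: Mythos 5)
Your proof is correct and follows the same strategy as the paper: use the two-sided norm estimates from Lemma \ref{lower calkin} and Proposition \ref{upper calkin} on the span of $[I]$ and the $[I_k]$, then invoke Corollary \ref{they generate all of it} for density. Your additional verification that $I_nI_m=\delta_{n,m}I_n$ holds exactly (via Remark \ref{infinite extensions remark}), so that $\Phi$ is genuinely a Banach algebra isomorphism and not merely a linear one, is a worthwhile detail the paper's one-line proof leaves implicit.
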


\begin{proof}
If we denote by $[I]$ and $[I_n]$ the equivalence class of $I$ and $I_n$ respectively in $\mathcal{C}al(\mathcal{Y}_X)$, then by Lemma \ref{lower calkin} and Proposition \ref{upper calkin} we have that for any sequence of scalars $(a_k)_{k=0}^n$ we have
\begin{equation*}
\begin{split}
\frac{1}{C_0}\left\|a_0I_X + \sum_{k=1}^n a_ke_k^*\otimes e_k\right\|_{\mathcal{L}(X)} &\leq \left\|a_0[I] + \sum_{k=1}^n a_k[I_k]\right\|\\
&\leq (2C_0^2-C_0)\left\|a_0I_X + \sum_{k=1}^n a_ke_k^*\otimes e_k\right\|_{\mathcal{L}(X)}.
\end{split}
\end{equation*}
That is, the space $\overline{\langle \{[I]\}\cup\{[I_n]: n\in\N\}\rangle}$ is $C_0(2C_0^2-C_0)$-isomorphic to $\R I\oplus\mathcal{K}_\mathrm{diag}(X)$. By Corollary \ref{they generate all of it} the space $\mathcal{C}al(\mathcal{Y}_X)$ is $(2C_0^3-C_0^2)$-isomorphic to $\R I\oplus\mathcal{K}_\mathrm{diag}(X)$.
\end{proof}

\section{Rapidly increasing sequences}
\label{section ris}
The notions of rapidly increasing sequences (RIS) and the basic inequality can by now be considered standard tools used in most HI, and related, constructions. The version of an RIS presented herein adds an extra condition that eliminates the influence of the $(\sum\oplus X_k)_X^\mathrm{utc}$ part in the definition of the space $\mathcal{Y}_X$ and keeps only the Bourgain-Delbaen part. Thus, RIS sequences can be treated identically as those in \cite{Z}. The ``utc'' condition used when defining $(\sum\oplus X_k)_X^\mathrm{utc}$ is designed so that sufficiently many RIS sequences with this extra condition can be found in the space. Recall that every element of $\mathcal{G}^\mathrm{utc}$ is naturally identified with a functional acting on $\mathcal{Z}_X^\infty$, and hence also on $\mathcal{Y}_X$.

\begin{ntt}
If $z\in\mathcal{Y}_X$ has finite BD-support, i.e. $\max\ran_\mathrm{BD}(z) = n$, then there is $u\in\mathcal{Z}_X^{n}$ with $z = i_n(u)$. If $u = (x_k,y_k)_{k=1}^n$ each vector $x_k$ is in the space $X_k$, which has a Schauder basis $(t_{k,i})_{i=1}^\infty$ and hence we may define the set $\supp_k(z) = \supp_{(t_{k,i})}(x_k)$ (see also \eqref{k-support}). If $\supp_k(z)$ is finite for $1\leq k\leq n$, then we say that $z$ has coordinate-wise finite supports and we may define the quantity
$$\max\supp_\mathrm{cw}(z) = \max\{\max\supp_k(z):1\leq k\leq n\}.$$
\end{ntt}

\begin{dfn}
\label{RIS}
Let $C>0$ and $(j_n)_n$ be a strictly increasing sequence of natural numbers. A block sequence (which may be either finite or infinite) $(z_n)_n$ of elements with coordinate-wise finite supports is called a $C$-rapidly increasing sequence (or a $C$-RIS) if
\begin{itemize}
 \item[(i)] $\|z_n\| \leq C$ for all $n$,
 \item[(ii)] $j_{n+1} > \max\ran_\mathrm{BD}(z_n)$,
 \item[(iii)] $|e_\ga^*(z_n)| \leq C/m_i$ for all $\ga\in\Ga$ with $\we(\ga) = 1/m_i$ and $i<j_n$, and
 \item[(iv)] for all $z^*\in\mathcal{G}^\mathrm{utc}$ there exists at most one $n$ for which $z^*(z_n)\neq 0$.
\end{itemize}
If we need to be specific about the sequence $(j_k)_k$ in the above definition we shall say that $(z_k)_k$ is a $(C,(j_k)_k)$-RIS.
\end{dfn}

\begin{lem}
\label{yeah well one could have included this in the definition of RIS but I guess that would have been just lazy}
Let $(z_n)_n$ be a sequence in $\mathcal{Y}_X$ that satisfies item (iv) of Definition \ref{RIS}. Then for every $z^*\in\mathcal{G}^{\mathrm{utc}}$ and $s\in\N$ there exists at most one $n\in\N$ so that $z^*(P_{(s,\infty)}z_n)\neq 0$.
\end{lem}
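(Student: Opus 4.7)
The plan is to reduce the claim about $P_{(s,\infty)}z_n$ to an instance of hypothesis (iv) applied to a suitably modified functional. First I would unpack how $z^*\in\mathcal{G}^\mathrm{utc}$ and $P_s$ interact. Writing $z^* = \sum_{k=1}^{i_0}a_k t_{k,i_0}^*$ for some $i_0\in\N$ and some sequence $(a_k)_{k=1}^\infty$ with $\|w^*\text{-}\!\sum_k a_k e_k^*\|_{X^*}\le 1$, I recall that $z^*$ acts on $\mathcal{Z}_X^\infty$ by vanishing on the $\ell_\infty(\Gamma)$-coordinates and selecting the $t_{k,i_0}$-coefficient from each $X_k$-coordinate. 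Writing $z_n=(x^{(n)}_k,y^{(n)}_k)_{k=1}^\infty$, and using the description of $i_n$ in Remark \ref{infinite extensions remark} — namely that the $\mathcal{X}$-part of $i_s R_s z_n = P_s z_n$ is $(x^{(n)}_1,\ldots,x^{(n)}_s,0,0,\ldots)$ — I would compute
\[
z^*(P_s z_n) = \sum_{k=1}^{\min(s,i_0)} a_k\, t_{k,i_0}^*\!\left(x^{(n)}_k\right),
\]
and hence
\[
z^*(P_{(s,\infty)}z_n) = z^*(z_n) - z^*(P_s z_n) = \sum_{k=s+1}^{i_0} a_k\, t_{k,i_0}^*\!\left(x^{(n)}_k\right),
\]
the sum being understood as $0$ if $s\ge i_0$ (in which case the conclusion is immediate).

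Next I would introduce the truncated functional $\tilde z^* := \sum_{k=s+1}^{i_0} a_k\, t_{k,i_0}^*$ and observe that the displayed equality says $z^*(P_{(s,\infty)}z_n) = \tilde z^*(z_n)$. The key step is then to show that $\tfrac{1}{A_0}\tilde z^*$ belongs to $\mathcal{G}^\mathrm{utc}$. For this I would use the bimonotonicity of $(e_k)_k$ with constant $A_0$: the interval projection onto $[e_k:k>s]$ has norm at most $A_0$ on $X$, and its adjoint sends the norm-one functional $w^*\text{-}\!\sum_k a_k e_k^*$ to $w^*\text{-}\!\sum_{k>s} a_k e_k^*$, which therefore has norm at most $A_0$ in $X^*$. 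Setting $a''_k = 0$ for $k\le s$ and $a''_k = a_k$ for $k>s$, the sequence $(a''_k/A_0)_k$ witnesses membership of $\tfrac{1}{A_0}\tilde z^* = \sum_{k=1}^{i_0}(a''_k/A_0)t_{k,i_0}^*$ in $\mathcal{G}^\mathrm{utc}$ (the index $i_0$ remains the same).

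Finally, applying hypothesis (iv) of Definition \ref{RIS} to the element $\tfrac{1}{A_0}\tilde z^*\in\mathcal{G}^\mathrm{utc}$ gives at most one $n$ with $\tfrac{1}{A_0}\tilde z^*(z_n)\neq 0$, i.e.\ at most one $n$ with $\tilde z^*(z_n)\neq 0$, which by the computation above is exactly at most one $n$ with $z^*(P_{(s,\infty)}z_n)\neq 0$. There is no real obstacle here; the only subtlety worth flagging is that the truncated functional is not itself in $\mathcal{G}^\mathrm{utc}$ on the nose but only after dividing by the bimonotonicity constant $A_0$, which still allows the direct appeal to (iv).
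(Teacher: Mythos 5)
Your proof is correct and follows essentially the same route as the paper's: you identify $z^*\circ P_{(s,\infty)}$ with the truncated functional, use bimonotonicity to see that $(1/A_0)\tilde z^*\in\mathcal{G}^\mathrm{utc}$, and then invoke hypothesis (iv) directly. (Your index of truncation, $k>s$ rather than $k\ge s$, is in fact the correct one, since $P_s$ preserves the first $s$ block coordinates; the paper has a harmless off-by-one slip here.)
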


\begin{proof}
Let $s\in\N$ and $z^* = \sum_{k=1}^{i_0}a_kt^*_{k,i_0}$ with $(a_k)$ is such that $\|w^*-\sum_{k=1}^\infty a_ke_k^*\|_X \leq 1$. If $s>i_0$ it follows that $P_{(s,\infty)}^*z^* = 0$ and there is nothing to prove. Otherwise, $s\leq i_0$ and it follows that $P_{(s,\infty)}^*z^* = \sum_{k=s}^{i_0}a_kt^*_{k,i_0}$. Recall that the basis $(e_i)_i$ of the space $X$ has bimonotone constant $A_0$ which means that $\|\sum_{k=s}^{i_0}(a_k/A_0)e_k^*\|_X\leq 1$, i.e. $\tilde z^* = (1/A_0)P_{(s,\infty)}^*z^*$ is in $\mathcal{G}^\mathrm{utc}$. This easily implies the desired conclusion.
\end{proof}

\subsection{The basic inequality}
Let us denote by $(t_k^*)_k$ the unit vector basis of $c_{00}(\N)$. Given a sequence of natural numbers $(l_j)_j$ and a sequence of positive real numbers $(\theta_j)_j$ We define $W[(\mathcal{A}_{l_j},\theta_j)_j]$ to be the smallest subset $W$ of $c_{00}(\N)$ with the following properties:
\begin{itemize}
 \item[(1)] $\pm t_k^*\in W$ for all $k\in\N$,
 \item[(2)] for all $j\in\N$, $n\leq l_j$ and successive vectors $f_1, f_2,\ldots,f_n$ in $W$ the vector
 \begin{equation}
 \label{weighted mixed-T}
 f = \theta_j\sum_{k=1}^nf_k
 \end{equation}
 is in $W$ as well. 
\end{itemize}
The elements $\pm t_k^*$, $k\in\N$ will be referred to as type 0 elements of $W[(\mathcal{A}_{l_j},\theta_j)_j]$. If an element of $W[(\mathcal{A}_{l_j},\theta_j)_j]$ is as in \eqref{weighted mixed-T} then we say that it is of type 1 and it has weight $\theta_j$. We also use the notation $(t_k)_k$ for the unit vector basis of $c_{00}(\N)$ and for $f$ in $W[(\mathcal{A}_{l_j},\theta_j)_j]$ and $x\in c_{00}(\N)$ we define $f(x)$ to be the usual inner product $\langle f,x\rangle$ on $c_{00}(\N)$. The following can be found in \cite[Proposition 2.5]{AH} 

\begin{prp}
\label{auxiliary space estimate}
If $j_0\in\N$ and $f\in W[(\mathcal{A}_{4n_j},1/m_j)_j]$ is an element of weight $1/m_h$, then
\begin{equation}
\label{general estimate}
\left|f\left(\frac{1}{n_{j_0}}\sum_{l=1}^{n_{j_0}}t_l\right)\right|\leq\left\{
\begin{array}{ll}
2/(m_hm_{j_0}),&\text{if }h<j_0,\\
1/m_h,&\text{if }h\geq j_0.
\end{array}\right.
\end{equation}
If additionally $f\in W[(\mathcal{A}_{4n_j},1/m_j)_{j\neq j_0}]$ then
\begin{equation}
\label{more careful estimate}
\left|f\left(\frac{1}{n_{j_0}}\sum_{l=1}^{n_{j_0}}t_l\right)\right|\leq\left\{
\begin{array}{ll}
2/(m_h(m_{j_0})^2),&\text{if }h<j_0,\\
1/m_h,&\text{if }h> j_0.
\end{array}\right.
\end{equation}
\end{prp}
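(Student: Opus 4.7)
The proof is a standard analysis of mixed Tsirelson-type auxiliary spaces, and I would follow the approach of \cite[Proposition 2.5]{AH}. The idea is to proceed by induction on the height of the tree of analysis of $f$. Recall that each $f\in W[(\mathcal{A}_{4n_j},1/m_j)_j]$ admits a (not necessarily unique) tree decomposition with type 0 leaves $\pm t_k^*$ and internal nodes of type 1; at each internal node of weight $1/m_j$ there are at most $4n_j$ successors, all successive. Throughout, set $x=\frac{1}{n_{j_0}}\sum_{l=1}^{n_{j_0}} t_l$.

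For the first estimate \eqref{general estimate}, I would prove a slightly stronger statement by induction, treating both type 0 elements (for which trivially $|f(x)|\leq 1/n_{j_0}$) and type 1 elements of weight $1/m_h$, so that the inductive hypothesis applies to children. For the inductive step, write $f=(1/m_h)\sum_{k=1}^n f_k$ with $n\leq 4n_h$. The key case split is on whether $h\geq j_0$ or $h<j_0$. When $h\geq j_0$, one uses the crude estimate $|\sum_k f_k(x)|\leq 1$ (since each $f_k$ lies in the unit ball of $\ell_\infty$ and $\|x\|_1=1$), which gives $|f(x)|\leq 1/m_h$. When $h<j_0$, the main point is that $4n_h\ll n_{j_0}$ thanks to the fast-growth condition $n_{j_0}\geq(16n_{j_0-1})^{\log_2(m_{j_0})}$ from \eqref{mjnjproperties}; this lets one split each $f_k$ into a piece supported on a single coordinate $t_l$ (contributing at most $1/n_{j_0}$, with at most $4n_h$ such terms, yielding a loss of order $n_h/n_{j_0}$) and a remaining piece to which the inductive hypothesis is applied. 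Combining gives the desired factor of $1/(m_h m_{j_0})$ after multiplication by the head weight $1/m_h$.

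The hard part will be the precise bookkeeping in the tree-pruning argument that yields the sharper bound \eqref{more careful estimate} under the extra hypothesis that no node of the tree of $f$ has weight $1/m_{j_0}$. In this setting one must gain an additional factor of $1/m_{j_0}$, and the mechanism is that at every level either $h<j_0$ (small branching $4n_h$, beating $1/m_{j_0}^2$ by the growth condition) or $h>j_0$ (small weight, already contributing a factor $1/m_h<1/m_{j_0}$). Combining these contributions through the inductive descent requires tracking which levels of the tree yield which factor, and the growth hypotheses on $(m_j,n_j)$ from \eqref{mjnjproperties} are precisely what make this accounting close. Since the argument is essentially verbatim the one in \cite{AH}, I would set up the induction carefully, handle the two case splits, and then refer the reader to that source for the numerical verification.
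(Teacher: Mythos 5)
The paper does not prove this proposition; it simply cites \cite[Proposition 2.5]{AH}, and your outline correctly reproduces the structure of that cited argument: induction on the analysis tree, crude $\|f\|_\infty\le 1/m_h$ bound for $h\ge j_0$, and branching-vs.-growth bookkeeping for $h<j_0$ using condition (4) of \eqref{mjnjproperties}. Since you too defer the final numerical verification to \cite{AH}, this matches the paper's own treatment exactly. One small imprecision: in your discussion of \eqref{more careful estimate}, you remark that $h>j_0$ gives "a factor $1/m_h<1/m_{j_0}$," but what the stronger estimate actually needs there is $1/m_h\le 1/m_{j_0}^2$; this holds because $h>j_0$ forces $h\ge j_0+1$ and $m_{j_0+1}\ge m_{j_0}^2$ by condition (2) of \eqref{mjnjproperties}, so the conclusion is unaffected, but the step is worth stating.
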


The proof of the following is practically identical to the proof of \cite[Proposition 5.4]{AH}. This is because on RIS functionals from $\mathcal{G}^\mathrm{utc}$ act in a $c_0$ way and hence they do not contribute to the norm of linear combinations of an RIS. We describe the proof for completeness.

\begin{prp}[Basic inequality]
\label{basic inequality}
Let $(z_k)_{k\in I}$ be a $C$-RIS, where $I$ is an interval of $\N$, let $(\la_k)_{k\in I}$ be real numbers, let $s$ be a natural number and let $\ga$ be an element of $\Ga$. Then there exist $k_0\in I$ and $g\in W[(\mathcal{A}_{3n_j},1/m_j)_j]$ such that
\begin{itemize}
 \item[(1)] either $g = 0$, or $\we(g) = \we(\ga)$ and $\supp(g)\subset\{k\in\ I:k>k_0\}$ and
 \item[(2)]
 \begin{equation*}
  \left|e_\ga^*\left(P_{(s,\infty)}\sum_{k\in I}\la_kz_k\right)\right| \leq (3A_0C_0C)|\la_{k_0}| + (4A_0C_0C)g\left(\sum_{k\in I}|\la_k|e_k\right).
 \end{equation*}
\end{itemize}
Moreover, if $j_0$ is such that
\begin{equation*}
\left|e_\xi^*\left(\sum_{k\in J}\la_kz_k\right)\right| \leq C\max_{k\in J}|\la_k|
\end{equation*}
for all subintervals $J$ of $I$ and all $\xi\in\Ga$ of weight $1/m_{j_0}$, then we may choose $g$ to be in $W[(\mathcal{A}_{3n_j},1/m_j)_{j\neq j_0}]$.
\end{prp}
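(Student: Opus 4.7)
The plan is to follow very closely the proof of Proposition 5.4 in \cite{AH} (and its adaptation in \cite{Z}), modified to accommodate the additional ``utc'' ingredient in the definition of the norm via the extra condition (iv) in the definition of an RIS. The auxiliary-space functional $g$ will be built recursively from the evaluation analysis tree of $\gamma$, while the single ``bad index'' $k_0$ absorbs all of the boundary contributions, the contributions of functionals drawn from $\mathcal{G}^{\mathrm{utc}}_n$, and certain lower-weight coincidences that are handled by the moreover clause.

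The proof will proceed by induction on $a = \ag(\gamma)$; the base case $a=0$ or $\gamma \in \Delta_1$ is trivial. For the inductive step, apply Proposition \ref{evanalysis} to $\gamma$, so that
\[
e_\gamma^*\circ P_{(s,\infty)} = \sum_{i=1}^{a} d_{\xi_i}^* \circ P_{(s,\infty)} + \frac{1}{m_j}\sum_{i=1}^{a} b_i^*\circ P_{(p_{i-1},p_i)}\circ P_{(s,\infty)}.
\]
Each $d_{\xi_i}^* = e_{\xi_i}^*\circ P_{\{p_i\}}$ can see at most one of the $z_k$, namely the (at most one) element whose BD-range contains $p_i$. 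After discarding those indices $k$ for which $\ran_{\mathrm{BD}}(z_k)$ is either disjoint from $(s,\infty)\cap(p_{i-1},p_i)$ or straddles one of the endpoints $p_i$, we are reduced to evaluating $b_i^*$ on blocks $\sum_{k\in J_i}\lambda_k z_k$ where $J_i$ is a subinterval of $I$ with $\max\ran_{\mathrm{BD}}(z_k) < p_i$ and $\min\ran_{\mathrm{BD}}(z_k) > p_{i-1}$ for $k\in J_i$. The at most two straddling indices per $p_i$ will, after an elementary bound by $\|z_k\|\leq C$ and a cardinality argument using the RIS property (ii), contribute the $(3A_0C_0C)|\lambda_{k_0}|$ term for a single $k_0$ (the leftmost straddling index).

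The main obstacle will be handling the three possible types of $b_i^*\in A_{p_i-1} = K_{p_i-1}\cup\mathcal{G}^{\mathrm{utc}}_{p_i-1}\cup B_{p_i-1}$. When $b_i^*\in K_{p_i-1}$, i.e. $b_i^*\in (1/A_0)B_{X_m^*}$ for some $m$, the action $b_i^*(z_k)$ picks out a coordinate of $z_k$ in $X_m$; by the coordinate-wise-finite-support hypothesis and RIS property (iii) this contributes at most one nontrivial term per $i$, absorbed into $k_0$. When $b_i^* \in \mathcal{G}^{\mathrm{utc}}_{p_i-1}$, Lemma \ref{yeah well one could have included this in the definition of RIS but I guess that would have been just lazy} guarantees that at most one $k\in J_i$ satisfies $b_i^*(P_{(s,\infty)}z_k)\neq 0$, giving a contribution bounded by $C|\lambda_k|$, again absorbed into $k_0$. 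When $b_i^*\in B_{p_i-1}$, write $b_i^* = \sum_{\eta} a_\eta e_\eta^*$ with $\sum|a_\eta|\leq 1$: each $e_\eta^*$ has strictly smaller age than $\gamma$, so by the inductive hypothesis each produces a pair $(k_0^\eta, g^\eta)$ with $g^\eta\in W[(\mathcal{A}_{3n_j}, 1/m_j)_j]$ of weight $1/\we(\eta)$. Combining these with the factor $1/m_j$ from the front of the analysis and taking the convex combination with weights $|a_\eta|$ gives a type-1 element $g_i$ in $W[(\mathcal{A}_{3n_j},1/m_j)_j]$ of weight $1/m_j$.

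The final step assembles the $g_i$, $1\leq i\leq a$, into a single functional $g = (4A_0C_0C)^{-1}(1/m_j)\sum_i g_i$ of weight $1/m_j = \we(\gamma)$; the cardinality bound $a\leq n_j \leq 3n_j$ guarantees that this is a legal tree operation in $W[(\mathcal{A}_{3n_j},1/m_j)_j]$ (the factor $3$, rather than $n_j$, gives us the required slack to cover the $d_{\xi_i}^*$ ``boundary'' indices and both types of ``straddling'' indices from the two sides). The moreover clause is obtained by exactly the same induction, simply observing that whenever $\we(\xi) = 1/m_{j_0}$ the hypothesis allows us to bound $e_\xi^*(\sum_{k\in J}\lambda_k z_k)$ directly by $C\max_{k}|\lambda_k|$ without introducing a node of weight $1/m_{j_0}$ into the tree, so the resulting $g$ lies in the strictly smaller class $W[(\mathcal{A}_{3n_j},1/m_j)_{j\neq j_0}]$.
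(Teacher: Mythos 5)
Your high-level plan is sound and you have correctly identified the one genuinely new ingredient: when $b_i^*\in\mathcal{G}^{\mathrm{utc}}_{p_i-1}$, Lemma \ref{yeah well one could have included this in the definition of RIS but I guess that would have been just lazy} and RIS condition (iv) ensure at most one $z_k$ in the relevant block is seen, so that the contribution is a single $c_0$-type term. However, the assembly step has genuine gaps.

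First, the induction must be on $\ra(\ga)$, not on $\ag(\ga)$. When $b_i^*\in B_{p_i-1}$ you expand $b_i^* = \sum_\eta a_\eta e_\eta^*$ with $\eta\in\Ga_{p_i-1}$; the only monotonicity you have is $\ra(\eta) \leq p_i - 1 < p_a = \ra(\ga)$, while $\ag(\eta)$ can be arbitrary (in particular larger than $\ag(\ga)$). So the induction on age does not close, but the induction on rank does, and this is what both the paper and \cite{AH} use.

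Second, and more importantly, you try to absorb all of the ``singleton'' contributions into the single $(3A_0C_0C)|\la_{k_0}|$ term: the $O(a)$ indices whose $\ran_{\mathrm{BD}}$ straddles some $p_i$, and the (at most one per $i$) indices $k_i$ coming from the $K_{p_i-1}$ and $\mathcal{G}^{\mathrm{utc}}_{p_i-1}$ cases. That cannot work, since you have up to $2a$ such indices and only one scalar $|\la_{k_0}|$. These indices must instead be absorbed into $g$ as type-0 summands $t_k^*$; the final form of $g$ is
\begin{equation*}
g = \frac{1}{m_j}\Big(\sum_{k\in I_0'}t_k^* + \sum_{r=1}^a\bigl(t_{k_r}^* + g_r\bigr)\Big),
\end{equation*}
which has at most $3a\leq 3n_j$ children and so lives legitimately in $W[(\mathcal{A}_{3n_j},1/m_j)_j]$ — this is the reason for the factor $3$ in the arity, not the ``slack'' you describe. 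The single $k_0$ contributing $(3A_0C_0C)|\la_{k_0}|$ arises from an entirely separate preliminary step: one first isolates the index $l$ with $j_l\leq h < j_{l+1}$ (where $\we(\ga)=1/m_h$) and bounds the contribution of the initial segment $\{k\in I : k\leq l\}$ using RIS condition (iii). Relatedly, your proposed formula $g = (4A_0C_0C)^{-1}(1/m_j)\sum_i g_i$ is not an element of the auxiliary norming set: the constant $4A_0C_0C$ must appear as a coefficient in the estimate (2) of the statement, not as a normalization inside $g$ itself. With these repairs — inducting on rank, pushing all boundary/singleton indices into $g$ as type-0 nodes, and separating the ``initial segment'' $k_0$ term — your outline coincides with the paper's argument.
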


\begin{proof}
This proof is along the lines of the proof of \cite[Proposition 5.4]{AH}. The difference is that here we also have to use property (iv) of Definition \ref{RIS}. Moreover some constants are different as, according to Remark \ref{Schauder decomposition} (iii), $\|P_n\|\leq A_0C_0$ for all $n\in\N$ whereas in \cite{AH} the same quantity is bounded by two. We describe the points where this difference takes place and refer the reader to \cite{AH} for the rest of the details. We shall only consider the statement without the additional assumption, as the modification required does not differ form that presented in \cite{AH}. The proof proceeds by induction on the rank of $\ga$. The case in which $\ra(\ga) = 1$ is fairly easy. Next, consider an element $\ga$ of rank greater than one, of age $a$, and of weight $1/m_h$. Let $(j_k)_k$ be the sequence for which $(z_k)_k$ is a $(C,(j_k)_k)$-RIS and assume that there is $l\in I$ for which $j_l\leq h < j_{l+1}$, as the other cases are simpler. Arguing identically as in \cite[Proposition 5.4]{AH} we obtain that for some $k_0\leq l$
\begin{equation}
 \label{under estimate}
 e_\ga^*\Big(P_{(s,\infty)}\sum_{\substack{k\in I\\ k\leq l}}\la_kz_k\Big) \leq (3A_0C_0C)|\la_{k_0}|.
\end{equation}
Set $I' = \{k\in I: k > l\}$ and let
$$e_\ga^* = \sum_{r=1}^ad_{\xi_r}^* + (1/m_h)\sum_{r=1}^ab_r^*\circ P_{(p_{r-1},p_r)}$$
be the evaluation analysis of $\ga$. Set
\begin{equation*}
\begin{split}
I_0' &= \{k\in I': \ran_\mathrm{BD}(z_k)\text{ contains $p_r$ for some }r\}\text{ and for }1\leq r\leq a\text{ set}\\
I'_r &= \{k\in I'\setminus I_0': \ran_\mathrm{BD}(z_k)\cap (p_{r-1},p_r)\neq \emptyset\}
\end{split}
\end{equation*}
Note that $\#I_0'\leq a$. Arguing identically as in \cite{AH} we obtain
\begin{equation*}
\begin{split}
e_\ga^*\left(P_{(s,\infty)}\sum_{k\in I'}\la_k z_k\right) &\leq (4A_0C_0C)(1/m_h^{-1})\sum_{k\in I_0'}|\la_k|\\
&+ (1/m_h)\left|\sum_{r=1}^ab_r^*\circ P_{(s\vee p_{r-1},\infty)}\sum_{k\in I'_r}\la_kz_k\right|.
\end{split}
\end{equation*}
Recall that each $b_r^*\in A_{p_r-1} = K_{p_r-1}\cup \mathcal{G}^\mathrm{utc}_{p_r-1}\cup B_{p_r-1}$. For $r$ such that $b_r\in B_{p_r-1}$ we have that $b_r^*$ is a convex combination of functionals $\pm e_\eta^*$, $\eta\in\Ga_{p_r-1}$. Hence, there exists $\eta_r$ with $\ra(\eta) < p_r$ so that
\begin{equation}
\label{convex combination dominated by singleton}
\left|\sum_{r=1}^ab_r^*\circ P_{(s\vee p_{r-1},\infty)}\sum_{k\in I'_r}\la_kz_k\right| \leq \left|\sum_{r=1}^ae_{\eta_r}^*\circ P_{(s\vee p_{r-1},\infty)}\sum_{k\in I'_r}\la_kz_k\right|
\end{equation}
Applying the inductive assumption to $\eta_r$, $(z_k)_{k\in I_r'}$ and $s\vee p_{r-1}$ we obtain $k_r\in I_r'$ and $g_r\in W[(\mathcal{A}_{3n_j},1/m_j)_j]$ with $\supp(g_r) \subset \{k\in I_r': k>k_r\}$ so that
\begin{equation}
\label{the usual case}
\left|e_{\eta_r}^*\circ P_{(s\vee p_{r-1},\infty)}\sum_{k\in I'_r}\la_kz_k\right| \leq (3A_0C_0C)|\la_{k_r}| + (4A_0C_0C)g_r\left(\sum_{k\in I'_r}|\la_k|e_k\right).
\end{equation}
For $r$ such that $b_r^*\in K_{p_r-1}$ we have that $\supp_\mathrm{BD}(b_r^*)$ is a singleton whereas if $r\in \mathcal{G}^\mathrm{utc}_{p_r-1}$ by Definition \ref{RIS} (iv) and Lemma \ref{yeah well one could have included this in the definition of RIS but I guess that would have been just lazy} there is at most one $k_r$ in $I'_r$ for which $b_r^*\circ P_{(s\vee p_{r-1},\infty)}z_{k_r} \neq 0$. In either case, there is $k_r\in I_r'$ so that
\begin{equation}
\label{like this is the only place that anything is different}
\left|b_r^*\circ P_{(s\vee p_{r-1},\infty)}\sum_{k\in I'_r}\la_kz_k\right| \leq |b_r^*\circ P_{(s\vee p_{r-1},\infty)}(\la_{k_r}z_{k_r})| \leq (2A_0C_0C)|\la_{k_r}|.
\end{equation}
We may for each such $r$ set $g_r = 0$. We combine  \eqref{convex combination dominated by singleton} and \eqref{the usual case} with \eqref{like this is the only place that anything is different} to obtain that for all $r$ there is $k_r\in I'_r$ and $g_r\in W[(\mathcal{A}_{3n_j},1/m_j)_j]$ $\supp(g_r) \subset \{k\in I_r': k>k_r\}$ so that
\begin{equation}
\label{(don't) over-estimate}
 \left|b_r^*\circ P_{(s\vee p_{r-1},\infty)}\sum_{k\in I'_r}\la_kz_k\right| \leq (3A_0C_0C)|\la_{k_r}| + (4A_0C_0C)g_r\left(\sum_{k\in I'_r}|\la_k|e_k\right).
\end{equation}
If we now define $g = (1/m_h)(\sum_{k\in I_0'}t_k^* + \sum_{r=1}^a(t_{k_r}^* + g_r))$ we conclude that $g\in  W[(\mathcal{A}_{3n_j},1/m_j)_j]$ with $\supp(g) \subset \{k\in I_r': k>k_0\}$. As in \cite{AH} We combine \eqref{under estimate} with \eqref{(don't) over-estimate} to obtain
\begin{equation*}
e_\ga^*\left(\sum_{k\in I}\la_kz_k\right) \leq (3A_0C_0C)|\la_{k_0}| + (4A_0C_0C)g\left(\sum_{k\in I}|\la_k|e_k\right).
\end{equation*}
\end{proof}

A combination of \eqref{general estimate} with Proposition \ref{basic inequality} directly yields the following estimate.

\begin{cor}
\label{brute RIS estimate}
Let $(z_k)_{k=1}^{n_j}$ be a $C$-RIS in $\mathcal{Y}_X$. Then
\begin{equation*}
\left\|\frac{1}{n_j}\sum_{k=1}^{n_j}z_k\right\| \leq A_0C_0C\left(\frac{3}{n_{j}} + \frac{4}{m_j}\right).
\end{equation*}
\end{cor}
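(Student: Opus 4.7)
The plan is to evaluate $(1/n_j)\sum_{k=1}^{n_j}z_k$ against each of the three families of functionals that compute the norm of $\mathcal{Y}_X$, as recorded in \eqref{norm triad} of Remark \ref{how things are normed}: elements of $\mathcal{G}^{\mathrm{utc}}$, the coordinate functionals in $(1/A_0)B_{X_k^*}$ for $k\in\N$, and the basic functionals $e_\ga^*$ for $\ga\in\Ga$. The first two families are controlled directly by the structural conditions defining an RIS, while the third is the substantive case and is handled by the basic inequality combined with the auxiliary space estimate.

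For the first family, fix $x^*\in\mathcal{G}^{\mathrm{utc}}$. Condition (iv) of Definition \ref{RIS} produces at most one index $k_0$ with $x^*(z_{k_0})\neq 0$, and condition (i) yields $|x^*((1/n_j)\sum z_k)|\leq C/n_j$. For a coordinate functional $x_k^*\in(1/A_0)B_{X_k^*}$, the disjointness of the BD-ranges of the $z_k$'s together with the structure of the extensions $i_n$ forces the $X_k$-component of $\sum_m z_m$ to equal that of at most one $z_{m_0}$; since $x_k^*$ has norm at most one on $\mathcal{Y}_X$ by \eqref{norm triad}, the same bound $C/n_j$ follows from condition (i) of Definition \ref{RIS}.

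The main case is $\ga\in\Ga$. Here I would apply Proposition \ref{basic inequality} with $s=0$, $I=\{1,\ldots,n_j\}$, and $\la_k\equiv 1$ to obtain some $g\in W[(\mathcal{A}_{3n_j},1/m_j)_j]$ with $\supp(g)\subseteq I$ satisfying
\[\Big|e_\ga^*\Big(\tfrac{1}{n_j}\sum_{k=1}^{n_j}z_k\Big)\Big| \leq \frac{3A_0C_0C}{n_j} + 4A_0C_0C\cdot g\Big(\tfrac{1}{n_j}\sum_{l=1}^{n_j}t_l\Big).\]
Since $W[(\mathcal{A}_{3n_j},1/m_j)_j]\subset W[(\mathcal{A}_{4n_j},1/m_j)_j]$, the estimate \eqref{general estimate} of Proposition \ref{auxiliary space estimate} applies to $g$. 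If $g$ has weight $1/m_h$ with $h\geq j$ it yields $1/m_h\leq 1/m_j$; if $h<j$ it gives $2/(m_hm_j)\leq 1/m_j$ using $m_h\geq m_1\geq 4$; and if $g$ is of type $0$ one computes $\pm 1/n_j\leq 1/m_j$ using \eqref{mjnjproperties}. In every case one obtains $g(\tfrac{1}{n_j}\sum t_l)\leq 1/m_j$, so that $|e_\ga^*((1/n_j)\sum z_k)|\leq (3A_0C_0C)/n_j + (4A_0C_0C)/m_j$. Combining this with the previous two bounds gives the claim. No real obstacle arises; the only minor bookkeeping is the case analysis on the type and weight of $g$, which is immediate from \eqref{mjnjproperties} and the monotonicity of the classes $W[\cdot]$ in the parameters $l_j$.
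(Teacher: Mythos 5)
Your argument is correct and follows essentially the route the paper intends: the paper's one-line ``proof'' simply says to combine Proposition~\ref{basic inequality} with the estimate~\eqref{general estimate}, and you have carried this out, checking in addition the two remaining families of functionals from~\eqref{norm triad}. Your handling of those two families — Definition~\ref{RIS}(iv) controlling the $\mathcal{G}^{\mathrm{utc}}$ case, and disjointness of $\ran_{\mathrm{BD}}$ forcing at most one $z_m$ to contribute to a given $X_k$-coordinate — is precisely the argument the paper makes explicit later for Proposition~\ref{zero dependent estimate}. One small remark: since the basic inequality asserts that any nonzero $g$ it produces has a weight equal to $\we(\ga)$, the type-$0$ case you consider for $g$ never arises (though your estimate $1/n_j \leq 1/m_j$ is valid and the extra case does no harm).
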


Recall that the sequence $(m_j,n_j)_j$ is in fact of the form $(\tilde m_j,\tilde n_j)_{j\in L_0}$, i.e. it is a subsequence  of the sequence $(\tilde m_j,\tilde n_j)_j$ from page \pageref{mjnjproperties}.

\begin{cor}
\label{estimate for other norming sets}
Let $j_0\in\N\setminus L_0$ and $(z_k)_{k=1}^{\tilde n_{j_0}}$ be a $C$-RIS in $\mathcal{Y}_X$. Then
\begin{equation*}
\left\|\frac{\tilde m_{j_0}}{\tilde n_{j_0}}\sum_{k=1}^{\tilde n_{j_0}}z_k\right\| \leq \frac{11A_0C_0C}{\tilde m_{j_0}}.
\end{equation*}
\end{cor}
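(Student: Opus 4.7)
The strategy is to set $z = (\tilde m_{j_0}/\tilde n_{j_0})\sum_{k=1}^{\tilde n_{j_0}} z_k$ and bound $|f(z)|$ for each of the three types of norming functional $f$ appearing in \eqref{norm triad}. A simple induction on \eqref{mjnjproperties} shows that $\tilde n_{j_0}\geq \tilde m_{j_0}^2$ for every $j_0\in\N$, which makes the trivial factor $\tilde m_{j_0}/\tilde n_{j_0}$ automatically acceptable. For $f = x^*\in\mathcal{G}^\mathrm{utc}$, property (iv) of Definition \ref{RIS} provides at most one index $k$ with $x^*(z_k)\neq 0$, so $|x^*(z)|\leq (\tilde m_{j_0}/\tilde n_{j_0})C\leq C/\tilde m_{j_0}$. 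For $f \in (1/A_0)B_{X_k^*}$, the fact that $(z_n)_n$ is a block sequence with respect to the Schauder decomposition $(Z_n)_n$ guarantees that the $X_k$-component of $z_n$ is non-zero for at most one $n$; combined with the coordinate-wise bound in \eqref{norm triad} this again yields $|f(z)| \leq C/\tilde m_{j_0}$.

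The main case is $f = e_\ga^*$ with $\ga \in \Ga$. Applying the basic inequality (Proposition \ref{basic inequality}) with $s=0$, $I=\{1,\ldots,\tilde n_{j_0}\}$ and $\la_k = \tilde m_{j_0}/\tilde n_{j_0}$ we obtain $g\in W[(\mathcal{A}_{3n_j},1/m_j)_j]$ of weight $1/m_h = 1/\tilde m_{\ell^0_h}$ such that
\[|e_\ga^*(z)|\leq 3A_0C_0C\cdot\frac{\tilde m_{j_0}}{\tilde n_{j_0}}+4A_0C_0C\cdot \tilde m_{j_0}\cdot g\Bigl(\frac{1}{\tilde n_{j_0}}\sum_{k=1}^{\tilde n_{j_0}}e_k\Bigr).\]
The critical observation is that since $j_0\notin L_0$, none of the weights $1/m_j = 1/\tilde m_{\ell^0_j}$ occurring in the tree construction of $g$ is equal to $1/\tilde m_{j_0}$; consequently $g\in W[(\mathcal{A}_{4\tilde n_j},1/\tilde m_j)_{j\neq j_0}]$ when we embed the tree into the larger auxiliary space attached to the full sequence $(\tilde m_j,\tilde n_j)_j$. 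The refined estimate \eqref{more careful estimate}, applied with distinguished index $j_0$, then yields
\[g\Bigl(\frac{1}{\tilde n_{j_0}}\sum_{k=1}^{\tilde n_{j_0}}e_k\Bigr)\leq \begin{cases}2/(m_h\tilde m_{j_0}^2)&\text{if }\ell^0_h<j_0,\\ 1/m_h&\text{if }\ell^0_h>j_0.\end{cases}\]
In the first case this is bounded by $2/\tilde m_{j_0}^2$ (using $m_h\geq 1$), and in the second case by $1/\tilde m_{j_0+1}\leq 1/\tilde m_{j_0}^2$ by \eqref{mjnjproperties}(2). After multiplication by $\tilde m_{j_0}$ both alternatives give at most $2/\tilde m_{j_0}$, so the second summand is at most $8A_0C_0C/\tilde m_{j_0}$, while the first is at most $3A_0C_0C/\tilde m_{j_0}$, producing $|e_\ga^*(z)|\leq 11A_0C_0C/\tilde m_{j_0}$. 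Taking the supremum over all three families of norming functionals completes the proof.

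The principal subtlety is precisely the inclusion $W[(\mathcal{A}_{3n_j},1/m_j)_j]\subseteq W[(\mathcal{A}_{4\tilde n_j},1/\tilde m_j)_{j\neq j_0}]$, which is what permits invoking the sharper \eqref{more careful estimate} rather than the generic \eqref{general estimate}. Without this enhancement we would only obtain a bound of the order $1/\tilde m_{j_0}^{1}$ coming from the second alternative of \eqref{general estimate} combined with a trivial bound in the first alternative, but the gain factor $\tilde m_{j_0}$ supplied by the non-membership $j_0\notin L_0$ is exactly what is needed to absorb the multiplier $\tilde m_{j_0}$ that appears in front of $g$.
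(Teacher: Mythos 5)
Your proof is correct and follows essentially the same route as the paper: apply the basic inequality, observe that the resulting auxiliary functional $g$ lies in $W[(\mathcal{A}_{4\tilde n_j},1/\tilde m_j)_{j\neq j_0}]$ because $j_0\notin L_0$, and then invoke the refined estimate \eqref{more careful estimate} (valid for the full sequence $(\tilde m_j,\tilde n_j)_j$ since it is a special case of \cite[Proposition 2.5]{AH}) to gain the extra factor of $\tilde m_{j_0}$. Your explicit handling of the $\mathcal{G}^\mathrm{utc}$ and $(1/A_0)B_{X_k^*}$ parts of the norming set \eqref{norm triad} is a welcome elaboration that the paper leaves implicit.
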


\begin{proof}
By Proposition \ref{basic inequality} there is $g\in W[(\mathcal{A}_{3n_j},1/m_j)_j]$ with
\[\left\|\frac{\tilde m_{j_0}}{\tilde n_{j_0}}\sum_{k=1}^{\tilde n_{j_0}}z_k\right\| \leq 3A_0C_0C\frac{\tilde m_{j_0}}{\tilde n_{j_0}} + 3A_0C_0Cg\left(\frac{\tilde m_{j_0}}{\tilde n_{j_0}}\sum_{k=1}^{\tilde n_{j_0}}e_k\right).\]
Note that $g\in W[(\mathcal{A}_{4\tilde n_j},1/\tilde m_j)_{j\neq j_0}]$ and although \eqref{more careful estimate} is not formulated in terms of this set it applies for it as well. This yields $g((\tilde m_{j_0}/\tilde n_{j_0})\sum_{k=1}^{\tilde n_{j_0}}e_k) \leq 2/\tilde m^2_{j_0}$ from which the desired estimate follows.
\end{proof}

\subsection{Rapidly increasing sequences and bounded linear operators}
On this subsection we prove that whether a bounded linear operator on $\mathcal{Y}_X$ is horizontally compact (see Definition \ref{def hor-co}) is witnessed on RIS.
\begin{ntt}
\label{extensions of stuff in range}
An important conclusion of \eqref{block vectors live in such things} is that if $z\in\mathcal{Y}_X$ and $\ran_\mathrm{BD}(z) = (m,n]$ then there is $u\in\mathcal{Z}_X^{n}$ with $z = i_n(u)$, so that if $u = (x_k,y_k)_{k=1}^n$, we have $x_k = 0$ and $y_k = 0$ for $1\leq k \leq m$.
We define
$$\supp^\Ga_\mathrm{loc}(z) =  \{i\in\N:\text{ there is }\ga\in\Ga_n\text{ with }\we(\ga)=1/m_i\text{ and }e_\ga^*(z)\neq0\}.$$
This set is called the $\Ga$-local support of $z$.
\end{ntt}
 
\begin{rmk}
\label{coordinate finite support perturbation}
For every $z\in\mathcal{Y}_X$ with finite BD-support and $\e>0$ there is $\tilde u$ with coordinate-wise finite supports, $\supp_\mathrm{BD}(u) = \supp_\mathrm{BD}(\tilde u)$, and $\|u-\tilde u\| < \e$. Indeed, if $ u = i_n(x_k,y_k)_{k=1}^n$ take for each $1\leq k\leq n$ a finitely supported vector $\tilde x_k$ in $X_k$ with $\|x_k - \tilde x_k\| \leq \e/(nC_0\sup_i\|t_{k,i}^*\|)$ (see Remark \ref{pesky detail}) and set $\tilde u = i_n(\tilde x_k,y_k)_{k=1}^n$. By \eqref{boundedness and compatibility} $\|i_n\|\leq C_0$ which yields the desired estimate.
\end{rmk}

\begin{lem}
\label{they basically all saitsfy (iv)}
Let $(z_n)_n$ be a block sequence with each $z_n$ having coordinate-wise finite supports. If for all $n\in\N$ we have $\max\supp_\mathrm{cw}(z_n) < \min\supp_\mathrm{BD}(z_{n+1})$ then $(z_n)_n$ satisfies item (iv) of Definition \ref{RIS}.
\end{lem}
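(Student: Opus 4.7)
The plan is to unpack the hypothesis on $z^*$ so that its action on any $z_n$ forces the ``column index'' of $z^*$ to lie in a certain window controlled by $\supp_\mathrm{BD}(z_n)$ and $\supp_\mathrm{cw}(z_n)$. The separation assumption then precludes two vectors in the sequence from sharing this window.

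More precisely, fix $z^* \in \mathcal{G}^\mathrm{utc}$. By the very definition of $\mathcal{G}^\mathrm{utc}$ given at the start of Section~\ref{the easy section}, there is a column index $i_0 \in \N$ and scalars $(a_k)_{k=1}^\infty$ with $\|w^*\text{-}\sum_k a_k e_k^*\|_{X^*} \leq 1$ such that $z^* = \sum_{k=1}^{i_0} a_k t_{k,i_0}^*$. Now suppose $n$ is such that $z^*(z_n) \neq 0$. Writing $z_n = i_{N}(x_k, y_k)_{k=1}^{N}$ for $N = \max\supp_\mathrm{BD}(z_n)$, we get $z^*(z_n) = \sum_{k=1}^{i_0} a_k t_{k,i_0}^*(x_k)$. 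Therefore there must exist some $k$ with $a_k t_{k,i_0}^*(x_k) \neq 0$, which in particular means $k \leq i_0$, $k \in \supp_\mathrm{BD}(z_n)$, and $i_0 \in \supp_k(z_n)$.

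The first step shows $i_0 \geq k \geq \min\supp_\mathrm{BD}(z_n)$, while the third shows $i_0 \leq \max\supp_k(z_n) \leq \max\supp_\mathrm{cw}(z_n)$. Hence every $n$ for which $z^*(z_n) \neq 0$ satisfies
\[
\min\supp_\mathrm{BD}(z_n) \;\leq\; i_0 \;\leq\; \max\supp_\mathrm{cw}(z_n).
\]

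Finally, I would argue by contradiction: if there were $n<m$ with $z^*(z_n)\neq 0$ and $z^*(z_m)\neq 0$, then the upper bound at $n$ and the lower bound at $m$ would give
\[
\max\supp_\mathrm{cw}(z_n) \;\geq\; i_0 \;\geq\; \min\supp_\mathrm{BD}(z_m) \;\geq\; \min\supp_\mathrm{BD}(z_{n+1}),
\]
since $m \geq n+1$ and $\supp_\mathrm{BD}(z_m)$ lies to the right of $\supp_\mathrm{BD}(z_{n+1})$. This contradicts the standing hypothesis that $\max\supp_\mathrm{cw}(z_n) < \min\supp_\mathrm{BD}(z_{n+1})$, so at most one such $n$ exists, verifying condition (iv) of Definition~\ref{RIS}. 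There is no real obstacle here; the only mild subtlety is to realize that the ``utc'' shape of $\mathcal{G}^\mathrm{utc}$ (single column, supported above the diagonal) is exactly what couples the index $i_0$ simultaneously to the BD-support and to the coordinate-wise support of $z_n$, and then the hypothesis is tailor-made to prevent overlap.
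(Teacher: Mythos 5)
Your proof is correct and follows essentially the same route as the paper's: you identify the column index $i_0$ of $z^*\in\mathcal{G}^{\mathrm{utc}}$, show that $z^*(z_n)\neq 0$ forces $\min\supp_\mathrm{BD}(z_n)\leq i_0\leq\max\supp_\mathrm{cw}(z_n)$, and then observe that the separation hypothesis makes this window attainable for at most one $n$. The only difference is that you spell out the final pigeonhole step as an explicit contradiction, whereas the paper states it directly.
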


\begin{proof}
If $z^*\in\mathcal{G}^\mathrm{utc}$ then $z^* = \sum_{k=1}^{i_0}a_kt_{k,i_0}^*$. If for some $n\in\N$ we have $z^*(z_n)\neq 0$, then $i_0\geq \min\supp_\mathrm{BD}(z_n)$ and $i_0 \leq \max\supp_\mathrm{cw}(z_n)$. That is,
\begin{equation*}
 \min\supp_\mathrm{BD}(z_n)\leq  i_0 \leq \max\supp_\mathrm{cw}(z_n),
\end{equation*}
which can be satisfied for at most one $n\in\N$.
\end{proof}

\begin{rmk}
\label{ris sum to ris}
Lemma \ref{they basically all saitsfy (iv)} easily implies that if $(z_k)_k$ and $(w_k)_k$ are both $C$-RIS then $(z_k+w_k)_k$ has a subsequence that is $2C$-RIS.
\end{rmk}

The following is essentially the same as \cite[Lemma 5.8]{AH} but for completeness we describe a proof.
 
\begin{lem}
\label{acts like ris on foreign weights}
Let $z$ be a vector for which the set $\supp_\mathrm{BD}(z)$ is finite. For every $\ga\in\Ga$ with $\we(\ga) = 1/m_i$ and $i\notin\supp_\mathrm{loc}^\Ga(z)$ we have $|e_\ga^*(z)| \leq 2C_0\|z\|/m_i$ (see \eqref{extension constant}). 
\end{lem}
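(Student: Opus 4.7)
The plan is to apply the evaluation analysis of $\ga$ from Proposition \ref{evanalysis} and exploit the finiteness of $\supp_\mathrm{BD}(z)$ to show that all but a single term contributing to $e_\ga^*(z)$ vanish. Set $n=\max\supp_\mathrm{BD}(z)$. The trivial case comes first: if $\ra(\ga)\leq n$ then $\ga\in\Ga_n$ has weight $1/m_i$, so the hypothesis $i\notin\supp^\Ga_\mathrm{loc}(z)$ forces $e_\ga^*(z)=0$. Henceforth assume $\ra(\ga)>n$ and write the evaluation analysis $(p_r,\xi_r,b_r^*)_{r=1}^a$ of $\ga$, noting that every $\xi_r$ has weight $1/m_i$.

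Let $r_0$ be the largest index with $p_{r_0}\leq n$, setting $r_0=0$ if no such index exists. Since the initial segment $(p_r,\xi_r,b_r^*)_{r=1}^{r_0}$ is itself the evaluation analysis of $\xi_{r_0}$, subtracting the two instances of \eqref{eanalysis eq} gives
\begin{equation*}
e_\ga^* \;=\; e_{\xi_{r_0}}^* + \sum_{r=r_0+1}^a d_{\xi_r}^* + \frac{1}{m_i}\sum_{r=r_0+1}^a b_r^*\circ P_{(p_{r-1},p_r)},
\end{equation*}
with the convention $e_{\xi_0}^*:=0$. I would then verify three vanishing statements: (a) when $r_0\geq 1$, the element $\xi_{r_0}$ lies in $\Ga_{p_{r_0}}\subset\Ga_n$ and has weight $1/m_i$, so $e_{\xi_{r_0}}^*(z)=0$ by hypothesis; (b) for every $r>r_0$ we have $p_r>n$, whence $P_{\{p_r\}}z=0$ and $d_{\xi_r}^*(z)=e_{\xi_r}^*(P_{\{p_r\}}z)=0$; (c) for each $r\geq r_0+2$ the interval $(p_{r-1},p_r)$ is contained in $(n,\infty)$, forcing $P_{(p_{r-1},p_r)}z=0$.

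The only surviving contribution is therefore
\begin{equation*}
e_\ga^*(z) \;=\; \frac{1}{m_i}\,b_{r_0+1}^*\bigl(P_{(p_{r_0},p_{r_0+1})}z\bigr),
\end{equation*}
and the functionals $b_r^*$ have norm at most $1$ on the appropriate ambient space. When $r_0=0$ we have $P_{(0,p_1)}z=z$ since $\supp_\mathrm{BD}(z)\subset[1,n]\subset[1,p_1-1]$, yielding $|e_\ga^*(z)|\leq\|z\|/m_i$ at once. For $r_0\geq 1$ one uses $\supp_\mathrm{BD}(z)\subset[1,n]$ and $p_{r_0+1}-1\geq n$ to rewrite $P_{(p_{r_0},p_{r_0+1})}z=z-P_{p_{r_0}}z$, and the stated bound $2C_0\|z\|/m_i$ then follows from the triangle inequality together with the projection estimates of Remark \ref{Schauder decomposition}. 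The main delicate point is the careful accounting of the constants in this final norm estimate for $P_{(p_{r_0},p_{r_0+1})}z$, which I expect to parallel \cite[Lemma 5.8]{AH} closely; apart from that, the argument is a clean bookkeeping exercise organised around the choice of $r_0$.
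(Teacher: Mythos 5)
Your argument is sound and reaches the same conclusion, but by a slightly different route from the paper's. You unroll the full evaluation analysis of $\ga$ from Proposition~\ref{evanalysis}, locate the pivot index $r_0$ at which the ranks $p_r$ cross $n=\max\supp_\mathrm{BD}(z)$, and show that every term except the one indexed $r_0+1$ annihilates $z$ (with your step (a) using that $\xi_{r_0}\in\Ga_{p_{r_0}}\subset\Ga_n$ so the hypothesis $i\notin\supp^\Ga_\mathrm{loc}(z)$ applies directly, and (b), (c) using $\supp_\mathrm{BD}(z)\subset[1,n]$). The paper instead argues by induction on $\ra(\ga)$: after noting $d_\ga^*(z)=0$ it writes $c_\ga^*=e_\xi^*+\tfrac{1}{m_i}b^*\circ P_{(p,\cdot]}$ and splits into the case where $\ra(\xi)\le\max\supp_\mathrm{BD}(z)$ (so $e_\xi^*(z)=0$ by hypothesis and the projection term is estimated directly) and the case where $\ra(\xi)>\max\supp_\mathrm{BD}(z)$ (so the projection term vanishes and the inductive bound on $e_\xi^*(z)$ takes over). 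These are two presentations of the same BD recursion --- yours applies it in fully unrolled form, the paper's peels off one layer per step --- and both carry the same content, since the evaluation analysis in Proposition~\ref{evanalysis} is itself established by that induction. Your version has the advantage of isolating explicitly the single term straddling the boundary of $\supp_\mathrm{BD}(z)$ where $e_\ga^*(z)$ actually lives. One small thing worth noting on the constant: the surviving term gives $|e_\ga^*(z)|\le\tfrac{1}{m_i}\|z-P_{p_{r_0}}z\|\le\tfrac{1}{m_i}(1+A_0C_0)\|z\|$ using $\|P_{p_{r_0}}\|\le A_0C_0$ from Remark~\ref{Schauder decomposition}, which is precisely what the paper's own case analysis produces; the stated $2C_0$ tacitly uses $A_0=1$, a normalization that can always be arranged by renorming $X$ to a bimonotone basis but which the text does not explicitly impose.
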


\begin{proof}
We show this is by induction on $\ra(\ga)$. If $\ra(\ga) \leq q_0 = \max\supp_\mathrm{BD}(z)$ with $\we(\ga) = m_i$ and $i\notin\supp_\mathrm{loc}^\Ga(z)$ then $e_\ga^*(z) = 0$. Assume that the result holds for all $\ga$ with $\we(\ga) = 1/m_i$ and $i\notin\supp_\mathrm{loc}^\Ga(z)$ so that $\ra(\ga)\leq n$ for some $n\geq q_0$ and let $\ga\in\Ga$ with $\we(\ga) = 1/m_i$, $i\notin\supp_\mathrm{loc}^\Ga(z)$, and $\ra(\ga) = n+1$. Then $e_\ga^* = d_\ga^* + c_\ga^*$ (see page \pageref{basic properties of space}, Subsection \ref{basic properties of space}) and $d_\ga^*(z) = 0$, i.e. $e_\ga^*(z) = c_\ga^*$. Either $c_\ga^* = e_\xi^* + (1/m_i^*)b^*\circ P_{(p,n]}$, where $\we(\xi) = 1/m_i$ and $\ra(\xi) = p$ and $b^*\in A_n$ or $c_\ga^* = (1/m_i^*)b^*\circ P_{(0,n]}$, where $b^*\in A_n$. In the second case it trivially follows that $|c_\ga^*(z)| \leq (1/m_i)\|P_{(0,n]}\|\|z\|$. In the first case, if $p\leq n$ then $e_\xi^*(z) = 0$ and similarly $|c_\ga^*(z)| \leq (1/m_i)\|P_{(0,n]}\|\|z\|$. Otherwise, if $p>n$ then $P_{(p,n]}(z) = 0$ and the result follows from the inductive assumption.
\end{proof}

We shall say that a block sequence $(z_n)_n$ has bounded local weights if there exists $i_0$ so that for all $n\in\N$ and all $\ga\in\supp_\mathrm{loc}^\Ga(z_n)$ we have $\we(\ga)^{-1}\leq m_{i_0}$. We shall say that a block sequence $(z_n)_n$ has rapidly decreasing local weights if there exists sequence of natural numbers $(j_n)_n$ so that $\lim_nj_n = \infty$ and for all $n\in\N$ and $\ga\in\supp_\mathrm{loc}^\Ga(z_n)$ we have $\we(\ga) \leq 1/m_{j_n}$.

The next result is a combination of Lemma \ref{they basically all saitsfy (iv)} and \cite[Proposition 5.10]{AH}.
\begin{prp}
\label{some stuff is ris}
Let $(z_n)_n$ be a bounded block sequence in $\mathcal{Y}_X$ so that each $z_n$ that has finite coordinate-wise supports. If $(z_n)_n$ has bounded local weights or $(z_n)_n$ has rapidly decreasing local weights, then $(z_k)_k$ has a subsequence that is an RIS.
\end{prp}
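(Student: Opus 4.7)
My plan is to verify the four clauses of Definition \ref{RIS} for an appropriate subsequence, handling conditions (i), (ii), (iv) in a uniform way and then splitting into the two hypotheses for condition (iii). Throughout, let $D = \sup_n\|z_n\|$, which is finite by assumption.

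First I would handle the easy conditions. Clause (i) is automatic with constant $D$. For clause (iv), since each $z_n$ has coordinate-wise finite supports, I may pass to a subsequence (still labeled $(z_n)_n$) satisfying $\max\supp_\mathrm{cw}(z_n)<\min\supp_\mathrm{BD}(z_{n+1})$ for all $n$; Lemma \ref{they basically all saitsfy (iv)} then delivers (iv). Clause (ii) is guaranteed by thinning further so that any designated sequence $(j_n)_n$ satisfies $j_{n+1}>\max\ran_\mathrm{BD}(z_n)$; this is only a choice of how fast we let $(j_n)$ grow along the subsequence.

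The real content lies in clause (iii), where Lemma \ref{acts like ris on foreign weights} is the workhorse: for any $\ga\in\Ga$ with $\we(\ga)=1/m_i$ and $i\notin\supp^\Ga_\mathrm{loc}(z_n)$ we automatically have $|e_\ga^*(z_n)|\leq 2C_0D/m_i$. In the rapidly decreasing local weights case, there is $(j_n)_n\to\infty$ with $\supp^\Ga_\mathrm{loc}(z_n)\subset[j_n,\infty)$; after passing to a subsequence so that $(j_n)_n$ also majorizes $\max\ran_\mathrm{BD}(z_{n-1})$, every $i<j_n$ lies outside $\supp^\Ga_\mathrm{loc}(z_n)$ and Lemma \ref{acts like ris on foreign weights} immediately gives (iii) with $C=2C_0D$. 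This is the clean case.

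The bounded local weights case is the one that requires a small trick. Fix $i_0$ with $\supp^\Ga_\mathrm{loc}(z_n)\subset[1,i_0]$ for all $n$, and choose $(j_n)_n$ strictly increasing with $j_1>i_0$ and $j_{n+1}>\max\ran_\mathrm{BD}(z_n)$ (thinning $(z_n)_n$ as needed). For $i_0<i<j_n$ Lemma \ref{acts like ris on foreign weights} again yields $|e_\ga^*(z_n)|\leq 2C_0D/m_i$. For the finitely many exceptional values $i\leq i_0$, I use only the trivial estimate $|e_\ga^*(z_n)|\leq \|e_\ga^*\|\,\|z_n\|\leq D$, which is crude but admissible because $C/m_i\geq C/m_{i_0}$ on this range. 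Setting $C=\max\{2C_0D,\,Dm_{i_0}\}$ absorbs both bounds and gives clause (iii). The ``main obstacle'' is thus only a bookkeeping one: one must recognize that a bounded local weight range produces a uniform (though $i_0$-dependent) RIS constant, whereas a rapidly decreasing local weight sequence produces an RIS with constant essentially $2C_0D$ and $(j_n)_n$ dictated by the decay.
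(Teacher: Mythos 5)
Your proposal is correct and follows essentially the same route as the paper: pass to a subsequence so that Lemma \ref{they basically all saitsfy (iv)} applies, use Lemma \ref{acts like ris on foreign weights} to control $|e_\ga^*(z_n)|$ for weights outside the local support, and in the bounded-weights case absorb the finitely many low weights $i\leq i_0$ by enlarging the constant to $\max\{2C_0,m_{i_0}\}\sup_n\|z_n\|$. The only cosmetic difference is your choice of $(j_n)_n$ in the bounded case (forcing $j_1>i_0$), whereas the paper simply takes $j_n=\min\ran_\mathrm{BD}(z_n)$; both choices work since the estimate holds for all $i$.
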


\begin{proof}
We first pass to a subsequence so that the assumption of Lemma \ref{they basically all saitsfy (iv)} is satisfied. We refer to this sequence as $(z_n)_n$ as well. If $(z_n)_n$ has bounded local weights witnessed by $m_{i_0}$ set $C = \max\{m_{i_0},2C_0\}\sup_n\|z_n\|$. Then for all $\ga\in\Ga$ with $\we(\ga) = 1/m_i$ and $i\leq i_0$ and for all $n\in\N$ we have $|e_\ga^*(z_n)| \leq \|z_n\| \leq (m_{i_0}/m_{i})\|z_n\| \leq C/m_i$. On the other hand, for $\ga\in\Ga$ with $\we(\ga) = 1/m_i$ and $i > i_0$ by Lemma \ref{acts like ris on foreign weights} we obtain $|e_\ga^*(z_n)| \leq 2C_0\|z\|/m_i\leq C/m_i$. If we define $j_n = \min\ran_\mathrm{BD}(z_n)$ then all assumptions of Definition \ref{RIS} are satisfied. 
If on the other hand the sequence $(z_n)_n$ has rapidly increasing weights witnessed by $(j_n)_n$, pass to common subsequences of $(z_n)_n$  and $(j_n)_n$, again denoted by $(z_n)_n$  and $(j_n)_n$, so that $j_{n+1}>\max\ran_\mathrm{BD}(z_n)$ for all $n\in\N$. If $C = 2C_0\sup_n\|z_n\|$ then Lemma \ref{acts like ris on foreign weights} yields that $(z_n)_n$ is a $(C,(j_n)_n)$-RIS.
\end{proof}

The following is almost identical to the proof of \cite[Proposition 5.11]{AH}. We describe a proof for completeness.

\begin{prp}
\label{horizontal compactness is witnessed on ris}
Let $Y$ be a Banach space and $T:\mathcal{Y}_X\to Y$ be a bounded linear operator. If for all RIS $(z_n)_n$ in $\mathcal{Y}_X$ we have $\lim_n\|Tz_n\| = 0$, then for every bounded block sequence $(z_n)_n$ we have $\lim_n\|Tz_n\| = 0$.
\end{prp}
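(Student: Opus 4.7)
The plan is to argue by contradiction. Suppose that there exists a bounded block sequence $(z_n)_n$ in $\mathcal{Y}_X$ with $\|Tz_n\|\geq \delta>0$ for all $n$. After rescaling we may assume $\|z_n\|\leq 1$, and by Remark \ref{coordinate finite support perturbation} combined with the boundedness of $T$, a small norm-perturbation of each $z_n$ (whose effect on $\|Tz_n\|$ is absorbed into $\delta$) allows us to assume that each $z_n$ has coordinate-wise finite supports. After passing to a subsequence we may further assume $\max\supp_\mathrm{cw}(z_n) < \min\supp_\mathrm{BD}(z_{n+1})$, so that by Lemma \ref{they basically all saitsfy (iv)} condition (iv) of Definition \ref{RIS} is satisfied automatically for every subsequence we henceforth extract.

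The core of the proof is a dichotomy showing that, possibly after decomposing each $z_n$ into two pieces carrying respectively the ``low weight'' and ``high weight'' contributions, one of the resulting block sequences contains a subsequence satisfying either the bounded local weights hypothesis or the rapidly decreasing local weights hypothesis of Proposition \ref{some stuff is ris}. The decomposition uses a natural threshold $j$ to split the functionals $e_\gamma^*$ according to whether $\we(\gamma)\geq 1/m_j$ or $\we(\gamma)<1/m_j$; the evaluation analysis of Proposition \ref{evanalysis} together with Lemma \ref{acts like ris on foreign weights} controls the contribution of coordinates outside the local support. Since $\|Tz_n\|\geq \delta$, the triangle inequality forces $\|T\cdot\|\geq \delta/2$ infinitely often on at least one of the two pieces, and we restrict attention to that piece. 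Applying Proposition \ref{some stuff is ris} to the appropriate subsequence produces an RIS $(w_k)_k$ satisfying $\|Tw_k\|\geq \delta/2$ for all $k$, whereas the hypothesis of the proposition forces $\|Tw_k\|\to 0$, a contradiction.

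The main technical obstacle is the dichotomy step itself. A given $z_n$ may carry significant mass from coordinate functionals $e_\gamma^*$ of both small and large weight index simultaneously, so a naive Ramsey-type extraction on the local supports $L_n = \supp^\Gamma_\mathrm{loc}(z_n)$ cannot land us directly in one of the two cases of Proposition \ref{some stuff is ris}. The decomposition therefore has to be carried out so that the pieces remain bounded block vectors with coordinate-wise finite supports, preserving the applicability of Lemma \ref{they basically all saitsfy (iv)} to every subsequence subsequently extracted. This is the portion of the argument that mirrors the proof of \cite[Proposition 5.11]{AH}, and where the careful bookkeeping of weights and BD-ranges inherited from the Argyros--Haydon construction plays a decisive role.
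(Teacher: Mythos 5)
Your high-level plan --- split each $z_n$ into a ``bounded local weight'' piece and a ``rapidly decreasing local weight'' piece and then invoke Proposition \ref{some stuff is ris} --- is the right idea, and you correctly point to \cite[Proposition 5.11]{AH} as the model, but the dichotomy you describe does not close. With a \emph{fixed} threshold $j$, the high-weight-index piece $\psi_n^j$ (the part supported on coordinates $\gamma$ with $\we(\gamma)<1/m_j$) has $\supp^\Gamma_\mathrm{loc}(\psi_n^j)\subset\{j+1,j+2,\dots\}$ for \emph{every} $n$, with no decay in $n$: this is not ``rapidly decreasing local weights,'' since the witnessing sequence $(j_n)_n$ in that definition must tend to infinity. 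So if your dichotomy sends you to that piece, Proposition \ref{some stuff is ris} simply does not apply and you produce no RIS. Worse, the other branch of your dichotomy is always vacuous: if $\|T\chi_n^j\|\geq\delta/2$ along an infinite set of $n$, then $(\chi_n^j)_n$ restricted to that set is a bounded block sequence with coordinate-wise finite supports and local weight indices $\leq j$, so Proposition \ref{some stuff is ris} yields an RIS subsequence on which $T$ is bounded below --- a direct contradiction with the hypothesis. Hence one is always thrown into the ``bad'' branch, and the argument stalls.

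The paper does not run a dichotomy at all; it proves that \emph{both} pieces vanish under $T$, via a diagonal argument over increasing thresholds. For each $N$, the sequence $(\chi_n^N)_n$ has bounded local weights, so by Proposition \ref{some stuff is ris} and the hypothesis one gets $\lim_n\|T\chi_n^N\|=0$. One then chooses indices $n_N\uparrow\infty$ with $\|T\chi_{n_N}^N\|\to 0$. The crucial observation is that the residual sequence $(\psi_{n_N}^N)_N$ --- with threshold moving together with the index --- \emph{does} have rapidly decreasing local weights (the $N$-th term has local weight indices $>N$), so again $\|T\psi_{n_N}^N\|\to 0$, and summing gives $\|Tz_{n_N}\|\to 0$. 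In your contradiction framing the repair is: since Case A fails for every $j$, for every $j$ there are infinitely many $n$ with $\|T\psi_n^j\|\geq\delta/2$; diagonalize over $j$ to obtain a block sequence $(\psi_{n_k}^{j_k})_k$ with $j_k\to\infty$, which has rapidly decreasing local weights and yields the contradiction. Without this diagonalization over thresholds --- which is not a choice of one piece but a simultaneous choice of index and cutoff --- the step you label as ``the core of the proof'' does not go through.
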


\begin{proof}
We will start with an arbitrary block sequence $(z_n)_n$ and show that it has a subsequence that satisfies the conclusion. By Remark \ref{coordinate finite support perturbation} we may perturb the sequence so that all of its elements have finite coordinate-wise supports. For each $n\in\N$ there is $q_n\in\N$ and $(x_{n,k},y_{n,k})_{k=1}^{q_n}$ so that $z_n = i_{q_n}(x_{n,k},y_{n,k})_{k=1}^{q_n}$. Define for each $n,N\in\N$ and $k\leq q_n$ the following vectors in $\ell_\infty(\De_k)$
\begin{equation*}
\begin{split}
v_{n,k}^N(\ga) &=\left\{
\begin{array}{ll}
y_k(\ga),&\text{if }\we(\ga) \geq 1/m_N,\\
0,&\text{otherwise}
\end{array}
\right.\;\text{ and }\\
w_{n,k}^N(\ga) &=\left\{
\begin{array}{ll}
y_k(\ga),&\text{if }\we(\ga) < 1/m_N,\\
0,&\text{otherwise},
\end{array}
\right.
\end{split}
\end{equation*}
and for all $n,N\in\N$ define the vectors $$\chi_n^N = i_{q_n}(x_{n,k},v_{n,k}^N)_{k=1}^{q_n}\text{ and }\psi_n^N = i_{q_n}(0,w_{n,k}^N)_{k=1}^{q_n}.$$
For each $n,N\in\N$ we have $z_n = \chi_n^N + \psi_n^N$ and $\|\chi_n^N\| \leq \|i_{q_n}\| \|(x_k,v_k^N)_{k=1}^{q_n}\| \leq C_0 \|R_{q_n}z_n\| \leq C_0A_0\|z_n\|$. For fixed $N\in\N$ the sequence $(\chi_n^N)_n$ has finite coordinate-wise supports and bounded weights, i.e., by Proposition \ref{some stuff is ris} any of its subsequences has a further subsequence that is an RIS. By assumption, $\lim_n\|T\chi_n^N\| = 0$. We may therefore choose an increasing sequence of indices $(n_N)_N$ so that $\lim_N\|T\chi_{n_N}^N\| = 0$. Consider now the sequence $(\psi_{n_N}^N)_N$ which is bounded, it has finite coordinate-wise supports, and it has rapidly decreasing local weights. By Proposition \ref{some stuff is ris} any of its subsequences has a further subsequence that is an RIS and by assumption, $\lim_N\|T\psi_{n_N}^N\| = 0$. As $z_{n_N} = \chi_{n_N}^N + \psi_{n_N}^N$ we have $\lim_N\|Tz_{n_N}\| = 0$. 
\end{proof}

\begin{cor}
\label{shrinking}
The Schauder decomposition $(Z_n)_n$ of $\mathcal{Y}_X$ is shrinking. 
\end{cor}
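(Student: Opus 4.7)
The plan is to show that every bounded block sequence with respect to the decomposition $(Z_n)_n$ is weakly null, which is equivalent to the decomposition being shrinking. By Proposition \ref{horizontal compactness is witnessed on ris} applied to an arbitrary functional $z^* \in \mathcal{Y}_X^*$ viewed as a bounded linear operator $z^* : \mathcal{Y}_X \to \R$, it suffices to verify that $z^*(z_n) \to 0$ for every RIS $(z_n)_n$ in $\mathcal{Y}_X$.

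First I would observe that Definition \ref{RIS} is stable under passing to subsequences: if $(z_n)_n$ is a $(C,(j_n)_n)$-RIS and $(z_{n_k})_k$ is a subsequence, then $(z_{n_k})_k$ is a $(C,(j_{n_k})_k)$-RIS, since each of the four conditions (uniform norm bound, compatibility of the sequence $(j_k)_k$ with the BD-ranges, the $1/m_i$ decay estimate, and condition (iv) on $\mathcal{G}^{\mathrm{utc}}$) is inherited by subsequences in a routine way.

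Next, suppose towards a contradiction that there exist $z^* \in \mathcal{Y}_X^*$, $\delta > 0$, and an RIS $(z_n)_n$ (with RIS constant $C$) such that $|z^*(z_n)| > \delta$ infinitely often. Passing to a subsequence, which by the previous observation remains an RIS with the same constant $C$, I may assume that $z^*(z_n)$ has constant sign and $z^*(z_n) > \delta$ for all $n$ (or the analogous negative case). For each $j \in \N$, apply Corollary \ref{brute RIS estimate} to the initial segment $(z_k)_{k=1}^{n_j}$ to obtain
\[
\left\|\frac{1}{n_j}\sum_{k=1}^{n_j} z_k\right\| \leq A_0 C_0 C\left(\frac{3}{n_j} + \frac{4}{m_j}\right).
\]
Evaluating $z^*$ on the average and using the sign assumption yields
\[
\delta \;\leq\; \frac{1}{n_j}\sum_{k=1}^{n_j} z^*(z_k) \;=\; z^*\!\left(\frac{1}{n_j}\sum_{k=1}^{n_j} z_k\right) \;\leq\; \|z^*\| \cdot A_0 C_0 C\left(\frac{3}{n_j} + \frac{4}{m_j}\right),
\]
and letting $j \to \infty$ forces $\delta \leq 0$, a contradiction.

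There is no serious obstacle here: the real work has already been done in Proposition \ref{horizontal compactness is witnessed on ris} (which reduces bounded block sequences to RIS) and in Proposition \ref{basic inequality}/Corollary \ref{brute RIS estimate} (which control averages along an RIS via the auxiliary tree-norm estimate). The argument is just the standard Cesàro-averaging trick that turns norm-smallness of averages into weak nullity of the original sequence.
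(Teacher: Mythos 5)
Your argument is correct and follows the same route as the paper: reduce the question to RIS via Proposition \ref{horizontal compactness is witnessed on ris} (viewing $z^*$ as an operator into $\R$), then apply the averaging bound of Corollary \ref{brute RIS estimate} to contradict a uniform lower bound on $|z^*(z_n)|$. The extra remarks about RIS being stable under subsequences and about fixing the sign of $z^*(z_n)$ are harmless expansions of the same argument.
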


\begin{proof}
If it were not shrinking then there would be a functional $x^*\in\mathcal{Y}_X^*$ and a normalized block sequence $(x_k)_k$ with $\lim\inf |x^*(x_k)|  > 0$. By Proposition \ref{horizontal compactness is witnessed on ris} there would exist a $C$-RIS $(z_k)_k$ and $\e>0$ with $x^*(z_k)>\e$ for all $k\in\N$. By Corollary \ref{brute RIS estimate} we would have
\begin{equation*}
\e < \left\|\frac{1}{n_j}\sum_{k=1}^{n_j}z_k\right\| \leq A_0C_0C\left(\frac{3}{n_{j}} + \frac{4}{m_j}\right)
\end{equation*}
which is absurd for $j$ sufficiently large.
\end{proof}

\section{The scalar-plus-horizontally compact property}
\label{section s-p-hc}
in this section we prove one of the most important features of this construction namely that every operator on $\mathcal{Y}_X$ is a scalar multiple of the identity plus a horizontally compact operator. The following was introduced in \cite[Definition 7.1]{Z}.
\begin{dfn}
\label{def hor-co}
Let $X$ be a Banach space with a Schauder decomposition $(Y_n)_n$. An operator $T:X\to X$ is called horizontally compact (with respect to $(Y_n)_n$) if for every bounded block sequence $(x_k)_k$ we have $\lim_k\|Tx_k\| = 0$.
\end{dfn} 

A standard argument yields that a $T:X\to X$, where $X$ has a Schauder decomposition with associated projections $(P_n)_n$, is horizontally compact precisely when $\lim_n \|TP_{(n,\infty)}\| = 0$ or equivalently $T = \lim_n TP_n$ in operator norm. If one furthermore assumes that the Schauder decomposition is shrinking, then $T$ is horizontally compact if and only if its restriction on any block subspace is compact.

 \subsection{Exact pairs and exact sequences}
The definition of exact pairs and exact sequences is based on that from \cite[Definition 6.1]{AH}. Some modification is made to take into consideration the set $\mathcal{G}^\mathrm{utc}$. Exact sequences are a delicate tool necessary to prove properties about operators in $\mathcal{Y}_X$. Similar to \cite[Theorem 1.1 (2)]{Z}, one can use these tools to shown that block sequences in $\mathcal{Y}_X$ span HI spaces. We do not prove this result as we do not use it.

\begin{dfn}
\label{def exact pair}
Let $C>0$ and $\e\in\{0,1\}$. A pair $(z,\ga)$ where $z\in\mathcal{Y}_X$ and $\ga\in\Ga$ is said to be a $(C,j,M,\e)$-exact pair if the following are satisfied:
\begin{itemize}

\item[(i)] $|d_\xi^*(z)| \leq C/m_j$ for all $\xi\in\Ga$,

\item[(ii)] $\we(\ga) = 1/m_j$,

\item[(iii)] $\|z\| \leq C$ and $e_\ga^*(z) = \e$,

\item[(iv)] for every element $\xi\in \Ga$ with $\we(\xi) \neq m_j$ we have
\begin{equation*}
|e_\xi^*(z)| \leq \left\{\begin{array}{cc}
C/m_i,&\text{if }i<j,\\
C/m_j,&\text{if }i>j.
\end{array}\right.
\end{equation*}

\item[(v)]  $\supp_\mathrm{BD}(z)$ is finite and $\max\supp_\mathrm{cw}(z) \leq M$.

\end{itemize}
\end{dfn}

The following is very similar to \cite[Lemma 6.2]{AH}. We include a proof for the sake of completeness.

\begin{lem}
\label{adding ris to exact pair}
Let $(z_k)_{k=1}^{n_{2j}}$ be $C$-RIS and assume that there are natural numbers $0 = q_0 <2j \leq q_1<\cdots<q_k$ so that $\supp_\mathrm{BD}(z_k) \subset (q_{k-1},q_k)$ and that there are $b_k\in A_{q_{k-1}}$ with $b_k(x_k) = 0$ for $k=1,\ldots,n_{2j}$. Then there exist $\zeta_k\in\De_{q_k}$ so that if $\gamma = \zeta_{n_{2j}}$, $M = \max_{1\leq k\leq n_{2j}}\max\supp_\mathrm{cw}(z_k)$, and
\begin{equation*}
z = \frac{m_{2j}}{n_{2j}}\sum_{k=1}^{n_{2j}}z_k
\end{equation*}
then $(z,\gamma)$ is a $(7C,2j,M,0)$-exact pair. Furthermore, the evaluation analysis of $\gamma$ is $(q_k,b_k^*,\zeta_k)_{k=1}^{n_{2j}}$.
\end{lem}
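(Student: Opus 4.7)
The plan is to construct $\gamma$ via Lemma \ref{constructing gammas} and then verify the five conditions of Definition \ref{def exact pair} in turn. Concretely, since $2j \leq q_1 < q_2 < \cdots < q_{n_{2j}}$ and each $b_k^* \in A_{q_{k-1}}$ (with $1 \leq n_{2j}$), Lemma \ref{constructing gammas} supplies elements $\zeta_k$ and a $\gamma \in \Gamma$ of weight $1/m_{2j}$ whose evaluation analysis is exactly $(q_k, \zeta_k, b_k^*)_{k=1}^{n_{2j}}$. Setting $\gamma = \zeta_{n_{2j}}$, this immediately gives item (ii) as well as the ``furthermore'' part of the conclusion. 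Item (v) is automatic: $\supp_\mathrm{BD}(z) \subseteq \bigcup_k (q_{k-1}, q_k)$ is finite, and $\max \supp_\mathrm{cw}(z) = \max_k \max \supp_\mathrm{cw}(z_k) = M$ by definition.

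For the norm bound in (iii) I would invoke Corollary \ref{brute RIS estimate} to obtain
\[ \|z\| = m_{2j}\Big\|\frac{1}{n_{2j}}\sum_{k=1}^{n_{2j}} z_k\Big\| \leq A_0 C_0 C\Big(\frac{3m_{2j}}{n_{2j}} + 4\Big), \]
which is bounded by a constant multiple of $C$ (at most $7C$ after choosing $\beta_0$ small enough so that $A_0 C_0$ is close to one, and using $m_{2j}^2 \leq n_{2j}$ from \eqref{mjnjproperties}). For $e_\gamma^*(z) = 0$, plug the evaluation analysis
\[ e_\gamma^* = \sum_{k=1}^{n_{2j}} d_{\zeta_k}^* + \frac{1}{m_{2j}} \sum_{k=1}^{n_{2j}} b_k^* \circ P_{(q_{k-1}, q_k)} \]
into $z = (m_{2j}/n_{2j})\sum_i z_i$. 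Since $\supp_\mathrm{BD}(z_i) \subset (q_{i-1}, q_i)$ is strictly interior to that interval, each $d_{\zeta_k}^*$ (which factors through $P_{\{q_k\}}$) kills every $z_i$, and $P_{(q_{k-1}, q_k)} z_i = z_i \cdot \mathbf{1}_{k=i}$. Therefore $e_\gamma^*(z) = (1/n_{2j}) \sum_k b_k^*(z_k) = 0$ by the hypothesis $b_k^*(z_k) = 0$.

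For (i), I would fix $\xi \in \Gamma$ with $\xi \in \Delta_n$ so that $d_\xi^* = e_\xi^* \circ P_{\{n\}}$, and observe that at most one index $k$ can have $n \in \ran_\mathrm{BD}(z_k)$ since the BD-supports are contained in disjoint intervals. Hence
\[ |d_\xi^*(z)| \leq \frac{m_{2j}}{n_{2j}} \cdot 2A_0 C_0 \cdot C, \]
which is $\leq 7C/m_{2j}$ by \eqref{mjnjproperties} (again using $n_{2j}$ much larger than $m_{2j}^2$). Item (iv) is the only one requiring real work, and I would use the Basic Inequality (Proposition \ref{basic inequality}) applied with $\lambda_k = m_{2j}/n_{2j}$ and $s = 0$: for any $\xi \in \Gamma$ of weight $1/m_i$ with $i \neq 2j$, there is $g \in W[(\mathcal{A}_{3n_j}, 1/m_j)_j]$ either zero or of weight $1/m_i$ with
\[ |e_\xi^*(z)| \leq 3A_0 C_0 C \cdot \frac{m_{2j}}{n_{2j}} + 4A_0 C_0 C \cdot g\Big( \frac{m_{2j}}{n_{2j}} \sum_{k=1}^{n_{2j}} e_k\Big). \]
Then Proposition \ref{auxiliary space estimate} applied with $j_0 = 2j$ bounds $g((m_{2j}/n_{2j})\sum e_k)$ by $2/m_i$ when $i < 2j$ and by $m_{2j}/m_i$ when $i > 2j$; in the latter case $m_i \geq m_{2j+1} \geq m_{2j}^2$ yields the $O(1/m_{2j})$ bound required.

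The main obstacle is really bookkeeping: matching every constant against $7C$ forces one to use both the rapid growth \eqref{mjnjproperties} of $(m_j, n_j)$ and the fact (Remark \ref{almost isometric}) that $C_0$ can be taken arbitrarily close to one, so that $A_0 C_0$ is an acceptable universal constant. The key conceptual step, where the construction of $\mathcal{Y}_X$ really pays off, is the vanishing $e_\gamma^*(z) = 0$, which hinges on the fact that by hypothesis the chosen $b_k^*$ annihilate the corresponding RIS vectors $z_k$; this is exactly the freedom built into the definition of the sets $A_n$ by including $\mathcal{G}^\mathrm{utc}_n$, $B_n$, and $K_n$.
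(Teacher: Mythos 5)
Your proof is correct and follows essentially the same route as the paper: construct $\gamma$ via Lemma~\ref{constructing gammas}, get $e_\gamma^*(z)=0$ directly from the evaluation analysis and the hypothesis $b_k^*(z_k)=0$, bound $|d_\xi^*(z)|$ by observing each $d_\xi^*$ sees at most one $z_k$, get $\|z\|$ from Corollary~\ref{brute RIS estimate}, and handle item~(iv) by combining the basic inequality with Proposition~\ref{auxiliary space estimate}. The only cosmetic difference is that your bookkeeping in (i) and (iv) yields constants like $2A_0C_0C$ and $8A_0C_0C$ rather than the paper's tighter $C_0 C$ and $7A_0C_0C$; both presentations rely implicitly on Remark~\ref{almost isometric} to reconcile these with the stated $7C$, so this is not a substantive gap.
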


\begin{proof}
The existence of $\ga$ with the desired properties is a consequence of Lemma \ref{constructing gammas} whereas \eqref{eanalysis eq} yields $e_\ga^*(z) = 0$. For every $\xi\in\Ga$ the functional $d^*_\xi$ acts on at most one $z_k$ so we have $|d_\xi^*(z)| \leq \|d_\xi^*\|Cm_{2j}/n_{2j} \leq CC_0/m_{2j}$ so (i) holds. Corollary \ref{brute RIS estimate} yields $\|z\| \leq 5A_0C_0C$, so (iii) holds. Also combining Proposition \ref{basic inequality} with \eqref{general estimate} we deduce that  for every element $\xi\in \Ga$ with $\we(\xi) \neq m_j$ we have
\begin{equation*}
|e_\xi^*(z)| \leq \left\{\begin{array}{cc}
7A_0C_0C/m_i,&\text{if }i<j,\\
7A_0C_0C/m_j,&\text{if }i>j.
\end{array}\right.
\end{equation*}
Additionally, (v) clearly holds from the choice of $M$.
\end{proof}

\begin{dfn}
A sequence $(z_k)_{k=1}^{n_{2j_0-1}}$ is called a $(C,2j_0-1,\e)$-dependent sequence if there exist natural numbers $0 = p_0 < p_1<\cdots<p_{n_{2j_0-1}}$, and elements $\eta_k$, $\xi_k$ in $\Ga$ for $k=1,\ldots,n_{2j_0-2}$ so that
\begin{itemize}


\item[(i)] $(z_1,\eta_1)$ is a $(C,4j_1-2,p_1,\e)$-exact pair and $(z_k,\eta_k)$ is a $(C,4j_k,p_k,\e)$-exact pair for $k = 2,\ldots,n_{2j_0-1}$,

\item[(ii)] the element $\ga = \xi_{n_{2j_0-1}}$ has $\we(\ga) = 1/m_{2j_0-1}$ and evaluation analysis $(p_k,e_{\eta_k}^*,\xi_k)_{k=1}^{n_{2j_0-1}}$, and

\item[(iii)] $\ran_\mathrm{BD}(z_k) \subset (p_{k-1}, p_k)$, for $k=1,\ldots,n_{2j_0-1}$.

\end{itemize}
\end{dfn}

\begin{rmk}
\label{dependent remark}
If $(z_k)_{k=1}^{n_{2j_0-1}}$ is a $(C,2j_0-1,\e)$-dependent sequence then $e_\ga^*(z_k) = \e/m_{2j_0 - 1}$ for $1\leq k\leq n_{2j_0-1}$ (where $\ga = \xi_{n_{2j_0-1}}$). In addition, by the definition of functionals of odd weight for the associated  components we can deduce that $p_{k-1} < \ra(\eta_k) < p_k$, $\ra(\xi_k) = p_k$, and for $2\leq k \leq n_{2j_0-1}$ $\we(\xi_k) = 1/m_{2j_0-1}$, $\we(\eta_1) = 1/m_{4j_1-2} < 1/n^2_{2j_0-1}$ and for $2\leq k \leq n_{2j_0-1}$ $\we(\eta_{k}) = 1/m_{4j_{k}} = 1/m_{4\sigma(\xi_{k-1})}$. \end{rmk}
 
The following is an easy consequence of the definition of an exact pair, the growth condition of the function $\sigma$, as well as Lemma \ref{they basically all saitsfy (iv)}. A short proof can be found in \cite[Lemma 6.4]{AH}.
 
\begin{lem}
\label{dependent is ris}
A $(C,2j_0-1,\e)$-dependent sequence $(z_k)_{k=1}^{n_{2j_0-1}}$ is a $C$-RIS.
\end{lem}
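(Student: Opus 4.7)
The plan is to check each of the four clauses of Definition \ref{RIS} in turn, using the weight indices of the $\eta_k$'s as the strictly increasing witness sequence $(j_k^{\mathrm{RIS}})_k$. Clause (i), the uniform norm bound $\|z_k\| \leq C$, is immediate from clause (iii) of each exact pair $(z_k,\eta_k)$.

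For clause (iv), I would appeal to Lemma \ref{they basically all saitsfy (iv)}. By clause (v) of the exact pair definition each $z_k$ has coordinate-wise finite supports with $\max\supp_{\mathrm{cw}}(z_k)\leq p_k$, and by the range condition (iii) of a dependent sequence $\ran_{\mathrm{BD}}(z_{k+1})\subset(p_k,p_{k+1})$, so $\max\supp_{\mathrm{cw}}(z_k)\leq p_k<\min\supp_{\mathrm{BD}}(z_{k+1})$; Lemma \ref{they basically all saitsfy (iv)} then delivers (iv).

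For clauses (ii) and (iii) I would define $j_1^{\mathrm{RIS}} = 4j_1-2$ and $j_k^{\mathrm{RIS}} = 4\sigma(\xi_{k-1}) = 4j_k$ for $k\geq 2$, i.e.\ the weight index of $\eta_k$ (using the notation from Remark \ref{dependent remark}). Clause (iii) then follows immediately from clause (iv) of the exact pair $(z_k,\eta_k)$: if $\we(\ga)=1/m_i$ with $i<j_k^{\mathrm{RIS}}$, then $i$ is strictly less than the weight index of $\eta_k$, so the exact pair bound yields $|e_\ga^*(z_k)|\leq C/m_i$. For clause (ii), since $\xi_k\in\De_{p_k}$ the monotonicity of $\sigma$ between successive $\De_n$'s gives $\sigma(\xi_k)\geq p_k$, hence
\[
j_{k+1}^{\mathrm{RIS}} = 4\sigma(\xi_k)\;\geq\; 4p_k \;>\; p_k \;>\; \max\ran_{\mathrm{BD}}(z_k).
\]

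The only real subtlety is verifying that $(j_k^{\mathrm{RIS}})_k$ is strictly increasing, and this is the step I expect to be trickiest since it requires combining two distinct sources of growth. For $k\geq 2$ the inequality $j_{k+1}^{\mathrm{RIS}} > j_k^{\mathrm{RIS}}$ is just $4\sigma(\xi_k)>4\sigma(\xi_{k-1})$, which follows from $p_k>p_{k-1}$ and the monotonicity of $\sigma$ between $\De_n$ blocks. The transition from $k=1$ to $k=2$, however, requires the standard constraint from the inductive construction of $\Ga$ that any $\eta\in\Ga_q$ of weight $1/m_i$ satisfies $i\leq q$; applied to $\eta_1$, whose rank lies strictly below $p_1$, this gives $4j_1-2\leq p_1-1$, and hence $j_1^{\mathrm{RIS}}=4j_1-2<4p_1\leq 4\sigma(\xi_1)=j_2^{\mathrm{RIS}}$. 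With strict monotonicity in place, all four RIS conditions hold with the same constant $C$, completing the proof.
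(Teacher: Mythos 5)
Your proof is correct and follows the approach the paper outlines, namely verifying RIS clauses (i) and (iii) from the exact-pair conditions, (iv) from Lemma \ref{they basically all saitsfy (iv)}, and (ii) together with strict monotonicity of the witness sequence from the growth condition on $\sigma$ (as in \cite[Lemma~6.4]{AH}). The one subtlety you flag, that the weight index of $\eta_1$ is bounded by its rank (which is below $p_1$), is indeed the key observation needed for the $k=1$ to $k=2$ transition, and your handling of it is right.
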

 
The following requires some calculations however its proof is entirely identical to \cite[Lemma 6.5]{AH} so we omit it.

\begin{lem}
\label{dependent die on same weight}
Let $(z_k)_{k=1}^{n_{2j_0-1}}$ be a $(C,2j_0-1,0)$-dependent sequence, $J$ be a subinterval of $\{1,\ldots,n_{2j_0-1}\}$, and $\zeta\in\Ga$ with $\we(\zeta) = 1/m_{2j_0-1}$. Then we have
\begin{equation*}
\left|e_\zeta^*\left(\sum_{k\in J}z_k\right)\right| \leq 3C.
\end{equation*} 
\end{lem}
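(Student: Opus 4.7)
The proof compares the evaluation analyses of $\zeta$ and $\ga$, both of which share the weight $1/m_{2j_0-1}$. Write the analysis of $\zeta$ as $(q_l,b_l^*,\zeta_l)_{l=1}^{a}$ with $a = \ag(\zeta) \leq n_{2j_0-1}$, and recall that the analysis of $\ga$ is $(p_k,e_{\eta_k}^*,\xi_k)_{k=1}^{n_{2j_0-1}}$. I would identify the maximal $l_0$ (possibly $l_0=0$) such that $(q_l,b_l^*,\zeta_l)=(p_l,e_{\eta_l}^*,\xi_l)$ for every $l \leq l_0$, and split $e_\zeta^*(\sum_{k\in J}z_k) = \sum_{k\in J}e_\zeta^*(z_k)$ into three pieces according to whether $k\leq l_0$, $k=l_0+1$, or $k\geq l_0+2$.

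\textbf{Agreeing prefix ($k\leq l_0$).} Since $\ran_\mathrm{BD}(z_k)\subset (p_{k-1},p_k)$ and $q_l=p_l\geq p_{l_0}\geq p_k$ for every $l>l_0$, the restrictions $P_{\{q_l\}}$ and $P_{(q_{l-1},q_l)}$ annihilate $z_k$ for those indices. Thus the only contributions to $e_\zeta^*(z_k)$ come from the first $l_0$ triples in the analysis, which by the very definition of $l_0$ coincide with the first $l_0$ triples in the analysis of $\ga$. A symmetric argument shows the $l>l_0$ terms in the analysis of $\ga$ also vanish on $z_k$, so $e_\zeta^*(z_k)=e_\ga^*(z_k)$. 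By Remark~\ref{dependent remark} and the fact that $(z_k,\eta_k)$ is an exact pair with $\e=0$, this is $0$.

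\textbf{Transition ($k=l_0+1$).} For this single index I would use the trivial bound $|e_\zeta^*(z_{l_0+1})|\leq \|z_{l_0+1}\|\leq C$, contributing at most $C$.

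\textbf{Late terms ($k\geq l_0+2$).} This is the main technical step. By the Odd$_1$ rule, for every $l\geq 2$ one has $b_l^*=e_{\eta_l'}^*$ with $\we(\eta_l')=1/m_{4\sigma(\zeta_{l-1})}$, while $\we(\eta_l)=1/m_{4\sigma(\xi_{l-1})}=1/m_{4j_l}$. Since the analyses first diverge at index $l_0+1$, at least one of $q_{l_0+1}\neq p_{l_0+1}$ or $\zeta_{l_0+1}\neq \xi_{l_0+1}$ holds, and a careful bookkeeping using the injectivity of $\sigma$ and the monotonicity $\max_{\ga\in\De_k}\sigma(\ga)<\min_{\ga\in\De_{k+1}}\sigma(\ga)$ propagates the inequality $\sigma(\zeta_{l-1})\neq \sigma(\xi_{l-1})$ to every $l\geq l_0+2$. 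Since the $4\sigma$-values are astronomically larger than the index $4j_l$ prescribed by the exact pair $(z_l,\eta_l)$, property (iv) of Definition~\ref{def exact pair} yields $|b_l^*(P_{(q_{l-1},q_l)}z_l)|$ bounded by $C/m_{4\sigma(\zeta_{l-1})}$ (and similarly the $d_{\zeta_l}^*$ contributions are controlled by $C/m_{2j_0-1}$). Summing these estimates against the outer $1/m_{2j_0-1}$ factor and using that we have at most $n_{2j_0-1}$ terms, the total contribution of $\sum_{k\geq l_0+2}e_\zeta^*(z_k)$ is comfortably below $C$, say $\leq C$.

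\textbf{Combining.} Adding the three bounds yields $|e_\zeta^*(\sum_{k\in J}z_k)|\leq 0+C+C\leq 3C$. The principal obstacle is the late-terms estimate: one must translate the abstract injectivity of $\sigma$ together with the Odd$_1$ construction into the precise statement that the weights $1/m_{4\sigma(\zeta_{l-1})}$ appearing in $\zeta$'s analysis are genuinely distinct from $1/m_{4j_l}$ for every $l\geq l_0+2$, and then carefully account for the possibility that the partition $\{q_l\}$ of $\N$ does not coincide with $\{p_l\}$ past the branching point, grouping contributions of $e_\zeta^*$ according to which $z_k$'s they hit.
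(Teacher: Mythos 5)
Your plan mirrors the argument that the paper defers to (\cite[Lemma 6.5]{AH}): compare the evaluation analyses of $\zeta$ and $\ga$, locate the first index where they diverge, and split the sum accordingly; the prefix vanishes by Remark~\ref{dependent remark}, the transition index is bounded trivially by $C$, and the tail is small because the injectivity of $\sigma$ forces the even weights appearing in $\zeta$'s analysis to miss the weights of the exact pairs $(z_k,\eta_k)$. That architecture is exactly right, and your handling of the prefix and transition is correct.

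The tail estimate as you sketch it is, however, imprecise in a few places that must be fixed before it yields $3C$. First, you pair $b_l^*$ with $z_l$ (``$|b_l^*(P_{(q_{l-1},q_l)}z_l)|$''), but there is no reason for the partition $\{q_l\}$ to coincide with $\{p_k\}$ beyond $l_0$; a given $b_l^*\circ P_{(q_{l-1},q_l)}$ may meet pieces of several $z_k$'s, and a given $z_k$ may be met by several such functionals, so the correct statement of the mismatch is $\sigma(\zeta_{l-1})\neq\sigma(\xi_{k-1})$ for the relevant pairs $(l,k)$, not merely for $l=k$. (The reasoning is fine: by uniqueness of the evaluation analysis, $\zeta_{l-1}=\xi_{k-1}$ would force $l=k$ and agreement up to $l-1$, contradicting maximality of $l_0$ when $l-1>l_0$.) Second, Definition~\ref{def exact pair}(iv) gives the bound $C/m_{\min(i,j)}$, where $1/m_i=\we(b_l^*)$ and $1/m_j$ is the weight of the exact pair; you cannot assert $C/m_{4\sigma(\zeta_{l-1})}$ outright, and ``astronomically larger'' is misleading since $4\sigma(\zeta_{l-1})$ and $4\sigma(\xi_{k-1})$ are comparable in size --- what saves you is that both exceed $4j_1-2$, and $m_{4j_1-2}>n_{2j_0-1}^2$ by the $\mathrm{Odd}_0$ constraint, so $n_{2j_0-1}^2$ terms each of size $\leq C/m_{4j_1-2}$ sum to at most $C$. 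Third, since (iv) is stated for $e_\xi^*(z_k)$ rather than for $e_\xi^*(P_{(q_{l-1},q_l)}z_k)$, one must either absorb the projections into a uniform constant or rewrite $e_\zeta^*$ via the truncated evaluation analysis $e_\zeta^*=e_{\zeta_{l_0+1}}^*+\sum_{l>l_0+1}d_{\zeta_l}^*+(1/m_{2j_0-1})\sum_{l>l_0+1}b_l^*\circ P_{(q_{l-1},q_l)}$, treat $e_{\zeta_{l_0+1}}^*$ separately (it has weight $1/m_{2j_0-1}$ and small rank, so its action on $\sum_{k\geq l_0+2}z_k$ is controlled by the same recursion), and bound the $d_{\zeta_l}^*$ contributions by property (i) of the exact pairs. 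None of this changes the approach, which matches the paper's source; it is bookkeeping that your ``careful accounting'' caveat gestures at but does not yet carry out.
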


The following is only a minor modification of \cite[Proposition 6.6]{AH}. We include a proof for the sake of completeness.
 
\begin{prp}
\label{zero dependent estimate}
Let $(z_k)_{k=1}^{n_{2j_0-1}}$ be a $(C,2j_0-1,0)$-dependent sequence. Then we have
\begin{equation*}
\left\|\frac{1}{n_{2j_0-1}}\sum_{k=1}^{n_{2j_0-1}}z_k\right\| \leq 33A_0C_0C\frac{1}{m^2_{2j_0-1}}.
\end{equation*}
\end{prp}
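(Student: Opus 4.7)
My plan is to estimate the norm of $z = (1/n_{2j_0-1})\sum_{k=1}^{n_{2j_0-1}}z_k$ by using the triangle-type description of the norm on $\mathcal{Y}_X \subset \mathcal{Z}_X^\infty$ given in Remark \ref{how things are normed}, controlling separately the contributions of $\mathcal{G}^\mathrm{utc}$, of the local functionals in $(1/A_0)B_{X_k^*}$, and of the coordinate functionals $e_\ga^*$ for $\ga\in\Gamma$. The last of these is the main term and will give the asserted bound $33A_0C_0C/m_{2j_0-1}^2$, while the first two contribute only $O(1/n_{2j_0-1})$, which is negligible.

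First, by Lemma \ref{dependent is ris}, the sequence $(z_k)_{k=1}^{n_{2j_0-1}}$ is a $C$-RIS, hence also a $3C$-RIS. Moreover, Lemma \ref{dependent die on same weight} gives $|e_\zeta^*(\sum_{k\in J}z_k)|\leq 3C$ for every subinterval $J$ and every $\zeta\in\Gamma$ with $\we(\zeta)=1/m_{2j_0-1}$; in particular the additional hypothesis of the ``moreover'' clause of the basic inequality (Proposition \ref{basic inequality}) is satisfied with $j_0$ there replaced by $2j_0-1$ and constant $3C$. Applying the basic inequality to arbitrary $\ga\in\Gamma$, with $\la_k=1/n_{2j_0-1}$ and $s=0$, yields
\[
\Big|e_\ga^*\Big(\frac{1}{n_{2j_0-1}}\sum_{k=1}^{n_{2j_0-1}}z_k\Big)\Big|\leq \frac{9A_0C_0C}{n_{2j_0-1}} + 12A_0C_0C\cdot g\Big(\frac{1}{n_{2j_0-1}}\sum_{k=1}^{n_{2j_0-1}}e_k\Big),
\]
for some $g\in W[(\mathcal{A}_{3n_j},1/m_j)_{j\neq 2j_0-1}]\subset W[(\mathcal{A}_{4n_j},1/m_j)_{j\neq 2j_0-1}]$ (or $g=0$). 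By the refined estimate \eqref{more careful estimate} of Proposition \ref{auxiliary space estimate}, if $g$ has weight $1/m_h$ with $h\neq 2j_0-1$, then whether $h<2j_0-1$ or $h>2j_0-1$ we obtain (using \eqref{mjnjproperties}(2) in the latter case) that $g((1/n_{2j_0-1})\sum_k e_k)\leq 2/m_{2j_0-1}^2$. Since $n_j\geq m_j^2$ for all $j$ (an easy induction from \eqref{mjnjproperties}), the first summand is bounded by $9A_0C_0C/m_{2j_0-1}^2$, so the sum of the two terms is at most $33A_0C_0C/m_{2j_0-1}^2$.

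Finally, I will check the remaining two terms in the norm are negligible. Property (iv) of Definition \ref{RIS} (available because an RIS is a dependent sequence here) ensures that any $z^*\in\mathcal{G}^\mathrm{utc}$ acts nontrivially on at most one $z_k$, giving $|z^*(z)|\leq C/n_{2j_0-1}$. Likewise, since the $z_k$ are BD-block, any functional in $(1/A_0)B_{X_k^*}$ hits at most one $z_k$, and by Remark \ref{pesky detail} the corresponding value is bounded by $C/n_{2j_0-1}$. Both of these are dominated by the $\Gamma$-estimate, since $n_{2j_0-1}\geq m_{2j_0-1}^2$. Combining with \eqref{norm triad} then gives the desired bound. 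There is no serious obstacle here once the basic inequality with the refined auxiliary-space estimate is in place; the only point that must be handled with some care is the separation of the three components of the $\mathcal{Z}_X^\infty$-norm and the verification that the contributions from $\mathcal{G}^\mathrm{utc}$ and the local dual balls remain negligible compared to the $\ell_\infty(\Gamma)$-part.
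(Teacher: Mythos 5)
Your proof is correct and follows essentially the same route as the paper: split the $\mathcal{Z}_X^\infty$-norm via \eqref{norm triad}, dispose of the $\mathcal{G}^\mathrm{utc}$ and local-dual-ball contributions using RIS condition (iv) and blockness, and bound the $\Gamma$-part with the ``moreover'' clause of the basic inequality (for a $3C$-RIS, thanks to Lemma \ref{dependent die on same weight}) followed by \eqref{more careful estimate}. The only cosmetic difference is that the paper treats $\we(\ga)=1/m_{2j_0-1}$ separately by citing Lemma \ref{dependent die on same weight} directly, whereas you handle that case implicitly by observing that then $g$ must vanish; the constants come out the same.
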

 
 \begin{proof}
Set $z = 1/n_{2j_0-1}\sum_{k=1}^{n_{2j_0-1}}z_k$. We will use \eqref{norm triad}. By condition (iv) of Definition \ref{RIS} every $x^*\in\mathcal{G}^\mathrm{utc}$ acts on at most one $z_k$ so we have $|x^*(z)| \leq C/n_{2j_0-1}$. The same holds for $x^*\in\cup_k(1/A_0)B_{X_k^*}$. For $\ga\in\Ga$ with $\we(\ga) = 1/m_{2j_0-1}$ by Lemma \ref{dependent die on same weight} we have $|e_\ga^*(z)|\leq 3C/n_{2j_0-1}$. Fix $\ga\in\Ga$ with $\we(\ga)\neq 1/m_{2j_0-1}$. We will use the full statement of Proposition \ref{RIS}. By Lemmas \ref{dependent is ris} and \ref{dependent die on same weight} it follows that the sequence $(z_k)_{k=1}^{n_{2j_0-1}}$ satisfies the additional assumption of \ref{RIS} for $3C$ and $2j_0-1$. This means that there exists $g\in W[(\mathcal{A}_{3n_j},1/m_j)_{j\neq 2j_0-1}]$ with
\[|e_\ga^*(z)| \leq \frac{9A_0C_0C}{n_{2j_0-1}} + 12A_0C_0Cg\left(\frac{1}{n_{2j_0-1}}\sum_{k=1}^{n_{2j_0-1}}e_k\right).\]
We conclude the desired estimate by applying \eqref{more careful estimate}.
\end{proof}

\subsection{Scalar-plus-horizontally compact}

As the technical tool of dependent sequences has been discussed we may now proceed to proving that every operator on $\mathcal{Y}_X$ is a scalar operator plus a horizontally compact operator.

\begin{prp}
\label{blocks vanish on initial parts}
Let $T:\mathcal{Y}_X\to\mathcal{Y}_X$ be a bounded linear operator. Then for every $N$ in $\N$ and every bounded block sequence $(z_k)_k$ we have $\lim_k\|P_NTz_k\| = 0$.
\end{prp}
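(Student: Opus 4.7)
By Proposition \ref{horizontal compactness is witnessed on ris} applied to the bounded operator $P_NT\colon\mathcal{Y}_X\to\mathcal{Y}_X$, it suffices to show $\lim_k\|P_NTz_k\|=0$ for every RIS $(z_k)_k$. I argue by contradiction: assume some RIS $(z_k)_k$ and $\delta>0$ satisfy $\|P_NTz_k\|\ge\delta$ for all $k$. Since the Schauder decomposition $(Z_n)_n$ is shrinking (Corollary \ref{shrinking}), $(z_k)_k$ is weakly null and hence so is $(P_NTz_k)_k$ in $Y_X^N$.

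Writing $P_NTz_k=i_N(x_k,y_k)$ via the isomorphism $i_N\colon\mathcal{Z}_X^N\to Y_X^N$, the finite-dimensional $\ell_\infty(\Gamma_N)$-coordinate $y_k$ is immediately controlled: for each fixed $\gamma\in\Gamma_N$ one has $e_\gamma^*(P_NTz_k)=(T^*e_\gamma^*)(z_k)\to 0$ by weak nullity, and since $\Gamma_N$ is finite we obtain $\|y_k\|_\infty\to 0$. Consequently, for large $k$, the utc-component satisfies $\|x_k\|_\mathrm{utc}\ge\delta/(2C_0)$, and by the definition of the utc norm recorded in \eqref{norm triad} there exist witnesses $x_k^*\in\mathcal{G}^\mathrm{utc}_0$ with $|x_k^*(x_k)|\ge\delta/(2C_0)$. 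Pigeonhole splits two cases, namely $x_k^*\in(1/A_0)B_{X_j^*}$ for a fixed $j\le N$, or $x_k^*\in\mathcal{G}^\mathrm{utc}$. In both cases the pulled-back functionals $\tilde x_k^* := x_k^*\circ\pi_\mathrm{utc}\circ R_N\in\mathcal{Y}_X^*$ are uniformly bounded, satisfy $\tilde x_k^*(P_NTz_k)\ge\delta/(2C_0)$, and vanish on $z_k$ once $\min\supp_\mathrm{BD}(z_k)>N$ (because then $R_Nz_k=0$).

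For sufficiently large $j_0$ I then build a $(C',2j_0-1,0)$-dependent sequence $(\tilde z_l)_{l=1}^{n_{2j_0-1}}$ from a sub-RIS of $(z_k)_k$ by applying Lemma \ref{adding ris to exact pair} to consecutive packets, where the auxiliary functionals $b_k^*\in A_{q_{k-1}}$ are chosen from $\mathcal{G}^\mathrm{utc}_{q_{k-1}}$ or $K_{q_{k-1}}\subset A_{q_{k-1}}$ (depending on the case) to approximate the $\tilde x_k^*$'s to within additive error $\delta/(8n_{2j_0-1})$; this is legitimate by the density of $\bigcup_n\mathcal{G}^\mathrm{utc}_n$ in $\mathcal{G}^\mathrm{utc}$ and of $\bigcup_nK_n$ in the appropriate $X_j^*$-ball. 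On one hand, Proposition \ref{zero dependent estimate} gives $\|\tilde w\|\le 33A_0C_0C'/m^2_{2j_0-1}$ for $\tilde w = (1/n_{2j_0-1})\sum_l\tilde z_l$, so $\|P_NT\tilde w\|\le 33A_0^2C_0^2C'\|T\|/m^2_{2j_0-1}$. On the other hand, a weak-$*$ accumulation point $\tilde x^*$ of the bounded sequence $(\tilde x_k^*)_k$ in $\mathcal{Y}_X^*$ detects a contribution of order $\delta/m_{2j_0-1}$ on $P_NT\tilde w$, via the matching of $b_k^*$'s with $\tilde x_k^*$'s inside each exact pair; this contradicts the upper bound for $j_0$ sufficiently large.

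The main obstacle is the last step: converting the many ``local'' witnesses $\tilde x_k^*$, each detecting a single block $P_NT\tilde z_l$, into a single ``global'' functional $\tilde x^*$ that still detects a definite fraction of the average $P_NT\tilde w$ rather than being washed out by cancellation. Here condition (iv) of Definition \ref{RIS}, amplified by Lemma \ref{yeah well one could have included this in the definition of RIS but I guess that would have been just lazy}, plays the decisive role: it forces each $\tilde x_k^*$ (viewed through $\pi_\mathrm{utc}\circ R_N$) to interact non-trivially with at most one block $\tilde z_l$, so the individual lower bounds add up along the average without destructive interference. Careful synchronization of the BD-ranks $(q_k)_k$ in Lemma \ref{adding ris to exact pair} with the coordinates on which the $\tilde x_k^*$'s are effective is what makes this non-cancellation work.
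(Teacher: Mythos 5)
Your initial reduction is sound and in fact parallels the first steps of the paper's argument (shrinkingness kills the finite-dimensional $\ell_\infty(\Gamma_N)$-part, so only the $(\sum_{k\le N}\oplus X_k)^X_\mathrm{utc}$-component of $P_NTz_k$ can stay bounded below, and a pigeonhole localizes the obstruction), but the dependent-sequence step has a structural flaw that cannot be repaired. The machinery around Lemma \ref{adding ris to exact pair} and Proposition \ref{zero dependent estimate} produces a functional $e_\gamma^*$ of the form $\sum_k d_{\zeta_k}^*+\frac{1}{m_j}\sum_k b_k^*\circ P_{(q_{k-1},q_k)}$ whose action strictly respects the BD-block decomposition along the ranges $(q_{k-1},q_k)$ with $q_1>N$. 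Applied to $P_NT\tilde w$, which lives entirely in the BD-range $[1,N]$, every summand with $k\ge 2$ vanishes and all the $d_{\zeta_k}^*$-terms vanish, leaving at most a single contribution $\frac{1}{m_j}b_1^*(P_NT\tilde w)$; the $n_{2j_0-1}$ individual lower bounds $|\tilde x_k^*(P_NTz_k)|\ge\delta/(2C_0)$ never accumulate. Your appeal to Definition \ref{RIS}(iv) and Lemma \ref{yeah well one could have included this in the definition of RIS but I guess that would have been just lazy} is also misplaced: those control $\mathcal{G}^\mathrm{utc}$-functionals acting on the block vectors $z_n$, not on the non-block vectors $P_NTz_n$, which are all concentrated in the fixed initial range $[1,N]$ and have no disjointness structure of the required kind. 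Finally, the claimed lower bound on $\tilde x^*(P_NT\tilde w)$ via a weak-$*$ accumulation point is unsupported: each $\tilde x_k^*$ was tailored to $P_NTz_k$, and the limit functional applied to a different $P_NTz_l$ need not be small, so the average may cancel.

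The proposition actually rests on an ingredient that your argument never invokes: the disjointness of the weight index sets $L_0$ (controlling the outer Bourgain-Delbaen norm of $\mathcal{Y}_X$) and $L_{k_0}$ (controlling the internal Argyros-Haydon norm of the factor $X_{k_0}$). After the reduction to an operator $S:\mathcal{Y}_X\to X_{k_0}$ that is bounded below along a $C$-RIS $(z_k)_k$, the paper takes averages of length $\tilde n_{\ell_{2j}}$ with $\ell_{2j}\in L_{k_0}$. Corollary \ref{estimate for other norming sets}, which applies precisely because $\ell_{2j}\notin L_0$, bounds the average $\left\|\frac{\tilde m_{\ell_{2j}}}{\tilde n_{\ell_{2j}}}\sum_{k\le\tilde n_{\ell_{2j}}}z_k\right\|$ above by $11A_0C_0C/\tilde m_{\ell_{2j}}$, while \cite[Proposition 4.8]{AH}, applied inside $X_{k_0}$ where $\ell_{2j}\in L_{k_0}$, bounds the corresponding average of the images $(Sz_k)_k$ below by $\delta$. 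These two estimates live on incompatible weight scales, and letting $\tilde m_{\ell_{2j}}\to\infty$ produces the contradiction. Without using this separation of $L_0$ from the $L_k$'s you cannot reach the conclusion; it is the same mechanism that makes cross-operators between the $X_k$'s compact in \cite[Theorem 10.4]{AH}.
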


\begin{proof}
By Corollary \ref{shrinking} every bounded block sequence is weakly null. Also, $\sum_{k=1}^N\oplus Z_k\simeq (\sum_{k=1}^N\oplus X_k)\oplus\ell_\infty(\Gamma_N)$. Therefore, is sufficient to check that for every $k_0$, every bounded block sequence $(z_k)_k$, and every bounded linear operator $S:\mathcal{Y}_X\to X_{k_0}$ we have $\lim_k\|Sz_k\| = 0$. By Proposition \ref{horizontal compactness is witnessed on ris} is it sufficient to check this for a $C$-RIS $(z_k)_k$. If the conclusion were false, then for some $\de > 0$ $(Sz_k)_k$ would be equivalent to abounded block sequence $(w_k)_k$ in $X_{k_0}$ with $\|w_k\| > \de$ for all $k\in\N$, that is there would exist a constant $M$ so that for all scalars $(a_k)_k$ we would have
\[\left\|\sum_ka_kw_k\right\| \leq M\left\|\sum_ka_kz_k\right\|.\]
By \cite[Proposition 4.8]{AH} for any $\ell_{2j}\in L_{k_0} = \{\ell_1 < \ell_2<\cdots\}$ (see page \pageref{mjnjproperties}) we would have
$$\left\|\frac{\tilde m_{\ell_{2j}}}{\tilde n_{\ell_{2j}}}\sum_{k=1}^{ \tilde n_{\ell_{2j}}}w_k\right\| \geq \de,$$
whereas by Corollary \ref{estimate for other norming sets} we would have
$$\left\|\frac{\tilde m_{\ell_{2j}}}{\tilde n_{\ell_{2j}}}\sum_{k=1}^{ \tilde n_{\ell_{2j}}}z_k\right\| \leq \frac{11A_0C_0C}{\tilde m_{\ell_{2j}}}.$$
This would mean $\tilde m_{\ell_{2j}} \leq 11A_0C_0CM/\de$ for arbitrary $j$, which is absurd.
\end{proof}

\begin{lem}
\label{Gamma is a norming set}
For every $z\in\mathcal{Y}_X$, $N\in\N$, and $\e>0$ there exists $\ga\in\Ga$ with $\ra(\ga)\geq N$ so that
$$|e_\ga^*(z)| \geq \frac{1-\e}{m_2} \|z\|.$$
\end{lem}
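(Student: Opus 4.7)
The strategy is to build $\ga$ as an $\mathrm{Even}_0$ element of rank $n+1\geq N$ with weight $1/m_2$, which corresponds to taking $j=1$ in \eqref{even0}. For such a $\ga$ one has $e_\ga^* = d_\ga^* + (1/m_2)b^*\circ P_{(0,n]}^n$ for some $b^*\in A_n$, with $d_\ga^* = e_\ga^*\circ P_{\{n+1\}}$. Thus the bound $|e_\ga^*(z)|\geq(1-\e)\|z\|/m_2$ will follow if one can ensure both that $|b^*(R_n z)|$ is close to $\|z\|$ and that $\|P_{\{n+1\}}z\|$ is small.

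Fix $\delta = \e/(3+m_2)$. By \eqref{norm triad} there is a functional $f_0$ in one of the three families $\mathcal{G}^\mathrm{utc}$, $\bigcup_k(1/A_0)B_{X_k^*}$, or $\{e_\eta^*:\eta\in\Ga\}$ with $|f_0(z)|\geq(1-\delta)\|z\|$. The sets $A_m = K_m\cup\mathcal{G}^\mathrm{utc}_m\cup B_m$ are tailored so that each of these three families is recovered in the limit: $\bigcup_m\mathcal{G}^\mathrm{utc}_m$ is dense in $\mathcal{G}^\mathrm{utc}$; $\bigcup_m F_k^m$ exhausts the $1$-norming subset $(1/A_0)\tilde F_k$ of $B_{X_k^*}$; and every $e_\eta^*$ with $\eta\in\Ga_m$ sits in $B_m$ as a trivial rational convex combination. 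For a suitable $m$ we therefore obtain $b^*\in A_m$ with $|b^*(z)|\geq(1-2\delta)\|z\|$, and by symmetry of $A_m$ we may assume $b^*(z)\geq 0$.

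Now pick $n\geq\max(m,N-1,1)$ large enough that $\|P_{\{n+1\}}z\|\leq\delta\|z\|$---possible since $(Z_n)_n$ is a Schauder decomposition of $\mathcal{Y}_X$---and also that $|b^*(R_n z)-b^*(z)|\leq\delta\|z\|$. The latter holds for large $n$ because, inspecting each of the three possible forms of $b^*$, one sees that $b^*(R_n z)=b^*(P_n z)$ (both vectors share their first $n$ BD-components, which is all $b^*$ reads), and then $b^*(P_n z)\to b^*(z)$ by continuity of $b^*$ combined with $P_n z\to z$. Reading the $\mathrm{Even}_0$ definition \eqref{even0} with $j=1$ produces $\ga\in\De_{n+1}^{\mathrm{Even}_0}$ with $\we(\ga)=1/m_2$, $\ra(\ga)=n+1\geq N$, and $c_\ga^* = (1/m_2)b^*\circ P_{(0,n]}^n$. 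Combining the estimates,
\[
|e_\ga^*(z)| \;\geq\; \frac{1}{m_2}|b^*(R_n z)| - |d_\ga^*(z)| \;\geq\; \frac{1-3\delta}{m_2}\|z\| - \delta\|z\| \;=\; \frac{1-\e}{m_2}\|z\|,
\]
as required. The main subtlety is verifying the uniform identity $b^*(R_n z)=b^*(P_n z)$ across the three flavors of $b^*\in A_n$---this is what allows the $c_\ga^*$ formula (which is defined via $R_n$) to pick up the nearly-norming value $b^*(z)$ computed on all of $\mathcal{Y}_X$.
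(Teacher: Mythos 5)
Your proof is correct and follows essentially the same line as the paper's: pick a nearly norming functional $b^*$ from $\bigcup_n A_n$ and realize it as the ``$b^*$-part'' of an $\mathrm{Even}_0$ element $\ga$ of weight $1/m_2$ and large rank, then control the discrepancy coming from $d_\ga^*$ and from replacing $z$ by its first $n$ BD-coordinates. The only organizational difference is that the paper first truncates $z$ to a vector $\tilde z$ with finite BD-support (so that $d_\ga^*(\tilde z)=0$ exactly and $c_\ga^*(\tilde z)=(1/m_2)b_0^*(\tilde z)$ exactly), whereas you keep $z$ and instead bound the small errors $\|P_{\{n+1\}}z\|$ and $|b^*(R_n z)-b^*(z)|$; these are two equivalent ways of handling the same tail term.
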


\begin{proof}
Approximate $z$ by a vector with finite BD-support $\tilde z$ so that $\|z - \tilde z\| \leq \e/(2m_2)$. By remark \ref{how things are normed} we may choose a natural number $M$ so that $(1-\e/2)\|\tilde z\| \leq \max\{|b^*(w)|: b^*\in A_M\}$ (for the definition of $A_M$ see page \pageref{even0}, paragraph before \eqref{even0}). Fix $b^*_0$ achieving this maximum. Then for every $n\geq\max\{M,N-1,\max\supp_\mathrm{BD}(\tilde z)\}$ and $b^*\in A_M\subset A_n$ we have that the triple $\ga = (n+1,1/m_2,b_0^*)$ is in $\De_{n+1}^{\mathrm{Even}_0}$. It follows that $|e_\ga^*(\tilde z)| = |c_\ga^*(\tilde z)| = (1/m_2)|b_0^*(\tilde z)| \geq (1-\e/2)/m_2\|\tilde z\|$. Hence, $|e^*_\ga(z)| \geq ((1-\e)/m_2)\|z\|$.
\end{proof}

Lemma \ref{perturb the distance} is an alternative approach to what is presented in \cite[Section 7]{Z}, where an element $x$ is approached by another element with the property that the action of every functional in $\cup_nA_n$ yields a rational number. A modification of the approach from \cite{Z} would work here as well, however the factor of $1/m_2$ in item (iii) bellow would not be avoided. The reason is that the construction presented here is designed to yield a Calkin Algebra that is $(1+\e)$-isomorphic to $\R I\oplus\mathcal{K}_\mathrm{diag}(X)$.

 \begin{lem}
 \label{perturb the distance}
 Let $x$ be in $\mathcal{Y}_X$, $T:\mathcal{Y}_X\to \mathcal{Y}_X$ be a bounded linear operator and $\de = \mathrm{dist}(Tx,\R x)$. Then for every $\e>0$ and $N\in\N$ there exist $b^*\in\cup_n B_n$ (see page \pageref{rational convex combinations}), $\ga_0\in\Ga$ and $0<\theta<\e$ so that if $\tilde x = x - \theta d_{\ga_0}$
\begin{itemize}

\item[(i)] $\ra(\ga_0) \geq N$,


\item[(ii)] $b^*(\tilde x) = 0$, and

\item[(iii)] $b^*(T\tilde x) \geq \de/(5m_2)$.

\end{itemize}
 \end{lem}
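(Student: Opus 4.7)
The plan is to combine a Hahn--Banach construction on $\spn\{x, Tx\}$ with the $(1/m_2)$-norming property of $\cup_n B_n$ on $\mathcal{Y}_X$ provided by Lemma \ref{Gamma is a norming set}. Assume $\delta > 0$ and fix a small parameter $\nu > 0$ to be determined later. On the two-dimensional subspace $V = \spn\{x, Tx\}$, the functional $\phi(ax + b\,Tx) = a\nu + b\delta$ has $\|\phi|_V\| = 1$ exactly when $\nu = 0$, by the definition of $\delta$, and $\|\phi|_V\| \leq 1+O(\nu/\delta)$ for small $\nu > 0$. Extending $\phi$ by Hahn--Banach yields $\Phi \in \mathcal{Y}_X^*$ with norm close to $1$, $\Phi(x) = \nu$, and $\Phi(Tx) = \delta$.

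By Lemma \ref{Gamma is a norming set} the set $\{\pm e_\gamma^*:\ra(\gamma) \geq N\}$ is $(1/m_2)$-norming for $\mathcal{Y}_X$, and hence by a standard Hahn--Banach separation argument its weak-$*$ closed convex hull contains $(1/m_2)\,B^*_{\mathcal{Y}_X}$. Approximating on the finite test set $\{x, Tx\}$ and then rationalizing coefficients yields, for every $\eta > 0$, an index $n \geq N$ and a rational convex combination $b^* = \sum_\eta a_\eta e_\eta^* \in B_n$, supported on $\{\eta: \ra(\eta) \geq N\}$, with $|b^*(x) - \nu/m_2| < \eta$ and $|b^*(Tx) - \delta/m_2| < \eta$.

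Pick $\gamma_0 \in \supp(b^*)$ of maximal rank, so $\ra(\gamma_0) \geq N$ holds automatically. The key computation uses $e_\eta^* = d_\eta^* + c_\eta^*$ together with the observation that, for every $\eta \in \supp(b^*)$ with $\ra(\eta) \leq \ra(\gamma_0)$, one has $R_{\ra(\eta)-1}(d_{\gamma_0}) = 0$ since $d_{\gamma_0} \in Z_{\ra(\gamma_0)}$. Consequently $c_\eta^*(d_{\gamma_0}) = 0$ and $d_\eta^*(d_{\gamma_0}) = \delta_{\eta, \gamma_0}$ give $e_\eta^*(d_{\gamma_0}) = \delta_{\eta, \gamma_0}$ for every $\eta \in \supp(b^*)$. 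Therefore $b^*(d_{\gamma_0}) = a_{\gamma_0}$, a nonzero rational with $|a_{\gamma_0}| \geq 1/N_{n+1}!$. Setting $\theta = b^*(x)/a_{\gamma_0}$ makes $b^*(\tilde x) = 0$, verifying (ii), and $|\theta| \leq (\nu/m_2 + \eta)\,N_{n+1}!$ is less than $\epsilon$ once $\nu$ and $\eta$ are chosen small enough. Property (iii) then follows from
\[
b^*(T\tilde x) \;=\; b^*(Tx) - \theta\, b^*(T d_{\gamma_0}) \;\geq\; \frac{\delta}{m_2} - \eta - |\theta|\,\|T\|\,\|d_{\gamma_0}\| \;\geq\; \frac{\delta}{5 m_2},
\]
with $\|d_{\gamma_0}\| \leq 2 A_0 C_0$ by Remark \ref{Schauder decomposition}, provided the parameters are calibrated appropriately.

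The main technical obstacle will be ensuring $\theta$ is strictly positive. The choice $\nu > 0$ forces $b^*(x) > 0$, so one needs $a_{\gamma_0} > 0$. Should the maximum-rank coefficient happen to be negative, a small perturbation fixes it: replace $b^*$ by $(1-\mu)\,b^* + \mu\,e_{\gamma_1}^*$ for a $\gamma_1$ of strictly greater rank than every element of $\supp(b^*)$ and a sufficiently small positive rational $\mu$. Then $\gamma_1$ becomes the unique maximum-rank element of the new support, the computation above gives $b^*(d_{\gamma_1}) = \mu > 0$, and the previous estimates survive with only minor quantitative adjustments. The three small parameters $\nu, \eta, \mu$ are then calibrated in the natural order to satisfy all the inequalities simultaneously.
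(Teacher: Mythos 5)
The approach is genuinely different from the paper's — you set $\Phi(x)=\nu>0$ and later divide by the maximal-rank coefficient, whereas the paper takes $x^*(x)=0$ exactly and introduces a \emph{second} auxiliary functional — but there is a real gap in your argument that the paper's second functional is precisely designed to close.

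The problem is the control of $\theta = b^*(x)/a_{\ga_0}$. You bound $|\theta|\le(\nu/m_2+\eta)N_{n+1}!$ and claim this can be made $<\e$ by choosing $\nu,\eta$ small, but $\eta$ and $N_{n+1}!$ are \emph{coupled}: the rationalization error in $b^*(x)$ is already of order $\#E\cdot\|x\|/N_{n+1}!$ (rounding each of the $\#E$ coefficients to a multiple of $1/N_{n+1}!$), so $\eta\cdot N_{n+1}!$ is bounded below by a quantity of order $\#E\cdot\|x\|$ that does not shrink. More fundamentally, the lower bound $|a_{\ga_0}|\ge 1/N_{n+1}!$ is the worst case and you have no control over the size of the coefficient sitting on the maximal-rank element of $\supp(b^*)$ — the separation argument gives existence of a convex combination, not any structure on where the mass sits. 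Your proposed repair, replacing $b^*$ by $(1-\mu)b^*+\mu e_{\ga_1}^*$, does not help: the new $\theta$ becomes $(1-\mu)b^*(x)/\mu + e_{\ga_1}^*(x)$, and the term $e_{\ga_1}^*(x)$ is not small for generic high-rank $\ga_1$ in a Bourgain--Delbaen space (unlike $d_{\ga_1}^*(x)$, the quantity $e_{\ga_1}^*(x)=d_{\ga_1}^*(x)+c_{\ga_1}^*(x)$ need not tend to $0$ as $\ra(\ga_1)\to\infty$, since $c_{\ga_1}^*$ carries history).

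The paper sidesteps this entirely. It fixes $\ga_0$ of high rank with $\eta := \mathrm{dist}(d_{\ga_0},\langle x,Tx\rangle)>0$ \emph{before} building the norming functional, then constructs two convex combinations: $f_0$ with $f_0(x)=0$, $f_0(Tx)=\de$ (your $\Phi$, but with $\nu=0$), and a second one $g_0$ with $g_0(x)=g_0(Tx)=0$ and $g_0(d_{\ga_0})=\eta$. The final $b^*$ is the average of normalized rational approximations of $f_0$ and $g_0$. This guarantees $b^*(d_{\ga_0})\to \eta/(4m_2)$, a fixed positive constant, while $b^*(x)\to 0$; hence $\theta=b^*(x)/b^*(d_{\ga_0})\to 0$ with no factorial in the denominator to fight. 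If you want to salvage your one-functional scheme, you essentially have to build $g_0$ in anyway, at which point you have the paper's proof. (Minor: in your second paragraph $\eta$ is used both as a summation index and as the error parameter.)
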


\begin{proof}
Use the Hahn-Banach Theorem to find a functional $x^*\in\mathcal{Y}_X^*$ with $\|x^*\| = 1$, $x^*(Tx) = \de$ and $x^*(x) = 0$. Lemma \ref{Gamma is a norming set} and a separation theorem imply that $\overline{\mathrm{conv}}^{w^*}\{\pm m_2e_\ga^*:\ga\in\Ga\}$ contains the unit ball of $\mathcal{Y}_X^*$. This means that there are a finite set $E$ and  $f_0 = \sum_{\ga\in E}c_\ga e_\ga^*$ with $f_0(Tx) = x^*(Tx) = \de$ and $f_0(x) = x^*(x) = 0$ so that $\sum_{\ga\in E}|c_\ga| \leq 2m_2$. Choose collections of rational numbers $(c^n_\ga)_{\ga\in E}$ with $\lim_n c_\ga^n = c_\ga$ for all $\ga\in E$.

Fix $\ga_0\in \Ga$ with $\ra(\ga_0) > \max\{N, \max_{\ga\in E}\ra(\ga)\}$ and $\mathrm{dist}(d_{\ga_0},\langle \{x,Tx\}\rangle) = \eta >0$. Arguing as before find another finite subset $F$ of $\Ga$ and $g_0 = \sum_{\ga\in F}a_\ga e_\ga^*$ with $g_0(d_{\ga_0}) = \eta$, $g_0(x) = g_0(Tx) = 0$ so that $\sum_{\ga\in E}|a_\ga| \leq 2m_2$. Choose collections of rational numbers $(a^n_\ga)_{\ga\in F}$ with $\lim_n a_\ga^n = a_\ga$ for all $\ga\in F$.

Define for each $n\in\N$
$$f_n = \frac{1}{2}\left(\frac{1}{\sum_{\ga\in E}|c_\ga^n|}\sum_{\ga\in E}c_\ga^ne_\ga^* + \frac{1}{\sum_{\ga\in F}|a_\ga^n|}\sum_{\ga\in F}a_\ga^ne_\ga^*\right) \text{ and } x_n = x - \frac{f_n(x)}{f_n(d_{\ga_0})}d_{\ga_0}.$$
Observe that for each $n\in\N$ we have $f_n\in\cup_m B_m$, that $f_n(x_n) = 0$ and that $\lim_n f_n(Tx) = \de/(2\sum_{\ga\in E}|c_\ga|)\geq \de/(4m_2)$. Furthermore, observe that $f_n(d_{\ga_0}) = \sum_{\ga\in F}a_\ga^ne_\ga^*(d_{\ga_0})/(2\sum_{\ga\in F}|a^n_\ga|)$ which yields that $\lim_n f_n(d_{\ga_0}) = \eta/(2\sum_{\ga\in F}|a_\ga|) \geq\eta/(4m_2)$. The proof is concluded by setting $b^* = f_n$ and $\theta = f_n(x)/f_n(d_{\ga_0})$ for $n$ sufficiently large.
\end{proof}

\begin{rmk}
\label{perturbation yields RIS}
 Let $(z_k)_k$ be a $(C,(j_k)_k)$-RIS, $(\ga_k)_k$ be a sequence in $\Ga$ so that $\ra(\ga_k)_k$ increases to infinity, and $(\theta_k)_k$ be real numbers with $0<\theta_k< C/m_{j_k}$ for all $k\in\N$. Then the sequence $(\tilde z_k)_k = (z_k - \theta_kd_{\ga_k})_k$ has a subsequence that is a $2C$-RIS. Indeed, conditions (i), (ii), and (iii) from Definition \ref{RIS} are straightforwardly satisfied by a suitable subsequence. Condition (iv) is trivial once it is observed that for all $\ga\in\Ga$ and all $x^*\in\mathcal{G}^\mathrm{utc}$ we have $x^*(d_\ga) = 0$. 
 \end{rmk}

 \begin{prp}
 \label{distances vanish}
 Let $T:\mathcal{Y}_X\to\mathcal{Y}_X$ be a bounded linear operator. Then for every infinite RIS $(z_k)_k$ we have $\lim_k\mathrm{dist}(Tz_k,\R z_k) = 0$.
 \end{prp}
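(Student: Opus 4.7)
Proceed by contradiction: suppose there exist an infinite $C$-RIS $(z_k)_k$ and $\delta>0$ such that, after passing to a subsequence, $\mathrm{dist}(Tz_k,\R z_k)>\delta$ for every $k$. For each $k$, apply Lemma \ref{perturb the distance} with $\e_k\downarrow 0$ chosen rapidly (say $\e_k<C/m_{j_k}$, where $(j_k)_k$ is the weight-sequence of $(z_k)_k$) and $N_k$ chosen arbitrarily large; this produces $\tilde z_k = z_k-\theta_kd_{\ga_k}$ together with $b_k^*\in\cup_n B_n\subseteq\cup_n A_n$ and $\ga_k\in\Ga$ with $\ra(\ga_k)\geq N_k$, satisfying $b_k^*(\tilde z_k)=0$ and $b_k^*(T\tilde z_k)\geq \delta/(5m_2)$. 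By Remark \ref{perturbation yields RIS}, after passing to a further subsequence $(\tilde z_k)_k$ is a $2C$-RIS. By choosing $N_k$ inductively, we can also arrange that each $b_k^*$ lies in $B_{n_k}$ with $\max\ran_\mathrm{BD}(\tilde z_{k-1})< n_k<\min\ran_\mathrm{BD}(\tilde z_k)$, precisely the interleaving pattern required by the hypothesis of Lemma \ref{adding ris to exact pair}.

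Fix $j_0\in\N$, very large, to be determined. Following the standard recipe for dependent sequences, construct a $(14C,2j_0-1,0)$-dependent sequence $(u_l,\eta_l)_{l=1}^{n_{2j_0-1}}$: choose $j_1$ with $m_{4j_1-2}<n_{2j_0-1}^{-2}$ and $j_{l+1}=\sigma(\xi_l)$ for $l\geq 1$, and for each $l$ apply Lemma \ref{adding ris to exact pair} to a block of $n_{4j_l}$ consecutive terms $\tilde z_{k_i^l}$ ($i=1,\ldots,n_{4j_l}$) with the corresponding $b_{k_i^l}^*$'s in the role of the $A_{q_{k-1}}$-functionals of that lemma. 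The condition $b_{k_i^l}^*(\tilde z_{k_i^l})=0$ then forces $e_{\eta_l}^*(u_l)=0$, where $u_l=(m_{4j_l}/n_{4j_l})\sum_i\tilde z_{k_i^l}$. Let $\ga:=\xi_{n_{2j_0-1}}\in\Ga$ of weight $1/m_{2j_0-1}$ be the concluding element, produced by Lemma \ref{constructing gammas}, and set $u=(1/n_{2j_0-1})\sum_lu_l$. Proposition \ref{zero dependent estimate} yields
\[\|u\|\leq \frac{C_1}{m_{2j_0-1}^2},\qquad\text{hence}\qquad\|Tu\|\leq \frac{C_1\|T\|}{m_{2j_0-1}^2},\]
for a constant $C_1$ depending only on $C,A_0,C_0$.

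The crux is producing a matching lower bound $|e_\ga^*(Tu)|\gtrsim \delta/(m_2m_{2j_0-1})$. Expanding $e_\ga^*(Tu)$ using the evaluation analysis $(p_l,e_{\eta_l}^*,\xi_l)_{l=1}^{n_{2j_0-1}}$ of $\ga$, and then each $e_{\eta_l}^*(P_{(p_{l-1},p_l)}Tu_l)$ using the evaluation analysis $(q_i^l,b_{k_i^l}^*,\zeta_i^l)$ of $\eta_l$, one obtains a leading ``diagonal-of-diagonal'' contribution
\[\frac{1}{n_{2j_0-1}\,m_{2j_0-1}}\sum_l\frac{1}{n_{4j_l}}\sum_i b_{k_i^l}^*\bigl(P_{(q_{i-1}^l,q_i^l)}T\tilde z_{k_i^l}\bigr).\]
Each such term equals $b_{k_i^l}^*(T\tilde z_{k_i^l})-b_{k_i^l}^*(P_{[1,q_{i-1}^l]}T\tilde z_{k_i^l})$ (using $b_{k_i^l}^*\in B_{n_{k_i^l}}$ with $n_{k_i^l}<q_i^l$), and Proposition \ref{blocks vanish on initial parts} drives the correction term to zero as $k_i^l\to\infty$, leaving $\geq\delta/(5m_2)$. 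Cross terms $b_{k_i^l}^*(P_{(q_{i-1}^l,q_i^l)}T\tilde z_{k_s^l})$ for $s>i$ are controlled by Proposition \ref{blocks vanish on initial parts} applied with $n=q_i^l$ fixed and $k_s^l\to\infty$. The harder $s<i$ cross terms, together with the $d_{\zeta_i^l}^*$ and $d_{\xi_l}^*$ pieces, are suppressed by a diagonal-type refinement of the subsequence exploiting the norm-convergent Schauder-decomposition representation $T\tilde z_{k_s^l}=\sum_m P_{\{m\}}T\tilde z_{k_s^l}$: for $s$ fixed we may choose the subsequent BD-levels $q_j^l$ ($j>s$) beyond the effective BD-tail of $T\tilde z_{k_s^l}$, making $\|P_{(q_{i-1}^l,n_{k_i^l}]}T\tilde z_{k_s^l}\|\leq \e/n_{4j_l}$. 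The net result is $|e_\ga^*(Tu)|\geq \delta/(10m_2m_{2j_0-1})$. Comparing with the upper bound gives $m_{2j_0-1}\leq 10\,C_1m_2\|T\|/\delta$, which is violated for large $j_0$ by \eqref{mjnjproperties}, yielding the contradiction.

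The main obstacle is the off-diagonal bookkeeping in the lower bound, especially the $s<i$ cross terms, where Proposition \ref{blocks vanish on initial parts} does not apply directly (the ``bad'' BD-range $(q_{i-1}^l,n_{k_i^l}]$ grows with $i$ while $s$ is fixed). Managing this rests on interleaving three diagonal choices simultaneously: the tail-decay of each $T\tilde z_{k_s^l}$ along the Schauder decomposition, the initial-segment decay $\|P_nT\tilde z_k\|\to 0$ furnished by Proposition \ref{blocks vanish on initial parts}, and the structural requirement that $b_k^*\in B_{n_k}$ lies between consecutive $\tilde z_k$'s.
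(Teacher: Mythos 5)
Your high-level strategy matches the paper's: argue by contradiction, perturb each $z_k$ via Lemma \ref{perturb the distance} to kill the diagonal part $b_k^*(\tilde z_k)$, build a $(C,2j_0-1,0)$-dependent sequence, and play the upper bound from Proposition \ref{zero dependent estimate} against a lower bound for $e_\ga^*$ applied to the image under $T$. However, there are two concrete points where your write-up diverges from what actually works.

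\textbf{A genuine gap in the interleaving.} You claim that ``by choosing $N_k$ inductively'' one can arrange $b_k^*\in B_{n_k}$ with $n_k<\min\ran_\mathrm{BD}(\tilde z_k)$. This cannot be done. The parameter $N$ in Lemma \ref{perturb the distance} only forces $\ra(\ga_0)\geq N$; it does not bound the rank of $b^*$ from above. In fact, looking inside the proof of that lemma, the second separation step produces $g_0=\sum_{\ga\in F}a_\ga e_\ga^*$ with $g_0(d_{\ga_0})\neq 0$, which forces $F$ to contain some $\ga$ with $\ra(\ga)\geq\ra(\ga_0)\geq N$; so making $N_k$ large pushes the rank of $b_k^*$ \emph{up}, not down. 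Since $b_k^*$ and $\tilde z_k$ are linked (you need $b_k^*(\tilde z_k)=0$ and $b_k^*(T\tilde z_k)\geq\de/(5m_2)$ to hold for that same pair), you also cannot first fix $b_k^*$ and then slide $\tilde z_k$ above it. The paper's proof instead sets $p_k=\max\{N_k,\max\ran_\mathrm{BD}(\tilde z_k)+1\}$ and places $b_k^*\in A_{p_k}$ with $p_k$ \emph{above} the BD-range of $\tilde z_k$; this is what Lemma \ref{constructing gammas} actually requires ($b_i^*\in A_{p_i-1}$), and then the condition $b_k^*(\tilde z_k)=0$ is nonvacuous and is exactly what the perturbation lemma secures. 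Note incidentally that the hypothesis ``$b_k\in A_{q_{k-1}}$'' as printed in Lemma \ref{adding ris to exact pair} should be read as $b_k\in A_{q_k-1}$, which is what its own proof (via Lemma \ref{constructing gammas}) needs; as literally stated, the condition $b_k(z_k)=0$ would be automatic for $B$-type functionals and the lemma would be useless in the present proof.

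\textbf{Packaging of the error terms.} Your one-stage attack forces you to track three families of off-diagonal contributions (the $s>i$ initial-part terms, the $s<i$ tail terms, and the $d_{\zeta}^*,d_\xi^*$ terms) simultaneously in a doubly-indexed expansion, and you acknowledge this as the main obstacle. The paper avoids this by factoring out a clean intermediate claim: for every $j,N$ there is a $(14C,2j,M,0)$-exact pair $(z,\ga)$ with $\min\supp_\mathrm{BD}(z)>N$ and $e_\ga^*(Tz)\geq\de/(6m_2)$. Within the claim's proof, all three error families collapse into the single hypothesis $\sum_k\|T\tilde z_k-P_{(p_{k-1},p_k)}T\tilde z_k\|<\eta$, achieved by a diagonal pass through the sequence using Proposition \ref{blocks vanish on initial parts} for the initial parts and the shrinking decomposition for the tails. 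The dependent sequence is then assembled from these pre-built exact pairs with one further use of weak nullity between levels. Your proposal is salvageable along these lines, but as written the interleaving step fails and the error bookkeeping is left at the sketch level precisely at the point where the argument is most delicate.
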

 
 \begin{proof}
 Assume that there is a $(C,(j_k)_k)$-RIS $(z_k)_k$ and $\de > 0$ so that for all $k\in\N$ we have $\mathrm{dist}(Tz_k,\R z_k) > \de$. We will show that this would mean that $T$ is unbounded.
 
 We shall first prove the following claim. For every $j$ and $N\in\N$ there exists an $(14C,2j,M,0)$-exact pair $(z,\ga)$ so that
 \begin{equation*}
 e_\ga^*(Tz) \geq \de/(6m_2) \text{ and } \min\supp_\mathrm{BD}(z) > N.
 \end{equation*}


 For every $k\in\N$ we apply Lemma \ref{perturb the distance} to find a sequence $(\tilde z_k)_k = (z_k - \theta_k d_{\ga_k})_k$, with $\ra(\ga_k)$ increasing to infinity and $0<\theta_k<C/m_{j_k}$, and $(b_k^*)_k$ in $A_{N_k}$, for some $N_k$, so that $b_k^*(\tilde z_k) = 0$ and $b_k^*(T\tilde z_k) \geq \de/(5m_2)$. By Remark \ref{perturbation yields RIS} we may assume that $(\tilde z_k)_k$ is $2C$-RIS. Define $p_0 = 0$ and $p_k = \max\{N_k,\max\ran_\mathrm{BD}(\tilde z_k)+1\}$. By passing, if it is necessary, to a subsequence we may assume $\ran_\mathrm{BD}(z_k) \subset (p_{k-1},p_k)$ for all $k\in\N$. Utilizing the weakly null property of $(T\tilde z_k)_k$ we may assume that $\sum_k\|T\tilde z_k - P_{(p_{k-1},p_k)}\tilde z_k\| <\eta$, where $\eta$ is a positive number to be determined later. We may assume that $\min\supp_\mathrm{BD}(\tilde z_1) > N$ and that $q_1>2j$. By Lemma \ref{adding ris to exact pair}, if $z = (m_{2j}/n_{2j})\sum_{k=1}^{n_{2j}}\tilde z_k$ and there are $(\zeta_k)_{k=1}^{n_{2j}}$ with $\ra(\zeta_k) = p_k$ and $\ga\in\Ga$ with $e_\ga^* = 1/m_{2j}\sum_{k=1}^{n_{2j}}b_k^*\circ P_{(p_{k-1},p_k)} + \sum_{k=1}^{n_{2j}}d_{\zeta_k}^*$ so that $(z,\ga)$ is a $(14C, 2j, M,0)$-exact pair. We calculate
 \begin{equation*}
 \begin{split}
 e_\ga^*(T z) &= \frac{1}{n_{2j}}\sum_{k=1}^{n_{2j}}b_k^*\circ P_{(p_{k-1},p_k)}(T \tilde z_k) + \sum_{k=1}^{n_{2j}}d_{\zeta_k}^*(T z)\\
 &=\frac{1}{n_{2j}}\sum_{k=1}^{n_{2j}}b_k^*(T\tilde z_k) - \frac{1}{n_{2j}}\sum_{k=1}^{n_{2j}}b_k^* (T\tilde z_k - P_{(p_{k-1},p_k)}T\tilde z_k) + \sum_{k=1}^{n_{2j}}e_{\zeta_k}^*\circ P_{\{q_k\}}(T z)\\
 &\geq \frac{\de}{5m_2} - \frac{1}{n_{2j}}\sum_{k=1}^{n_{2j}}\|T\tilde z_k - P_{(p_{k-1},p_k)}T\tilde z_k\| - \sum_{k=1}^{n_{2j}}\|P_{\{p_k\}}Tz\|\\
 &\geq  \frac{\de}{5m_2}  - \frac{\eta}{n_{2j}} - \frac{m_{2j}}{n_{2j}}\sum_{k=1}^{n_{2j}}\sum_{l=1}^{n_{2j}}\|P_{\{p_k\}}T\tilde z_m\|\\
 & \geq \frac{\de}{5m_2}  - \frac{\eta}{n_{2j}} - (2A_0C_0)\frac{m_{2j}}{n_{2_j}}\sum_{k=1}^{n_{2j}}\sum_{l=1}^{n_{2j}}\|T \tilde z_m - P_{(p_{k-1},p_k)}T\tilde z_m\|\\
 & \geq \frac{\de}{5m_2}  - \frac{\eta}{n_{2j}} - (2A_0C_0)m_{2j}\eta \geq \frac{\de}{6m_2},
 \end{split}
 \end{equation*}
 for $\eta>0$ sufficiently small.
 
 We now use the claim to construct for any given $j_0\in\N$ a $(14C,2j_0-1,0)$-dependent sequence, with associated components $0=p_1<\cdots p_{n_{2j_0}-1}$, $(\eta_k)_{k=1}^{n_{2j_0}-1}$, and $(\xi_k)_{k=1}^{n_{2j_0}-1}$ so that
 \begin{equation*}
 e_{\xi_k}^*(Tz_k) > \de/(6m_2)\text{ and }\|Tz_k - P_{(p_{k-1},p_k)}Tz_k\| < \e/n_{2j_0-1},
 \end{equation*}
 where $0<\e<\de/(84m_2m_{2j_0-1})$, for $1 \leq k \leq n_{2j_0-1}$. We start by choosing a $(14C, 4j_1-2,M,0)$-exact pair $(z_1,\eta_1)$, for $j_1$ with $m_{4i_1-2}>n_{2j_0-1}^2$, satisfying the conclusion of the claim and we also choose $p_1$ sufficiently large so that $\max\{M, \ra(\eta_1),\max\supp_{BD}(z_1)\} < p_1$ as well as $\|Tz_1 - P_{(0,p_1)}Tz_1\| < \e/n_{2j_0-1}$. Clearly, $(z_1,\eta_1)$ is also a $(14C, 4j_1-2,p_1,0)$-exact pair. Set $\xi_1 = (p_1,1/m_{2j_0-1},\eta_1)$ which is in $\De_{p_1}^{\mathrm{Odd}_0}$. If we have chosen $(z_k,\eta_k)$, $\xi_k$, $p_k$ for $1\leq k\leq a < n_{2j_0-1}$ set $j_{a+1} = \sigma(\xi_k)$ and apply the claim to find a sequence of $(14C, 2j_{a+1},M_n,0)$-exact pairs $(z^{a+1}_n,\eta_n^{a+1})_{n\in\N}$ with $p_a <\min\supp_\mathrm{BD}(z^{a+1}_n) \to  \infty$. By the weak null property of $(z^{a+1}_n)_n$ for $n$ sufficiently large we have $\|Tz^{a+1}_n - P_{(0,p_a)}Tz^{a+1}_n\| < \e/(2n_{2j_0-1})$. Set $(z_{a+1},\eta_{a+1}) = (z_n^{a+1},\eta_n^{a+1})$ and choose $p_{a+1}$ sufficiently large so that $\max\{M_n, p_a,\ra(\eta_{a+1}),\max\supp_{BD}(z_{a+1})\} < p_{a+1}$ and $\|P_{(p_{a+1},\infty)}Tz_1\| < \e/(2n_{2j_0-1})$. Set $\xi_{a+1} = (p_{a+1},\xi_a,1/m_{2j_0-1},\eta_{a+1})$ which is in $\De_{p_{a+1}}^{\mathrm{Odd}_1}$.
 
Having chosen the dependent sequence set $z = (m_{2j_0-1}/n_{2j_0-2})\sum_{k=1}^{n_{2j_0-1}}z_k$ and $\ga = \xi_{n_{2j_0-1}}$. By the analysis of $\ga$ we obtain $e_\ga^* = (1/m_{2j_0-1})\sum_{k=1}^{n_{j_0-1}}e^*_{\eta_k}\circ P_{(p_{k-1},p_k)} + \sum_{k=1}^{n_{2j_0-1}}d_{\xi_k}^*$. By Proposition \ref{zero dependent estimate} we have $\|z\| \leq 462A_0C_0C/m_{2j_0-1}$. The same calculations as the ones above yield that $e_\ga^*(Tz) \geq \de/(6m_2) - \e/{n_{2j_0-1}}- m_{2j_0-1}\e \geq \de/ \geq \de/(7m_2)$, by the choice of $\e$. This yields $\|T\| \geq \frac{\de }{3234m_2A_0C_0C}m_{2j_0-1}$. Choosing $m_{2j_0-1}$ large enough yields a conradiction. 
 \end{proof}

\begin{prp}
\label{scalar plus horizontally compact}
Let $T:\mathcal{Y}_X\to\mathcal{Y}_X$ be a bounded linear operator. Then there exists a scalar $\la$ so that $T - \la I$ is horizontally compact.
\end{prp}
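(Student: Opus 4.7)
The plan is to combine Proposition \ref{distances vanish} with an interleaving trick and then appeal to Proposition \ref{horizontal compactness is witnessed on ris}. First I would observe that for any RIS $(z_k)_k$ with $\inf_k\|z_k\|>0$, Proposition \ref{distances vanish} supplies scalars $\lambda_k\in\R$ with $\|Tz_k-\lambda_k z_k\|\to 0$; the boundedness of $T$ together with the lower bound on $\|z_k\|$ forces $(\lambda_k)_k$ to be bounded, so after passing to a subsequence $\lambda_k\to\lambda$ and $\|(T-\lambda I)z_k\|\to 0$ along that subsequence.

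The central step is to show that the scalar $\lambda$ obtained above does not depend on the particular RIS. Given two bounded-below RIS $(z_k)_k$ and $(w_k)_k$ yielding limits $\lambda$ and $\mu$, I would first pass to interleaved subsequences so that
\[
\max\ran_\mathrm{BD}(z_k)<\min\ran_\mathrm{BD}(w_k)<\max\ran_\mathrm{BD}(w_k)<\min\ran_\mathrm{BD}(z_{k+1}),
\]
and then use Remark \ref{ris sum to ris} to thin further so that $(y_k):=(z_k+w_k)_k$ is itself an RIS. Another application of the first step produces $\nu\in\R$ with $\|Ty_k-\nu y_k\|\to 0$. Combining the three asymptotic identities gives
\[
\bigl\|(\lambda-\nu)z_k+(\mu-\nu)w_k\bigr\|\longrightarrow 0.
\]
Because $z_k$ and $w_k$ occupy disjoint BD-ranges, the intervals projections $P_{\ran_\mathrm{BD}(z_k)}$ isolate the first summand; their uniform boundedness (Remark \ref{Schauder decomposition}) together with $\inf_k\|z_k\|>0$ then forces $\nu=\lambda$, and symmetrically $\nu=\mu$, so $\lambda=\mu$.

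Having obtained this universal $\lambda$ I would set $S=T-\lambda I$ and verify $\|Sz_k\|\to 0$ for every RIS $(z_k)_k$. If $\|z_k\|\to 0$ along a subsequence this is immediate from boundedness of $S$. Otherwise $\inf\|z_k\|>0$ along a subsequence, and the first step together with the uniqueness just proved yields $\|Sz_k\|\to 0$ along that subsequence. The usual ``every subsequence has a further subsequence converging to $0$'' argument then upgrades this to $\|Sz_k\|\to 0$ outright. The degenerate case in which no RIS with $\inf\|z_k\|>0$ exists is handled by taking $\lambda=0$ from the outset. Proposition \ref{horizontal compactness is witnessed on ris} now concludes horizontal compactness of $S=T-\lambda I$.

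I expect the main difficulty to lie in the uniqueness argument of the second paragraph: one must check that the interleaved sum $(z_k+w_k)_k$, after suitable thinning, really does satisfy all four conditions of Definition \ref{RIS} (most notably condition (iv) concerning $\mathcal{G}^\mathrm{utc}$), and one must extract the two summands from $\|(\lambda-\nu)z_k+(\mu-\nu)w_k\|\to 0$ using only the block structure and the uniform bounds on BD-projections. The remaining steps are routine subsequence extractions and direct applications of Propositions \ref{distances vanish} and \ref{horizontal compactness is witnessed on ris}.
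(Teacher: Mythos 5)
Your proposal is correct and takes essentially the same approach as the paper: fix a reference bounded-below RIS to extract the scalar $\lambda$ via Proposition \ref{distances vanish}, prove the scalar is universal by comparing to an interleaved RIS formed from the sum (the paper uses the difference $(z_k-w_k)_k$, which is equivalent) and invoking Remark \ref{ris sum to ris}, isolate each summand using the uniform bound on the BD-projections from Remark \ref{Schauder decomposition}, and then conclude via Proposition \ref{horizontal compactness is witnessed on ris}. The details you flag as potentially delicate (condition (iv) of Definition \ref{RIS} for the interleaved sum, and extracting the two parts from disjoint BD-ranges) are handled exactly as you anticipate, by Lemma \ref{they basically all saitsfy (iv)} and the projection bound respectively.
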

 
\begin{proof}
Let $(z_k)_k$ be an arbitrary $C$-RIS, that is bounded from bellow, say by some $\e>0$. By Proposition \ref{distances vanish} we may find a scalar $\la$ and pass to a subsequence so that $\lim_k\|Tz_k - \la z_k\| = 0$.  We will show that for any bounded block sequence $(w_k)_k$ we have $\lim_k\|Tw_k - \la w_k\| = 0$. By Proposition \ref{horizontal compactness is witnessed on ris} is sufficient to verify this only for a $C'$-RIS $(w_k)_k$. If $\lim_k\|w_k\| = 0$ then this is true. Otherwise we may assume $\|w_k\| > \tilde \e$ for all $k\in\N$ and for some $\tilde \e>0$. By passing to subsequences of $(z_k)$ and $(w_k)_k$ we may assume that $\ran(z_k)$ and $\ran(w_m)$ are disjoint for all $k$ and $m$ in $\N$. By Remark \ref{ris sum to ris} the sequence $(z_k - w_k)_k$ has a $(C+C')$-RIS subsequence. Apply Lemma \ref{distances vanish} and pass to further subsequences and to find scalars $\la'$ and $\mu$ so that $\lim_k\|Tw_k - \la'w_k\| = 0$ and $\lim_k\|T(z_k - w_k) - \mu(z_k - w_k)\| = 0$. We can consequently deduce that $\lim_k\|\la z_k - \la'w_k - \mu(z_k - w_k)\| = 0$. By the fact that the ranges of $z_k$ and $w_k$ are disjoint we obtain
\begin{equation*}
\frac{1}{2A_0C_0}\max\left\{|\la - \mu|\e,|\la' - \mu|\tilde \e\right\} \leq \lim_k\left\|(\la - \mu)z_k -  (\la' - \mu)w_k\right\| = 0
\end{equation*}
which yields $\la = \la'$, as desired.
\end{proof}

 \section{Diagonal plus compact approximations}
 \label{Diagonal plus compact approximations}
 In this section we finally prove that every bounded linear operator on the the space $\mathcal{Y}_X$ can be approximated by a sequence of operators of the form diagonal (with respect to the Schauder decomposition $(Z_k)_k$) plus compact. This is the main theorem used to prove the desired property of the Calkin algebra of $\mathcal{Y}_X$.

\begin{prp}
\label{lower bound}
Let $(z_i)_i$ be a block sequence in $\mathcal{Y}_X$ (with respect to the Schauder decomposition $(Z_k)_k$). Then $(z_i)_i$ has a subsequence $(z_{i_k})_k$ with the property that for every $j\in\N$, $1\leq a\leq n_{2j}$, and every further block vectors $(w_i)_{i=1}^{a}$ of $(z_{i_k})_k$ there exists $\ga\in\Ga$ with
\begin{equation*}
\left\|\sum_{i=1}^{a}w_i\right\| \geq e_\ga^*\left(\sum_{i=1}^{a}w_i\right) \geq \frac{1}{3m_{2j}}\sum_{i=1}^{a}\|w_i\|.
\end{equation*}
\end{prp}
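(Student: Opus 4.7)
The strategy is to pass to a sparse subsequence $(z_{i_k})_k$ of $(z_i)_i$ in which one can approximately norm arbitrary block vectors by functionals lying in the Bourgain-Delbaen norming sets $A_n$, and then to glue these norming functionals into the evaluation analysis of a single $\ga\in\Ga$ of weight $1/m_{2j}$ by means of Lemma \ref{constructing gammas}.

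First, I would extract, by a diagonal/compactness argument, a subsequence $(z_{i_k})_k$ together with natural numbers $q_k$ satisfying $\max\ran_\mathrm{BD}(z_{i_k})<q_k<\min\ran_\mathrm{BD}(z_{i_{k+1}})$ and $q_k\to\infty$, such that every unit vector $w\in [z_{i_1},\ldots,z_{i_k}]$ admits some $b^*\in A_{q_k-1}$ with $b^*(w)\geq 1-\e_k$, where $\e_k\downarrow 0$. This is feasible because the norming set of $\mathcal{Y}_X$ in \eqref{norm triad} equals $\mathcal{G}^\mathrm{utc}\cup\bigcup_k(1/A_0)B_{X_k^*}\cup\{\pm e_\eta^*:\eta\in\Ga\}$, and each of $\mathcal{G}^\mathrm{utc}_n$, $F_k^n\subset K_n$, and $\{\pm e_\eta^*:\eta\in\Ga_n\}\subset B_n$ exhausts its corresponding family as $n\to\infty$; a compactness argument on the unit sphere of the finite-dimensional span then yields a single level $n=q_k-1$ that works for all its unit vectors simultaneously.

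Given $j\in\N$, $a\leq n_{2j}$, and block vectors $w_1<\cdots<w_a$ of $(z_{i_k})_k$ (which, after discarding an initial segment of the subsequence if necessary, we may assume satisfy $\min\ran_\mathrm{BD}(w_1)\geq 2j+1$), set $p_i=q_{k_i}$, where $k_i$ is the largest $k$ with $z_{i_k}\in\ran_\mathrm{BD}(w_i)$. These satisfy $0=p_0<p_1<\cdots<p_a$, $p_1\geq 2j$, $\max\ran_\mathrm{BD}(w_i)<p_i<\min\ran_\mathrm{BD}(w_{i+1})$, and $p_i\notin\ran_\mathrm{BD}(w_l)$ for any $l$. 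For each $i$ choose $b_i^*\in A_{p_i-1}$ with $b_i^*(w_i)\geq (1-\e_{k_i})\|w_i\|$ (fixing the sign via symmetry of $A_{p_i-1}$) and apply Lemma \ref{constructing gammas} to obtain $\xi_i\in\De_{p_i+1}$ and $\ga\in\Ga$ of weight $1/m_{2j}$ with evaluation analysis $(p_i,\xi_i,b_i^*)_{i=1}^a$. The decomposition \eqref{eanalysis eq} then gives
\[
e_\ga^*\Big(\sum_{i=1}^a w_i\Big)=\sum_{i=1}^a d_{\xi_i}^*\Big(\sum_l w_l\Big)+\frac{1}{m_{2j}}\sum_{i=1}^a b_i^*\Big(P_{(p_{i-1},p_i)}\sum_l w_l\Big)=\frac{1}{m_{2j}}\sum_{i=1}^a b_i^*(w_i),
\]
since $d_{\xi_i}^*=e_{\xi_i}^*\circ P_{\{p_i\}}$ vanishes on each $w_l$ (as $p_i\notin\ran_\mathrm{BD}(w_l)$) and $P_{(p_{i-1},p_i)}w_l=\de_{il}w_i$. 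Combining with the lower norming bound yields $e_\ga^*(\sum_i w_i)\geq(1/(3m_{2j}))\sum_i\|w_i\|$ once $\e_k$ is taken sufficiently small, while $\|\sum w_i\|\geq e_\ga^*(\sum w_i)$ is immediate since $\|e_\ga^*\|\leq 1$.

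The main obstacle lies in Step 1: norming functionals for a given block vector may live in any of the three qualitatively different families comprising $A_n$, and for the Bourgain-Delbaen family $B_n$ the rank of the norming coordinate $\pm e_\eta^*$ is a priori unbounded. The argument is rescued by the finite-dimensionality of the span at each stage of the diagonal construction, together with the symmetry and monotonicity of $(A_n)_n$ and the exhaustion properties of its three constituent families inside the norming set of $\mathcal{Y}_X$.
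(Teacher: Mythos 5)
The first stage of your argument (extracting a subsequence $(z_{i_k})_k$ along which finite-dimensional spans are $(1-\e_k)$-normed by $A_{q_k}$ for levels $q_k$ interlacing the BD-ranges) matches the paper's construction via Remark \ref{how things are normed}. The gap is in the gluing step: Lemma \ref{constructing gammas} requires $2j\leq p_1$, and with your choice $p_1=q_{k_1}$ this can fail whenever $j$ is large relative to where the block $w_1$ begins. Your fix, "after discarding an initial segment of the subsequence if necessary, we may assume $\min\ran_\mathrm{BD}(w_1)\geq 2j+1$," is not available: the subsequence $(z_{i_k})_k$ must be fixed once, before $j$, $a$, and the $(w_i)$ are presented, and the $(w_i)$ are \emph{given} block vectors of that subsequence (they may start at $z_{i_1}$). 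A subsequence tailored to $j$ would only prove the statement with the quantifiers reordered, which is strictly weaker.

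The paper's proof handles this obstruction differently. Assuming without loss that $a=n_{2j}$, it does \emph{not} feed all $n_{2j}$ vectors into a single evaluation analysis. Instead it takes $b_1^*=0$ and $p_1 = N_{i_{k_j}}$; since $w_1<\dots<w_j$ forces $k_j\geq j$ and the subsequence was chosen with $N_{i_k}\geq 2k$, this gives $p_1\geq 2j$ for free. The functionals $\tilde b_d^*$, $d=j+1,\dots,n_{2j}$, are then placed at positions $p_2<\dots<p_{n_{2j}-j+1}$, so the resulting $\ga$ only recovers $(1-\e)m_{2j}^{-1}\sum_{d>j}\|w_d\|$. The first $j$ vectors are normed one at a time by separate elements $\ga_d$ of weight $1/m_2$, and the two contributions are combined using $j/m_{2j}\leq 1/m_2$ to absorb the discarded terms, which is the source of the extra factor (hence the $1/(3m_{2j})$ rather than the cleaner $(1-\e)/m_{2j}$ your argument would have produced had it been valid). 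If you insert this "pad the first block with $b_1^*=0$ and handle $w_1,\dots,w_j$ separately" device into your write-up, the rest of your argument (the cancellation $d_{\xi_i}^*(w_l)=0$ and $b_i^*\circ P_{(p_{i-1},p_i)}w_l=\delta_{il}b_i^*(w_i)$) goes through.
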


\begin{proof}
Let us denote by $E_i$ the support of each vector $z_i$ with respect to $(Z_k)_k$. We fix $0<\e\leq 1/3$. By remark \ref{how things are normed} we may choose for each $i_0\in\N$ an index $N_{i_0}$ so that for every $w$ in the linear span of $(z_i)_{i\leq i_0}$ satisfies
$$(1-\e)\|w\| \leq \max\{b^*(w): b^*\in A_{N_{i_0}}\}.$$
(for the definition of $A_{N_{i_0}}$ see page \pageref{even0}, paragraph before \eqref{even0}) As the sets $(A_n)_n$ are increasing we may choose the sequence $(N_i)_i$ to be strictly increasing. Pass to a subsequence $(i_k)_k$ so that $N_{i_k} + 1 < \min(E_{i_{k+1}})$ as well as $2k\leq N_{i_k}$  for all $k\in\N$.

Let now $j\in\N$, $1\leq a\leq n_{2j}$, and $(w_d)_{d=1}^a$ be block vectors of $(z_{i_k})_k$. We may assume that $a = n_{2j}$. For each $d$ let $i_{k_d}$ be the maximum of the support of the vector $w_d$ with respect to $(z_{i_k})_k$. Then there is $\tilde b_d^*\in A_{N_{i_{k_d}}}$ with $\tilde b_d^*(w_d) \geq (1-\e)\|w_d\|$. We set $b_1^* = 0$, $p_1 = N_{i_{k_j}},$ $b_2^* = \tilde b_{j+1}^*$, $p_2 = N_{i_{k_{j+1}}}$,\ldots, $b^*_{n_{2j}-j+1} = \tilde b^*_{n_{2j}}$, $p_{n_{2j}-j+1} = N_{i_{k_{n_{2j}-j+1}}}$.
Then there are $\xi_l\in\De_{p_l+1}$ for $1\leq l\leq n_{2j}-j+1$ and $\ga\in\Ga$ with evaluation analysis $(p_l,\xi_l,b_l^*)_{l=1}^{n_{2j}-j+1}$. It follows that
\begin{equation*}
\label{one estimate required in this lemma}
\left\|\sum_{d=1}^{n_{2j}}w_d\right\| \geq e_\ga^*\left(\sum_{d=1}^{n_{2j}}w_d\right) = \frac{1}{m_{2j}}\sum_{d=j+1}^{n_{2j}}\tilde b_d^*(w_d) \geq (1-\e)\frac{1}{m_{2j}}\sum_{d=j+1}^{n_{2j}}\|w_d\|.
\end{equation*}
Similarly, for $1\leq d\leq j$ there is $\ga_d\in\Ga$ with $\|\sum_{d=1}^{n_{2j}}w_d\| \geq e_{\ga_d}^*(\sum_{i=1}^{a}w_i) \geq (1-\e)/m_{2}\|w_d\|.$
Finally,
\begin{align*}
\frac{1}{m_{2j}}\sum_{i=1}^{a}\|w_i\| &\leq \frac{j}{m_{2j}}\max_{1\leq d\leq j}\|w_d\| + \frac{1}{m_{2j}}\sum_{d=j+1}^{n_{2j}}\|w_d\|\\
&\leq \frac{1}{m_2}\max_{1\leq d\leq j}\|w_d\| + \frac{1}{m_{2j}}\sum_{d=j+1}^{n_{2j}}\|w_d\|\leq \frac{2}{1-\e}\left\|\sum_{d=1}^{n_{2j}}w_d\right\|.
\end{align*}
\end{proof}

\begin{prp}
\label{you can only escape from component  compactly}
Let $T:\mathcal{Y}_X\to\mathcal{Y}_X$ be a bounded linear operator. Then for every $k\in\N$ the operator $TP_{\{k\}} - P_{\{k\}}TP_{\{k\}}$ is compact.
\end{prp}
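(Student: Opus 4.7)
The plan is to apply Proposition \ref{scalar plus horizontally compact} to write $T=\lambda I+S$ with $S$ horizontally compact, so that $TP_{\{k\}}-P_{\{k\}}TP_{\{k\}}=(I-P_{\{k\}})SP_{\{k\}}$ after the $\lambda I$ terms cancel. Writing $P_{\{k\}}=j_kp_kP_{\{k\}}+FP_{\{k\}}$, where $j_k:X_k\hookrightarrow Z_k$ is the natural inclusion, $p_k:Z_k\to X_k$ the natural projection, and $F$ the finite-rank projection of $Z_k$ onto its $i_k[\{0\}\oplus\ell_\infty(\Delta_k)]$ component, and using $(I-P_{\{k\}})j_k=0$, I reduce modulo a finite-rank (hence compact) term to showing that $(I-P_{\{k\}})Sj_k:X_k\to\mathcal{Y}_X$ is compact. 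Since $X_k^*\cong\ell_1$ is separable, $X_k$ does not contain $\ell_1$; by Rosenthal's theorem and Bessaga--Pe\l czy\'nski applied to differences, this further reduces to showing $\|(I-P_{\{k\}})Sj_kx_n\|\to 0$ for every normalized, finitely supported, weakly null block basic sequence $(x_n)$ in $X_k$ with respect to its basis $(t_{k,i})_i$.

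Suppose for contradiction that $\|y_n\|\geq\varepsilon$ for some $\varepsilon>0$, where $y_n:=(I-P_{\{k\}})Sj_kx_n$. Each $y_n$ is weakly null in the complemented subspace $(I-P_{\{k\}})\mathcal{Y}_X$, which inherits a Schauder decomposition from $(Z_n)_n$, so Bessaga--Pe\l czy\'nski furnishes a subsequence and a bona fide BD-block basic sequence $(\tilde y_n)$ in $\mathcal{Y}_X$ with $\|y_n-\tilde y_n\|<2^{-n}$. A further thinning guarantees that $(\tilde y_n)$ satisfies the conclusion of Proposition \ref{lower bound}. Simultaneously, standard AH-type analysis inside $X_k$ will allow me to arrange, after passing to a common further subsequence, that $(x_n)$ is itself a $C$-RIS in $X_k$ with respect to its own defining pair $(\tilde m_j,\tilde n_j)_{j\in L_k}$.

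The contradiction is extracted from a mismatch between sharp upper and lower bounds on $\|\sum_{i=1}^{n_j}\tilde y_i\|$ for $j$ even and large, exploiting the crucial fact that $L_0\cap L_k=\emptyset$. Recall $(m_j,n_j)=(\tilde m_{\ell^0_j},\tilde n_{\ell^0_j})$, and set $j_0:=\ell^0_j\in L_0\subseteq\N\setminus L_k$. Because $j_0$ is then a \emph{foreign} index for the $X_k$-norming set, the derivation of Corollary \ref{estimate for other norming sets}---which rests only on the basic inequality and the refined auxiliary-space estimate \eqref{more careful estimate}---transfers verbatim inside $X_k$ to yield
\[
\Big\|\sum_{i=1}^{n_j}x_i\Big\|_{X_k}\leq \frac{C\,n_j}{m_j^2}.
\]
Applying the bounded operator $(I-P_{\{k\}})Sj_k$ gives $\|\sum_{i=1}^{n_j}y_i\|\leq C'n_j/m_j^2$, and the summable perturbation yields $\|\sum_{i=1}^{n_j}\tilde y_i\|\leq C'n_j/m_j^2+2$. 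On the other hand, since $j=2(j/2)$ makes $m_{2(j/2)}=m_j$, Proposition \ref{lower bound} applied to the first $n_j$ terms of $(\tilde y_i)$ delivers $\|\sum_{i=1}^{n_j}\tilde y_i\|\geq \varepsilon n_j/(6m_j)$ for $n$ sufficiently large. Combining, $\varepsilon m_j\leq 6C'+O(1)$, which contradicts $m_j\to\infty$.

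The principal technical obstacle will be verifying that the argument behind Corollary \ref{estimate for other norming sets} transfers inside $X_k$ with $j_0\in L_0$ serving as the foreign weight index, and cleanly orchestrating the three simultaneous subsequence extractions so that $(\tilde y_n)$ is both a BD-block basic approximant of $(y_n)=((I-P_{\{k\}})Sj_kx_n)$ with summable errors and enjoys the conclusion of Proposition \ref{lower bound}, while $(x_n)$ is at the same time a $C$-RIS inside $X_k$.
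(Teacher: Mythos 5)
Your overall strategy matches the paper's: you derive a contradiction by playing a foreign-weight upper bound inside $X_k$ (valid because $L_0\cap L_k=\emptyset$, so the refined estimate \eqref{more careful estimate} transfers to $W[(\mathcal{A}_{3\tilde n_j},1/\tilde m_j)_{j\in L_k}]$) against the $\ell_1$-type lower bound of Proposition \ref{lower bound} applied to the image block sequence, and the quantitative mismatch forces $m_j$ to be bounded. This is exactly the paper's argument. Two of your reduction steps, however, have real gaps.

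First, the claim that ``after passing to a common further subsequence, $(x_n)$ is itself a $C$-RIS in $X_k$'' is false in general: a normalized weakly null block sequence need not have an RIS subsequence (condition (iii) of Definition \ref{RIS} can fail for every $C$ if, say, each $x_n$ carries a fixed low-weight local coordinate bounded below). The correct route, and the one the paper takes, is to prove $\|Rz_n\|\to 0$ for \emph{every} RIS $(z_n)$ in $X_k$ and then invoke the analogue inside $X_k$ of Proposition \ref{horizontal compactness is witnessed on ris} (i.e.\ \cite[Proposition 5.11]{AH}), whose decomposition-and-diagonalization trick upgrades vanishing on RIS to vanishing on all bounded block sequences; combined with $\ell_1\not\hookrightarrow X_k$ this yields compactness. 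Second, your appeal to Bessaga--Pe\l czy\'nski to produce a BD-block approximant $(\tilde y_n)$ of $(y_n)$ is incomplete: weak nullity alone does not force the supports of a perturbed subsequence to march off to infinity; you first need $P_{(0,N]}y_n\to 0$ for each fixed $N$. This rests on the central design feature that, since the $X_i$ are Argyros--Haydon spaces built over pairwise disjoint index sets $L_i$, every bounded operator $X_k\to X_i$ for $i\ne k$ is compact --- this is precisely how the paper disposes of the $P_{(0,k)}TP_{\{k\}}$ part at the outset of its proof, and in your unified treatment of $(I-P_{\{k\}})Sj_k$ it is the unstated ingredient controlling the components landing in $\sum_{i<k}\oplus Z_i$ and in $\sum_{k<i\le N}\oplus Z_i$. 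Without naming it the block-approximant step is unsupported.
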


\begin{proof}
We first observe that $P_{(0,k)}TP_{\{k\}}$ is compact. Recall $Z_k\simeq X_k\oplus\ell_\infty(\De_k)$ and $\sum_{i=1}^{k-1}\oplus Z_k\simeq (\sum_{i=1}^{k-1}\oplus X_i)\oplus\ell_\infty(\Ga_{k-1})$. We may identify $P_{(0,k)}TP_{\{k\}}$ with an operator on these spaces. As it is mentioned bellow \eqref{mjnjproperties} every operator from $X_k$ to $X_i$ with $k\neq i$ is compact. This yields that $P_{(0,k)}TP_{\{k\}}$ is compact.

We now show that $P_{(k,\infty)}TP_{\{k\}}$ is compact as well and the proof will be complete. This requires a bit more work. Omitting the finite dimensional component $\ell_\infty(\De_k)$ It is sufficient to show that every bounded linear operator $S:X_k\to\sum_{i=k+1}^\infty\oplus Z_i$ is compact. We will show that for a $C$-RIS $(x_n)_n$ in $X_k$ $\lim_n\|Sx_n\| = 0$. If this is true the \cite[Proposition 5.11]{AH} implies that $S$ is indeed compact. We start by observing that by \cite[Corollary 5.5]{AH} for any scalars $(a_n)_n$ we have $\|\sum_{n=1}^\infty a_nx_n\| \leq 10C\vertiii{\sum_{k=1}^\infty a_ne_n}$, where $\vertiii{\cdot}$ is the norm induced by $W[(\mathcal{A}_{3n_j},1/\tilde m_j)_{j\in L_k}]$. The argument used in the proof of Corollary \ref{estimate for other norming sets} yields that for any $j\in\N$ yields we have
\begin{equation}
\label{kenli}
\left\|\frac{m_{2j}}{n_{2j}}\sum_{n=1}^{n_{2j}}x_n\right\| \leq \frac{20C}{m_{2j}}.
\end{equation}
Towards a contradiction assume that $\liminf_n\|Sx_n\| > 0$. Arguing as in the first part of this proof, for every $N>k$ the operator $P_{(k,N)}S$ is compact. Recall that bounded block sequences in $X_k$ are weakly null (see \cite[Proposition 5.12]{AH}). This means that for all $N>k$ $\lim_n\|P_{(k,N)}Sx_n\| = 0$. We conclude that $(Sx_n)_n$ has a subsequence that is equivalent to a block sequence in $\mathcal{Y}_X$. By Proposition \ref{lower bound} and passing to a further subsequence there exists $\theta>0$ so that for all $j\in\N$ we have $\|\sum_{n=1}^{n_{2j}}x_n\| \geq \theta n_{2j}/m_{2j}$. Combining this with \eqref{kenli} we obtain $m_{2j} \leq 20C/\theta $ for all $j\in\N$ which is absurd.
\end{proof}

\begin{thm}
\label{diagonal plus compact are dense}
For every bounded linear operator $T:\mathcal{Y}_X\to\mathcal{Y}_X$ there exists a sequence of real numbers $(a_k)_{k=0}^\infty$ and a sequence of compact operators $(K_n)_n$ so that
$$T = \lim_n\left(a_0 I + \sum_{k=1}^na_kI_k + K_n\right),$$
where the limit is taken in the operator norm.
\end{thm}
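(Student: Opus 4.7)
The plan is to decompose $T$ using the scalar-plus-horizontally-compact structure already established in Proposition \ref{scalar plus horizontally compact} and then analyze each ``diagonal block'' $SP_{\{k\}}$ using the Argyros-Haydon scalar-plus-compact property of $X_k$.

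First I would apply Proposition \ref{scalar plus horizontally compact} to write $T = a_0 I + S$ for some scalar $a_0$ and some horizontally compact operator $S$. Since the Schauder decomposition $(Z_n)_n$ is shrinking by Corollary \ref{shrinking}, horizontal compactness of $S$ is equivalent to the norm convergence $\|S - SP_n\| \to 0$. Thus it suffices to exhibit scalars $(a_k)_{k \geq 1}$ and compact operators $K_n$ such that $SP_n = \sum_{k=1}^n a_k I_k + K_n$.

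Next, for each fixed $k$, I would produce such a decomposition for a single ``strip'' $SP_{\{k\}}$. By Proposition \ref{you can only escape from component  compactly} the difference $SP_{\{k\}} - P_{\{k\}}SP_{\{k\}}$ is compact, so it is enough to analyze $P_{\{k\}}SP_{\{k\}}$. Using the identity $I_k = P_{\{k\}} - A_k$, where $A_k$ is a finite-rank (hence compact) operator by the discussion preceding Theorem \ref{pretty important part of this result}, one expands
\[
P_{\{k\}}SP_{\{k\}} = (I_k + A_k)S(I_k + A_k) = I_k S I_k + \bigl(I_k S A_k + A_k S I_k + A_k S A_k\bigr),
\]
and the parenthesized term is compact. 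It remains to handle $I_k S I_k$. Its range lies in $I_k[\mathcal{Y}_X]$, which is isomorphic (though not isometric, cf. Remark \ref{pesky detail}) to $X_k$. Via this isomorphism, the restriction of $I_k S I_k$ to $I_k[\mathcal{Y}_X]$ corresponds to a bounded linear operator on $X_k$; by the scalar-plus-compact property of the Argyros-Haydon space $X_k$, this restricted operator is of the form $a_k \,\mathrm{id}_{X_k} + \kappa_k$ with $\kappa_k$ compact. Transferring back to $\mathcal{Y}_X$ and pre-composing the extension with $I_k$ yields $I_k S I_k = a_k I_k + \widetilde{\kappa}_k$ with $\widetilde{\kappa}_k$ compact. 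Consequently $SP_{\{k\}} = a_k I_k + E_k$ for some compact $E_k$.

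Finally, summing over $k$ gives
\[
SP_n \;=\; \sum_{k=1}^n SP_{\{k\}} \;=\; \sum_{k=1}^n a_k I_k + K_n, \qquad K_n := \sum_{k=1}^n E_k,
\]
which is a finite sum of compact operators and hence compact. Therefore
\[
T - \Bigl(a_0 I + \sum_{k=1}^n a_k I_k + K_n\Bigr) \;=\; S - SP_n \;\longrightarrow\; 0
\]
in operator norm, completing the proof. The main (albeit minor) subtlety lies in the third step: one must ensure that applying the scalar-plus-compact property of $X_k$ genuinely produces a compact perturbation of $a_k I_k$ in $\mathcal{L}(\mathcal{Y}_X)$, which requires working through the (non-isometric but bounded) identification between $I_k[\mathcal{Y}_X]$ and $X_k$ and then extending the compact piece back to $\mathcal{Y}_X$ via pre-composition with $I_k$.
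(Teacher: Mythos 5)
Your proposal is correct and follows essentially the same route as the paper: split $T = a_0 I + S$ via Proposition \ref{scalar plus horizontally compact}, use Proposition \ref{you can only escape from component  compactly} to reduce $SP_{\{k\}}$ to $P_{\{k\}}SP_{\{k\}}$ modulo compacts, then pass to $I_kSI_k$ and invoke the scalar-plus-compact property of $X_k$ to get $a_kI_k$ plus compact, and finally sum and use horizontal compactness of $S$ to obtain norm convergence. Your write-up is a bit more explicit about transferring the compact perturbation through the non-isometric identification $I_k[\mathcal{Y}_X]\simeq X_k$, but this is exactly the point the paper compresses into ``$I_kSI_k = a_kI_k + \tilde C_k$.''
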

 
\begin{proof}
Use Proposition \ref{scalar plus horizontally compact} to write $T = a_0I + S$ with $S$ horizontally compact. Recall that for each $k\in\N$ $I_k:\mathcal{Y}_X\to\mathcal{Y}_X$ is a projection whose image is isomorphic to the space $X_k$, a space that has the scalar-plus-compact-property. This means that for each $k\in\N$ the operator $I_kSI_k = a_kI_k + \tilde C_k$, where $a_k$ is a scalar and $\tilde C_k$ is compact.  Since  $I_k$ is a finite rank perturbation of $P_{\{k\}}$ we conclude that $P_{\{k\}}SP_{\{k\}} = a_kI_k + C_k$ with $C_k$ compact.  Furthermore, by Proposition \ref{you can only escape from component  compactly} for every $n\in\N$ the operator $\tilde K_n = SP_{(0,n]} - \sum_{k=1}^nP_{\{k\}}SP_{\{k\}}$ is compact. Summarizing, if we define the compact operator $K_n = \sum_{k=1}^nC_k + \tilde K_n$ then $SP_{(0,n]} = \sum_{k=1}^na_kI_k + K_n$. As $S$ is horizontally compact $SP_{(0,n]}$ converges to $S$ in operator norm. In conclusion, $\lim_n\|T - (a_0I + \sum_{k=1}^na_kI_k + K_n)\| = 0$. 
\end{proof}
 
 \begin{rmk}
 \label{ss is compact}
 Theorem \ref{diagonal plus compact are dense} easily implies that Strictly singular operators on $\mathcal{Y}_X$ are always compact.
 \end{rmk}

\section{Remarks and problems}
\label{section remarks and problems}
We conclude this paper with a section containing general remarks based on our results as well as several related open problems.

\begin{rmk}
In \cite{MPZ} for every countable compactum $K$ a Banach space $X_K$ is presented the Calkin algebra of which is isomorphic, as a Banach algebra, to $C(K)$. There exist $K_1$ and $K_2$ so that $C(K_1)$ are isomorphic as Banach spaces but not as Banach algebras. Such an example is provided by $K_1 = \omega$ and $K_2 = \omega\cdot2$. Hence, it is possible for Calkin algebras to be isomorphic to one another as Banach spaces but not as Banach algebras. There is also an additional manner of achieving this. In \cite{T} a $\mathscr{L}_\infty$-space $\mathfrak{X}_\infty$ is presented the Calkin algebra of which is isometric, as a Banach algebra, to the convolution algebra of $\ell_1(\N_0)$ (where $\N_0 = \{0\}\cup\N$). Note that this commutative Banach algebra has continuum many maximal ideals (each ideal corresponds to a number $\la\in[-1,1]$ by being the kernel of the functional $\phi_\la\in\ell_\infty(\N_0) = \ell_1(\N_0^*)$ with $\phi_\la = (1,\la,\la^2,\la^3,\ldots)$). Here, if $X$ is the space $c$ endowed with the monotone summing basis then as it follows by Example \ref{variation norm} the Calkin algebra of $\mathcal{Y}_X$ is isomorphic to $\ell_1$. However, by Corollary \ref{ideals in that case too} it has countably many maximal ideals. Therefore it is different, as a Banach algebra, to the convolution algebra of $\ell_1(\N_0)$.
\end{rmk}

\begin{rmk}
All preexisting examples of Banach spaces the Calkin algebras of which were explicitly described were $\mathscr{L}_\infty$-spaces (see \cite{T}, \cite{MPZ}, \cite{KL}). We point out that, as it was observed in \cite{Sk}, the Calkin algebra of the space $X$ from \cite{AM} is the unitization of a Banach algebra with trivial multiplication. This space is not a $\mathscr{L}_\infty$-space however the norm structure of its Calkin algebra is not explicitly described. In the present paper we use tools from the theory of $\mathscr{L}_\infty$-spaces, however the space $\mathcal{Y}_X$ is not necessarily a $\mathscr{L}_\infty$-space. For example, it can be shown using the set $\mathcal{G}^\mathrm{utc}$ that $X^*$ is crudely finitely representable in $\mathcal{Y}_X^*$. This implies that when $X^*$ does not embed in an $L_1$ space then $\mathcal{Y}_X^*$ is not a $\mathscr{L}_1$-space. Such a space is, e.g., $X = J^* = \mathcal{J}_*(\ell_2)$ the dual of which is isomorphic to $J$ which has trivial cotype. In fact, with some work, something stronger can be shown: if $\mathcal{Y}_X$ is a $\mathscr{L}_\infty$-space then so is $X$.
\end{rmk}

\begin{rmk}
The space $\mathcal{Y}_X$ has the bounded approximation property. To see this recall that $(Z_n)_n$ is a Schauder decomposition of $\mathcal{Y}_X$ and the spaces $\sum_{1=k}^n\oplus Z_k$ are uniformly isomorphic to $((\sum_{i=1}^n\oplus X_k)^X_\mathrm{utc}\oplus\ell_\infty(\Ga)_n)_\infty = (\mathcal{X}_n\oplus\ell_\infty(\Ga_n))_\infty$. Recall that each $X_k$ has a $2$-Schauder basis $(t_{k,i})_i$. Using the definition of $\mathcal{G}^\mathrm{utc}$ one can see that the finite dimensional subspaces $F_m$ spanned by $((t_{k,i})_{i=1}^m)_{k=1}^m$ are increasing, uniformly complemented in $\mathcal{X}_n$, and their union is dense $\mathcal{X}_n$.
\end{rmk}

\begin{rmk}
When a space $X$ has a finite dimensional Calkin algebra or one that is isomorphic to $\ell_1$ then $\mathcal{K}(X)$ is complemented in $\mathcal{L}(X)$. In the examples from \cite{MPZ}, that have infinite dimensional $C(K)$ spaces as Calkin algebras, the space $\mathcal{K}(X)$ is not complemented in $\mathcal{L}(X)$. Regarding the space $\mathcal{Y}_X$ this is not always clear. If $\mathcal{K}_\mathrm{diag}(X)$ is isomorphic to $\ell_1$ then $\mathcal{K}(\mathcal{Y}_X)$ is complemented (this happens, e.g., when $X$ is $c_0$ endowed with the summing basis). On the other hand, if $\mathcal{K}_\mathrm{diag}(X)$ contains $c_0$ then the argument used in \cite{MPZ} goes through and $\mathcal{K}(\mathcal{Y}_X)$ is not complemented in $\mathcal{L}(\mathcal{Y}_X)$.
\end{rmk}

\begin{rmk}
The construction of spaces with quasi-reflexive Calkin algebras is a step towards trying to find a space with a reflexive and infinite dimensional Calkin algebra. One way for this to be possible would be to find a space $X$ with $\mathcal{L}(X)$ reflexive. As it was pointed out to us by J. A. Ch\' avez-Dom\' inguez, \cite[Corollary 2]{B} implies that such an $X$ must be finite dimensional so this route would be a dead end.
\end{rmk}

\begin{problem}
Does there exist a Banach space the Calkin algebra of which is reflexive and infinite dimensional?
\end{problem}

\begin{rmk}
Given a Banach space $X$ with a basis we have used the Argyros-Haydon scheme for defining spaces with the scalar-plus-compact property to obtain a Calkin algebra that is isomorphic as a Banach algebra to the space $\R I\oplus\mathcal{K}_\mathrm{diag}(X)$. It is conceivable that one may use the Gowers-Maurey scheme from \cite{G} for constructing a space with an unconditional basis with the ``diagonal plus strictly singular'' property to construct a space with the property that algebra $\mathcal{L}(X)/\mathcal{SS}(X)$ is isomorphic as a Banach algebra to the whole space $\mathcal{L}_\mathrm{diag}(X)$. If one would like to have a space with Calkin algebra $\mathcal{L}_\mathrm{diag}(X)$, then a new scheme would be necessary, one that is used to a define Banach space with an unconditional basis with the diagonal-plus-compact property.
\end{rmk}

\begin{problem}
\label{diagonal plus compact}
Does there exist a Banach space with an unconditional basis so that every bounded linear operator on that space is the sum of a diagonal operator with a compact operator?
\end{problem}

\begin{problem}
\label{all diagonals}
Let $X$ be a Banach space with a Schauder basis $(e_i)_i$. Does there exist a Banach space $Y$ the Calkin algebra of which is isomorphic, as a Banach algebra, to $\mathcal{L}_\mathrm{diag}(X)$?
\end{problem}

\begin{rmk}
Recall that for all spaces $X$ with an unconditional basis $\mathcal{L}_\mathrm{diag}(X)$ is isomorphic as a Banach algebra to $\ell_\infty$ with pointiwise multiplication. As it was explained earlier, a positive answer to Problem \ref{diagonal plus compact} could perhaps yield a positive answer to Problem \ref{all diagonals} and hence also a positive solution to the following. 
\end{rmk}

\begin{problem}
Does there exist a Banach space the Calkin algebra of which is isomorphic, as a Banach algebra to $C(K)$ for an uncountable compact topological space $K$?
\end{problem}

\begin{rmk}
\label{cant show this way for normal pointwise}
In a personal communication with the first author T. Kania asked whether the unitization of Schreier space from \cite{S} endowed with coordinate-wise multiplication with respect to its standard unconditional basis is a Calkin algebra. Our paper does not provide an answer to this. More generally, we observe that our method does not work directly to show that the unitization of an arbitrary Banach space with an unconditional basis is a Calkin algebra. If $X$ is a Banach space with an unconditional basis $(x_i)_i$ and there exists a Banach space $Y$ with a basis so that the unitization of $X$ is isomorphic as a Banach algebra to $\R I\oplus \mathcal{K}_\mathrm{diag}(Y)$ then $(x_i)_i$ is equivalent to the unit vector basis of $c_0$. Indeed, assume that $T:\R e\oplus X\to\R I\oplus\mathcal{K}(Y)$ is an algebra isomorphism. Then if $(e_i)_i$ is the basis if $Y$ write for all $k\in\N$ $Tx_k= \mathrm{SOT}$-$\sum_ia_i^ke_i^*\otimes e_i$. Since $x_k^n =  x_k$ for all $n\in\N$ we obtain that in fact there is a subset $F_k$ of $\N$ so that $Tx_k = \mathrm{SOT}$-$\sum_{i\in F_k}e_i^*\otimes e_i$ and since $x_ix_j = 0$ for $i\neq j$ the sets $F_k$ must be pairwise disjoint. Clearly, $Te = \mathrm{SOT}$-$\sum_ie_i^*\otimes e_i$. The fact that $T$ is onto implies that each $F_k$ is a singleton $F_k = \{\phi(k)\}$ and that $\cup_k F_k = \N$. If we reorder the basis $(x_k)_k$ as $(x_{\phi^{-1}(k)})_k$ then for all $n\in\N$ $\|\sum_{k=1}^nx_{\phi^{-1}(k)}\| \leq \|T^{-1}\| \|\sum_{k=1}^ne_k^*\otimes e_k\|\leq C\|T^{-1}\|$ where $C$ is the monotone constant of $(e_i)_i$. Unconditionality yields that $(x_i)_i$ is equivalent to the unit vector basis of $c_0$.
\end{rmk}

\begin{problem}
\label{unconditional pointwise unitization}
Find a Banach space $X$ with an unconditional basis $(x_i)_i$ that is not equivalent to the unit vector basis of $c_0$ so that there exist a Banach space the Calkin algebra of which is isomorphic as a Banach algebra to the unitization of $X$.
\end{problem}

\begin{rmk}
The unitization of James space $\R e_\omega\oplus J$ may be viewed as the space of all scalar sequences of bounded quadratic variation and similarly for every Banach space $X$ with a subsymmetric sequence the space $\R e_\omega\oplus J(X)$ is the space of all scalar sequences of bounded $X$-variation. One may consider for any such $X$ and any linearly ordered set $I$ the space $V_X(I)$ of all functions $f:I\to\R$ of bounded $X$-variation. The norm on such a space is submultiplicative. The spaces $V_X[0,1]$ were introduced in \cite{AMP}. By very carefully combining the method of the present paper with the method from \cite{MPZ} it is conceivable that one may obtain for every countable well ordered set $I$ a Banach space the Calkin algebra of which is isomorphic as a Banach algebra to $V_X(I)$.
\end{rmk}

\begin{problem}
Let  $X$ be a Banach space with a subsymmetric basis. For what ordered sets $I$ does there exist a Banach space the Calkin algebra of which is isomorphic as a Banach algebra to $V_X(I)$?
\end{problem}

\begin{rmk}
In \cite{AMS} the notion of a convex block homogeneous sequence is introduced, i.e., a sequence that is equivalent to its convex block sequences. Known examples of such sequences are constructed using a space $X$ with a subsymmetric basis $(x_i)_i$. The bases $(e_i)_i$ of $J(X)$ and $(v_i)_i$ of $\mathcal{J}_*(X)$ are both convex block homogenous. In either case, the difference basis is submultiplicative .
\end{rmk}

\begin{problem}
Let $X$ be a Banach space with a convex block homogeneous basis $(e_i)_i$ and set $d_1 = e_1$ and for $i\in\N$ $d_{i+1} = e_{i+1} - e_i$. Is $X$ endowed with $(d_i)_i$ submultiplicative? 
\end{problem}
Regardless of the discussion above, there is little reason to believe that the answer to this problem should be positive. In fact, by \cite[Theorem II]{AMS}, a positive answer would imply that every for every conditional spreading sequence $(e_i)_i$ the difference basis $(d_i)_i$ is submultiplicative. 
 
\section*{Acknowledgements}
The authors would like to extend their sincere gratitude to Professor S. A. Argyros for invaluable discussions that contributed to this work.


\begin{thebibliography}{99}

\bibitem[AG]{AG}
A. D. Andrew and  W. L. Green,
{\em On James' quasireflexive Banach space as a Banach algebra},
Canad. J. Math. {\bf 32} (1980), no. 5, 1080-1101.


\bibitem[AD1]{AD}
S. A. Argyros and I. Deliyanni,
{\em Banach Spaces of The type of Tsirelson},
arXiv:math/9207206, (1992), preprint.

\bibitem[AD2]{AD2}
S. A. Argyros and I. Deliyanni,
{\em Examples of asymptotic $\ell_1$ Banach spaces},
Trans. Amer. Math. Soc. {\bf 349} (1997), no. 3, 973-995.

\bibitem[ADT]{ADT}
S.A. Argyros, I. Deliyanni, A.G. Tolias,
{\em Hereditarily indecomposable Banach algebras of diagonal operators},
Israel J. of Math.  {\bf 181}   (2011),  65-110.

\bibitem[AH]{AH}
S. A. Argyros and R. G. Haydon,
{\em A hereditarily indecomposable $\mathscr{L}_{\infty}$-space that solves the scalar-plus-compact
problem},
Acta Mathematica , Vol. {\bf 206} (2011), 1-54.

\bibitem[AMP]{AMP}
S. A. Argyros, A. Manoussakis, and M. Petrakis,
{\em Function spaces not containing $\ell_1$},
Israel J. Math. {\bf 135} (2003), 29-81.

\bibitem[AM]{AM}
 S. A. Argyros and P. Motakis,
 {\em A dual method of constructing hereditarily indecomposable Banach spaces},
 Positivity {\bf 20} (2016), no. 3, 625-662.
 
\bibitem[AMS]{AMS}
S. A. Argyros, P. Motakis, and B. Sari,
{\em A study of conditional spreading sequences},
J. Funct. Anal. {\bf 273} (2017), no. 3, 1205-1257.

\bibitem[At]{At}
F. V. Atkinson,
{\em The normal solvability of linear equations in normed spaces},
Mat. Sb. {\bf 28} (70) (1951), 3-–14.

\bibitem[B]{B}
J. M. Baker,
{\em Reflexivity of operator spaces},
Proc. Amer. Math. Soc. {\bf 85} (1982), no. 3, 366-368.

\bibitem[BO]{BO}
H. Bang and E. Odell,
{\em On the stopping time Banach space},
Quart. J. Math. Oxford Ser. (2) {\bf 40} (1989), no. 159, 257-273.

\bibitem[BHO]{BHO}
S. F. Bellenot, R. G. Haydon, and E. Odell,
{\em Quasi-reflexive and tree spaces constructed in the spirit of R. C. James,}
Banach space theory (Iowa City, IA, 1987) Contemp. Math., vol. 85, Amer. Math. Soc., Providence, RI, 1989, pp. 19-43.

\bibitem[CJS]{CJS}
P. G. Casazza, W. B. Johnson, and L.  Tzafriri,
{\em On Tsirelson's space},
Israel J. Math. {\bf 47} (1984), no. 2-3, 81-98.

\bibitem[Cau]{C}
R. M. Causey,
{\em Concerning the Szlenk index},
Studia Math. {\bf 236} (2017), no. 3, 201-244.

\bibitem[Cal]{Cal}
J. W. Calkin,
{\em Two-sided ideals and congruences in the ring of bounded operators in Hilbert space},
Ann. of Math. {\bf 42} (1941), no. 2, 839-873.

\bibitem[G]{G}
W. T. Gowers,
{\em A solution to Banach's hyperplane problem},
Bull. London Math. Soc. {\bf 26} (1994), no. 6, 523-530.

\bibitem[GM]{GM}
W. Gowers and B. Maurey,
{\em Banach Spaces with Small Spaces of Operators},
Mathematische Annalen {\bf 307} (1997), no. 4, 543-568.


\bibitem[J]{J}
R. C. James,
{\em Bases and reflexivity of Banach spaces},
Ann. of Math. (2) {\bf 52} (1950), 518-527. 


\bibitem[FJ]{FJ}
T. Figiel and W. B. Johnson,
{\em A uniformly convex Banach space which contains no $\ell_p$},
Compositio Math. \textbf{29} (1974), 179-190.

\bibitem[KL]{KL}
T. Kania and N. J. Laustsen,
{\em Ideal structure of the algebra of bounded operators acting on a Banach space},
Indiana Univ. Math. J. {\bf 66} (2017), no. 3, 1019-1043.


\bibitem[MPZ]{MPZ}
P. Motakis, D. Puglisi, and D. Zisimopoulou,
{\em A hierarchy of Banach spaces with $C(K)$ Calkin algebras},
Indiana Univ. Math. J. {\bf 65} (2016),  no. 1, 39-67

\bibitem[OSZ]{OSZ}
E. Odell, Th, Schlumprecht, A. Zs\'ak,
{\em Banach spaces of bounded Szlenk index},
Studia Math. {\bf 183} (2007), no. 1, 63-97.

\bibitem[Schr]{S}
J. Schreier
{\em Ein Gegenbeispiel zur Theorie der schwachen Konvergenz},
Studia Math. {\bf 2} (1930), no. 1, 58-62.

\bibitem[Schl]{Sch}
Th. Schlumprecht, {\em An arbitrarily distortable Banach space}, Israel J. Math. {\bf 76}, no. 1-2 (1991), 81-95.

\bibitem[Sk]{Sk}
 R. Skillicorn,
 {\em The uniqueness-of-norm problem for Calkin algebras},
 Math. Proc. R. Ir. Acad. {\bf 115A} (2015), no. 2, 1-8.
 
 \bibitem[Sz]{Sz}
 W. Szlenk,
 {\em The non-existence of a separable reflexive Banach space universal for all separable reflexive Banach spaces},
 Studia Math. {\bf 30} (1968), 53-61.


\bibitem[Ta]{T}
M. Tarbard, Operators on Banach Spaces of Bourgain-Delbaen Type. arXiv:1309.7469v1 (2013)

\bibitem[Ts]{Tsi}
B.S. Tsirelson, Not every Banach space contains $\ell_p$ or $c_0$; {\em Funct. Anal. Appl.} {\bf 8} (1974), 138--141.

\bibitem[Y]{Y}
B. Yood,
{\em Difference algebras of linear transformations on a Banach space},
Pacific J. Math. {\bf 4}, (1954). 615-636.

\bibitem[Z]{Z}
D. Zisimopoulou,
{\em Bourgain-Delbaen  $\mathcal{L}_\infty$-sums of Banach spaces},
arXiv:1402.6564v1 (2014) (preprint)

\end{thebibliography}
\end{document}